\numberwithin{equation}{section}
\newcommand{\res}{\mathop{\hbox{\vrule height 7pt width .5pt depth 0pt
			\vrule height .5pt width 6pt depth 0pt}}\nolimits}
\DeclareMathOperator{\length}{length}
\DeclareMathOperator{\diam}{diam}
\DeclareMathOperator{\tr}{tr}
\DeclareMathOperator{\Tan}{Tan}
\DeclareMathOperator{\dist}{dist}
\DeclareMathOperator{\Lin}{Lin}
\DeclareMathOperator{\Span}{Span}
\DeclareMathOperator{\curl}{curl}
\DeclareMathOperator{\dive}{div}
\DeclareMathOperator{\DIVE}{\mathsf{div}}
\DeclareMathOperator{\grad}{\mathsf{grad}}
\DeclareMathOperator{\conv}{conv}
\DeclareMathOperator{\supp}{supp}
\DeclareMathOperator{\err}{err}
\newcommand{\Gto}{\overset{\Gamma}{\longrightarrow}}
\newcommand{\one}{{{\bf 1}}}
\renewcommand{\hat}{\widehat}
\renewcommand{\tilde}{\widetilde}
\newcommand{\Lip}{\mathrm{Lip}}
\newcommand{\spane}{\textrm{span}}
\newcommand{\tra}{{{\mathrm{tr}}}}
\newcommand{\toa}{\stackrel{\rma}{\to}}
\newcommand{\ip}[1]{\langle {#1}\rangle}
\newcommand{\bip}[1]{\big\langle {#1}\big\rangle}
\newcommand{\Bip}[1]{\Big\langle {#1}\Big\rangle}
\newcommand{\Lm}{\mathscr{L}} 
\newcommand{\Hm}{\mathcal{H}} 
\newcommand{\N}{\mathbb{N}} 
\newcommand{\Z}{\mathbb{Z}} 
\newcommand{\Q}{\mathbb{Q}} 
\newcommand{\E}{\mathbb{E}} 
\newcommand{\R}{\mathbb{R}} 
\newcommand{\1}{\mathds{1}} 
\newcommand{\calA}{\mathcal{A}}
\newcommand{\cX}{\mathcal X}
\newcommand{\cE}{\mathcal E}
\renewcommand{\P}{\mathbb{P}}
\DeclareMathOperator*{\argmin}{arg\,min}
\newcommand{\ext}{\nu_{\text{ext}}}
\newcommand{\cB}{\mathcal{B}}
\newcommand{\id}{\mathrm{id}}
\newcommand{\M}{\mathcal{M}}
\newcommand{\bF}{\mathbb F}
\newcommand{\bW}{\mathbb{W}}
\newcommand{\eps}{\varepsilon}
\newcommand{\tD}{\text{D}}
\newcommand{\EEE}{\color{black}}
\newcommand{\tand}{\quad \text{and} \quad}
\newcommand{\where}{\quad \text{where} \quad}
\newcommand{\suchthat}{\ensuremath{\ : \ }} 
\newcommand{\de}{\ensuremath{\, \mathrm d}} 
\newcommand{\dd}{\, \mathrm{d}}
\newcommand{\ddd}{\mathrm{d}}
\newcommand{\weX}{\cX_\eps}
\newcommand{\weE}{\cE_\eps}
\newcommand{\fP}{\mathsf P}
\newcommand{\Rep}{\mathsf{Rep}}
\newcommand{\mP}{\mathbb{P}}
\newcommand{\tv}{\text{TV}}
\newcommand{\KR}{\mathrm{KR}}
\newcommand{\TV}{\mathrm{TV}}
\newcommand{\tKR}{\widetilde{\mathrm{KR}}}
\newcommand{\tP}{\text{P}}
\newcommand{\cC}{\mathcal C}
\newtheorem{theorem}{Theorem}[section]
\newtheorem{prop}[theorem]{Proposition}
\newtheorem{lemma}[theorem]{Lemma}
\newtheorem{defi}[theorem]{Definition}
\newtheorem{cor}[theorem]{Corollary}
\theoremstyle{remark}
\newtheorem{rem}[theorem]{Remark}
\newtheorem{example}[theorem]{Example}
 \def\bbE{{\mathbb E}}
\def\calA{{\mathcal A}} \def\calB{{\mathcal B}}
  \def\calL{{\mathcal L}}
\def\calM{{\mathcal M}}  
  \def\calR{{\mathcal R}}
\def\calS{{\mathcal S}}
\def\rma{{\mathrm a}}  \def\rmc{{\mathrm c}}
\title[Stochastic homogenisation of minimum-cost flow problems]{Stochastic homogenisation of nonlinear minimum-cost flow problems}
\date{\today}
\begin{document}

	\author[P.~Gladbach]{Peter Gladbach}
	\author[J.~Maas]{Jan Maas}
	\author[L.~Portinale]{Lorenzo Portinale}

	\address{Institut f\"ur Angewandte Mathematik \\ Rheinische Friedrich-Wilhelms-Universit\"at Bonn \\ Endenicher Allee 60 \\ 53115 Bonn, Germany}
	\email{gladbach@iam.uni-bonn.de}

	\address{Institute of Science and Technology Austria (IST Austria),
	Am Campus 1, 3400 Klosterneuburg, Austria}
	\email{jan.maas@ist.ac.at}

    \address{Università degli studi di Milano statale, \\ 
    Via Saldini, 50
20133 Milano, Italy}
    \email{lorenzo.portinale@unimi.it}
	
	\subjclass[2010]{}
	\keywords{}

	\begin{abstract}
		This paper deals with the large-scale behaviour of nonlinear minimum-cost flow problems on random graphs. 
		In such problems, a random nonlinear cost functional is minimised among all flows (discrete vector-fields) with a prescribed net flux through each vertex.
		On a stationary random graph embedded in $\R^d$,
		our main result asserts that these problems converge, 
		in the large-scale limit, 
		to a continuous minimisation problem
		where an effective cost functional is minimised among all 
		vector fields with prescribed divergence. 
		Our main result is formulated using $\Gamma$-convergence 
		and applies to multi-species problems. 
		The proof employs the blow-up technique by Fonseca and M\"uller in a discrete setting.
		One of the main challenges to overcome is the construction of the homogenised energy density on random graphs without a periodic structure.

	\end{abstract}

	\maketitle

	\setcounter{tocdepth}{1}
	\tableofcontents

\section{Introduction}
\label{sec:intro}

This article studies the large-scale behaviour of minimum-cost flow problems on large random graphs. 

\subsection*{Nonlinear minimum-cost flow problems}
Let $(\cX,\cE)$ be a graph with 
vertex set $\cX$ and edge set 	
	$\cE\subseteq \cX\times \cX$. 
We always assume in this paper that graphs are undirected, i.e., $\cE$ is symmetric.
A \emph{scalar flow} on such a graph is an antisymmetric function $J: \cE \to \R$.
In this case, we write $J: \cE \toa \R$.
For each $e \in \cE$, let  
$f_e : \R \to [0,+\infty]$ be a cost function, so that 
	$f_{e}(j)$ 
represents the cost of flowing $j$ units of mass through the edge $e \in \cE$. 
Note that capacity constraints can be incorporated, since $f_e$ may attain the value $+\infty$.
Let $m \in \M_0(\cX)$ be a signed measure of total mass $0$, which prescribes the desired net flux through each of the vertices.

The \emph{minimum-cost flow problem} \cite{FordFulkerson} 
consists in minimising the total transport costs 
among all scalar flows with prescribed flux through each vertex, 
that is,
	\begin{align}
		\label{eq: min cost}
		& \text{minimise} \	
		&& 
			F(J) := 
			\sum_{(x,y)\in\cE} 
			f_{(x,y)}\bigl(J(x,y)\bigr)
		\quad \text{ among all } J : \cE \stackrel{\rma}{\to} \R
			\\
		& \text{subject to }
		&& 	\sum_{y: y\sim x} J(x,y) = m(x)
		\ \text{ for all } x \in \cX 
		\, .
	\end{align}	
As usual, $x \sim y$ means that $(x,y) \in \cE$.
The left-hand side in this constraint will sometimes be denoted  
$\DIVE J(x)$, as 
it can be thought of as the discrete divergence of the discrete vector field $J$ at the vertex $x$. 

In the simplest non-trivial example, which is of special interest, 
the cost through each edge is proportional to the flow, i.e., 
	$f_{e}(j) = \alpha_e |j|$
for all $j \in \R$,
for given edge-weights $\alpha_e > 0$.
In that case, \eqref{eq: min cost} is the classical 
\emph{Beckmann problem} \cite{MR68196}, which is equivalent to a Monge--Kantorovich optimal transport problem on $\cX$ with cost function given by the weighted graph distance $d_\alpha$ induced by the edge weights $\alpha_e$. 
In other words, the infimum in \eqref{eq: min cost} 
coincides with $W_1(m_+, m_-)$, 
the 1-Wasserstein distance (also known as earth-mover's distance) between the positive and negative part of $m$.

In this paper we allow for nonlinear and nonconvex cost functions $f_{(x,y)}$, which are assumed to be Lipschitz.
Our setting below also covers the multi-species generalisation of \eqref{eq: min cost}, 
where $m \in \M(\cX; V)$ is a multi-dimensional measure taking values in a finite-dimensional
vector space 
	$V$,
the cost functions $f_e: V \to [0,\infty)$ are given,
and the minimisation in 
	\eqref{eq: min cost} 
runs over antisymmetric functions
$J: \cE \toa V$ (we write $J \in V_a^\cE$). 
Moreover, our results apply to more general cost functions, 
not necessarily sums of edge contributions, 
but to lighten notation, 
we restrict to \eqref{eq: min cost} in the introduction.

\subsection*{Minimal cost-flow problems in the continuum}

The discrete minimisation problem \eqref{eq: min cost} has a natural counterpart in the continuum, that we shall now introduce. 
Let $\mu \in \M(\R^d)$ be a signed measure of total mass $0$, describing a spatial distribution of sources and sinks. 
Furthermore, let $f: \R^d \to [0,\infty]$ be a given cost function.
For $j \in \R^d$, 
	$f(j)$ represents the 
	energy associated to transporting a unit mass in the direction $j$. 
To ensure good lower-semicontinuity of the functional defined below, we assume that $f$ is lower semicontinuous and $\dive$-quasiconvex.

The continuum minimal cost-flow problem is the following: 
\begin{align}
	\label{eq:min-cost-cont}
	& \text{minimise} \	
	&& 
	\bF(\nu) :=
			\int_{\R^d} 
				f\Bigl(\frac{\dd \nu}{\dd \Lm^d}\Bigr)
			\dd x 
			+ 
			\int_{\R^d} 
				f^\infty\Bigl(\frac{\dd \nu}{d|\nu|}\Bigr)
			\dd \nu^s
	\quad \text{ among all } 
	\nu \in \M(\R^d;\R^d)
		\\
	& \text{subject to }
	&& 	\dive(\nu) = \mu
	\, .
\end{align}	
Here, 
$f^\infty : \R^d \to [0,\infty]$
is the recession function of $f$ defined by 
		$
			f^\infty(j) := \limsup_{t\to \infty} 
				\frac{f(tj)}{t}
		$,
and 
$\nu^s$ denotes the singular part of $\nu$ with respect to the Lebesgue measure $\Lm^d$.
We refer to \cite[Section 4.4]{santambrogio} for a discussion of minimisation problems with divergence constraints in the continuum.	

It will be convenient below to incorporate the constraint in the objective functional, as is often done. We thus define
	\begin{align}
		\label{eq:limitform}
		\bF(\cdot | \mu)
		:	 	\M(\R^d ; \R^d) \to [0,\infty] \, ,
		\qquad
		\bF(\nu | \mu) 
		:=
			\begin{cases}
			\displaystyle
			\bF(\nu)
				 &\text{ if }\dive \nu = \mu \, , \\+\infty&\text{ otherwise.} 
			\end{cases}
		\end{align}

\subsection*{Random setup}

In this paper we consider the discrete nonlinear minimum-cost flow problem \eqref{eq: min cost} on a large random graph embdedded in $\R^d$.
Let us informally present the setup in a simplified setting. For full details we refer to Section \ref{sec:setup} below.  

Let $(\cX, \cE)$ be a random graph embedded in $\R^d$, which 
is assumed to be \emph{stationary} (in distribution) with respect to translations of $\Z^d$.
We further assume that there exist (possibly random) constants $R_1, R_2, R_3 < \infty$ such that the following assertions hold:
 \begin{enumerate}[align=left]
	\item [{(G1)}] \label{item:G1-intro}
	(Absence of large gaps.) \ For all $x\in \R^d$ we have $ \cX \cap B(x,R_1
	)\neq \emptyset$.
	\item [{(G2)}] \label{item:G2-intro}
	(Quantitative connectedness.) For all $x, y\in \cX$ there is a path $P$ in $(\cX,\cE)$ connecting $x$ and $y$ with Euclidean length
	\begin{align*}
		\length(P) \leq R_2 \bigl(|x-y| + 1\bigr) \, .
	\end{align*}
	\item [{(G3)}] \label{item:G3-intro}
	(Bounded edge-lengths.) For all $(x,y) \in \cE$ we have
			$| x-y | \leq R_3$.
	\end{enumerate}
The simplest example is the Cartesian grid $(\Z^d, \E^d)$, 
but our framework also covers random configurations of points. 
It does not cover the case where $\cX$ is a Poisson point process, which remains an interesting challenge.

We endow the edges of the random graph $(\cX, \cE)$ 
with stationary random cost functions 
	$f_e: \R \to [0,\infty)$ 
for $e \in \cX$, which are assumed to be Lipschitz and of linear growth; see Section~\ref{sec:edge_based} below for more details.
Our goal is to describe the large-scale behaviour of the resulting minimum-cost flow problem.

For this purpose, fix $\eps \in (0,1]$, and consider  the rescaled random graph $(\cX_\eps, \cE_\eps)$ in which edge-lengths are of order $\eps > 0$; that is, 
	$\cX_\eps := \eps \cX$
and 
	$\cE_\eps := \eps \cE$.
We endow each edge $e \in \cE_\eps$ 
with the rescaled random cost function $f_e^\eps$
defined by
	$f_e^\eps(j) := \eps^d f_{e/\eps}(j/\eps^{d-1})$.
For given $m_\eps \in \M(\cX_\eps)$ with $m_\eps(\cX_\eps) = 0$,
we thus arrive at the rescaled minimisation problem
	\begin{align}
		\label{eq: min cost rescaled}
		& \text{minimise} \	
		&& 
			F_\eps(J) 
			:= 
			\eps^d
			\sum_{(x,y)\in\cE_\eps} 
			 f_{e/\eps}
			 \Bigl(\frac{J(x,y)}{\eps^{d-1}}\Big)
		\quad \text{ among all } 
		J : \cE_\eps \stackrel{\rma}{\to} \R
			\\
		& \text{subject to }
		&& 	\sum_{y: y\sim x} J(x,y) = m_\eps(x)
		\ \text{ for all } x \in \cX_\eps
		\, .
	\end{align}	
Suppose now that the measures $m_\eps$, viewed as signed measures on  $\R^d$, converge in the Kantorovich--Rubinstein norm $\|\cdot \|_{\tKR}$ to a signed measure $\mu \in \M_0(\R^d)$.
Our main result describes the asymptotic behaviour of these minimisation problems as $\eps \to 0$.

\begin{rem}
	Consider the special case where $(\cX, \cE)$ is the Cartesian grid 
		$(\Z^d, \E^d)$ 
	endowed with iid random positive edge weights $\tau_e$.
	If the cost functions $f_e$ take the form 
		$f_e(j) = \tau_e |j|$, 
	then \eqref{eq: min cost} is closely related to the problem of 
	\emph{first passage percolation} \cite{Auffinger-Damron-Hanson:2017}. 
	Indeed, if $m = \delta_a - \delta_b$ for $a, b \in \Z^d$, 
	then the infimum in \eqref{eq: min cost} coincides with the first passage time between $a$ and $b$.
\end{rem}

\subsection*{Main result: 
\texorpdfstring{$\Gamma$}{Gamma}-convergence 
in the scaling limit}

In order to state the main convergence result, 
we embed the discrete problem in a continuous framework. 
In particular, we will identify a discrete vector field 
	$J : \cE_\eps \toa \R$
with a singular continuous vector field
	$\iota_\eps J \in  \M(\R^d; \R^d)$
supported on the line segments 
	$[x,y] \subseteq \R^d$ 
for $(x,y) \in \cE$, namely
 	\begin{align*}
	 	\iota_\eps J 
		:= \frac12\sum_{(x,y)\in \cE_\eps} 
		 J(x,y) \frac{y-x}{|y-x|} \Hm^1\llcorner[x,y] \, .
	\end{align*}
Note that $\dive \iota_\eps J = \DIVE J$, where the latter is identified with a signed measure on $\R^d$ supported on $\cX_\eps$. 

For $m \in \M_0(\cX_\eps)$
we then consider the random functionals
	\begin{align}
		\label{eq:def_Fmu-intro}
		F_\eps(\cdot | m)
	:	 
	\M(\R^d ; \R^d) \to [0,+\infty] \, ,
	\qquad
		F_\eps(\nu | m)
			:=
		\begin{cases}
			F_\eps(J)
				& \text{ if } \nu = \iota_\eps J
					\, , \quad
					\DIVE J = m\, ,
					\\
			+\infty
				&\text{ otherwise}  \, .
		\end{cases}
	\end{align}
By the remarks above, the condition	
	$\DIVE J = m$ can be replaced equivalently by $\dive \nu = m$.

In the simplified setting of this introduction, our main result reads as follows; 
see Theorem \ref{thm:main} for a more general version.

	\begin{theorem}[Main result, special case]
	\label{thm:main-intro}
		Let $(\cX,\cE)$ be a stationary random graph satisfying the assumptions (G1)--(G3), endowed with 
		a stationary random family of cost functions $f_e$
		that are Lipschitz and of linear growth.
		Assume that the random measures
			$m_\eps \in \M(\cX_\eps)$
		converge narrowly to the random measure
			$\mu\in \M(\R^d)$ 
		almost surely.
		Then we have almost surely
			$\Gamma$-convergence 
		\begin{align*}
			F_\eps (\cdot | m_\eps) 
				\Gto
			\bF_{\hom} (\cdot | \mu) 
			\qquad  \text{ as } 
			\eps \to 0\, ,
		\end{align*}
		in the vague and narrow topologies on $\M(\R^d; \R^d)$.
		Moreover,
		\begin{align*}
			\inf_{ J : \cE_\eps \toa \R } 
				\bigl\{ F_\eps (J) 
					\suchthat
					\DIVE J = m_\eps
				\bigr\}
				\to 
			\inf_{ \nu \in \M(\R^d ; \R^d) }
				\bigl\{ \bF_{\hom} (\nu) 
					\suchthat
					\dive \nu = \mu 
				\bigr\} 
				\qquad  \text{ as } 
			\eps \to 0\, .
		\end{align*}
		The limit functional $\bF_{\hom} (\cdot | \mu)$ 
		is of the form \eqref{eq:limitform},
		with a (possibly random) energy density $f_{\hom}$ that is stationary,
		almost surely lower semicontinuous, $\dive$-quasiconvex, and of linear growth. 
	\end{theorem}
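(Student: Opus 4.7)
The plan is to follow the standard three-step $\Gamma$-convergence strategy: identify the candidate limit density $f_{\hom}$ via a cell-type formula, prove the $\Gamma$-liminf by a Fonseca--M\"uller blow-up transplanted to the discrete setting, and prove the $\Gamma$-limsup by an explicit recovery sequence built from near-minimisers of the cell formula. Since the graph is only $\Z^d$-stationary and not periodic, the cell formula cannot be solved on a single fixed unit cell; instead the limit density has to be extracted from a multiparameter subadditive ergodic theorem of Akcoglu--Krengel type.

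For the construction of $f_{\hom}$ I fix a cube $Q_R = (-R,R)^d$ and set
\begin{align*}
\Phi_R(\omega, v) := \inf\bigl\{ F(J) : J : \cE(\omega)\cap(Q_R\times Q_R) \toa \R,\ \DIVE J = m_R^v \bigr\},
\end{align*}
where $m_R^v$ is a discrete datum supported on vertices near $\partial Q_R$ that matches the boundary flux of the affine field $j\equiv v$, and then define
\begin{align*}
f_{\hom}(\omega, v) := \lim_{R\to\infty} \frac{\Phi_R(\omega, v)}{(2R)^d}.
\end{align*}
The almost sure existence of this limit will follow once $Q \mapsto \Phi_Q(\omega, v)$ is shown to be subadditive with respect to disjoint unions of cubes up to a boundary error of strictly lower order in $R$: minimisers on two disjoint subcubes are glued by a thin shell of edges absorbing the residual divergence, with extra cost proportional to (surface area)$\times$(graph constants). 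Stationarity of the graph and costs then transfers to stationarity of $f_{\hom}$; $\dive$-quasiconvexity is read off from testing against piecewise-constant perturbations in the cell formula; and Lipschitz control in $v$ is inherited from the linear growth of the $f_e$.

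For the $\Gamma$-liminf, given $\nu_\eps = \iota_\eps J_\eps \to \nu$ with $\sup_\eps F_\eps(J_\eps) < \infty$, I introduce the auxiliary non-negative Radon measures
\begin{align*}
\lambda_\eps := \eps^d \sum_{(x,y)\in\cE_\eps} f_{e/\eps}\bigl(J_\eps(x,y)/\eps^{d-1}\bigr)\,\delta_{(x+y)/2},
\end{align*}
extract a weak-$*$ subsequential limit $\lambda$, and blow up at $\Lm^d$-a.e.\ point $x_0$: after rescaling the discrete flow on $x_0 + r Q_1$ back to a flow on a large cube in the unrescaled random graph, comparison with $\Phi_R$ together with the ergodic convergence yields $\frac{\dd\lambda}{\dd\Lm^d}(x_0) \geq f_{\hom}(\nabla\nu(x_0))$; the singular part is handled analogously through $f_{\hom}^\infty$. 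For the $\Gamma$-limsup I first approximate a general $\nu$ with $\dive\nu = \mu$ by piecewise-affine vector fields on a fine dyadic partition (using density results for divergence-constrained measures as in \cite[Section 4.4]{santambrogio}), then paste on each small cube a rescaled near-minimiser of $\Phi_R$, and repair the mismatch between prescribed and realised cell-wise divergences via a flux-routing construction along short paths provided by (G2). A diagonal argument then recovers the general case, including the singular component. Equi-coercivity comes from the linear lower bound on the $f_e$ combined with the narrow convergence $m_\eps \to \mu$, and the convergence of infima then follows from $\Gamma$-convergence together with equi-coercivity by a standard argument.

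The hard part, and the one emphasised in the abstract, is the homogenisation step itself on a random non-periodic graph: unlike in the periodic case, one cannot simply tile space with a copy of a fixed unit-cell minimiser. Designing the flux-correcting boundary layers---both in the subadditivity estimate needed to define $f_{\hom}$ and in the gluing used in the recovery sequence---requires combining (G2) to route mismatching flux along controlled-length paths with (G1) to guarantee enough vertices on any boundary slab, and then obtaining uniform-in-$\omega$ cost bounds that are of lower order than the bulk term $(2R)^d$. The companion bookkeeping, translating the continuum constraint $\dive\nu = \mu$ into the discrete constraint $\DIVE J = m_\eps$ via the embedding $\iota_\eps$ without losing the ergodic average, is the other recurrent technical point I expect to absorb most of the work.
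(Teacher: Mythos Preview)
Your architecture matches the paper's: subadditive ergodic theorem for $f_{\hom}$, Fonseca--M\"uller blow-up for the liminf, gluing of near-optimal microstructures plus divergence correctors for the limsup. Two points, however, are genuine gaps rather than routine bookkeeping.

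First, your cell formula is not well-posed as written. On a non-periodic random graph there is no canonical ``boundary flux of the affine field $j\equiv v$'': there is no obvious discrete vector field on $(\cX,\cE)$ that represents the constant continuum field $v$ and against which one could fix boundary conditions. The paper resolves this by constructing, from (G1)--(G2), a \emph{uniform-flow operator} $\calR\in\mathrm{Lin}(V\otimes\R^d;V_\rma^\cE)$ such that $\calR j$ is divergence-free, satisfies $\iota_\eps\calR_\eps j\to j\,\Lm^d$ vaguely, and obeys a uniform mass bound on orthotopes. The cell problem is then defined over representatives that agree with $\calR_\eps j$ near the boundary of the cube. This operator is also what makes the blow-up step work: the rescaled discrete flow $K_\delta^\eps$ is not a competitor for the cell formula, and one must cut it off towards $\calR_\eps j$ and then repair the resulting divergence. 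The error is controlled by $\|\iota_\eps K_\delta^\eps-\calR_\eps j\|_{\tKR}$ and $\|\dive\iota_\eps K_\delta^\eps\|_{\tKR}$, which both vanish in the blow-up limit (Proposition~\ref{prop:asymptotic_cube_AC}). Your comparison ``with $\Phi_R$'' skips this correction step entirely.

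Second, the singular part of the blow-up is \emph{not} analogous to the absolutely continuous part. At a $|\xi|^s$-generic point $x_0$ the tangent measure is not $j_0\,\Lm^d$ but only of the form $j_0\,\lambda\otimes\kappa$ with respect to the splitting $\R^d=(\ker j_0)^\perp\oplus\ker j_0$, where $\kappa$ is an unknown positive measure on $\ker j_0$ (this uses $\dive\tau=0$ and $\mathrm{rank}\,j_0\leq d-1$; see Proposition~\ref{prop:structure_tangent_measures}). Consequently the rescaled flow is close to a constant only in the $(\ker j_0)^\perp$-directions, and the cube blow-up fails. The paper instead blows up on thin oriented rectangles $R_\alpha$ with short side $\alpha$ parallel to $\ker j_0$; the extra error from lack of control in the $\ker j_0$-directions is then of order $\sqrt\alpha/\eta$, and one sends $\alpha\to0$ before $\eta\to0$ (Proposition~\ref{prop:asymptotic_strip}). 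This geometric input, together with the recession $t_\delta\to\infty$, is what produces $f_{\hom}^\infty$.
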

Since the functionals $f_e$ are assumed to be Lipschitz and of linear growth, the infima above are attained.

In the special case where $f_{e}(j) = \alpha_e |j|$,
Theorem \ref{thm:main-intro} implies that the 
$1$-Wasserstein distance on $\cX$ converges, after rescaling, to a $1$-Wasserstein distance on $\R^d$
induced by a non-trivial (deterministic, whenever ergodicity is assumed) norm, which depends on the microscopic properties of the random graph $(\cX,\cE)$ and the random edge-weights $\alpha_e$.

\subsection*{The effective energy density}

A significant part of the paper deals with the construction of the energy density $f_{\hom}$ appearing in Theorem \ref{thm:main-intro} as the solution to a suitable variational problem. 
Substantial new ideas are required to treat random graphs without a periodic structure.
Let us informally explain the main steps in the construction.

First, we consider a localised version of the rescaled cost functional $F_\eps$ from \eqref{eq: min cost rescaled}.
That is, for a Borel set $A \subseteq \R^d$, we define
\begin{align*}
	F_\eps(J,A) 
			:= 
			\eps^d
			\sum_{(x,y)\in\cE_\eps} 
			 \lambda_A(x,y)
			 f_{e/\eps}
			 \Bigl(\frac{J(x,y)}{\eps^{d-1}}\Bigr)\, , 
			 \quad \text{ where }
			 \lambda_A(x,y) := \frac{\Hm^1([x,y] \cap A)}{\Hm^1([x,y])} \, .
\end{align*}
Note that the weight $\lambda_A(x,y) \in [0,1]$ is the proportion of the line segment $[x,y]$ that is contained in $A$.

Second, given a direction $j \in \R^d$, we would like to minimise $F_\eps(J,A)$ among all discrete vector fields $J : \cE_\eps \toa \R$ 
which ``behave outside of $A$ like the constant vector field $j$ on large scales.''
While such vector fields can be constructed straightforwardly on the Cartesian grid $(\Z^d, \E^d)$, the existence of such vector fields on non-periodic graphs is not straightforward. 
Nevertheless, under $(G1)$ and $(G2)$, 
we show that there indeed exist 
a linear operator
	$\calR$
with suitable boundedness properties,
that assigns to each direction $j \in \R^d$
a divergence-free discrete vector field 
	$\calR j : \cE \toa \R$,
such that the corresponding rescaled 
fields $\calR_\eps j : \cE_\eps \toa \R$ converge vaguely, 
after embedding,
to the constant vector field $j \Lm^d$,
as $\eps \to 0$.
Having constructed the operator $\calR$, 
we  define the local energy density on $A$ by 
\begin{align*}
		f_{\eps,\calR}(j,A) :=
		 \inf
		\left\{
			F_{\eps}(J,A)
		\suchthat
			J \in \Rep_{\eps,\calR}(j;A)
		\right\} \, ,
\end{align*}
where $\Rep_{\eps,\calR}(j;A)$ denotes the class of all representatives of $j$ on $A$, i.e.,~
all divergence-free discrete vector fields 
on the rescaled graph $\cE_\eps$ that coincide with 
$\calR_\eps j$ on a neighbourhood of $\R^d \setminus A$.

Third, let $Q$ be the centered open unit cube in $\R^d$, 
and define the homogenised energy density $f_{\hom}$ by
\begin{align*}
	f_{\hom}(j)
		:= 
	\lim_{\eps \to 0} 
	\frac{f_{\eps,\calR}(j,Q)} {\Lm^d(Q)}
\end{align*}
for $j \in \R^d$. It is a consequence of 
Kingman's subadditive ergodic theorem 
	\cite{kingman1973subadditive,akcoglu1981ergodic,Licht-Michaille:2002} 
that this limit exists almost surely.
While the operator $\calR$ is not unique, it follows from our 
results that the limiting functional $f_{\hom}$ does not depend on the particular choice of this operator.
Let us remark that this is the only place in the paper where randomness plays a role. The rest of our methods are completely deterministic, in the sense that
$\omega$ in our probability space stays fixed.

\subsection*{Related work}
Discrete-to-continuum problems involving dynamical optimal transport have attracted a lot of attention in recent years. 
Convergence results for transport distances have been obtained in
\cite{
	GiMa13,
	Gladbach-Kopfer-Maas:2020,lavenant2021unconditional,slepvcev2023nonlocal,
	ishida2024quantitative} 
while convergence of the associated gradient-flow structures is studied in  
\cite{
	Disser-Liero:2015,
	Forkert2020,Esposito-Patacchini-Schlichting:2021,Hraivoronska-Tse:2023,Esposito-Heinze-Schlichting:2023,Esposito-Mikolas:2023,Esposito-Patacchini-Schlichting:2023,Hraivoronska-Tse-Schlichting:2023}.

\smallskip
	
The stochastic homogenisation result for the minimum-cost flow problem on stationary random graph obtained in this paper builds on the earlier works \cite{Gladbach-Kopfer-Maas:2020, Gladbach-Kopfer-Maas-Portinale:2023}, 
which differ from the current paper in several ways.
Most importantly,  
\cite{Gladbach-Kopfer-Maas:2020, Gladbach-Kopfer-Maas-Portinale:2023}
deal with deterministic $\Z^d$-periodic graphs. 
The stochastic setting of the current paper poses substantial additional difficulties: 
in particular, the construction of the homogenised energy is considerably more involved on non-periodic graphs.
Another difference is that
\cite{Gladbach-Kopfer-Maas:2020, Gladbach-Kopfer-Maas-Portinale:2023}
treat a dynamical optimal transport problem: 
instead of minimising over time-independent scalar flows subject to a divergence-constraint 
$\DIVE J = m$, the minimisation is over
time-dependent scalar flows satisfying 
a discrete continuity equation 
$\partial_t m_t + \DIVE J_t = 0$ with prescribed boundary conditions for $m_t$ at time $t=0$ and $t=1$.
Yet another difference is that the current paper treats Lipschitz cost functions, 
not necessarily convex, 
whereas 
	\cite{Gladbach-Kopfer-Maas:2020, Gladbach-Kopfer-Maas-Portinale:2023} 
deals with convex cost functions, not necessarily Lipschitz. 
Finally, 
\cite{Gladbach-Kopfer-Maas:2020, Gladbach-Kopfer-Maas-Portinale:2023}
treat scalar flows only,
while the current paper covers multi-species flows. 
While the cost function in \cite{Gladbach-Kopfer-Maas-Portinale:2023} 
is assumed to be of superlinear growth, the subsequent work
\cite{Portinale-Quattrocchi:2024} 
treats cost functions with mere linear growth.
The recent preprint \cite{Gladbach-Kopfer:2024}  builds on the methods developed in the current paper. It treats the dynamical optimal transport problem with quadratic cost function on stationary random graphs.

Earlier convergence results for dynamical optimal transport on random geometric graphs have been proved in \cite{Garcia-Trillos:2020}. 
That paper covers a different regime, where the degree of the graphs grows with number of vertices.
In that situation, the microscopic structure of the random graphs does not not appear in the limiting metric, which is the $2$-Wasserstein metric over Euclidean space.

\smallskip

Many works deal with discretisations of integral functionals 	
	$\int_{\R^d} f(\nabla u(x)) \dd x$  
involving gradients of Sobolev functions.
In particular, the paper 
	\cite{Alicandro-Cicalese-Gloria:2011} 
contains a stochastic homogenisation result for  energy functionals involving discrete gradients, under a superlinear growth assumption. 
The authors work with random Voronoi discretisations, under graph assumptions that are similar to those in the present paper. For a similar geometric setting and homogenisation results for BV functions in a randoom environment, see also \cite{Alicandro-Cicalese-Ruf:2015}.
Energy functionals with degenerate growth have been covered in \cite{Neukamm-Schaffner-Schlomerkemper:2017}.
The paper \cite{Braides-Caroccia:2023} studies stochastic homogenisation of quadratic energy functionals on a Poisson point cloud.

	\smallskip
The limiting functional $\bF_{\hom}(\cdot|\mu)$ that we obtain in this work belongs to a class of functionals that have been widely studied in the literature.
In particular, 
lower semicontinuity and homogenisation of integral functionals under
differential constraints of the form $\calA u = 0$ was proved in \cite{Fonseca-Muller:1999},  
for general first-order differential operators $\calA$.
Extensions have been obtained in 
\cite{Braides-Fonseca-Leoni:2000,
			Baia-Chermisi-Matias-Santos:2013,Matias-Morandotto-Santos:2015,
			Davoli-Fonseca:2016}; 
see also \cite{Conti-Muller-Ortiz:2020}.
The case $\calA = \curl$ corresponds to functionals of gradients, whereas the case $\calA = \dive$ is the relevant one for this paper.
The natural condition ensuring weak lowersemicontinuity 
is the one of $\mathcal{A}$-\emph{quasiconvexity}. This notion reduces to Morrey's classical notion of quasiconvexity \cite{Morrey:1952}
when $\mathcal{A} = \curl$.
		

In \cite{Ruf-Zeppieri:2023}, the authors treat stochastic homogenisation of random integral functionals with linear growth, at the continuous level and in the setting of curl-free measures.
Our approach shares some similarities with the latter work, in particular in the proof of the lower bound, 
where the study of tangent measures and the blow-up method play an important role.

\subsection{Structure of the paper}
In Section~\ref{sec:setup} we introduce the general setting of this paper, discussing assumptions on the graphs and the energies, and present our main results. In Section~\ref{sec:examples} we discuss applications, including the convergence of $1$-Wasserstein distances in a random environment. An overview of the strategy and a sketch of the proof of our main theorem is the content of Section~\ref{sec:sketch}. Section~\ref{sec:existence} contains the existence of discrete uniform flows, which play a crucial role in the description of the limit energy density, as described in Section~\ref{sec:multi-cell}. We proceed in Section~\ref{sec:correctors} with the proof of the existence of discrete correctors, and study the structure of divergence measures and their blow-ups in Section~\ref{sec:tangent}. Finally, the proof of the main result is included in Section~\ref{sec:ub} (upper bound) and Section~\ref{sec:lb} (lower bound). The appendix collects useful properties of the topologies employed in this paper, and some basic discrete calculus rules. 

\section{General setup and main results }
\label{sec:setup}

In this section we present the detailed setup of the 
nonlinear minimal-cost flow problems 
on stationary random graphs in $\R^d$.

\subsection{Assumptions}

 We first introduce the main objects and the main assumptions that will be in force.

\begin{itemize}[leftmargin=5mm]
	\item A probability space
	$(\Omega, \mathcal{F}, \P)$
	endowed with
	a family
		$(\sigma_z)_{z \in \Z^d}$
	consisting of measure-preserving transformations
		$\sigma_z: \Omega \to \Omega$
	satisfying
		$\sigma_{z + w} = \sigma_z \circ \sigma_w$
	for all $z, w \in \Z^d$.
	\item A random, undirected graph $\omega \mapsto (\cX_\omega,\cE_\omega)$ embedded in $\R^d$,
	i.e., $\cX_\omega$ is a countable subset of $\R^d$
	and $\cE_\omega$ is a symmetric subset of
	$\{ (x,y) \in \cX_\omega \times \cX_\omega
	\suchthat x \neq y \}$.
	We assume that $(\cX_\omega,\cE_\omega)$ is
	\textit{stationary},
i.e.,
    \begin{align}
        \big(
            \cX_{\sigma_z \omega} ,
            \cE_{\sigma_z \omega}
        \big)
            =
        ( \cX_\omega + z, \cE_\omega + (z,z) )
    \end{align}
	for all
        $z \in \Z^d$
	and
        $\omega \in \Omega$.
  To lighten notation, we omit the subscripts in $(\cX_\omega, \cE_\omega)$ from now.
  Moreover, for $\mathbb P$-a.e. $\omega \in \Omega$, we assume that there exist constants $R_1, R_2, R_3>0$ (possibly depending on $\omega \in \Omega$) so that:
\begin{enumerate}[align=left]
\item [{(G1)}] \label{item:G1}
For all $x\in \R^d$ we have $ \cX \cap B(x,R_1)\neq \emptyset$.
\item [{(G2)}] \label{item:G2}
For all $x, y\in \cX$ there is a path $P$ in $(\cX,\cE)$ connecting $x$ and $y$ with Euclidean length
\begin{align*}
    \length(P) \leq R_2 \bigl(|x-y| + 1\bigr) \, .
\end{align*}
\item [{(G3)}] \label{item:G3}
For all $(x,y) \in \cE$ we have
		$| x-y | \leq R_3$.
\end{enumerate}
\item A finite-dimensional normed space $(V, |\cdot|_V)$.
		\item A random energy functional
		\begin{align*}
			F  : \Omega \times V_\rma^\cE \times \cB(\R^d)
				\to [0, +\infty],
			\qquad
				F = F_\omega(J, A),
		\end{align*}
	   which is \textit{stationary}, i.e.,
			\begin{align}
				F_{\sigma_z \omega}(\tau_z J,A + z) = F_{\omega}(J, A).
			\end{align}
			for all		$z \in \Z^d$,
			$\omega \in \Omega$,
			$J \in V_\rma^{\cE}$,
				 and
					 $A \in \cB(\R^d)$.
			Here,
		$
		\tau_z J \in V_\rma^{\cE}
		$
		denotes the translated field defined by
		$\tau_z J(x,y) :=J (x-z, y-z)
		$ for $(x,y) \in \cE$.
 Moreover, we assume that, for $\mathbb P$-a.e. $\omega \in \Omega$, there exist constants $C_1, c_2, C_2, R_\Lip > 0$ (possibly depending on $\omega \in \Omega$) so that
\begin{itemize}[align=left]
        \item [{(F1)}] (\emph{Lipschitz continuity})
        \label{item:F1}
        $F_\omega$ is Lipschitz-continuous, in the sense that
        \begin{align}
            \label{eq:ass-F-Lipschitz}
			|F_\omega(J',A) - F_\omega(J,A)|
            &\leq C_1
            \sum_{ (x,y) \in \cE }
            |J(x,y) - J'(x,y)|_V
            \Hm^1\bigl([x,y] \cap B(A,R_\Lip)\bigr)
        \end{align}
        for all $A \in \cB(\R^d)$
        and $J, J' \in V_\rma^{\cE}$.
        \item [{(F2)}] (\emph{linear growth})
        \label{item:F2}
        $F_\omega$ has linear growth, i.e.,
        \begin{align}
            \label{eq:ass-F-linear-growth}
            F_\omega(0,A)
				\leq
			C_2 \Lm^d(A)
        \tand
            F_\omega(J,A)
        		&\geq
			c_2 \sum_{(x,y) \in \cE}
        			|J(x,y)|_V
					 \Hm^1([x,y] \cap A)
            \, ,
        \end{align}
        for all $A \in \cB(\R^d)$ and $J \in V_\rma^{\cE}$, where
        we use the notation
               \item [{(F3)}] \label{item:F3} (\emph{$\sigma$-additivity}) $F_\omega$ is $\sigma$-additive in the second variable, i.e.,
			   for all
			   $J \in V_\rma^{\cE}$
			   and all
			   pairwise disjoint sequences of Borel sets
					$\{A_i\}_{i \in \N}$
				we have
        \begin{align}
		\label{eq:additivity}
        F_\omega \Big(J, \bigcup_{i=1}^\infty A_i\Big)
            = \sum_{i=1}^\infty F_\omega(J,A_i)\, .
        \end{align}
        \end{itemize}
\end{itemize}

\begin{rem}[Locality]
\label{rem:locality}
    The Lipschitz-continuity assumption (F1) clearly implies that $F_\omega$ is a ($R_\Lip$-)local functional, i.e.
    \[
        F_\omega(J,A)
        =
        F_\omega(J',A)
    \]
    whenever $J = J'$ in $B(A,R_\Lip)$.
\end{rem}

\begin{rem}[Growth assumptions]
    The nonnegativity and the condition (F2) could be relaxed, assuming that the graph $(\cX_\omega,\cE_\omega)$ is locally finite and that 
    \begin{align}
    \label{eq:geometric_assumption}
        \alpha_\eps 
            := 
        \eps ^d 
        \sum_{[x,y] \cap \cE_\eps} 
            \Hm^1 \res [x,y] 
            \to 
                \alpha 
                \, , 
    \end{align}
    vaguely in $\M_+(\R^d)$ as $\eps \to 0$, for some $\alpha \in \M_+(\R^d)$ \footnote{Note that in this case, $\alpha = \Lm^d$ for $\mathbb{P}$-a.e. $\omega \in \Omega$, by stationarity.}. 
    In this case it would suffice to assume that 
    \begin{align}
           F_\omega(J,A)
        &\geq
\sum_{(x,y) \in \cE}
        \big(  c|J(x,y)|_V - C  \big) \Hm^1([x,y] \cap A)
                \,  ,
    \end{align}
    for some $c,C\in \R_+$ and then work with the functional $\tilde F_\omega$ given by 
    $$
        \tilde F_\omega(J,A) := F_\omega(J,A) + \alpha_1(A)
            \, , \quad 
        \forall J \in V_a^\cE   \, , \, A \in \cB(\R^d)
            \, , 
    $$
    which is nonnegative and satisfies (F2). 
\end{rem}

\begin{rem}[Additivity]
    The $\sigma$-additivity assumption of $F_\omega$ with respect to the second variable is classical and not very restrictive, as many important examples fit into this wide class (see Section~\ref{sec:examples} for more details). Nonetheless, due to their nonlocal nature,  discrete models might correspond to mildly non additive energies, which do become additive only in the limit as $\eps \to 0$.
    A weaker assumption would be to assume \textit{subadditivity} together with an \textit{almost additivity} property, e.g. the existence of $R \in \R_+$ such that, if $d_{\text{eucl}}(A_i,A_j) \geq R$ for every $i \neq j$, then \eqref{eq:additivity} holds.
    This would suffice for the energy to be local in the limit as $\eps \to 0$, and the general strategy of this work could find applications in this setting too.
    For the sake of simplicity and to avoid extra technical complications, we decided to omit these generalisations, and we restrict ourselves to the additive setting. For similar discussions we refer to \cite{Braides-Maslennikov-Sigalotti:2008} and \cite[Section~7]{DeGiorgi-Letta:1977}.
\end{rem}

\subsection{The minimum-cost flow problem}

As before, let $V$ be a finite-dimensional normed space.
Let $m \in \M(\cX;V)$ be a finite $V$-valued measure on $\cX$ such that $m(\cX) = 0$. 
The measure $m$ can be thought of as prescribing sources and sinks in a multi-commodity flow problem.
Its total variation $|m| \in \M_+(\cX)$
is a finite measure on $\cX$; 
see, e.g.~\cite[Chapter 1]{Ambrosio-Fusco-Pallara:2000}.
Clearly, $m$ can be identified with an element $m \in V^\cX$
with $\sum_{x\in \cX} m(x) = 0$ and
$\sum_{x\in \cX} |m(x)|_V < \infty$.

For $A \in \cB(\R^d)$
we are interested in the nonlinear min-flow problem:
\begin{align}
	\label{eq: min cost two}
	& \text{minimise} \	
		F_\omega(J,A)
	\quad \text{ among all } 
	J \in V_{\rma}^\cE 
	 \text{ satisfying }
	 	\DIVE J = m
	\, .
\end{align}	
We refer to Appendix \ref{sec:preliminaries_derivatives}
for the used notation from discrete calculus.


\subsection{Localisation and rescaling}
Let $U \subset \R^d$ be either $\R^d$ or an open bounded domain with Lipschitz boundary. 
For $\eps \in (0,1]$ we consider the rescaled and localised graph 
	$(\cX_\eps,\cE_\eps)$ 
defined by
\begin{align}
	\cX_\eps:= \eps \cX \cap \overline U
	\tand 
		\cE_\eps :=
		\Big\{
			(\eps x, \eps y) \in \weX \times \weX
				\suchthat
			(x,y)\in \cE
		\Big\}  \, .
\end{align}
To lighten notation, we suppress the dependence on $U$. 
Note that $(\cX_\eps, \cE_\eps)$ is a connected graph if $\eps>0$ is small enough.

Consider the localized and rescaled energy
$F_{\omega,\eps}  : V_\rma^{\cE_\eps} \times \cB(\R^d)
\to [0, +\infty]$ defined by
\begin{align}
	\label{eq:rescaled-energy}
	F_{\omega,\eps}(J,A)
	:=
	\eps^d
	F_{\omega} \bigg(
		\frac{J(\eps \cdot)}{\eps^{d-1}},
		\frac{A}{\eps} \bigg)
\end{align}
for $J \in V_\rma^{\cE_\eps}$ and $A\in \mathcal{B}(\overline U)$.
Here and below we identify 
	$J\in V_\rma^{\weE}$
with its natural extension in $V_\rma^{\eps \cE}$ given by
\begin{align}
	J(x,y) := 0	\, , \quad
		\forall (x,y) \in \eps \cE \setminus \weE\, .
\end{align}
Note the nonstandard scale $\eps^{d-1}$ instead of $\eps^d$ in the denominator, which is due to the missing length scale in the discrete divergence.

\begin{defi}[Embedding and rescaling]
\label{def:embedding}
	Let $\eps \in (0,1]$. For $J\in V^{\cE_\eps}_a$, we consider the embedded Radon measures given by
 	\begin{align}
			\label{eq:def-iota-J}
	\qquad 	\iota_\eps J &:= \frac12\sum_{(x,y)\in \cE_\eps} \Big( J(x,y)\otimes \frac{y-x}{|y-x|} \Big) \Hm^1\llcorner[x,y] \in \M(\R^d;V\otimes \R^d)
			\, .
 	\end{align}
	Slightly abusing notation, 
	let
		$\iota_\eps m \in \M(\R^d; V)$ 
	be the trivial 
	embedding 
	of
	$m \in \M(\cX_\eps; V)$.
\end{defi}
As observed before,  
$(m,J)$ solves $\DIVE J = m$ if and only if $\dive \iota_\eps J = \iota_\eps m$ in the sense of distributions; see also Lemma~\ref{lemma:divergence_discr_cont}.

It will be convenient to reformulate the assumptions  (F1) and (F2) in terms of the embedding $\iota_\eps$.

\begin{lemma}[Properties of $F_\eps$]
	\label{lemma:tv_bounds}
	Let $\eps > 0$.
    \begin{enumerate}
        \item \label{item:F-lip} For all $J, J' \in V_\rma^{\cE_\eps}$ and $A \in \cB(\R^d)$ we have
        \begin{align}
        \label{eq:Lipschitz-F-eps}
            |F_{\omega,\eps}(J,A) - F_{\omega,\eps}(J',A)| \leq 2 C_1 \bigl| \iota_\eps (J - J') \bigr| \bigl(B(A,\eps R_\Lip)\bigr) \, .
        \end{align}
        \item \label{item:F-lower}
        For all  $J \in V_\rma^{\cE_\eps}$ and $A \in \cB(\R^d)$ we have
		\begin{align}
		\label{eq:TV-F-eps}
			F_{\omega,\eps}(0,A)
				\leq
			C_2 \Lm^d(A)
			\tand
			F_{\omega,\eps}(J,A)
				\geq
			2 c_2
			| \iota_\eps J |(A) \, .
		\end{align}
	\end{enumerate}
\end{lemma}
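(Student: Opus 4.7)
The plan is to reduce both assertions to direct consequences of the assumptions (F1) and (F2) on the unrescaled functional $F_\omega$, by unfolding the definitions of $F_{\omega,\eps}$ and $\iota_\eps$ and performing a careful change of variables. The key identity I will use repeatedly is that, for any antisymmetric field $\phi : \cE_\eps \toa V$ and any Borel set $B \subseteq \R^d$,
\begin{align*}
    \sum_{(x,y) \in \cE_\eps} |\phi(x,y)|_V \, \Hm^1\bigl([x,y] \cap B\bigr) = 2 |\iota_\eps \phi|(B),
\end{align*}
which follows from the definition of $\iota_\eps \phi$ in \eqref{eq:def-iota-J} together with the fact that every unordered edge appears twice in the sum over $\cE_\eps$. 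The factor $2$ visible on the right-hand sides of \eqref{eq:Lipschitz-F-eps} and \eqref{eq:TV-F-eps} originates precisely from this double counting.

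For part \eqref{item:F-lip}, I would start from
\begin{align*}
    F_{\omega,\eps}(J,A) - F_{\omega,\eps}(J',A)
    = \eps^d \Bigl[ F_\omega\bigl(J(\eps \cdot)/\eps^{d-1}, A/\eps\bigr) - F_\omega\bigl(J'(\eps \cdot)/\eps^{d-1}, A/\eps\bigr) \Bigr]
\end{align*}
and apply (F1) with the Borel set $A/\eps$ and the two fields $J(\eps\cdot)/\eps^{d-1}, J'(\eps \cdot)/\eps^{d-1} \in V_\rma^\cE$. The change of variables $(x,y) \mapsto (\eps x, \eps y) \in \cE_\eps$ turns the factor $\eps^d$ in front together with the factor $\eps^{-(d-1)}$ from the rescaled fields and the identity $\Hm^1([x,y]\cap B(A/\eps, R_\Lip)) = \eps^{-1} \Hm^1([\eps x, \eps y] \cap B(A, \eps R_\Lip))$ into an expression of the form
\begin{align*}
    C_1 \sum_{(x,y) \in \cE_\eps} |J(x,y)-J'(x,y)|_V \, \Hm^1\bigl([x,y] \cap B(A, \eps R_\Lip)\bigr),
\end{align*}
which equals $2C_1 |\iota_\eps(J-J')|(B(A, \eps R_\Lip))$ by the identity above.

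For part \eqref{item:F-lower}, the upper bound follows immediately from (F2), since
\begin{align*}
    F_{\omega,\eps}(0,A) = \eps^d F_\omega(0, A/\eps) \leq \eps^d C_2 \Lm^d(A/\eps) = C_2 \Lm^d(A).
\end{align*}
For the lower bound, the same rescaling argument as in \eqref{item:F-lip}, now applied to the growth condition in (F2), gives
\begin{align*}
    F_{\omega,\eps}(J,A) \geq c_2 \sum_{(x,y) \in \cE_\eps} |J(x,y)|_V \, \Hm^1\bigl([x,y] \cap A\bigr) = 2 c_2 |\iota_\eps J|(A).
\end{align*}

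There is no real obstacle here: the argument is entirely bookkeeping, and the only point that deserves any care is tracking the combinatorial factor $2$ coming from the symmetry of $\cE_\eps$ together with the scaling factors $\eps^d$ and $\eps^{d-1}$ in the definition \eqref{eq:rescaled-energy} of $F_{\omega,\eps}$ and in the Hausdorff measure $\Hm^1$ on rescaled segments.
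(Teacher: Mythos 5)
Your proof is correct and follows the same approach as the paper: unfold the definition of $F_{\omega,\eps}$, apply the assumptions (F1)/(F2) to the rescaled objects, change variables, and identify the resulting sum with $2|\iota_\eps\cdot|$ via the formula \eqref{eq:total_var_iota}. The only difference is that you spell out the change of variables for the Lipschitz bound explicitly, whereas the paper proves the growth bound in full and then remarks that the Lipschitz bound is proved the same way; your bookkeeping of the factors $\eps^d$, $\eps^{-(d-1)}$, and $\eps^{-1}$ in the Hausdorff measure is accurate.
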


\begin{proof}
    The first bound in \eqref{item:F-lower} follows immediately from the definition of the rescaled energy and the scaling property of the Lebesgue measure.

	We next show the second bound in \eqref{item:F-lower}.
	Using the definition of the rescaled energy, the growth condition (F2), and \eqref{eq:total_var_iota},
	we obtain
	\begin{align}
		F_{\omega,\eps}(J,A)
		  &=
		\eps^d
		F_\omega \bigg(
		\frac{J(\eps \cdot)}{\eps^{d-1}},
		\frac{A}{\eps} \bigg)
            \geq
		c_2 \eps
		\sum_{(x,y) \in \cE}
		|J(\eps x, \eps y)|_V
            \Hm^1\bigl( [x,y] \cap (A/\eps) \bigr)
\\
		  &=
		c_2
		\sum_{(x,y) \in \cE_\eps}
		|J(x,y)|_V
            \Hm^1\bigl( [x,y] \cap A \bigr)
		  =
		2 c_2 |\iota_\eps J|(A)
    \, ,
	\end{align}
	which is the claimed bound.
	Here we used the identity
\begin{align}
	\label{eq:total_var_iota}
	|\iota_\eps J|
	=
	\frac12
	\sum_{(x,y) \in \cE_\eps}
		| J(x,y) |_V \Hm^1 \llcorner [x,y] \, .
\end{align}

	The bound in \eqref{item:F-lip} can be proved in the same way using (F1).
\end{proof}


\subsection{Statement of the main result}

In order to state the main result, we define the relevant functionals, which were already introduced in Section \ref{sec:intro} in a more restrictive setting.

\begin{defi}
	For $\eps  >0$, $\omega \in \Omega$, and $m \in \M_0(\cX_{\eps};V)$, we define the embedded
    functional $F_{\omega,\eps}(\cdot|m): \M(\overline{U} ; V \otimes \R^d) \to [0,+\infty]$ by
	\begin{align}
		\label{eq:def_Fmu}
		F_{\omega,\eps}(\nu|m)
			:=
		\begin{cases}
			F_{\omega,\eps}(J,\overline U)
				& \text{ if } \nu = \iota_\eps J
					\text{ and }
				\DIVE J = m \, ,
					\\
			+\infty
				&\text{ otherwise}  \, .
		\end{cases}
	\end{align}
	\end{defi}	 	
The limit functionals appearing in our main result are of the following form.

\begin{defi}
Let $f : V \otimes \R^d \to [0,+\infty)$ be given.
For $\mu \in  \M_0(\overline{U};V)$	
we define
\begin{align}
	\label{eq:limitform-main}
	\bF(\cdot | \mu)
	:	 	\M(\overline U  ; V \otimes \R^d) 
			\to 
			[0,\infty] \, ,
	\qquad
	\bF(\nu | \mu) 
	:=
		\begin{cases}
		\displaystyle
		\bF(\nu)
			 &\text{ if }\dive \nu = \mu \, , \\+\infty&\text{ otherwise,} 
		\end{cases}
	\end{align}
where
\begin{align}
	\bF(\nu) :=
			\int_{\overline U} 
				f\Bigl(\frac{\dd \nu}{\dd \Lm^d}\Bigr)
			\dd \Lm^d
			+ 
			\int_{\overline U} 
				f^\infty\Bigl(\frac{\dd \nu}{d|\nu|}\Bigr)
			\dd \nu^s
	\, .
\end{align}	
\end{defi}

As above, 
$f^\infty : V \otimes \R^d \to [0,\infty]$
is the recession function of $f$ defined by 
		$
			f^\infty(j) := \limsup_{t\to \infty} 
				\frac{f(tj)}{t}
		$,
and 
$\nu^s$ denotes the singular part of $\nu$ with respect to the Lebesgue measure.

For the definition of the Kantorovich--Rubenstein distance $\tKR$ on the space of measures we refer to Definition~\ref{def:KR}.

\begin{theorem}[Main result]
\label{thm:main}
	Let $U \subset \R^d$ be either $\R^d$ or an open bounded domain with Lipschitz boundary. 
	Let $(\cX,\cE)$ be a stationary random graph satisfying the assumptions \emph{(G1)}--\emph{(G3)}, 
	and let $F$ be a stationary random 
	energy functional
	satisfying \emph{(F1)}--\emph{(F3)}.
	If $U = \R^d$, assume additionally that $C_2=0$. 
	Assume that the random measures
	$m_\eps \in \M(\cX_\eps;V)$
	converge almost surely
	to the random measure
	$\mu\in \M(\overline{U})$ in the $\tKR$ topology.
Then we have almost surely
			$\Gamma$-convergence
		\begin{align*}
			F_{\omega,\eps} (\cdot | m_\eps) 
				\Gto
			\bF_{\omega,\hom} (\cdot | \mu) 
			\qquad  \text{ as } 
			\eps \to 0\, ,
		\end{align*}
		in the vague and narrow topologies on $\M(\overline{U}; V \otimes \R^d)$.
		The limit functional $\bF_{\omega,\hom} (\cdot | \mu)$ 
		is of the form \eqref{eq:limitform-main},
		with a (possibly random) energy density $f=f_{\omega,\hom}$ that is stationary,
		almost surely lower semicontinuous, $\dive$-quasiconvex, and of linear growth. 
\end{theorem}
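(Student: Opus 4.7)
The plan is to establish the $\Gamma$-convergence by the standard liminf/limsup strategy, once the homogenised density $f_{\hom}$ has been constructed. I would define
\begin{align*}
f_{\hom}(j) := \lim_{\eps\to 0} \frac{f_{\eps,\calR}(j,Q)}{\Lm^d(Q)},
\end{align*}
where $Q$ is the centered open unit cube in $\R^d$ and $\Rep_{\eps,\calR}(j;Q)$ is the class of divergence-free discrete vector fields on $\cE_\eps$ which agree with the rescaled reference flow $\calR_\eps j$ near $\partial Q$. The existence of the flow operator $\calR$ (Section~\ref{sec:existence}) is what requires (G1)--(G2), and the existence of the limit is provided by the Akcoglu--Krengel subadditive ergodic theorem applied to the stationary family of set functions $f_{\eps,\calR}(j,\cdot)$. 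Stationarity of $f_{\hom}$, its independence of the choice of $\calR$, its linear growth (from Lemma~\ref{lemma:tv_bounds} together with the controlled energy cost of $\calR j$), and its $\dive$-quasiconvexity (from the flexibility afforded by compactly supported divergence-free test fields in the cell problem) are then read off essentially from the definition; lower semicontinuity follows automatically. This is the content of Section~\ref{sec:multi-cell}.

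Granted $f_{\hom}$, compactness of energy-bounded sequences is immediate from $F_\eps(J,\overline U)\geq 2 c_2 |\iota_\eps J|(\overline U)$, so $\nu_\eps \rightharpoonup \nu$ along a subsequence and $\dive\nu = \mu$ by $\tKR$-convergence of $m_\eps$. For the upper bound I would first treat piecewise constant targets $\nu = \sum_k j_k \one_{A_k}\Lm^d$ on a polyhedral partition of $\overline U$: on each piece $A_k$ I insert an $\eps$-near-optimal flow for the cell problem defining $f_{\hom}(j_k)$, and the discrete correctors built in Section~\ref{sec:correctors} are used to reconcile the divergence on the common boundaries and to absorb the mismatch between $\DIVE J_\eps$ and $m_\eps$ at sub-leading cost. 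A density/diagonal argument relying on the growth of $f_{\hom}$ and $f_{\hom}^\infty$ then yields the upper bound for general $\nu$ with nontrivial singular part.

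The heart of the proof, and the main obstacle, is the lower bound, which I would prove by the blow-up method of Fonseca--M\"uller adapted to the discrete setting (Section~\ref{sec:lb}). Given $\nu_\eps = \iota_\eps J_\eps$ with $\dive\nu_\eps = \iota_\eps m_\eps$ and $\sup_\eps F_\eps(J_\eps,\overline U) <\infty$, define positive measures $\lambda_\eps \in \M_+(\overline U)$ by $\lambda_\eps(A) := F_\eps(J_\eps,A)$, which is well defined thanks to (F3), and extract a weak-$*$ limit $\lambda_\eps \weakstar \lambda$. Using the Radon--Nikodym decompositions $\lambda = \tfrac{d\lambda}{d\Lm^d}\Lm^d + \lambda^s$ and $\nu = \tfrac{d\nu}{d\Lm^d}\Lm^d + \nu^s$, the desired liminf inequality $\lambda(\overline U)\geq \bF_{\hom}(\nu)$ reduces, via Besicovitch differentiation, to the pointwise bounds
\begin{align*}
\tfrac{d\lambda}{d\Lm^d}(x_0) \geq f_{\hom}\Bigl(\tfrac{d\nu}{d\Lm^d}(x_0)\Bigr) \quad\text{for } \Lm^d\text{-a.e. }x_0, \qquad \tfrac{d\lambda}{d|\nu^s|}(x_0) \geq f_{\hom}^\infty\Bigl(\tfrac{d\nu^s}{d|\nu^s|}(x_0)\Bigr) \quad\text{for } |\nu^s|\text{-a.e. }x_0.
\end{align*}
Each is proved by rescaling around $x_0$ at a scale $r_\eps \to 0$ so that the blow-up of $\nu_\eps$ converges to a tangent measure of $\nu$ at $x_0$, which by the analysis of divergence measures in Section~\ref{sec:tangent} has the constant or one-dimensional structure required to produce a competitor in the cell problem defining $f_{\hom}$ or its recession $f_{\hom}^\infty$.

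The delicate step, and the one where the randomness and non-periodicity of the graph really bite, is converting the blown-up $J_\eps$ into an admissible competitor for $f_{\eps,\calR}(j,Q)$: I would need to replace $J_\eps$ by $\calR_{\eps/r_\eps} j$ in a thin shell inside the blow-up cube while preserving the divergence constraint and spending only $o(r_\eps^d)$ extra energy. This is precisely where (G1)--(G2), the discrete correctors of Section~\ref{sec:correctors}, and $\dive$-quasiconvexity of $f_{\hom}$ are deployed jointly: the former provide a divergence-compatible interpolation between an arbitrary flow and the reference flow at controlled cost, while the latter ensures that localized perturbations of a constant flow cannot decrease the effective density. Once these pointwise blow-up inequalities are established, $\sigma$-additivity (F3) and Besicovitch's theorem assemble them into the global lower bound, concluding the $\Gamma$-convergence.
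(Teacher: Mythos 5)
Your proposal correctly reproduces the paper's overall architecture: the cell formula built from the uniform-flow operator $\calR$ and the Akcoglu--Krengel theorem (Section~\ref{sec:multi-cell}), the discrete divergence correctors (Section~\ref{sec:correctors}), the structure theory of tangent measures for divergence measures (Section~\ref{sec:tangent}), a gluing-plus-correction construction for the upper bound, and the Fonseca--M\"uller blow-up for the lower bound. Your upper-bound route starts from piecewise-constant flows on a polyhedral partition and argues by density, whereas the paper first mollifies and retracts into a compactly supported smooth field (Lemma~\ref{lemma: smooth}, using the bi-Lipschitz deformation of the Lipschitz domain) and only then discretises on cubes; these are equivalent in substance, but be aware that the density step for general $\nu$ with a singular part is exactly what the paper's Lemma~\ref{lemma: smooth} supplies, so your ``density/diagonal argument'' is not free.

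There is, however, a genuine gap in your treatment of the singular part of the lower bound. You write that the blow-up of $\nu$ at a singular point ``has the constant or one-dimensional structure required to produce a competitor in the cell problem,'' and then proceed as if one could carry out the same boundary-replacement argument in the blow-up cube as in the absolutely continuous case. This fails: by Proposition~\ref{prop:structure_tangent_measures}, the tangent measure at a singular point of a divergence measure has the form $\tau = j\,\lambda\otimes\kappa$, where $\lambda$ is Lebesgue measure on $(\ker j)^{\perp}$ but $\kappa \in \M_+(\ker j)$ is an arbitrary positive measure, in general singular with respect to Lebesgue. Consequently, on a cube, the blown-up discrete field $K^\eps_\delta$ is \emph{not} close (in the relevant $\tKR$ sense, after normalising by $t_\delta\to\infty$) to the reference flow $\calR_\eps(t_\delta j)$ in the $\ker j$ directions, so the Lipschitz cost of forcing the boundary condition $J = \calR_\eps(t_\delta j)$ on the full boundary of a cube does not go to zero. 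The paper's fix is the anisotropic geometry of Proposition~\ref{prop:asymptotic_strip}: one blows up in a rectangular strip $R_\alpha$ with unit cross-section in $(\ker j)^{\perp}$ and side-length $\alpha \ll 1$ in $\ker j$, uses two families of cutoffs aligned with the two subspaces, and exploits that $\nabla\psi^{\perp}_{\eta'}\in\ker j$ annihilates $\tau$ (and $j\Lm^d$) so that the boundary layers in the $\ker j$ direction create no leading-order divergence, while the layers in $(\ker j)^{\perp}$ only cost $O(\alpha/\eta)$ by the averaging argument in \eqref{eq:final_claim_SIN}. Without this device, the crucial estimate $\frac{\dd\nu}{\dd|\xi|}(x_0) \geq f_{\hom}^\infty(j_0)$ does not follow. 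Relatedly, your appeal to $\dive$-quasiconvexity of $f_{\hom}$ inside the lower-bound argument is not how the paper proceeds and is somewhat delicate to justify at that stage: the paper obtains $\dive$-quasiconvexity \emph{a posteriori} as a consequence of the $\Gamma$-convergence (cf.~the remark after Lemma~\ref{lemma:prop_fhom}), and the lower bound itself is proved by the explicit error estimates of Propositions~\ref{prop:asymptotic_cube_AC} and~\ref{prop:asymptotic_strip} without invoking quasiconvexity.
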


\begin{rem}[Quasiconvexity]
The notion of div-quasiconvexity  \cite{Fonseca-Muller:1999} 
is natural 
in minimisation problems with divergence constraints, yielding good lower semicontinuity properties of the energy functional.
	A measurable, locally bounded function 
		$f: V \otimes \R^d \to \R$ 
	is said to be $\dive$-quasiconvex if, for every $j \in V \otimes \R^d$ and $Q \subset \R^d$, we have
	\begin{align}
		f(j) \leq
		\fint_Q f(j + h) \de \Lm^d
			\, , \quad
		\forall h:\R^d \to V \otimes \R^d
			\, , \quad h \in C^\infty_c(Q)
			\, , \quad \dive h =0 \, .
	\end{align}
\end{rem}

\begin{rem}[Existence of the limit in the recession function]
\label{rem:existence_recess}
	It has been proved in \cite[Corollary~1.13]{DePhilippis-Rindler:2016} that if $\dive \nu = \mu \in \M(\overline U ; V)$, then
	\begin{align}
		\text{rank}
		\left(
			\frac{\de \nu}{\de |\nu|}(x)
		\right) \leq n-1
			\, , \quad
		\text{for} \ \ |\nu|^s\text{-a.e.} \ x \in \overline U \, .
	\end{align}
	Moreover, $\dive$-quasiconvex functions are convex along directions of rank at most $n-1$ (see e.g. \cite[Lemma~2.4]{Conti-Muller-Ortiz:2020} - also \cite[Proposition~3.4]{Fonseca-Muller:1999} for $f$ upper semicontinuous). Therefore, for what concerns the definition of $\bF_{\hom} (\cdot | \mu) $, the limsup in the definition of the recession function can be replaced by a limit.
\end{rem}

\subsection{Discrete uniform flows and the homogenised energy density}
\label{sec:uniform_repr_intro}
In order to describe and compute the limit density $f_{\hom}$, we shall introduce the concept of discrete uniform flow on a stationary graph. A \emph{uniform-flow operator} for a graph $(\cX,\cE)$ embedded in $\R^d$ is a bounded linear operator $\calR \in \mathrm{Lin}(V\otimes \R^d; V_\rma^\cE)$ so that
\begin{enumerate}
	\item (divergence free) $\DIVE \calR  j = 0$ for all $j\in V\otimes \R^d$.
	\item (convergence) the rescaling $\calR_\eps\in \mathrm{Lin}(V\otimes \R^d;V_\rma^{\cE_\eps})$ defined by $\calR_\eps j(\eps x, \eps y) := \eps^{d-1} \calR  j(x,y)$ is so that
\begin{align}
\label{eq:J0 properties_1_intro}
	\iota_\eps \calR_\eps j \to j\Lm^d \quad \text{ vaguely as } \eps \to 0
        \, , \qquad
    \forall j \in V \otimes \R^d
        \, .
\end{align}
	\item (boundedness) there is a constant $C>0$ such that, for every $\eps >0$,
	\begin{align}
\label{eq:J0 properties_2_intro}
	|\iota_\eps \calR_\eps j|(Q)\leq C |j| \Lm^d(Q)
\end{align}
whenever $Q$ is an orthotope containing a cube of side-length $\eps$.
\end{enumerate}
One important contribution of this work is to show the existence of an uniform-flow operator on a graph which satisfies our geometric assumptions (cfr. Proposition~\ref{prop:J0}).

\begin{prop}[Existence of uniform-flow operators]\label{prop:J0_intro}
	Every graph $(\cX,\cE)$ embedded in $\Z^d$ satisfying (G1) and (G2) admits a uniform-flow operator $\calR \in \mathrm{Lin}(V\otimes \R^d; V_\rma^\cE)$.
\end{prop}

For a given operator $\calR$, we can the define the corresponding variational cell-problems: for $\eps \in (0,1]$, $j \in V \otimes \R^d$, and $A \in \cB(\R^d)$, we define 
\begin{align}
\label{eq:def_feps_intro}
    f_{\omega, \eps,\calR}(j,A) 
        := 
    \inf 
    \left\{
        F_{\omega,\eps}(J,A)
		  \suchthat
		J \in \Rep_{\eps,\calR}(j;A)
    \right\}
        \, ,
\end{align}
where $\Rep_{\eps,\calR}(j;A)$ denotes the set of all \textit{representatives of $j$ on $A$}, i.e. the set of all $J \in V_a^{\cE_\eps}$ such that $\DIVE J = 0$ and 
\begin{align}
    J(x,y) = \calR_\eps j (x,y) 
        \, , \quad 
    \forall (x,y) \in \cE_\eps 
        \quad \text{with} \quad 
    \dist\bigl([x,y],\R^d\setminus A\bigr) \leq \eps R_\partial
        \, , 
\end{align}
for $R_\partial := \max\{ R_\Lip , R_3\}$. Note that all the objects involved are random, we simply omit the explicit dependence on $\omega$ for simplicity. Thanks to stationary and an application of the ergodic theorem, the limit as $\eps \to 0$ of $f_{\eps,\calR}$ can be used to describe the effective energy density $f_{\hom}$, independently of the choice of the uniform-flow operator $\calR$. More precisely, for a given $j \in V \otimes \R^d$, we have that ($\mathbb{P}$-almost every $\omega \in \Omega$)
\begin{align}
    f_{\omega,\hom}(j) 
        = 
    \lim_{\eps \to 0} 
    \frac
        {f_{\omega,\calR}(j,A/\eps)}
        {\Lm^d(A/\eps)}
            \, , 
\end{align}
where $A \subset \R^d$ is an arbitrary nonempty, open, convex, and bounded set. In particular, the limit does not depend on the specific choice of uniform-flow operator $\calR$. 

\subsection{Compactness}
Another important consequence of the growth assumptions on the energy is the sequential (pre)compactness of discrete fluxes with bounded energy. More precisely, let $J_\eps$ be a sequence of discrete fluxes on $\cE_\eps$ so that 
\begin{align}
    \sup_{\eps>0} F_{\omega,\eps}(J_\eps, \overline U)< \infty
        \, , \quad 
    \DIVE J_\eps = m_\eps 
        \, , \quad 
    m_\eps \to \mu 
        \in \M(\overline{U};V)
            \, \,  \text{vaguely as }\eps \to 0
                \, .
\end{align}
Then by the linear growth assumption (Lemma~\ref{lemma:tv_bounds}) we infer that 
\begin{align}
    \sup_{\eps > 0} |\iota_\eps J_\eps|(\overline U) < \infty
        \, .
\end{align}
As a consequence, up to subsequence we have that $\iota_\eps J_\eps \to \nu \in \M(\overline U;V\otimes \R^d)$ vaguely as $\eps \to 0$. Note that if $U$ is bounded, compactness holds in the narrow topology as well. As a consequence, using that the divergence is stable with respect to the distributional convergence (in particular, the vague convergence), we also conclude that $\dive \nu = \mu$.

As a corollary, we obtain the convergence of the associated minimisers and minimal values.
\begin{cor}[Convergence of the minima and minimisers]
\label{cor:minimisers}
    Under the same assumptions of Theorem~\ref{thm:main}, 
	for $\mathbb P$-a.e. $\omega \in \Omega$, and for every $\eps >0$, the constrained functional $F_{\omega,\eps}(\cdot|m_\eps)$ admits a minimiser. Moreover, we have
    \begin{align*}
			\lim_{\eps \to 0}
            \min_{ J : \cE_\eps \toa \R } 
            F_{\omega,\eps}(J|m_\eps)
                =
			\min_{ \nu \in \M(\R^d ; \R^d) }
				\bF_{\omega,\hom} (\nu|\mu) 
				\, .
		\end{align*}
    Furthermore, if $J_\eps$ is an approximate minimiser for $F_{\omega,\eps}(\cdot|m_\eps)$, i.e.,
    \begin{align}
        \lim_{\eps \to 0}
        \Big|
            \min F_{\omega,\eps}(\cdot|m_\eps) - F_{\omega,\eps}(J_\eps|m_\eps)
        \Big|
            = 0
                \, ,
    \end{align}
    then $\{ \iota_\eps J_\eps \}_\eps$ is compact in the vague (narrow if $U$ is bounded) topology, and any limit point $\nu \in \M(\overline U; V\otimes \R^d)$ is a minimiser for $\bF_{\omega,\hom}(\cdot|\mu)$. If $ \bF_{\omega,\hom}(\cdot|\mu)$ admits a unique minimiser $\nu$, then $\iota_\eps J_\eps \to \nu$ vaguely (narrowly if $U$ is bounded).
\end{cor}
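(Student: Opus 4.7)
The corollary is a standard consequence of the $\Gamma$-convergence statement in Theorem~\ref{thm:main} combined with the equicoercivity already discussed just above the corollary. The plan is to verify the three usual ingredients of the ``fundamental theorem of $\Gamma$-convergence'' in the present discrete-to-continuum setting: existence of discrete minimisers at each scale, boundedness of the discrete infima, and equicoercivity of the family $\{F_{\omega,\eps}(\cdot|m_{\omega,\eps})\}_\eps$.

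\textbf{Existence at fixed $\eps$.} For $U$ bounded, (G1) together with (G3) and stationarity implies that $\cX_\eps \cap \overline{U}$ is finite, so the feasible set $\{J\in V_\rma^{\cE_\eps} \suchthat \DIVE J = m_\eps\}$ is a finite-dimensional affine subspace, which is nonempty by (G2), connectedness for small $\eps$, and $m_\eps(\cX_\eps)=0$ (a spanning tree argument). By (F1) the functional $J \mapsto F_{\omega,\eps}(J,\overline U)$ is continuous, while Lemma~\ref{lemma:tv_bounds}\eqref{item:F-lower} together with \eqref{eq:total_var_iota} gives coercivity in $|\iota_\eps J|(\overline U)$, so the direct method yields a minimiser. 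For $U=\R^d$ one uses the assumption $C_2=0$ to ensure that modifying a flow outside a large ball does not cost energy, together with the uniform flow construction from Section~\ref{sec:existence}, to produce a minimising sequence whose mass remains in a bounded region; the same coercivity then gives existence.

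\textbf{Convergence of the minima.} By the limsup inequality in Theorem~\ref{thm:main}, for any minimiser $\nu^\star \in \M(\overline U; V\otimes \R^d)$ of $\bF_{\omega,\hom}(\cdot|\mu)$ (which exists by lower semicontinuity and linear growth of $f_{\hom}$), there is a recovery sequence $J_\eps$ with $\iota_\eps J_\eps \to \nu^\star$ and $F_{\omega,\eps}(J_\eps|m_\eps) \to \bF_{\omega,\hom}(\nu^\star|\mu)$. Hence
\begin{align}
\limsup_{\eps\to 0} \min F_{\omega,\eps}(\cdot|m_\eps) \leq \min \bF_{\omega,\hom}(\cdot|\mu).
\end{align}
Conversely, if $J_\eps$ is any sequence of (almost) minimisers, then their energies are bounded, so by Lemma~\ref{lemma:tv_bounds}\eqref{item:F-lower} we have $\sup_\eps |\iota_\eps J_\eps|(\overline U) < \infty$. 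As noted in the paragraph preceding the corollary, this gives (up to a subsequence) vague (narrow if $U$ is bounded) convergence $\iota_\eps J_\eps \to \nu$ with $\dive \nu = \mu$ by distributional stability of the divergence and the assumed $\tKR$-convergence of $m_\eps$. The liminf inequality of Theorem~\ref{thm:main} then yields
\begin{align}
\min \bF_{\omega,\hom}(\cdot|\mu) \leq \bF_{\omega,\hom}(\nu|\mu) \leq \liminf_{\eps\to 0} F_{\omega,\eps}(J_\eps|m_\eps) = \liminf_{\eps\to 0} \min F_{\omega,\eps}(\cdot|m_\eps),
\end{align}
which combined with the reverse inequality proves convergence of the minimal values and simultaneously identifies every limit point of almost-minimisers as a minimiser of $\bF_{\omega,\hom}(\cdot|\mu)$.

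\textbf{Uniqueness and full convergence.} If the limit minimiser is unique, every subsequence of $\{\iota_\eps J_\eps\}$ admits a further subsequence converging to the same $\nu$, so the whole net converges vaguely (narrowly for bounded $U$) to $\nu$ by the usual Urysohn argument.

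\textbf{Main obstacle.} The only nonroutine point is the case $U = \R^d$, where narrow compactness may fail and one must ensure that neither the recovery sequences nor the almost-minimisers lose mass at infinity. This is precisely what the additional assumption $C_2 = 0$ is designed to handle, by preventing the cost functional from penalising the zero flow on the complement of large balls, and it is here where one must be careful to combine the corrector/uniform-flow constructions of Section~\ref{sec:existence} with the $\tKR$-convergence of $m_\eps$ (which controls the mass of $\mu$ at infinity) to keep everything sufficiently localised.
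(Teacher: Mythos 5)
Your proposal follows the same route the paper intends: the corollary is stated without an explicit proof, and the paper expects the reader to invoke the fundamental theorem of $\Gamma$-convergence together with the equicoercivity discussed in the paragraph immediately preceding the corollary. Your steps — existence of discrete minimisers, the two $\Gamma$-inequalities to sandwich the minima, identification of limit points of almost-minimisers, and the Urysohn subsequence argument for the unique-minimiser case — are precisely the standard chain of deductions implicit in the paper, and steps 2–4 are correct as written.

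One assertion in your existence step is not justified by what you cite: you claim that (G1), (G3) and stationarity imply $\cX_\eps \cap \overline{U}$ is finite. That does not follow. (G1) bounds the density of vertices from \emph{below} (every ball of radius $R_2$ meets $\cX$), (G3) bounds edge lengths, and stationarity is a distributional symmetry; none of these prevent $\cX$ from being locally infinite, nor do they even guarantee local finiteness of the edge set. The paper's examples (the lattice, Voronoi graphs with positive minimal point separation) all satisfy local finiteness, and it is reasonable to treat it as an implicit standing hypothesis, but you should either state it as such or replace the finite-dimensional direct-method argument by a weak-$*$ direct method: a minimising sequence $J^{(n)}$ satisfies $\sup_n|\iota_\eps J^{(n)}|(\overline U)<\infty$ by (F2)/Lemma~\ref{lemma:tv_bounds}, so (a subsequence of) $\iota_\eps J^{(n)}$ converges weak-$*$ as measures; the divergence constraint passes to the limit by distributional stability; and the energy is lower semicontinuous along this convergence (for the edge-based model this is immediate from (F1) and Fatou; in the general (F1)--(F3) setting one needs weak-$*$ lower semicontinuity of $F_{\omega,\eps}(\cdot,\overline U)$, which is where an additional mild regularity hypothesis on $F_\omega$ or on the graph is genuinely used). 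The rest of the proof is unaffected by how this point is settled.
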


\section{Examples}
\label{sec:examples}

In this section we present examples and applications of our main result. 
Throughout this section, $U\subset \R^d$ is either the full space $\R^d$ or a bounded Lipschitz domain.

\begin{example}[Edge-based costs]
\label{sec:edge_based}
We shall discuss how the special class of edge-based energies from Section \ref{sec:intro} fits into the framework of Section \ref{sec:setup}.
As in Section \ref{sec:setup} we fix a probability space
	$(\Omega, \mathcal{F}, \P)$
	endowed with
	a family
		$(\sigma_z)_{z \in \Z^d}$
	consisting of measure-preserving transformations
		$\sigma_z: \Omega \to \Omega$
	satisfying
		$\sigma_{z + w} = \sigma_z \circ \sigma_w$
	for all $z, w \in \Z^d$.
	
Let $(\cX,\cE)$ be a stationary random graph satisfying  (G1)--(G3).
We endow the edges 
	$(x,y) \in \cE$ with
random cost functions
	$f_\omega^{xy} : V \to [0,\infty)$
that are stationary in the sense that 
	\begin{align}
		f_{\sigma_z \omega}^{xy} = f_\omega^{(x+z)(y+z)}
			 \quad 
		\forall \omega \in \Omega 
			\, , \, 
		\forall z \in \Z^d
			\, .
	\end{align}
We impose the following conditions for $\P$-a.e. $\omega \in \Omega$:
	\begin{itemize}
		\item There exists $L \in \R_+$ such that $f_{\omega}^{xy}$ is $L$-Lipschitz for all $(x,y) \in \cE$.
		\item There exists $c_2 \in \R_+$ such that $\displaystyle f_\omega^{xy} (J) \geq c_2 |J|$ for all $J \in V$ and $(x,y) \in \cE$.
		\item We have that 
			$\displaystyle f_\omega^{xy}(0) =0$ for all $(x,y) \in \cE$.
	\end{itemize}	
We then consider the functional
$F_\omega:  V^{\cE}_\rma \to \R$
defined by
\begin{align}
\label{eq:edge_based}
	F_\omega(J)
		:= 
	\frac12
	\sum_{(x,y)\in \cE} 
		f_\omega^{xy}(J(x,y)) 
		\, .
\end{align}
Localising this energy functional, we define 
$F_\omega: V_a^\cE \times \cB(\R^d) \to \R$ by
\begin{align}
	F_\omega(J,A):=
	\frac12
	\sum_{(x,y)\in \cE} 
		f_\omega^{xy}(J(x,y)) 	
		\Hm^1([x,y] \cap A)
	\, . 
\end{align}
It is straightforward to check that the assumptions on 
	$f_{\omega}^{xy}$
ensure that 
(F1)--(F3) are satisfied.
Moreover, $F_\omega(0,A) = 0$
for all Borel sets $A \in \cB(\R^d)$.
Note in particular that (F1) follows from the Lipschitz properties of $f_\omega^{xy}$ with constant $C_1:= L$ and any $R_\Lip >0$.

When the set $U$ in Theorem \ref{thm:main} is bounded, the assumption that $f_\omega^{xy}(0)$ must vanish can be replaced by $\sup_{x,y} f_\omega^{xy}(0)<\infty$ if one additionally assumes that the graph satisfies $\alpha_\eps(A) \lesssim \Lm^d(A)$ for all $A \in \cB(\R^d)$, where $\alpha_\eps \in \M_+(\R^d)$ is the one-dimensional skeleton measure defined in \eqref{eq:geometric_assumption}.
The rescaled energies read as
\begin{align}
	F_{\omega,\eps}(J,A) 
		= 
	\eps^d 
	F_\omega\biggl(\frac{J(\eps \cdot)}{\eps^{d-1}}, \frac{A}{\eps}\biggr) 
		=
	\frac12 \sum_{(x,y) \in \cE_\eps} 
		\eps^{d-1} f_{\omega,\eps}^{xy}
		\Bigl(
					\frac{J(x,y)}{\eps^{d-1}}
				\Bigr) 
			\Hm^1\bigl([x,y]\cap A\bigr) 
	\, ,
\end{align}
for every $J \in V_a^{\cE_\eps}$.
Here we use the notation 
$f_{\omega,\eps}^{xy} := f_\omega^{\frac{x}\eps \hspace{-0.5mm} \frac{y}\eps}$ for $(x,y) \in \cE_\eps$.

The homogenised energy density $f_{\hom}$ takes  the form
\begin{align}
	f_{\omega,\hom}(j) 
		= 
	\lim_{\eps \to 0}
	\inf
		\left\{
			\frac12 
			\sum_{(x,y) \in \cE_\eps} 
				\eps^{d-1} f_{\omega,\eps}^{xy}
				\Bigl(
					\frac{J(x,y)}{\eps^{d-1}}
				\Bigr) 
					\Hm^1\bigl([x,y]\cap Q\bigr) 
						\suchthat 
				J \in \Rep_{\eps,\calR}(j,Q)
		\right\}
	\, , 
\end{align}
where $\Rep_{\eps,\calR}(j,Q)$ denotes the class of all representatives of $j$ on the open unit cube $Q$, i.e.,~
all divergence-free discrete vector fields 
on the rescaled graph $\cE_\eps$ that coincide with a discrete uniform flow
$\calR_\eps j$ on a neighbourhood of $\R^d \setminus Q$ (recall the definitions in Section~\ref{sec:uniform_repr_intro}).
\end{example}

\begin{example}[The integer lattice]
The simplest example of a stationary graph is the integer lattice $\cX = \Z^d$ 
with nearest-neighbour edges 
	$\cE = \bbE^d
		:= 
		\{(x,y)\in \Z^d\times \Z^d
		\suchthat
		|x-y|=1 \}$. 
This is a deterministic and periodic (thus stationary) graph of constant degree 
$2d$ satisfying the graph assumptions 
	(G1)--(G3).
On this graph, we can define a uniform-flow operator 
in a simple explicit manner. Indeed, 
the operator $\calR$
defined by 
\begin{align}
	\calR j (z,z'):= j (z' - z) \in V 
		\, , \quad 
	(z,z')\in\cE 
\end{align}
has the desired properties, as follows by 
arguing as in the proof of Proposition~\ref{prop:J0}.

For any $\eps >0$, the rescaled graph $(\cX_\eps, \cE_\eps)$ corresponds to the symmetric grid $\cX_\eps= \Z_\eps^d \cap \overline{U}$ of size $\eps>0$ within $\overline{U}$ with its nearest neighbour graph structure $\cE_\eps:= \{ x,y\in \cX_\eps \, : \, |x-y| = \eps \}$.

In the case of edge-based energies over the integer lattice, in the computation of $f_{\hom}$ one can describe the set of all representatives by
\begin{align}
	\Rep_{\eps,\calR}(j,Q)
		=
        \hspace{-1mm}
	\left\{
		J \in V_a^{\cE_\eps} 
			: 
		\sum_{i=1}^d J(x,x+\eps e_i) = 0
			\, , \,  
		J(x,y) = j (y-x) 
			\, \text{ if } [x,y] \cap \partial Q \neq \emptyset
	\right\}.
\end{align}
Beyond the integer lattice, one could consider general $\Z^d$-periodic graph as well, we refer to  \cite{Gladbach-Kopfer-Maas-Portinale:2023} for a more detailed discussion.
\end{example}

\begin{example}(Scaling limits of $1$-homogenous energies)
	We consider a special subclass of the previously described examples, in which we additionally assume that the functions $f_\omega^{xy}$ satisfy the following scaling relations:
	\begin{align}
		f_\omega^{xy}(\lambda J) = |\lambda| f_\omega^{xy}(J)
			\, , \qquad 
		\forall J \in V
			\, , \,
		(x,y) \in \cE
			\, , \,  
		\omega \in \Omega 
			\, , \, 
		\lambda \in \R 
			\, .
	\end{align}
	In this case, the formula for $f_{\omega,\hom}(\cdot)$ further simplifies as
	\begin{align}
		f_{\omega,\hom}(j) 
			= 
		\lim_{\eps \to 0}
		\inf
			\left\{
				\frac12 
				\sum_{(x,y) \in \cE_\eps} 
					f_{\omega,\eps}^{xy}
					\big(
						J(x,y)
					\big) 
						\Hm^1([x,y]\cap Q) 
							\suchthat 
					J \in \Rep_{\eps,\calR}(j,Q)
			\right\}
				\, .
	\end{align}
	It is also clear from this equality that the same scaling properties are inherited to $f_{\omega,\hom}$, namely $f_{\omega,\hom}:V \otimes \R^d \to \R_+$ is a nonnegative div-quasiconvex function which satisfies
	\begin{align}
		f_{\omega,\hom}(\lambda j) = |\lambda| f_{\omega,\hom}(j)
			\, , \quad 
		\forall j \in V \otimes \R^d 
			\, , \, 
		\lambda \in \R 
			\, .
	\end{align}
	As a consequence of the linear growth and Lipschitz assumption on the discrete energies, we also know that $f_{\omega,\hom}$ has at least linear growth and it is Lipschitz, cfr. Lemma~\ref{lemma:prop_fhom}. In particular, arguing as in \cite[Corollary~5.3]{Gladbach-Kopfer-Maas-Portinale:2023}, one infers that when $V=\R$ (hence div-quasiconvexity reduces to the usual convexity) then the limit density $f_{\omega,\hom}=: \| \cdot \|_{\omega,\hom}$ is a norm on $\R \otimes \R^d \simeq \R^d$.
	
	Let $m_\eps^+$, $m_\eps^- \in \M(\cX_\eps)$ be measures on $\overline{U}$ of equal total mass, i.e. such that 
	\begin{align}
		\sum_{x \in \cX_\eps} m_\eps^+(x) 
			= 
		\sum_{x \in \cX_\eps} m_\eps^-(x) 
			\, , 		
	\end{align}
	and define $m_\eps:= m_\eps^+ - m_\eps^-\in \M_0(\cX_\eps ; V)$. We define the discrete vectorial $\bW_1$ cost between $m_\eps^+$ and $m_\eps^-$ as 
	\begin{align}
		\bW_{1,\eps}(m_\eps^+, m_\eps^-)	
			:= 
		\inf_J
		\left\{
			\frac12 
			\sum_{(x,y) \in \cE_\eps} 
				f_{\omega,\eps}^{xy}
				\big(
					J(x,y)
				\big) 
					\Hm^1([x,y]\cap Q) 
						\suchthat 
				\DIVE J = m_\eps	
		\right\}
			.
	\end{align}
	If $V=\R$, then $\bW_{1,\eps}$ indeed corresponds to the the Kantorovich-Wasserstein $\bW_1$ distance between probability measures on the graph $\cX_\eps$ with respect to a suitable distance $d_\eps$, see \cite{Portinale-Quattrocchi:2024} for a similar discussion. 
	The case with a multi-dimensional $V$ and convex energies corresponds to a discrete version of the vectorial $\bW_1$-distance studied in \cite{Ciosmak:2021}.
	
	As a corollary of our main theorem, in particular an application of Corollary~\ref{cor:minimisers}, we deduce that $\bW_{1,\eps}$ $\Gamma$-converges ($\mathbb{P}$-almost surely) as $\eps \to 0$ with respect to the weak topology of $\mathcal{P}(\overline U) \times \mathcal{P}(\overline{U})$ to the homogenised $\bW_1$-distance 
	\begin{align}
		\bW_{1,\hom}(\mu^+, \mu^-) 
			:=
		\inf_\xi
		\left\{
			\big\| \xi \big\|_{\TV, \omega, \hom}(\overline{U})
				\suchthat 
			\dive \xi = \mu^+ - \mu^- 
		\right\}
			\, ,
	\end{align}
	where $\| \cdot \|_{\TV,\omega,\hom}$ denotes the total variation of a measure computed with respect to the norm induced by $f_{\omega,\hom}$.
\end{example}

\begin{example}(Stationary Voronoi tessellations)
The next examples describes how to construct admissible random graphs starting from suitable point processes. 
Let $X_\omega\subset \R^d$ be a stationary point process satisfying (G1) and so that 
\begin{align}
    \inf
    \left\{
        |x-y| 
            \suchthat
        x,y \in \cX_\omega, \, x \neq y 
    \right\}
        > 0
            \, .
\end{align}
In order to define the graph structure, we consider the associated Voronoi tessellation $ \mathcal V(\cX_\omega) = \{ C_\omega(x) \}_{x \in \cX_\omega}$ given by 
\begin{align}
    C_\omega(x) 
        := 
    \left\{
        z \in \R^d 
            \suchthat 
        |z-x| \leq |z -y|, 
            \quad \forall y \in \cX_\omega
    \right\}
        \subset \R^d 
            \, , 
\end{align}
and declare an edge $[x,y]$ between $x,y \in \cX_\omega$ as soon as their Voronoi cells share an interface. In mathematical terms, we define 
\begin{align}
    \cE_\omega 
        := 
    \left\{
        (x,y) \in \cX_\omega \times \cX_\omega 
            \suchthat 
        \exists z \in \R^d 
            \text{ with }
        |x-z| = |y-z| = \text{dist}(z,\cX_\omega)
    \right\}
        \, .
\end{align}
It is then easy to see that $(\cX_\omega, \cE_\omega)$ is a stationary graph satisfying (G1),(G2),(G3). For a similar construction, and the property of the constructed graph, see e.g. \cite[Section~2.1]{Alicandro-Cicalese-Gloria:2011}.

Note that the Poisson point process is beyond the scope of this article, as there are almost surely holes of any size, i.e., for any $R>0$ there is $p\in\R^d$ with $B(p,R)\cap X_\omega = \emptyset$.
\end{example}

\section{Sketch of the proof of the main result}
\label{sec:sketch}

We provide a sketch of the proof of our main result,  Theorem \ref{thm:main}, to highlight the key ideas and most important steps. We start by discussing the proof of the upper bound. 
Then we move to the lower bound, which is the most involved part of the proof.
Recall the definitions of uniform-flow operators $\calR$, $\calR_\eps$, representatives $\Rep_{\eps,\calR}$, and the cell-formula $f_{\eps,\calR}$ introduced in Section~\ref{sec:uniform_repr_intro}. For simplicity we omit the dependence on $\omega \in \Omega$ throughout this section, as most of the arguments are purely deterministic.

We fix a uniform-flow operator $\calR$ throughout this section. As before, let $U \subset \R^d$ be either $\R^d$ or an open bounded domain with Lipschitz boundary.

\subsection{Upper bound}
Given $\xi \in \M(\R^d; V \otimes \R^d)$ with $\mu :=\dive \xi \in \M(\R^d; V)$, we seek discrete fluxes $J_\eps$ so that $\DIVE J_\eps = m_\eps$, $\iota_\eps J_\eps \to \xi$, and such that
\begin{align}
    \limsup_{\eps \to 0}
        F_\eps(J_\eps, \overline U)
            \leq
        \bF_{\hom}(\xi)
            \, .
\end{align}
We proceed in two main steps: first, we show that every $\xi \in \M(\R^d ; V \otimes \R^d)$ having finite energy can be suitably approximated (both as measure and in energy) with measures having smooth densities (with respect to $\Lm^d$) and compact support in $U$ (cfr. Lemma~\ref{lemma: smooth}). Second, we show the existence of a recovery sequence for such smooth measures, and conclude using a classical diagonal argument. Let us provide a few more details for the second part.

\begin{proof}[Existence of a recovery sequence for smooth fields] \
Take $\xi = j \Lm^d$ with $j \in C_c^\infty(U; V \otimes \R^d)$, and write $\mu:= \dive j \Lm^d$. Fix $\delta>0$ and consider a countable approximate cover of $\overline U$ with disjoint cubes of size $\delta$, namely
\begin{align*}
	\lim_{\delta \to 0}
	\Lm^d
	\Big(
		\overline U \setminus \bigcup_{i\in \N} Q_\delta(x_i)
	\Big) = 0
		\, , \quad x_i \in \overline U \, .
\end{align*}
We select optimal microstructures $J_\eps^{\delta,i}
\in \Rep_{\eps,\calR} \big( j(x_i),Q_\delta(x_i) \big) 
$ solving the cell problem on the cube $Q_\delta(x_i)$, i.e.,
\begin{align}
	\label{eq:cell}
	f_{\eps,\calR} \big( j(x_i),Q_\delta(x_i) \big)
		=
	F_\eps \big( J_\eps^{\delta,i} , Q_\delta(x_i) \big)
	 \, .
\end{align}
Using a partitions of unity we glue these fields together to obtain a global discrete vector field $J_\eps^\delta$.
We do this in such a way that the restriction of $J_\eps^\delta$ coincides with $J_\eps^{\delta,i}$ in the interior of each cube $Q_\delta(x_i)$, 
whilst near the intersection of the boundaries of two cubes $Q_\delta(x_i)$ and $Q_\delta(x_j)$, 
the vector field $J_\eps^\delta$ is obtained by averaging the values $j(x_i)$ and $j(x_j)$.
By continuity of $j$, these values are very close to each other when $\delta \to 0$. For more details we refer to Step 3 in the proof of the upper bound in Section~\ref{sec:ub}.

By Lipschitz continuity and the definition of $f_{\hom}$, we then estimate
\begin{align}
	\int_{\overline U} f_{\hom}(j(x))\de x
 \,
	&\stackrel{\delta \to 0}{\simeq}
     \,
		\sum_{i=1}^{N_\delta} \Lm^d\big(Q_\delta(x_i)\big) f_{\hom}\big(j(x_i)\big)
 \,
	\stackrel{\eps \to 0}{\simeq}
     \,
		\sum_{i=1}^{N_\delta} \eps^d f_{\eps,\calR} \big( j(x_i),  Q_{\delta/\eps}(x_i/\eps) \big)
\\
	&\stackrel{\eqref{eq:cell}}{=}
		\sum_{i=1}^{N_\delta} \eps^d
    F_\eps
		\big(
			J_\eps^{\delta,i},  Q_\delta(x_i)
		\big)
	\stackrel{(a)}{\simeq}
		\sum_{i=1}^{N_\delta} \eps^d
        F_\eps
		\big(
		  J_\eps^\delta,  Q_{\delta/\eps}(x_i/\eps)
		\big)
\\
	&\stackrel{(b)}{=}
		F_\eps
		\Big(
			J_\eps^\delta\, ,  \bigcup_{i=1}^{N_\delta} Q_\delta(x_i)
		\Big)
	\stackrel{\delta \to 0}{\simeq}
		F_\eps
		(J_\eps^\delta, \overline U
		)
    \, ,
\end{align}
where the validity of $(a)$ comes from the locality properties of $F$, while to get
$(b)$ we used its additivity.
Once made rigorous, these estimates would ensure that
\begin{align}
	\limsup_{\eps \to 0}
		F_\eps(J_\eps^{\delta_\eps}, \overline U)
	\leq \int_{\overline U} f_{\hom}&(j(x))\de x  \, .
\end{align}

Furthermore, using the definition of representatives and the properties of $\| \cdot \|_{\tKR}$ , it is not hard to show that with this procedure we can construct a flux so that $J_\eps^{\delta_\eps} \to \xi$  in $\tKR(\overline{U})$ as $\eps,\delta_\eps \to 0$ suitably.

We are left with one more step, since the divergence of the glued vector fields $J_\eps^{\delta}$ does not in general coincide with $m_\eps$.
Nevertheless, $\DIVE J_\eps^{\delta}$ is $\tKR(\overline{U})$-close to $m_\eps$ (cfr.~Step 4, Section~\ref{sec:ub}). 
Therefore, Proposition~\ref{prop:correctors} ensures that we can find a corrector $K_\eps^\delta$ of small total variation,
so that the corrected field 
	$\tilde J_\eps^\delta:= J_\eps^\delta + K_\eps^\delta$ 
has the desired property $\DIVE \tilde J_\eps^\delta = m_\eps$ (cfr. Step 5, Section~\ref{sec:ub}).  Putting all things together, we are finally able to show that $\tilde J_\eps^{\delta_\eps} \to \xi$ with
\begin{align}
	\limsup_{\eps \to 0}
		F_\eps(\tilde J_\eps^{\delta_\eps}, \overline U)
  \leq
	\limsup_{\eps \to 0}
		F_\eps(J_\eps^{\delta_\eps}, \overline U)
	\leq \int_{\overline U} f_{\hom} \big(j(x)\big)\de x  \, ,
\end{align}
which shows that $\tilde J_\eps^{\delta_\eps}$ is the sought recovery sequence of $\xi$.
\end{proof}

\subsection{Lower bound}
The main tool for the proof of the lower bound is given by the \textit{blow-up method} by Fonseca--M\"uller \cite{Fonseca-Mueller:1992}.

Let $J_\eps \in V_\rma^{\cE_\eps}$ be a sequence of discrete fluxes that converge vaguely, after embedding, to a limit measure $\xi \in \M(\overline U; V \otimes \R^d)$ and so that $\dive \iota_\eps J_\eps \to \mu$ vaguely, for some $\mu \in \M(\overline U; V)$. 
In particular, note that $\dive \xi = \mu$.   
The goal is to show that
\begin{align}
\label{eq:lb_intro}
    \liminf_{\eps \to 0}
        F_\eps(J_\eps, \overline{U})
            \geq
        \bF_{\hom}(\xi)
            \, .
\end{align}
Without loss of generality we assume that $\sup_\eps F_\eps(J_\eps, \overline{U}) < \infty$.
Consider now the positive Borel measures $\nu_\eps := F_\eps(J_\eps,\cdot) \in \M_+(\overline U)$. By the local compactness of $\M_+(\overline U)$ in the vague (narrow if $U$ is bounded) topology, we can assume that, up to extracting a subsequence, there exists $\nu\in \M_+(\overline U)$ such that
	\begin{equation}
		\begin{cases} \displaystyle
			\lim_{\eps \to 0} \nu_\eps(\overline U) = \liminf_{\eps \to 0} F_\eps(J_\eps; \overline{U}) \geq \nu(\overline U) \, , \\
			\nu_\eps \to \nu \ \text{ vaguely} \, , \\
			\iota_\eps J_\eps \to \xi \ \text{ vaguely} \, ,
		\end{cases}
	\end{equation}
	where the last convergence follows (up to extraction of a subsequence) by the linear growth assumption on $F$.
	To prove the sought bound \eqref{eq:lb_intro}, 
	it is not hard to see that it suffices to show the following inequalities:
	\begin{align}	\label{eq:lb_abs_cont_part_intro}
		f_{\hom}\left(\frac{\ddd \xi}{\ddd x}\right) \leq \frac{\ddd \nu}{\ddd x} \, ,  \qquad & \Lm^d\text{-a.e.~in }\overline U \, ,
		\\
			\label{eq:ub_blowup_recess_intro}
		f_{\hom }^\infty\left( \frac{\ddd \xi}{\ddd |\xi|}\right) \leq \frac{\ddd \nu}{\ddd |\xi|} \, ,  \qquad & |\xi|^s \text{-a.e.~in }\overline U \, .
	\end{align}
Here and below, 
for a vector-valued measure $\zeta \in \M(\overline{U}; W)$
with values in a 
finite-dimensional normed space $W$,
we write 
	$\frac{\ddd \zeta}{\ddd \sigma}$
for the density of its absolutely continuous part 
in the Lebesgue decomposition with respect to a given measure $\sigma \in \M_+(\overline U)$.
For every nonempty, bounded, open, convex subset of $R \subseteq \R^d$,
the Besicovitch differentiation theorem (cfr. Proposition~\ref{prop:Besicovitch}) ensures that
 \begin{align}
 \label{eq:density_intro}
     \frac
        {\ddd \zeta}
        {\ddd \sigma}
    (x)
        =
    \lim_{\delta \to 0}
        \frac
            {\zeta(x + \delta R)}
            {\sigma(x + \delta R)}
		\qquad \text{for $\sigma$-a.e.~$x \in \overline U$} \, .
 \end{align}
 As a corollary, using $\sigma := |\xi|$ and $\zeta:= \xi$ one obtains, for $|\xi|$-a.e.~$x \in \overline U$, the vague convergence as $\delta \to 0$ (after extracting a subsequence) of the rescaled measures
 \begin{align}
 \label{eq:def_tangent_intro}
    \xi_{\delta,x}:=
        \frac{1}{|\xi|(x + \delta R)}
    (\rho_{\delta,x})_{\#} \xi
        \quad \text{where} \quad
    \rho_{\delta,x}:\R^d \to \R^d
        \, , \quad
    \rho_{\delta,x}(y):= \frac{y-x}{\delta}
        \, ,
 \end{align}
to a tangent measure $\tau_x \in \M(\R^d; V\otimes \R^d)$ which satisfies
 \begin{align}
 \label{eq:structure_tangent_measures_intro}
    \dive \tau_x = 0
       \quad  \tand \quad
    \frac{\ddd \tau_x }{\ddd |\tau_x|}(y)
        =
    \frac{\ddd \xi }{\ddd |\xi|}(x)
        \, ,
 \end{align}
 for $|\tau_x|$-a.e.~$y \in \R^d$.
 See Lemmas~\ref{lemma:tangent_measures} and \ref{lemma:divergence_measures} below for the details.

 \smallskip
 \noindent
 \underline{Case 1} (the absolutely continuous part). \
 Applying \eqref{eq:density_intro} to $R := Q_1(0)$ and $\sigma := \Lm^d$, we deduce for $\Lm^d$-a.e. $x$ in $\overline{U}$,
 \begin{align}
 \label{eq:density_AC_intro}
    \frac
        {\ddd \nu}
        {\ddd \Lm^d}
    (x)
        =
     \lim_{\delta \to 0}
        \frac
            {\nu(Q_\delta(x))}
            {\Lm^d(Q_\delta(x))}
        \, , \qquad
     j
        :=
    \frac
        {\ddd \xi}
        {\ddd \Lm^d}
    (x)
        =
     \lim_{\delta \to 0}
        \frac
            {\xi(Q_\delta(x))}
            {\Lm^d(Q_\delta(x))}
        \, .
 \end{align}
 Note that $\nu(\partial Q_\delta(x)) = 0$ for all but countably many $\delta \in (0,1)$. In particular, up to taking a suitable subsequence in $\delta \to 0$, we have 
 \begin{align}
\label{eq:est_intro_1}
   \frac
        {\ddd \nu}
        {\ddd \Lm^d}
    (x)
        =
     \lim_{\delta \to 0}
     \lim_{\eps \to 0}
        \frac
            {\nu_\eps(Q_\delta(x))}
            {\Lm^d(Q_\delta(x))}
        =
     \lim_{\delta \to 0}
     \lim_{\eps \to 0}
       F_{\frac{\eps}{\delta}}\bigl(K_{\frac{\eps}{\delta}},Q_1(\delta^{-1} x ) \bigr)
        \, ,
 \end{align}
 where at last we simply used the definition of rescaled energy. Here, for $s=\frac\eps\delta$, the discrete vector field $K_\delta^\eps$  is obtained by rescaling $J_\eps$ as
 \begin{align}	\label{eq:def_K_s_AC_intro}
    K_\delta^\eps \in V_\rma^{\cE_s}
    \, , \quad
    K_\delta^\eps(x,y) := \delta^{1-d} J_\eps(\delta x, \delta y)
    \, , \quad
    \forall (x,y) \in \cE_s \, .
\end{align}
Note that, if $K_\delta^\eps \in \Rep_{\eps/\delta,\calR}\bigl(j,Q_1(\delta^{-1} x)\bigr)$, then we would be able to conclude that
 \begin{align}
   \frac
        {\ddd \nu}
        {\ddd \Lm^d}
    (x)
        \geq
     \lim_{\delta \to 0}
     \lim_{\eps \to 0}
       f_{\eps/\delta, \calR}\bigl(j,Q_1(\delta^{-1} x ) \bigr)
    =
        f_{\hom}(j)
        \, ,
 \end{align}
 by the very definition of $f_{\hom}(j)$.
However, one cannot generally ensure that $K_\delta^\eps$ is a competitor for the associated cell problem. Therefore, the next step is to show that we can find another sequence of vector fields $\tilde K_\delta^\eps$, suitably close to $K_\delta^\eps$, which does belong to the class of representatives and  is comparable in energy to $K_\delta^\eps$.

To get an intuition why this should be possible, we start by observing that  $K_\delta^\eps$  satisfies
\begin{align}	\label{eq:easy_comp_Ks_AC_intro}
	\big(
        \rho_{1,\frac{x}{\delta}}
    \big)_{\#}
    \big(
        \iota_{\eps/\delta} K_\delta^\eps
    \big)
        =
    \frac{1}{\delta^d} (\rho_{\delta,x})_{\#}(\iota_\eps J_\eps)
        \, ,
\end{align}
where the measure at the right-hand side converges vaguely, up to extracting a subsequence, for $\delta \to 0$ and $\eps=\eps(\delta) \to 0$ fast enough, 
to a tangent measure $\tau$ of $\xi$ in the point $x$, 
which thanks to \eqref{eq:structure_tangent_measures_intro} and \eqref{eq:density_AC_intro} is of constant density:  $\tau = j \Lm^d$.

This shows that the vector field $K_\delta^\eps$ on the translated cube $Q_1(\delta^{-1}x)$ is close, albeit in a weak sense, to the constant density measure  $\tau = j \Lm^d$, which is divergence-free, 
hence not far from being a representative for $j$ on the same cube $Q_1(\delta^{-1}x)$.
From this point on, we proceed in two corrections steps:

\smallskip
\noindent
\textit{Step 1} (boundary correction): For this purpose, 
for suitable $\eta \in (0,1)$ we consider a cutoff function $\psi_\eta$ which is zero outside the cube $Q_1(\delta^{-1}x)$, constant equal to $1$ on most of the interior of cube, and nonconstant on a cubical shell of width $\eta>0$. 
Then we consider
\begin{align}
    \tilde K_\delta^\eps
	:=
    \psi_\eta   K_\delta^\eps
	   +
	(1-\psi_\eta )
	\calR_s j
    \, ,
\end{align}
which by construction coincides with $\calR_s j$ at the boundary of $Q_1(\delta^{-1} x )=: Q^\delta$.

\smallskip
\noindent
\textit{Step 2} (divergence correction):
The second condition we need to enforce is that of being divergence free. Here we use the following existence result for correctors of the discrete divergence equation, that we prove in Proposition~\ref{prop:correctors}:
for given $m \in \M_0(\cX_s;V)$, we can find $J \in V_a^{\cE_s}$ so that $\DIVE J = m$ with $\supp(\iota_s J) \subset B_{Cs}(\conv(\supp(m)))$ and
\begin{align}
    \bigl|\iota_\eps J \bigr|(\R^d)
		\leq  C
		\left(
			\| \mu\|_{\tKR(\R^d)}
				+ \eps | \mu |(\R^d)
		\right)
    \, , \quad \text{where} \quad
    \mu := \iota_s m
        \, .
\end{align}
We apply the proposition to $m:= \DIVE \tilde K_\delta^\eps$ and find a vector field $C_s$ so that $\tilde K_\delta^\eps := \tilde K_\delta^\eps + C_s \in \Rep_{\eps/\delta,\calR}(j; Q^\delta)$.

\smallskip
\noindent
\textit{Step 3} (energy estimate):
Using the property of the corrector constructed in Step 2, and by choosing in a suitable optimal way the cutoff functions (cfr. the proof of Proposition~\ref{prop:asymptotic_cube_AC}  for a precise construction), we can show that
\begin{align}
    F_s (\tilde K_\delta^\eps, Q^\delta)
    -
		F_s(K_\delta^\eps,Q^\delta)
    \lesssim
     \frac1\eta \| \dive \iota_s K_\delta^\eps \|_{\tKR(\overline{Q^\delta})}
        +
    \frac1{\eta^2} \| \iota_s (K_\delta^\eps - \calR_s j) \|_{\tKR(\overline{Q^\delta})}
        +
    \sqrt\eta
        +
    \frac\eps\eta
        \, .
\end{align}
Note that $\iota_s K_\delta^\eps \to j \Lm^d$, and it is not hard to see that also $\dive \iota_s K_\delta^\eps \to \dive (j \Lm^d) = 0$. 
Together with \eqref{eq:est_intro_1} and the fact that $\tilde K_\delta^\eps$ is a competitor for the cell formula, we conclude that
\begin{align}
    \frac{\de \nu}{\de \Lm^d}(x)
        \geq
    f_{\hom}(j)
    - C
    \sqrt\eta
        \, .
\end{align}
Sending $\eta \to 0$, we conclude the proof.

\smallskip
\noindent
\underline{Case 2} (the singular part). \
In order to show \eqref{eq:ub_blowup_recess_intro}, we shall perform a blow-up around a singular point $x \in \R^d$. This time, we need to construct the set $R$ in a suitable geometric way, depending on the structure of the density of $\xi$. 
Let us choose $\sigma:=|\xi|$. 
From \eqref{eq:density_intro}
we obtain, for $|\xi|^s$-a.e. $x \in \R^d$,
\begin{align}
 \label{eq:density_sin_intro}
    \frac
        {\de \nu}
        {\de |\xi|}
    (x)
        =
     \lim_{\delta \to 0}
        \frac
            {\nu(R_\delta(x))}
            {|\xi|(R_\delta(x))}
        \, , \qquad
     j
        :=
    \frac
        {\de \xi}
        {\de |\xi|}
    (x)
        =
     \lim_{\delta \to 0}
        \frac
            {\xi(R_\delta(x))}
            {|\xi|(R_\delta(x))}
        \, ,
 \end{align}
 where $R_\delta(x):= x + \delta R$.
As in Case 1, we can also assume that for $|\xi|^s$-a.e. $x \in \R^d$,
 \begin{align}
\label{eq:est_intro_2}
   \frac
        {\de \nu}
        {\de |\xi|}
    (x)
        =
     \lim_{\delta \to 0}
     \lim_{\eps \to 0}
        \frac
            {\nu_\eps(R_\delta(x))}
            {|\xi|(R_\delta(x))}
        =
     \lim_{\delta \to 0}
     \lim_{\eps \to 0}
     \frac
        {\Lm^d(R_\delta(x))}
        {|\xi|(R_\delta(x))}
     \frac{F_{\eps/\delta}(K_\delta^\eps,\delta^{-1} R_\delta )}{\Lm^d(R)}
        \, ,
 \end{align}
 where $K_\delta^\eps$  is obtained by rescaling $J_\eps$ as in \eqref{eq:def_K_s_AC_intro}.

Moreover, we consider a tangent measure $\tau$ of $\xi$ around a singular point $x$: for $|\xi|^s$-a.e. $x \in \R^d$, the rescaled measure $\xi_{\delta,x}$ (cfr. \eqref{eq:def_tangent_intro}) converges (up to subsequence) to a measure $\tau$ that satisfies \eqref{eq:structure_tangent_measures_intro}. Around singular points, a tangent measure $\tau$ is not necessarily of the form $j \Lm^d$.  Nonetheless, we show in Proposition~\ref{prop:structure_tangent_measures} that there exists a measure $\kappa \in \M_+(\ker j)$ such that
\begin{align}
\label{eq:tau_intro}
    \tau = j \lambda \otimes \kappa
        \, ,
\end{align}
with respect to the decomposition $\R^d = (\ker j )^\perp \oplus \ker j$, and $\lambda \in \M_+((\ker j)^\perp)$ is the Hausdorff measure (of dimension $\dim((\ker j)^\perp)$).

Additionally, another consequence of Besicovitch's differentiation theorem ensures that $|\xi|^s$-a.e. $x \in \R^d$
\begin{align}
    \lim_{\delta \to 0}
        t_\delta
    = + \infty
        \, , \qquad
    \text{where} \quad
      t_\delta:=
     \frac
        {|\xi|(R_\delta(x))}
        {\Lm^d(R_\delta(x))}
        \, .
\end{align}
In particular, if we had that $K_\delta^\eps \in \Rep_{\eps/\delta,\calR}(t_\delta j,\delta^{-1} R_\delta)$, then we would conclude that
 \begin{align}
   \frac
        {\de \nu}
        {\de |\xi|}
    (x)
        \geq
     \lim_{\delta \to 0}
     \lim_{\eps \to 0}
     \frac1{t_\delta}
       \frac
       {f_{\frac\eps\delta, \calR}(t_\delta j,\delta^{-1} R_\delta )}{\Lm^d(R)}
    =
    \lim_{\delta \to 0}
    \frac1{t_\delta}
    f_{\hom}(t_\delta j)
    =
        f_{\hom}^\infty(j)
        \, ,
 \end{align}
 by the very definition of $f_{\hom}^\infty(j)$.

Once again,  one cannot generally ensure that $K_\delta^\eps$ is a competitor for the associated cell problem.
We shall proceed in a similar spirit as in the absolutely continuous part, and seek vector fields $\tilde K_\delta^\eps$, suitably close to $K_\delta^\eps$, which do belong to the class of representatives and whose energy is comparable to the one of $K_\delta^\eps$.
Recall \eqref{eq:easy_comp_Ks_AC_intro}. In particular, one has that, for suitable choices of $\delta, \eps=\eps(\delta) \to 0$,
\begin{align}
\label{eq:hyp_final_1_SIN_intro}
    \frac1{\Lm^d(R) }
    (\rho_{1,\frac{x_0}{\delta}})_{\#}
    \bigg(
        \frac{\iota_{\eps/\delta} K_\delta^\eps}{t_\delta}
    \bigg)
        \to  \tau
            \quad \text{vaguely in }
                \M(\R^d ; V \otimes \R^d)
        \, ,
\end{align}
where $\tau$ is of the form \eqref{eq:tau_intro}. Note that generally $\tau$ does not coincide with the Lebesgue measure on $\ker j$, hence $K_\delta^\eps \simeq t_\delta j$ (in a weak sense) only in the direction of  $(\ker j)^\perp$. This is why the choice of the set $R$ is crucial around singular points: in particular, we pick $R$ being a rectangle with sides parallel to $(\ker j)^\perp$ os size $1$ and sides parallel to $\ker j$ of size $\alpha \ll 1$, where indeed we have no information on the shape of $\tau$.  We then proceed as before: we first fix the right boundary conditions, and then find suitable correctors to obtain a divergence free field which is a competitor on $\delta^{-1} R_\delta$.
Thank to the specific choice of the set $R$ we show we can perform these corrections paying an error in the energy of the form, as $\delta \to 0$,
\begin{align}
    \err_{\eta,\alpha}
        \lesssim
        C_{\alpha,\eta}  \,  o_\eps(1) +
    \eps + \sqrt{\eta} + \sqrt{\alpha} + \frac{\sqrt{\alpha}}{\eta}
        \, ,
\end{align}
for some $o_\eps(1) \to 0$ as $\delta, \eps=\eps(\delta) \to 0$. This is the content of Proposition~\ref{prop:asymptotic_strip}, one of the most important and complex proofs of this work.

Sending first $\delta \to 0$, then $\alpha \to 0$, and finally $\eta \to 0$, we are able to conclude.

\section{Existence of uniform flows}
\label{sec:existence}
With assumptions (G1) and (G2) we can show the existence of a linear embedding $\calR \in \mathrm{Lin}(V\otimes \R^d; V_\rma^{\cE})$ with the following properties.

\begin{defi}[Uniform-flow operator]
	\label{def:admissible}
	A \emph{uniform-flow operator} for a graph $(\cX,\cE)$ embedded in $\R^d$ is a bounded linear operator $\calR \in \mathrm{Lin}(V\otimes \R^d; V_\rma^\cE)$ so that
\begin{enumerate}
	\item (divergence free) $\DIVE \calR  j = 0$ for all $j\in V\otimes \R^d$.
	\item (convergence) the rescaling $\calR_\eps\in \mathrm{Lin}(V\otimes \R^d;V_\rma^{\cE_\eps})$ defined by $\calR_\eps j(\eps x, \eps y) := \eps^{d-1} \calR  j(x,y)$ is so that
\begin{align}
\label{eq:J0 properties_1}
	\iota_\eps \calR_\eps j \to j\Lm^d \quad \text{ vaguely as } \eps \to 0
        \, , \qquad
    \forall j \in V \otimes \R^d
        \, .
\end{align}
	\item (boundedness) there is a constant $C>0$ such that, for every $\eps >0$,
	\begin{align}
\label{eq:J0 properties_2}
	|\iota_\eps \calR_\eps j|(Q)\leq C |j| \Lm^d(Q)
\end{align}
whenever $Q$ is an orthotope containing a cube of side-length $\eps$.
\end{enumerate}
\end{defi}
\begin{rem}[The cartesian grid $(\cX, \cE) = (\Z^d, \E^d)$]
	In the simplest case
	\begin{align*}
		\cX=\Z^d
			\tand
		\cE = \E^d := \{(z,z')\in \Z^d\times \Z^d\,:\,|z-z'|=1\}\,,
	\end{align*}
	one easily checks that a uniform-flow operator is given by  
		$\calR  j(z,z') 
		:= j(z'-z)\in V$.
	To show the existence of a uniform-flow operator on more general graphs,
	the idea is to construct a grid of paths that behaves like the cartesian grid $(\Z^d,\E^d)$.
	Proving the desired properties (in particular, the convergence to the constant density measure) is more involved
	due to the possibly nontrivial geometry of the graph.
\end{rem}

\begin{rem}[Orthotopes]
\label{rem:vague_J0_cubes}
	If $\calR$ is a uniform-flow operator,
    then $\iota_\eps \calR_\eps j(Q) \to j\Lm^d(Q)$ for
	every orthotope $Q \subset \R^d$,
	as a consequence of (2) and Portmanteau theorem.
	Moreover,
	for
	every set $A \in \calB(\R^d)$
	that is a countable, disjoint union of orthotopes, each containing a cube of side-length $\eps > 0$,
	we have
    \begin{align}
        |\iota_\eps \calR_\eps j|(A)\leq C |j| \Lm^d(A)
            \, ,
    \end{align}
	by $\sigma$-additivity of the measure
	$|\iota_\eps \calR_\eps j|$,
	where $C$ is the constant in \eqref{eq:J0 properties_2}.
\end{rem}

The main result of this section is the following existence result.
\begin{prop}\label{prop:J0}
	Every graph $(\cX,\cE)$ embedded in $\Z^d$ satisfying (G1) and (G2) admits a uniform-flow operator $\calR \in \mathrm{Lin}(V\otimes \R^d; V_\rma^\cE)$.
\end{prop}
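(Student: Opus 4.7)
The plan is to reduce to the scalar case by linearity and then construct scalar flows by mimicking the Cartesian grid with bounded-length graph paths. Writing $V\otimes\R^d\ni j=\sum_{i=1}^d j_i\otimes e_i$ with $j_i\in V$, it suffices to build, for each coordinate direction $e_i$, a scalar antisymmetric field $F_i\in\R_\rma^{\cE}$ that is divergence-free, satisfies $\iota_\eps F_i^{(\eps)}\to e_i\Lm^d$ vaguely (where $F_i^{(\eps)}(\eps x,\eps y):=\eps^{d-1}F_i(x,y)$), and obeys the orthotope bound of Definition~\ref{def:admissible}(3); then $\calR j(x,y):=\sum_{i=1}^d j_i\,F_i(x,y)$ is linear in $j$ and inherits all three properties. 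Using (G1), fix once and for all a selection $\phi\colon\Z^d\to\cX$ with $|\phi(z)-z|\leq R_2$. Using (G2), for each $z\in\Z^d$ and each $i$ fix a path $P_z^i$ in $(\cX,\cE)$ from $\phi(z)$ to $\phi(z+e_i)$ of Euclidean length at most $L:=R_1(1+2R_2)+1$. Set
\[
F_i(x,y):=\#\{z\in\Z^d:(x,y)\text{ traversed by }P_z^i\}-\#\{z:(y,x)\text{ traversed by }P_z^i\},
\]
which is antisymmetric with $\|F_i\|_\infty\leq\#(\Z^d\cap B(0,R_2+L))$, since any edge $(x,y)\in P_z^i$ forces $|x-\phi(z)|\leq L$ and hence $|z-x|\leq R_2+L$.

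Divergence-freeness is a telescoping identity: the path $P_z^i$ contributes $\1_{\{\phi(z+e_i)\}}-\1_{\{\phi(z)\}}$ to $\DIVE F_i$, and summing over all $z\in\Z^d$ (re-indexing $z\mapsto z-e_i$ in the first term) cancels at every vertex. For the orthotope bound, a path $P_z^i$ has its $\eps$-rescaled image meeting $Q$ only when $\eps z\in B(Q,\eps(L+R_2))$; whenever $Q$ contains a cube of side $\eps$, this fattening has volume at most $C\,\Lm^d(Q)$, so there are at most $C\Lm^d(Q)/\eps^d$ contributing paths. Each path has total Euclidean length $\leq L$, so its contribution to $|\iota_\eps F_i^{(\eps)}|(Q)$ is at most $\eps^{d-1}\cdot\eps L=\eps^d L$, giving the bound.

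The main obstacle is the vague convergence. Fix $\varphi\in C_c^1(\R^d;\R^d)$ and consider the $1$-form $\omega_\eps:=\varphi(\eps\cdot)\cdot dx$, whose exterior derivative satisfies $\|d\omega_\eps\|_\infty\leq\eps\|D\varphi\|_\infty$. A direct rearrangement yields
\[
\langle\iota_\eps F_i^{(\eps)},\varphi\rangle=\eps^d\sum_{z\in\Z^d}\int_{P_z^i}\omega_\eps.
\]
Introduce the polygonal comparison path $\tilde P_z^i:=[\phi(z),z]\cup[z,z+e_i]\cup[z+e_i,\phi(z+e_i)]$, so that $P_z^i-\tilde P_z^i$ is a closed loop of diameter $\leq L+2R_2+1$, bounding (for $d\geq 2$) a Lipschitz $2$-chain of area $O((L+R_2)^2)$. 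Stokes' theorem gives
\[
\Bigl|\int_{P_z^i}\omega_\eps-\int_{\tilde P_z^i}\omega_\eps\Bigr|\leq C(L+R_2)^2\eps\|D\varphi\|_\infty,
\]
so summed over the $O(\eps^{-d})$ contributing indices and multiplied by $\eps^d$, the total Stokes error is $O(\eps)$. The integral over $\tilde P_z^i$ decomposes into three line integrals: the two endpoint contributions $\sum_z\int_{[\phi(z),z]}\omega_\eps$ and $\sum_z\int_{[z+e_i,\phi(z+e_i)]}\omega_\eps$ cancel identically under the substitution $z\mapsto z-e_i$ in the second sum (both sums run over all of $\Z^d$, with only finitely many nonzero terms by compact support), while the middle contribution $\eps^d\sum_z\int_0^1\varphi(\eps(z+te_i))\cdot e_i\,dt$ is a Riemann sum converging to $\int\varphi\cdot e_i\,d\Lm^d$. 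This establishes (2) on $C_c^1$, and density together with the locally uniform bound obtained from (3) extends vague convergence to all of $C_c(\R^d;\R^d)$. The crux is precisely this cancellation, which combines the near-closedness of $\omega_\eps$ (providing the $\eps$-Stokes error) with global telescoping over $\Z^d$ to absorb both the displacement $\phi(z)\neq z$ and the excess length of the graph paths.
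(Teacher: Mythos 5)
Your proof is correct and uses the same underlying construction as the paper (a ``grid of graph paths'' $P^i_z$ from $\phi(z)$ to $\phi(z+e_i)$, with $\phi$ an $R_2$-selection from (G1) and path lengths bounded by (G2)), and the arguments for divergence-freeness and the orthotope bound are essentially identical to the paper's. Where you genuinely diverge is in the proof of the vague convergence $\iota_\eps\calR_\eps j\to j\,\Lm^d$, which is the crux of the proposition.

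The paper introduces a mesoscopic length scale $\delta$ with $\eps\ll\delta\ll 1$, decomposes each path into $\delta/\eps$ pieces, uses the telescoping identity $\int d\iota_\eps(J_{\eps P^m_{z'}})=\eps(x_{z'_{m+1}}-x_{z'_m})$ to control the displacement of a mesoscopic block to $O(\eps)$, estimates the variation of $\varphi$ over a block to $O(\delta)$ via $C^1$, and then sums, incurring an $O(\eps/\delta+\delta)$ error before optimising $\delta\sim\sqrt\eps$. Your argument replaces this two-scale bookkeeping with a single geometric step: compare each graph path $P^i_z$ to the polygonal surrogate $\tilde P^i_z=[\phi(z),z]\cup[z,z+e_i]\cup[z+e_i,\phi(z+e_i)]$ via Stokes' theorem, using $\|d\omega_\eps\|_\infty\lesssim\eps\|D\varphi\|_\infty$ together with the fact that a closed loop of $O(1)$ length bounds a Lipschitz $2$-chain of $O(1)$ area (e.g.\ the cone from any point on the loop). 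The two ``offset'' legs $[\phi(z),z]$ and $[z+e_i,\phi(z+e_i)]$ then cancel \emph{exactly} after the shift $z\mapsto z-e_i$, and the remaining straight segments $[z,z+e_i]$ form a standard Riemann sum. This is cleaner: it yields an $O(\eps)$ error rather than $O(\sqrt\eps)$, and it localises all the $C^1$-use in a single Stokes estimate. The price is a small geometric input (for $d\geq 2$, a bounded loop bounds a bounded $2$-chain) that the paper avoids. Two very minor points worth attending to if you write this up: (i) you should take the paths $P^i_z$ \emph{simple} (always possible under (G2) by deleting loops) so that your multiplicity count of $z$'s actually coincides with the signed flow $\sum_z J_{P^i_z}$ used in the telescoping divergence computation; and (ii) as with the paper's proof, the Stokes argument needs a separate remark for $d=1$, where $1$-forms integrate to zero over closed loops trivially and the whole issue disappears.
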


\begin{proof}
For every $z \in \Z^d$, we use the assumption (G1) to pick a nearby vertex $x_z \in \cX$
with
$|x_z - z| \leq R_1$.
For all $z,z'\in \Z^d$ with $|z-z'| = 1$,
note that
	$|x_z - x_{z'}| \leq 2 R_1 + 1$
by the triangle inequality.
Hence, using assumption (G2),
we may choose, for all $z \in \Z^d$ and $i \in \{1, \ldots, d\}$,
a simple path
	$P_{z,i}
	\in \fP$
	of Euclidean length at most
	$\ell := R_2 ( 2 R_1 + 1) + 1$
	connecting $x_z$ and $x_{z + e_i}$ in
	the graph $(\cX,\cE)$.
Recalling
	Definition~\ref{def:vectoralongpath}
we define
	$\calR  \in
		\Lin(V\otimes \R^d; V_\rma^\cE)$
by
\begin{align}
\label{eq:def_calR_eps}
	\calR j:=
			\sum_{z\in \Z^d}
			\sum_{i=1}^d
				J_{P_{z,i}} j e_i
			\in  V_\rma^\cE
		 \qquad  \text{for }  j \in V \otimes \R^d
	\, ,
\end{align}
see Figure~\ref{fig:unit_flows}.
\begin{figure}[h]
\includegraphics{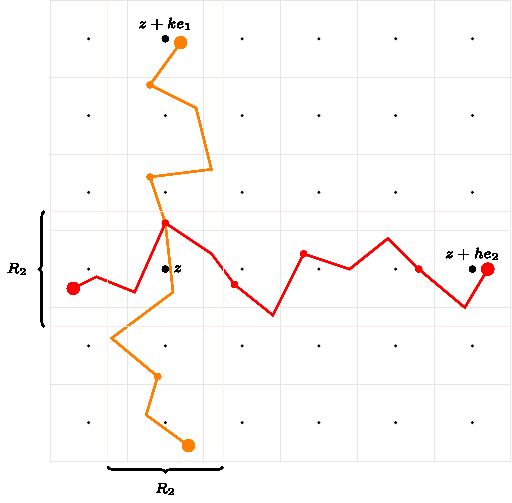}
\caption{In red (resp. in orange), a concatenation of paths on $(\cX,\cE)$ of the form $P_{z,1}$ (resp. $P_{z,2}$) with $z \in \Z^2$. Along the red paths $\calR j = j e_2$, whilst along the orange path $\calR j = j e_1$.}
\label{fig:unit_flows}
\end{figure}
For each $(x,y)\in \cE$,
only finitely many summands in the definition of
$\bigl(\calR j\bigr)(x,y)$ are nonzero, so that
$\calR j$ is well-defined. It is then clear that
$\calR\in \mathrm{Lin}(V\otimes \R^d;V_\rma^\cE)$.

It remains to check that $\calR$ has the desired properties.

\smallskip
\noindent
\textit{(1) \ Divergence free.} \
For $i =1, \dots, d$, Lemma~\ref{lem:J_Pq} yields
\begin{align*}
	\DIVE
	\bigg(
		\sum_{z\in \Z^d} J_{P_{z, i}}
	\bigg)
		=
	\sum_{z \in \Z^d}
	\big(
		\one_{\{x_z\}} - \one_{\{x_{z+e_i}\}}
	\big)
		= 0
	 \, ,
\end{align*}
which implies that $\calR j$ is divergence free

\smallskip
\noindent
\textit{(3) \ Boundedness.} \
We show the statement for $\eps = 1$. The corresponding estimate for $\eps>0$ follows using the same argument together with the scaling properties \eqref{eq:J-scaling}.

Let us first assume that $Q$ is a unit cube.
Using the definitions, it follows that for all $z \in \Z^d$ and all $i$,
	$|\iota_1 J_{P_{z, i}}|(Q) \leq
	\length (P_{z, i} \cap Q)
	$,
where we slighly abuse notation by viewing
the path $P_{z, z + e_i}$ as a union of line segments in $\R^d$.
Since
	$\length (P_{z, i}) \leq \ell$
and  each unit cube $Q$ intersects at most
	$K := K(d,R_1, R_2)$ of the paths $P_{z, i}$,
it follows that
\begin{align}
	|\iota_1 \calR j|(Q)
		&\leq
	\| j \|
		\sum_{ z \in \Z^d}
		\sum_{i=1}^d
		\text{length}
		\left(
			P_{z, i} \cap Q
		\right)
		\leq
	\| j \| K \ell
		\, .
\end{align}

In general, every orthotope $Q$ containing a unit cube can be covered by $C \Lm^d(Q)$-many unit cubes, where $C > 0$ depends only on $d$.
Therefore the claimed inequality \eqref{eq:J0 properties_2} follows by subadditivity of $|\iota_1 \calR j|$.

\smallskip
\noindent
\textit{(2) Convergence.} \
Fix $j \in V \otimes \R^d$.
For every $\varphi \in C_c^1(\R^d)$ we will show the convergence
\begin{align}
\label{eq:test_C1}
    \lim_{\eps \to 0}
    \int_{\R^d} \varphi \dd (\iota_\eps \calR_\eps j)
        =
    \int_{\R^d} \varphi j \dd \Lm^d  \, .
\end{align}
The claimed vague convergence then follows from this together with the boundedness proved above, via a classical compactness argument.
Namely, boundedness implies that $\sup_\eps |\iota_\eps \calR_\eps j|(K)<\infty$ for every compact set $K\subset \R^d$, which implies that, up to a non-relabeled subsequence, $\iota_\eps \calR_\eps j \to \mu \in \M(\R^d; V \otimes \R^d)$ vaguely as $\eps \to 0$. Due to the fact that $C_c^1(\R^d)$ is dense in $C_c(\R^d)$ in the uniform topology, from \eqref{eq:test_C1} we infer that $\mu = j \Lm^d$, and therefore the claimed convergence.

Fix $\varphi \in C_c^1(\R^d)$.
Without loss of generality,
we assume that $j = v \otimes e_d$ for some $v \in V$,
as the general statement follows by linearity.
For this particular $j$, the vector field
$\calR_\eps j$ naturally splits
as a sum of vector fields induced by paths $\eps P_{z'}$
for $z' \in \Z^{d-1}$,
where $P_{z'}$ is an approximately straight path
passing through $(z',0) \in \Z^d$
in the approximate direction $e_d$.
Precisely,
\begin{align}
	\label{eq:formula_J0eps_paths}
		\calR_\eps j
			=
		\eps^{d-1}
		\biggl(\sum_{z' \in \Z^{d-1}}
		J_{\eps P_{z'}
		}
		\biggr)
		v
			\, ,
	\end{align}
where
$P_{z'}$ is the concatenation of the paths
	$\{ P_{(z',h), d}
		\suchthat h \in \Z\}$ (in Figure~\ref{fig:unit_flows} corresponding to the red paths),
so that
\begin{align}
	J_{\eps P_{z'}}
		=
	\sum_{h \in \Z}
		J_{\eps P_{(z',h), d}}
		\, .
\end{align}

Fix a mesoscopic length scale $\eps \ll \delta \ll 1$
such that $\delta/\eps \in \N$.
We then decompose each path $P_{z'}$ in pieces of mesoscopic size, namely
\begin{align*}
	J_{\eps P_{z'}}
		=
	\sum_{m \in \Z}
		J_{\eps P_{z'}^m}
	\quad
		\text{where}
	\quad
		J_{\eps P_{z'}^m}
		=
	 \sum_{h = m {\delta/\eps}}
	 		^{ (m+1) {\delta/\eps}-1}
		J_{\eps P_{(z',h), d}} \, .
\end{align*}
Note that $\eps P_{z'}^m$ connects
	$\eps x_{z_m'}$ with $\eps x_{z_{m+1}'}$,
	where $z_m' := (z', m\delta/\eps)$, see Figure~\ref{fig:mesoscopic}.
\begin{figure}[h]
    \centering

\includegraphics[scale=1.1]{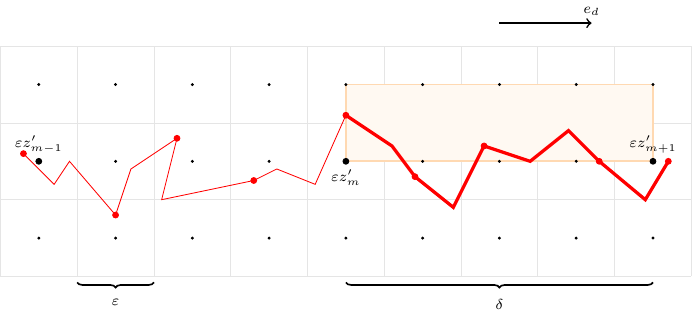}
    \caption{A two-scale decomposition of the paths in the direction $e_d$. Highlighted the subcomponent given by $J_{\eps P_{z'}^m}$. In light orange, a representation of the set $R_{z_m'}$. On the mesoscopic scale $\delta \gg \eps$, the path $J_{\eps P_{z'}^m}$ is approximately the segment joining $z_m'$ and $z_{m+1}'$, hence $\iota_\eps J_{\eps P_{z'}^m}$ has an approximate average orientation given by $e_d$, see \eqref{eq:convergence_est1}.}
    \label{fig:mesoscopic}
\end{figure}
    
Fix $z'\in \eps \Z^{d-1}$. We claim that
\begin{align}
\label{eq:claim1_convergence}
	\bigg|
	\int \varphi \dd \iota_\eps (J_{\eps P_{z'}^m})
	-
	\varphi(\eps z_m')
		\delta e_d
\bigg|
\leq
C (\eps + \delta^2)
\end{align}
for some $C > 0$ depending on $\varphi$, $R_1$, and $R_2$, which may change from line to line below.
The proof of this statement crucially uses that $\varphi$ belongs to $C_c^1$, and not merely to $C_c$.
Indeed, noticing that
	$\int \dd  \iota_\eps (J_{\eps P_{z,i}})
			= \eps (x_{z + e_i} - x_z)$,
we have the telescopic series
\begin{align*}
	\int \dd \iota_\eps (J_{\eps P_{z'}^m})
	=
		\sum_{h = m {\delta/\eps}}
				^{ (m+1) {\delta/\eps}-1}
				\eps \bigl(x_{(z', h+1)} - x_{(z', h)}\bigr)
	= \eps \bigl( x_{z_{m+1}'} - x_{z_m'} \bigr) \, .
\end{align*}
Using the triangle inequality we find
\begin{align*}
	\Bigl| x_{z_{m+1}'} - x_{z_m'} - \frac{\delta}{\eps} e_d \Bigr|
	& = \bigl| x_{z_{m+1}'} - z_{m+1}' +  z_m' - x_{z_m'}  \bigr|
	\\& \leq \bigl | x_{z_{m+1}'} - z_{m+1}'  \bigr| + \bigl|  z_m' - x_{z_m'}  \bigr|
	\leq 2( 2R_1 + 1)\,,
\end{align*}
hence
\begin{align}
	\label{eq:convergence_est1}
	\bigg|
		\int \dd \iota_\eps (J_{\eps P_{z'}^m})
	  -
	  	\delta e_d
	\bigg|
	\leq  2( 2R_1 + 1) \eps \, .
\end{align}
Moreover, for $x$ in the support of $J_{\eps P_{z'}^m}$,
$(G1)$ and $(G2)$ yield
	$|\varphi(x) - \varphi(\eps z_m')|
	\leq C \| \nabla \varphi\|_\infty \delta
	\leq C \delta$.
Consequently,
since $\length(\eps P_{z'}^m) \leq C \delta$,
\begin{align}
	\label{eq:convergence_est2}
	\bigg|
        \int \bigl( \varphi - \varphi( \eps z_m') \bigr) \dd \iota_\eps ( J_{\eps P_{z'}^m})
    \bigg|
    &\leq
	C
		\delta
	\length(\eps P_{z'}^m)
	\leq
	C
		\delta^2\, .
\end{align}
Combining
\eqref{eq:convergence_est1}
and
\eqref{eq:convergence_est2}
yields the claimed bound
\eqref{eq:claim1_convergence}.

Now, consider the orthotope (cfr. Figure~\ref{fig:mesoscopic}) 
\begin{align*}
	R_{z_m'}
		=
	(\eps z_m', 0)
	+
	[0,\eps)^{d-1} \times [0,\delta)\;,
\end{align*}
and observe that, since $\varphi \in C_c^1(\R^d)$,
\begin{align*}
	\bigg|
	\varphi( \eps z_m')
	-
	\frac{1}{ \eps^{d-1} \delta}
		\int_{R_{z_m'}} \varphi
	\bigg|
	\leq
	C \delta \, .
\end{align*}
Combined with \eqref{eq:claim1_convergence} we find
\begin{align}
		\label{eq:zmbound}
	\bigg|
	\int \varphi \dd \iota_\eps (J_{\eps P_{z'}^m})
	-
	\frac{1}{ \eps^{d-1}}
		\biggl(\int_{R_{z_m'}} \varphi\biggr)
		e_d
\bigg|
\leq
C (\eps + \delta^2) \, .
\end{align}
Since $\varphi$ is compactly supported, we notice using assumptions $(G1)$ and $(G2)$ that both integrals vanish, except for $(z', m) \in \calS_{\eps,\delta} \subseteq \Z^d$,
where $\calS_{\eps,\delta}$ is a set whose cardinality $\# \calS_{\eps,\delta}$ can be bounded by $\frac{C}{\eps^{d-1} \delta}$.
Therefore, summation over $(z', m) \in \calS_{\eps,\delta}$ yields, using \eqref{eq:zmbound},
\begin{align*}
	\int \varphi \dd \iota_\eps (\calR_\eps j)
	&		=
		\eps^{d-1}
		\sum_{
			(z', m) \in
			\calS_{\eps,\delta} }
			v
			\otimes
		\biggl(
			\int \varphi
		\dd
		\iota (J_{\eps P_{z'}^m})
		\biggr)
		\\& =
		\eps^{d-1}
		\sum_{
			(z', m) \in
			\calS_{\eps,\delta} }
			\bigg[
			\frac{1}{ \eps^{d-1}}
			\biggl(\int_{R_{z_m'}} \varphi\biggr)
			v
			\otimes
			e_d
			+ O(\eps + \delta^2)
			\bigg]
		\\& =
			\biggl(
				\int_{\R^d} \varphi
			\biggr)
			v
			\otimes
			e_d
			+
			\eps^{d-1}
			\bigl(\# \calS_{\eps, \delta}\bigr)
			O\Bigl( \eps + \delta^2\Bigr)
		\\& =
		\biggl(
				\int_{\R^d} \varphi
		\biggr)
			j
			+
			O\Bigl( \frac{\eps}{\delta} + \delta\Bigr) \, .
\end{align*}
Taking for instance $\delta = \eps \big\lfloor \frac{1}{\sqrt{\eps}}\big\rfloor$ (so that $\delta/\eps \in \N$),
we obtain the desired convergence \eqref{eq:test_C1}.
\end{proof}

\begin{rem}[Absolute continuity of limit points for $\calR_\eps j$]
\label{rem:abs_cont_Reps}
    The boundedness property of $\calR_\eps$ can be used to show that, for every $j \in V \otimes \R^d$, every accumulation point $\lambda \in \M_+(\R^d)$ of the sequence of measures $\{ |\iota_\eps \calR_\eps j| \}_\eps$ must necessarily be absolutely continuous with respect to the Lebesgue measure with bounded density. Indeed, assume that (up to subsequence) $|\iota_\eps \calR_\eps| \to \zeta$ vaguely. Let $D \subset \R^d$ be a closed bounded set, and define the set
    \begin{align}
        Q(D ; \eps)
            :=
        \bigcup
        \big\{
            Q_z = z + [0,\eps)^d
                \suchthat
            z \in \eps \Z^d
                \, , \quad
            Q_z \cap D \neq \emptyset
        \big\}
            \, ,
    \end{align}
    for which we have the trivial inclusion $D \subset Q(D;\eps)$, for every $\eps>0$. Denote by $Q^\circ(D,\eps)$ the interior of $Q(D,\eps)$. Note that $Q(D,\eps)$ is a countable, disjoint union of cubes of size $\eps$. Therefore, by Remark~\ref{rem:vague_J0_cubes}, boundedness of $\calR_\eps$ and the vague convergence to $\lambda$ ensure, for every $\eps_0 >0$,
    \begin{align}
        \zeta(D)
            \leq
        \zeta( Q^\circ(D;\eps_0) )
            &\leq
        \liminf_{\eps \to 0}
            |\iota_\eps \calR_\eps|(Q^\circ(D,\eps_0))
    \\
            &\leq
        \liminf_{\eps \to 0}
            |\iota_\eps \calR_\eps|
                (Q(D,\eps_0))
            \leq
        C  \| j \|  \Lm^d(Q(D,\eps_0))
            \, .
    \end{align}
    Taking the limit of as $\eps_0 \to 0$, we obtain that
    \begin{align}
        \zeta(D) \leq C \| j \|  \Lm^d(D)
            \, , \quad
        \forall D \subset \R^d \, \text{ closed}
            \, .
    \end{align}
    By inner regularity of $\zeta$ and $\Lm^d$, we then conclude that the previous inequality holds for all Borel sets $D \subset \R^d$, hence the claimed absolute continuity.
\end{rem}

\section{The multi-cell formula and homogenized limit}
	\label{sec:multi-cell}

In this section we define the homogenized energy density
	$f_{\hom} :V\otimes \R^d \to \R$
and study some of its main properties.

Recall that $R_\Lip$ denotes the radius of nonlocality in assumption (F1), and $R_3$ is the maximal edge-length in assumption (G3).

\begin{defi}[Representative]
	\label{def:rep}
	 Let $\eps \in (0,1]$. 
	 Let $\calR$ be a uniform-flow operator.
	A discrete vector field $J \in V_\rma^{\cE_\eps}$ is said to be an
	$(\eps, \calR)$-representative of
	a tensor
	$j \in V \otimes \R^d$ in a Borel set $A \in \calB(\R^d)$ if
	\begin{enumerate}
	\item[(i)] $\DIVE J = 0$;
	\item[(ii)]
		$J(x,y) = \calR_\eps j(x,y)$
		for all $(x,y) \in \cE_\eps$ with $\dist\bigl([x,y],\R^d\setminus A\bigr) \leq \eps R_\partial$,
	\end{enumerate}
 where  $R_\partial := \max \{ R_\Lip, R_3 \}$. The set of all $\eps$-representatives of $j$ in $A$  will be denoted by $\Rep_{\eps,\calR}(j;A) $. We use the notation $\Rep_\calR := \Rep_{1,\calR}$.
\end{defi}

\begin{figure}[h]
    \centering
\includegraphics{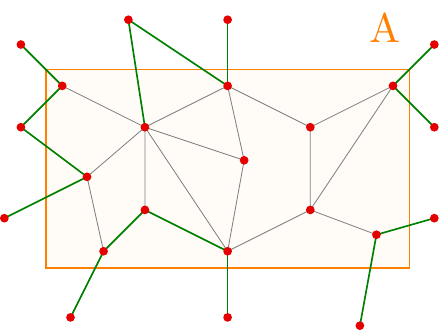}
    \caption{The set of $\calR$-representatives of $j$ in $A$ must coincide with $\calR j$ on all the edges highlighted in green, which at distance of order $1$ from $\partial A$. For $(\eps,\calR)$ representatives, the boundary conditions must be satisfied for edges at distance of order $\eps$ from $\partial A$.}
    \label{fig:cellformula}
\end{figure}

In other words, the set $\Rep_{\eps,\calR}(j,A)$ contains all the divergence-free discrete vector fields which coincide with the uniform flow $\calR_\eps j$ at distance ~$\eps$ from the boundary $\partial A$ of $A$, see Figure~\ref{fig:cellformula}. 
Note that $\Rep_{\eps,\calR}(j;A)$ is non-empty for all $(j,A)$,
as this set contains the canonical representative $\calR_\eps j$. 
The need of imposing Dirichlet-type boundary conditions up to $\eps R_\partial$  comes from the mild nonlocality of the graph (hence the dependence on $R_3$) and the energy (hence the dependence on $R_{\Lip}$). In the limit as $\eps \to 0$, this formally becomes a pure Dirichlet boundary condition on the boundary of the set $A$, which is coherent with the cell-formula typically appearing in stochastic homogenisation results for continuous energies. For a similar representation formula, see e.g. \cite{Alicandro-Cicalese-Gloria:2011} (in the setting of curl-free minimisation problems).

The following simple scaling lemma follows from the definition of $\iota_\eps$ and $\calR_\eps$.

\begin{lemma}[Scaling]
	\label{lem:j-and-J}
	Let $\eps \in (0,1]$ and let $\calR$ be a uniform-flow operator.
	For any $j \in V \otimes \R^d$ and $A \in \calB(\R^d)$
	the following properties hold:
		\begin{align}\label{eq:J-scaling}
			& \big(\iota_\eps \calR_\eps j \big)( \eps A)
				=
			\eps^d
			\big(\iota_1 \calR j\big)( A)
							\, ,
		\\
			& J \in \Rep_{\calR}(j,A) \quad{ \text{iff} } \quad
			\eps^{d-1} J(\cdot /\eps) \in \Rep_{\eps, \calR}(j, \eps A)  \, .
		\end{align}
\end{lemma}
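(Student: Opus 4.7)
\smallskip

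The plan is to verify both statements by direct computation, unwinding the definitions of $\iota_\eps$, $\calR_\eps$, and $\Rep_{\eps,\calR}$, and tracking the factors of $\eps$ that arise from the geometric scaling (the $\eps^{d-1}$ in the definition of $\calR_\eps$, and the scaling of the Hausdorff measure on line segments).

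For the first identity, I would start from the definition
\begin{align*}
\iota_\eps \calR_\eps j = \frac12 \sum_{(\eps x,\eps y) \in \cE_\eps} \Bigl(\calR_\eps j(\eps x,\eps y)\otimes \frac{\eps y-\eps x}{|\eps y-\eps x|}\Bigr)\Hm^1\llcorner[\eps x,\eps y],
\end{align*}
substitute $\calR_\eps j(\eps x,\eps y) = \eps^{d-1}\calR j(x,y)$, observe that the unit direction vector is invariant under scaling, and use $\Hm^1([\eps x,\eps y]\cap \eps A) = \eps\, \Hm^1([x,y]\cap A)$. Gathering one factor of $\eps^{d-1}$ from the rescaling of $\calR$ and one factor of $\eps$ from the rescaling of $\Hm^1$ yields the claimed $\eps^d$.

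For the second equivalence, set $J_\eps(x',y') := \eps^{d-1}J(x'/\eps,y'/\eps)$ for $(x',y')\in \cE_\eps$ and check the three conditions in Definition~\ref{def:rep}. The divergence constraint transfers trivially, since $\DIVE J_\eps(\eps x) = \eps^{d-1}\sum_{y\sim x} J(x,y) = \eps^{d-1}\DIVE J(x)$, so one vanishes iff the other does. The boundary condition is a bijection under the bijection $[x,y]\leftrightarrow [\eps x,\eps y]$: since $\dist\bigl([\eps x,\eps y],\R^d\setminus \eps A\bigr) = \eps\,\dist\bigl([x,y],\R^d\setminus A\bigr)$, the neighbourhood of width $\eps R_\partial$ around $\R^d\setminus \eps A$ corresponds exactly to the neighbourhood of width $R_\partial$ around $\R^d\setminus A$. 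On such edges the identity $J_\eps(\eps x,\eps y) = \eps^{d-1}J(x,y) = \eps^{d-1}\calR j(x,y) = \calR_\eps j(\eps x,\eps y)$ is precisely the compatibility of the two representative conditions. The reverse implication is entirely symmetric.

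There is no real obstacle here: both statements are bookkeeping of the scaling factors built into Definitions~\ref{def:embedding}, \ref{def:admissible}, and~\ref{def:rep}. The only point worth being explicit about is the appearance of the single extra factor of $\eps$ (from $\Hm^1$) on top of the $\eps^{d-1}$ (from $\calR_\eps$), which combine to give the $\eps^d$ in \eqref{eq:J-scaling}; likewise, the factor $\eps^{d-1}$ in the definition of $\Rep_{\eps,\calR}$ is precisely what makes the second equivalence hold verbatim.
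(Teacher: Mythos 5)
Your proof is correct, and it coincides with the paper's (implicit) argument: the paper merely asserts that the lemma follows from the definitions of $\iota_\eps$ and $\calR_\eps$, and your computation is exactly the direct unwinding that this assertion points to, with the relevant $\eps$-factors ($\eps^{d-1}$ from $\calR_\eps$, $\eps$ from $\Hm^1$ on segments, and the equivalence of the $\eps R_\partial$-neighbourhood of $\R^d\setminus\eps A$ with the $R_\partial$-neighbourhood of $\R^d\setminus A$ after scaling) all tracked correctly.
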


While $\Rep_{\eps,\calR}(j;A)$ may contain many elements,
the next result shows that all of them
assign the same value to $A$.

\begin{lemma}
	\label{lem:equal-mass}
	Let $\eps \in (0,1]$ and let $\calR$ be a uniform-flow operator.
	Let $j \in V \otimes \R^d$ and let $A \in \cB(\R^d)$.
	For all $J \in \Rep_{\eps,\calR}(j,A)$ we have
	\begin{align}
		\label{eq:equal-int}
		\iota_\eps J (A)
		= \iota_\eps \calR_\eps j (A)
		\, .
	\end{align}
\end{lemma}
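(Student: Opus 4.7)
Set $K := J - \calR_\eps j \in V_\rma^{\cE_\eps}$. Since the canonical representative $\calR_\eps j$ is divergence-free by Definition~\ref{def:admissible}(1), and $J$ is divergence-free by (i) in Definition~\ref{def:rep}, the field $K$ is antisymmetric and satisfies $\DIVE K \equiv 0$. Moreover, condition (ii) in Definition~\ref{def:rep} gives $K(x,y) = 0$ on every edge $(x,y) \in \cE_\eps$ with $\dist([x,y], \R^d \setminus A) \leq \eps R_\partial$. By linearity of $\iota_\eps$, the identity \eqref{eq:equal-int} is equivalent to showing $\iota_\eps K(A) = 0$ in $V \otimes \R^d$.

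\emph{Step 1 (reduction of $\iota_\eps K(A)$ to a bare tensorial sum).} For any edge $(x,y) \in \cE_\eps$ with $K(x,y) \neq 0$ the above forces
\[
\dist([x,y], \R^d \setminus A) > \eps R_\partial \geq \eps R_3 \geq |x-y|,
\]
using $R_\partial = \max\{R_\Lip, R_3\}$ and the edge-length bound (G3) pulled back to the rescaled graph. Hence $[x,y] \subset A$ and $\Hm^1([x,y] \cap A) = |x-y|$ for every edge that contributes. Plugging this into the definition \eqref{eq:def-iota-J} collapses the measure evaluation to
\[
\iota_\eps K(A) \;=\; \tfrac12 \sum_{(x,y) \in \cE_\eps} K(x,y) \otimes (y-x).
\]

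\emph{Step 2 (discrete integration by parts).} Relabeling $(x,y) \mapsto (y,x)$ and using antisymmetry $K(y,x) = -K(x,y)$ gives $\sum_{(x,y)} K(x,y) \otimes y = -\sum_{(x,y)} K(x,y) \otimes x$, so
\[
\iota_\eps K(A) = -\sum_{x \in \cX_\eps} \Bigl(\sum_{y : y \sim x} K(x,y)\Bigr) \otimes x = -\sum_{x \in \cX_\eps} \DIVE K(x) \otimes x = 0,
\]
where the final equality uses $\DIVE K \equiv 0$. Combined with Step 1, this proves $\iota_\eps K(A) = 0$ and hence \eqref{eq:equal-int}.

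\emph{Remark on convergence.} The only delicate point is the absolute convergence needed to rearrange the sums in Step 2. This is automatic in every application of the lemma in this paper, since $A$ is a bounded orthotope and only finitely many edges support $K$ by (G3) together with the local finiteness of $(\cX_\eps,\cE_\eps)$ inherent in the graph framework. For general $A$ one reduces to this case by exhausting $A$ with bounded Borel sets $A_n \uparrow A$ whose boundaries stay in the ``free'' region where $K$ vanishes, so that $\iota_\eps K(A) = \lim_n \iota_\eps K(A_n) = 0$; this is the step I expect to require the most attention, and is the only non-routine consideration in the proof.
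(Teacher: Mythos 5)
Your proof is correct and follows essentially the same underlying mechanism as the paper's: convert the integral of a divergence-free field over $A$ into a sum $-\sum_x \DIVE(\cdot)(x)\otimes x$ by antisymmetry, and use the representative condition to dispose of boundary effects. The difference is purely organizational. The paper keeps $J$ and $\calR j$ separate, computes $\iota_1 J(A)$ with the $\lambda_{xy}$ factors intact, and splits the resulting sum over interior vertices (where $\lambda_{xy}=1$ and $\DIVE J$ kills the term) and boundary vertices (where $J=\calR j$), concluding that the expression is representative-independent. You instead subtract first, setting $K=J-\calR_\eps j$, so that $K$ is simultaneously divergence-free and vanishes on a collar of $\partial A$; this forces $\lambda_{xy}=1$ on every contributing edge from the start, and the whole calculation collapses to $\iota_\eps K(A)=-\sum_x\DIVE K(x)\otimes x=0$ with no case split. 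The subtraction trades the paper's slightly more general observation (that $\iota_\eps J(A)$ only depends on the prescribed boundary data, with no need for $\DIVE\calR j=0$) for a shorter argument that uses $\DIVE\calR_\eps j=0$ from Definition~\ref{def:admissible}(1) explicitly. Two small remarks: the inequality $\eps R_\partial\ge|x-y|$ in your Step~1 is not actually needed (already $\dist([x,y],\R^d\setminus A)>0$ gives $[x,y]\subset A$), and your closing remark on rearranging the sum correctly flags a summability point that the paper's proof passes over equally silently, so it is not a gap in your argument relative to theirs.
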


		The reason why this result holds is that
		the integral of any
		divergence-free vector field over a domain is completely determined by its boundary values.
		In the continuous setting,
		this is an elementary consequence of Stokes' theorem: Indeed, let
		$j$
		be a smooth divergence-free vector field
		on a bounded domain $\Omega \subseteq \R^d$.
		Let $y \in \R^d$ be arbitrary and consider the linear map $\ell_y : \Omega \to \R$ with slope $y$, i.e.,
		$\ell_y(x) := \ip{x, y}$ for $x \in \Omega$.
		An application of Stokes' theorem yields,
		\begin{align}
			\Bip{
				\int_\Omega j \dd x
					,
				y
			}
				=
			\int_\Omega
			\ip{
				j
					,
				\nabla \ell_y
			}
				\dd x
				&=
			\int_{\partial \Omega}
				\ip {j, \ext }
				\ell_y
				\dd \Hm^{d-1}
			- \int_\Omega (\nabla \cdot j) \ell_y \dd x
                \, , 
		\end{align}
		where $\ext$ denotes the outward unit normal on $\partial \Omega$.
		Since $\nabla \cdot j \equiv 0$ by assumption, the latter expression is fully determined by the boundary values of $j$, hence the same holds for $\int_\Omega j \dd x$.
	The following proof contains a discrete version of this argument.

	\begin{proof}[Proof of Lemma \ref{lem:equal-mass}]
		Without loss of generality, we can assume that $\eps=1$.
		For $J \in \Rep_\calR(j,A)$
		it follows from the definitions that
	\begin{align}
		\iota_1 J (A)
		= \frac12
		\sum_{(x,y) \in \cE^A}
		\lambda_{xy}
		J(x,y) \otimes (y-x)
			\, ,
	\end{align}
	where $\lambda_{xy} := \Hm^1([x,y] \cap A) / | y - x|$.
	Note first that, by anti-symmetry of $\lambda_{xy}
	J(x,y)$,
	\begin{align*}
		- \frac12 \sum_{(x,y) \in \cE^A}
			\lambda_{xy}
			J(x,y) \otimes (y-x)
				=
		\sum_{(x,y) \in \cE^A}
				\lambda_{xy}
				J(x,y) \otimes x \, .
	\end{align*}
	We will distinguish two cases in the sum on the right-hand side,
	depending on whether or not
		$x$ belongs to
		the interior of $\cX$ in $A$,
		which we define by
	\begin{align*}
		(\cX^A)^\circ :=
		\left\{
			x \in \cX \cap A
			\suchthat
			[x, y] \subseteq A
			\text{ for every }y \sim x
			\right\}.
		\end{align*}

		First, if $ x \in (\cX^A)^\circ$ and
	$(x,y) \in \cE^A$,
	we note that $[x,y] \subseteq A$, hence $\lambda_{xy} = 1$.

		Second, if $x \in \cX \setminus (\cX^A)^\circ$ and $(x,y) \in \cE^A$,
	then $x$ has a neighbour $z \sim x$ such that $[x,z] \not\subseteq A$. Consequently,
	$\dist([x,y], A^c)
	\leq \dist(x, A^c)
	\leq |x-z|
		\leq R_3 \leq R_\partial$.
	Since $J \in \Rep_\calR (j,A)$,
	this implies that $J(x,y) = \calR j(x,y)$.

	Splitting the sum above, we obtain
	\begin{align*}
		& - \frac12
		\sum_{(x,y) \in \cE^A}
			\lambda_{xy}
			J(x,y) \otimes (y-x)
		\\&
		=
		\sum_{\substack{
			x \in (\cX^A)^\circ ,\; y \in \cX
			\suchthat
			\\
		(x,y) \in \cE^A   }}
				\lambda_{xy}
				J(x,y) \otimes x
		+
		\sum_{\substack{
			x \in \cX \setminus (\cX^A)^\circ ,\; y \in \cX  \suchthat
			\\
		(x,y) \in \cE^A   }
		}
				\lambda_{xy}
				J(x,y) \otimes x
		\\ &		=
				\sum_{ x \in (\cX^A)^\circ }
						\DIVE J(x) \otimes x
				+
				\sum_{\substack{
					x \in \cX \setminus (\cX^A)^\circ ,\;  y \in \cX  \suchthat
			\\
		(x,y) \in \cE^A
				}}
						\lambda_{xy}
						\calR j(x,y) \otimes x
				\, .
	\end{align*}
	Since $\DIVE J(x) = 0$ for all $x \in \cX$,
	the latter expression does not depend on the particular representative $J \in \Rep_\calR(j,A)$.
	This yields the result.
\end{proof}

The multi-cell formula then reads as follows.

\begin{defi}
	Let $\eps \in (0,1]$ and let $\calR$ be a uniform-flow operator.
	The $\eps$-rescaled cell-problem functional
	$f_{\omega,\eps,\calR} :V \otimes \R^d \times \calB(\R^d) \to \R$
	is defined by
	\begin{align}
		f_{\omega,\eps,\calR}(j,A) :=
		 \inf
		\left\{
			F_{\omega,\eps}(J,A)
		\suchthat
			J \in \Rep_{\eps,\calR}(j;A)
		\right\} \, .
	\end{align}
	We use the notation $f_{\omega,\calR} := f_{\omega,1,\calR}$.
\end{defi}

Note that $f_{\omega,\eps,\calR}(j,A) < \infty$ for all $(j,A)$ as above, since the canonical representative $\calR_\eps j$ is a competitor, as observed after Definition \ref{def:rep}.

\begin{lemma}[Scaling]
	\label{lem:scaling-again}
	Let $\eps \in (0,1]$ and let $\calR$ be a uniform-flow operator.
	For any $j \in V \otimes \R^d$ and $A \in \calB(\R^d)$
	we have
	\begin{align}
		\label{eq:scaling_prop_fomega}
		f_{\omega,\eps,\calR}(j,A)
        =
		\eps^d
		f_{\omega,\calR}\Bigl(j,\frac{A}{\eps}\Bigr)\, .
	\end{align}
\end{lemma}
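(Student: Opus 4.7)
The plan is to verify the identity directly from the definitions, using the already established scaling relation in Lemma~\ref{lem:j-and-J} between $\Rep_\calR$ and $\Rep_{\eps,\calR}$, together with the definition \eqref{eq:rescaled-energy} of the rescaled energy. The only substantive point is to set up the change of variables $J \leftrightarrow \tilde J$ between discrete vector fields on $\cE_\eps$ and on $\cE$ so that both the admissibility class and the energy transform with the same factor of $\eps^d$.

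First I would define the bijection: given $J \in V_\rma^{\cE_\eps}$, set
\begin{align}
    \tilde J \in V_\rma^{\cE},
    \qquad
    \tilde J(x,y) := \eps^{1-d} J(\eps x, \eps y),
    \qquad (x,y) \in \cE.
\end{align}
This map is a bijection between $V_\rma^{\cE_\eps}$ and $V_\rma^{\cE}$, with inverse $J(x,y) = \eps^{d-1} \tilde J(x/\eps, y/\eps)$. By Lemma~\ref{lem:j-and-J}, $\tilde J \in \Rep_\calR(j, A/\eps)$ if and only if $J = \eps^{d-1}\tilde J(\cdot/\eps) \in \Rep_{\eps,\calR}(j, \eps \cdot (A/\eps)) = \Rep_{\eps,\calR}(j, A)$. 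Hence the bijection restricts to a bijection between $\Rep_{\eps,\calR}(j,A)$ and $\Rep_\calR(j, A/\eps)$.

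Next I would compute the energies. By the definition \eqref{eq:rescaled-energy} of $F_{\omega,\eps}$,
\begin{align}
    F_{\omega,\eps}(J,A)
    = \eps^d F_\omega\!\left(\frac{J(\eps\,\cdot)}{\eps^{d-1}}, \frac{A}{\eps}\right)
    = \eps^d F_\omega(\tilde J, A/\eps).
\end{align}
Combining this with the bijection above and taking the infimum yields
\begin{align}
    f_{\eps,\calR}(j,A)
    = \inf_{J \in \Rep_{\eps,\calR}(j;A)} F_{\omega,\eps}(J,A)
    = \eps^d \inf_{\tilde J \in \Rep_\calR(j; A/\eps)} F_\omega(\tilde J, A/\eps)
    = \eps^d f_\calR(j, A/\eps),
\end{align}
which is \eqref{eq:scaling_prop_fomega}.

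There is essentially no obstacle here: everything follows by tracking the defining formulas. The one thing to be careful about is verifying that the boundary condition in Definition~\ref{def:rep}, involving the distance threshold $\eps R_\partial$, transforms correctly under the rescaling by a factor of $\eps$, but this is exactly the content of Lemma~\ref{lem:j-and-J}\,(ii), so no additional work is required.
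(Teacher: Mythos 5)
Your proof is correct and follows essentially the same route as the paper's: both invoke the definition \eqref{eq:rescaled-energy} of the rescaled energy together with Lemma~\ref{lem:j-and-J} to set up the bijection $J \leftrightarrow \eps^{1-d}J(\eps\,\cdot)$ between the admissible classes, then take infima. Your write-up just makes the bijection slightly more explicit than the paper's terser computation.
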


\begin{proof}
		Using the definition of the rescaled energy $F_\eps$ from \eqref{eq:rescaled-energy}
		and Lemma \ref{lem:j-and-J},
		we obtain
	\begin{align*}
		f_{\omega,\eps,\calR}(j,A)
	& =
			\inf
			\big\{
				F_{\omega,\eps}(J,A)
			\suchthat
				J \in \Rep_{\eps,\calR}(j;A)
			\big\}
	\\&	=
			\inf
			\bigg\{
				\eps^d
				F_\omega \bigg(
							 \frac{J(\eps \cdot)}{\eps^{d-1}},
						  	 \frac{A}{\eps} \bigg)
			\suchthat
				\frac{J(\eps \cdot)}{\eps^{d-1}} \in
						\Rep_\calR\bigg(  j;\frac{A}{\eps}\bigg)
			\bigg\}
	\\&	=
			\inf
			\bigg\{
				\eps^d
				F_\omega \Big(
							 J,
						  	 \frac{A}{\eps} \Big)
			\suchthat
				J \in
						\Rep_\calR\bigg( j;\frac{A}{\eps}\bigg)
			\bigg\}
	= \eps^d f_{\omega,\calR}\bigg( j, \frac{A}{\eps}\bigg)\,,
	\end{align*}
	as desired.
\end{proof}

\begin{lemma}[Properties of $f_{\omega,\eps,\calR}$]\label{lemma: F_omega}
	There exist constants $c,C>0$ such that the following assertions hold:
	\begin{enumerate}
		\item [(i)]
		\label{it:linear-growth}
		(linear growth) \
		For any bounded set
			$A \in \calB(\R^d)$
		with $\Lm^d(\partial A) = 0$
		we have
		\begin{align}
                \qquad
                c|j| \Lm^d(A)
                    \leq
                \liminf_{\eps \to 0}
				f_{\omega,\eps,\calR}(j,A)
			\leq
			\limsup_{\eps \to 0}
				f_{\omega,\eps,\calR}(j,A)
			\leq C \Lm^d(A) \bigl(|j|+1\bigr)
		\end{align}
		for all $j \in V \otimes \R^d$.
		\item [(ii)]
        (Lipschitz property) \
		For any bounded set $A\in \calB(\R^d)$
		we have
		\begin{align*}
			|f_{\omega,\eps,\calR}(j,A) - f_{\omega,\eps,\calR}(j',A)|
				\leq
			C\Lm^d
				\big( B(A,\eps \tilde R) \big)
				|j-j'|
		\end{align*}
		for all $j, j' \in V \otimes \R^d$, where $\tilde R := R_\Lip + \sqrt{d}$.
		\item [(iii)] (subadditivity) \
		For any pairwise disjoint collection of Borel sets $\{A_1, \ldots, A_N\}$ we have \begin{align}
			A = \bigcup_{i =1}^N A_i
				\quad \Longrightarrow \quad
			f_{\omega,\calR}(j,A) \leq \sum_{i=1}^N f_{\omega,\calR}(j,A_i) \, .
		\end{align}
	\end{enumerate}
\end{lemma}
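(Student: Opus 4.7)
My plan is to treat the three items separately, in each case testing the infimum defining $f_{\eps,\calR}$ with a carefully chosen admissible representative and exploiting Lipschitz continuity (F1), linear growth (F2), $\sigma$-additivity (F3), together with the divergence-freeness and boundedness of the uniform-flow operator $\calR$.

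For the \emph{upper bound in (i)} I would test with the canonical representative $\calR_\eps j$: by \eqref{eq:Lipschitz-F-eps} applied to $(\calR_\eps j, 0)$ and $F_\eps(0,A) \leq C_2 \Lm^d(A)$ from \eqref{eq:TV-F-eps}, the task reduces to controlling $|\iota_\eps \calR_\eps j|(B(A, \eps R_\Lip))$. Covering $B(A, \eps R_\Lip)$ by a disjoint union of $\eps$-cubes contained in $B(A, \eps \tilde R)$ with $\tilde R := R_\Lip + \sqrt d$, and invoking \eqref{eq:J0 properties_2} via Remark~\ref{rem:vague_J0_cubes}, bounds that term by $C|j|\Lm^d(B(A, \eps \tilde R))$, which converges to $C|j|\Lm^d(A)$ since $\Lm^d(\partial A) = 0$. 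For the \emph{lower bound in (i)}, I would combine the total variation inequality in \eqref{eq:TV-F-eps} with $|\iota_\eps J|(A) \geq |\iota_\eps J(A)|$ and the flux identity of Lemma~\ref{lem:equal-mass} to obtain $F_\eps(J,A) \geq 2c_2 |\iota_\eps \calR_\eps j(A)|$ for \emph{every} $J \in \Rep_{\eps,\calR}(j;A)$. The uniform TV bound from Remark~\ref{rem:abs_cont_Reps} upgrades the vague convergence $\iota_\eps \calR_\eps j \to j \Lm^d$ to narrow convergence on bounded sets, and Portmanteau with $\Lm^d(\partial A) = 0$ then yields $|\iota_\eps \calR_\eps j(A)| \to |j| \Lm^d(A)$, giving the lower bound with $c := 2c_2$.

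For \emph{(ii)}, given $J \in \Rep_{\eps,\calR}(j;A)$, I would consider the shifted field $J' := J + \calR_\eps(j'-j)$; by linearity of $\calR_\eps$, its divergence-freeness, and the boundary condition on $J$, one directly checks $J' \in \Rep_{\eps,\calR}(j';A)$. Then \eqref{eq:Lipschitz-F-eps} applied to $(J,J')$ together with the $\eps$-cube covering argument from (i) gives $|F_\eps(J,A) - F_\eps(J',A)| \leq C|j-j'|\Lm^d(B(A,\eps \tilde R))$, and taking infima in both orders produces the two-sided estimate.

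For \emph{(iii)}, I would pick nearly optimal $J_i \in \Rep_\calR(j;A_i)$ and glue them by $\tilde J := \calR j + \sum_{i=1}^N (J_i - \calR j)$. Since each $J_i$ agrees with $\calR j$ on every edge $[x,y]$ with $\dist([x,y], \R^d \setminus A_i) \leq R_\partial$, the support of $J_i - \calR j$ lies in the deep interior of $A_i$; pairwise disjointness of the $A_i$'s makes these supports disjoint, so $\tilde J$ is well-defined and divergence-free, and near $\R^d \setminus A \subseteq \R^d \setminus A_i$ every summand vanishes, so $\tilde J \in \Rep_\calR(j;A)$. The hard part will be the energy estimate, which hinges on the identity $R_\partial = \max\{R_\Lip, R_3\} \geq R_\Lip$: for every edge deep inside $A_k$ with $k \neq i$,
\[
	\dist([x,y], A_i) \geq \dist([x,y], \R^d \setminus A_k) > R_\partial \geq R_\Lip,
\]
so all such edges lie outside $B(A_i, R_\Lip)$. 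Locality of $F$ (Remark~\ref{rem:locality}) then gives $F(\tilde J, A_i) = F(J_i, A_i)$, and the $\sigma$-additivity (F3) delivers $F(\tilde J, A) = \sum_i F(J_i, A_i)$, producing the subadditivity after sending the optimisation error to zero. The only genuinely subtle point is this compatibility $R_\partial \geq R_\Lip$, hardwired into Definition~\ref{def:rep}, which prevents the boundary layers of distinct $A_i$'s from interacting.
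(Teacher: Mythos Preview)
Your proposal is correct and follows essentially the same route as the paper: testing with $\calR_\eps j$ for the upper bound in (i), using Lemma~\ref{lem:equal-mass} together with \eqref{eq:TV-F-eps} for the lower bound, shifting by $\calR_\eps(j'-j)$ for (ii), and gluing near-optimal $J_i$'s for (iii). Your formulation $\tilde J = \calR j + \sum_i (J_i - \calR j)$ is in fact the same field as the paper's piecewise definition, and you are slightly more explicit than the paper in isolating the key inequality $R_\partial \geq R_\Lip$ that makes locality applicable on each $A_i$.
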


\begin{proof}
	\emph{(i)}: \
To prove the lower bound, we fix $j \in V \otimes \R^d$ and let $A \subseteq \R^d$ be relatively compact with
	$\Lm^d(\partial A) = 0$.
For any $J \in \Rep_\calR(j,A)$
we obtain
using Assumption (F2),
\eqref{eq:total_var_iota},
and Lemma \ref{lem:equal-mass},
\begin{align}
\label{eq:lower_bound_rep}
    \frac{1}{2 c_2}
    F(J,A)
\geq
	\frac{1}{2}
    \sum_{(x,y) \in \cE}
        \Hm^1\bigl([x,y] \cap A\bigr) |J(x,y)|_V
=
     |\iota_1 J|(A)
\geq
     |\iota_1 J(A)|
=
     |\iota_1 \calR j(A)|
        \, .
\end{align}
Minimising over $J$, we infer that
\begin{align*}
    \frac{1}{2 c_2}
    f_{\omega,\calR}(j,A)
    \geq
    |\iota_1 \calR j(A)|
    \, .
\end{align*}
Applying this bound to $A/\eps$,
and using
Lemma \ref{lem:scaling-again}
and
\eqref{eq:J-scaling},
we find
\begin{align}
	\label{eq:low-tem}
	\frac{1}{2 c_2}
    f_{\omega,\eps, \calR}(j,A)
	=
    \frac{\eps^d}{2 c_2}
    f_{\omega,\calR}\Bigl( j,\frac{A}{\eps}\Bigr)
    \geq
    \eps^d
	\Bigl|\iota_1 \calR j\Bigl(\frac{A}{\eps}\Bigr) \Bigr|
    =
    |\iota_\eps \calR_\eps j (A)| \, .
\end{align}

	Next we claim that
	$\iota_\eps \calR_\eps j (A) \to j \Lm^d(A)$.
	To prove this, we first note that
	 $\iota_\eps \calR_\eps j$ converges vaguely to
	 $j \Lm^d$ since $\calR$ is a uniform-flow operator,
	and we recall that
	$A$ is relatively compact with $\Lm^d(\partial A) = 0$.
	The claim therefore follows by Remark~\ref{rem:abs_cont_Reps} and
	\cite[Propostion~1.62]{Ambrosio-Fusco-Pallara:2000} .

	Using the claim, we let $\eps \to 0$ in \eqref{eq:low-tem} to obtain
	\begin{align}
	\frac{1}{2 c_2}
	\liminf_{\eps \to 0}
		f_{\omega,\eps,\calR}(j,A)
    \geq
    	|j \Lm^d(A)|
	=
		|j|  \, \Lm^d(A) ,
\end{align}
as desired.

We are left with the proof of the upper bound, which follows from the assumption $F_\omega(0,A) \leq C_\omega \Lm^d(A)$, (F1), a similar argument as in Remark~\ref{rem:abs_cont_Reps}. Indeed, for every bounded $A \in \calB(\R^d)$, simply insert the representative $\calR_\eps j \in \Rep_{\eps,\calR}(j,A)$ and by the Lipschitz property \eqref{eq:Lipschitz-F-eps} of $F_{\omega,\eps}$
\begin{align}
\label{eq:proof_ub_f}
		f_{\omega,\eps,\calR}(j, A)
			\leq
		F_{\omega,\eps}(\calR_\eps j, A)
                &\leq
		F_{\omega,\eps}(0,A)
                +
            |\iota_\eps  \calR_\eps j |
             \Big( B(A,\eps R_\Lip) \Big)
\\
                &\leq
            C_\omega \Lm^d(A)
                +
            |\iota_\eps  \calR_\eps j |
             \Big( B(A,\eps R_\Lip) \Big)
                \, .
\end{align}
In order to use the boundedness of $\calR$, valid only for orthotopes, we observe that
\begin{align}
\label{eq:proof_lip_2}
    B(A,\eps R_\Lip)
        \subset
    \tilde B :=
    \bigcup
    \big\{
        Q_z = z + [0,\eps)^d
            \suchthat
        z \in \eps \Z^d
            \, , \quad
        Q_z \cap B(A,\eps R_\Lip) \neq \emptyset
    \big\}
        \, ,
\end{align}
where the set on the right-hand side is a finite ($A$ being bounded) union of disjoint cubes of side-length $\eps$.  Therefore, we can apply boundedness of $\calR$ (Remark~\ref{rem:vague_J0_cubes}) and obtain
\begin{align}
    |\calR_\eps(j'-j)|(\tilde B)
        \leq
    C |j-j'| \Lm^d(\tilde B)
        \leq
    C |j-j'| \Lm^d(B(A,\eps (R_\Lip+\sqrt{d}))
        \,  ,
\end{align}
where at last we used the trivial inclusion $\tilde B \subset B(A,\eps (R_\Lip+\sqrt{d}))$. The latter inequality, together with \eqref{eq:proof_ub_f} yields
 \begin{align}
 \label{eq:bounds_balls_uniform-flow}
		f_{\omega,\eps,\calR}(j, A)
			\leq
		C_2\Lm^d(A)
			+
		C \Lm^d(B(A,\eps \tilde R)) |j|
            \, ,
\end{align}
 which shows the claimed upper bound, thanks to the fact that $\Lm^d(B(A,\eps \tilde R)) \to \Lm^d(A)$ as $\eps \to 0$ if (and only if) $\Lm^d(\partial A)=0$.

	\medskip

	\emph{(ii)}: \
	To show (ii), we start from the lower bound. Pick
	$J \in \Rep_{\eps,\calR}(j, A)$ and define $J' := J + \calR_\eps (j'-j)$.
	It is readily checked that $J' \in \Rep_{\eps,\calR}(j', A)$.
	By the  Lipschitz properties of $F_{\omega,\eps}$ \eqref{eq:Lipschitz-F-eps}, we obtain
	\begin{align}
 \label{eq:proof_lip_1}
		f_{\omega,\eps,\calR}(j',A)
		\leq
		F_{\omega,\eps}(J',A)
		& \leq
	F_{\omega,\eps}(J,A) +
       c_2 |\calR_\eps(j'-j)|(B(A,\eps R_\Lip))
            \, .
	\end{align}
Arguing as in \eqref{eq:proof_lip_2}, from \eqref{eq:proof_lip_1} we infer that
        \begin{align}
            f_{\omega,\eps,\calR}(j',A)
                \leq
            F_{\omega,\eps}(J,A)
                +
            C c_2 |j-j'| \Lm^d(B(A,\eps \tilde R ))
                \, .
        \end{align}
	Minimising over all admissible $J \in \Rep_{\calR,\eps}(j, A)$, these bounds together yield
		$f_{\omega,\eps,\calR}(j',A) \leq f_{\omega,\eps, \calR}(j,A) + Cc_2 \Lm^d\big( B(A,\eps \tilde R) \big) |j'-j|$.
	The other bound follows by exchanging the roles of $j$ and $j'$.

	\medskip

	\emph{(iii)}: \
	Take near-optimal competitors $J_i$ in every $A_i$, namely for a given $\delta>0$, let $J_i \in \Rep_\calR(j,A_i)$ be such that $F_\omega(J_i,A_i) \leq f_{\omega,\calR} (j,A_i) + \delta$.
	Consider the glued field
	\[
	J(x,y):= \begin{cases}J_i(x,y) &\text{ if }[x,y]\subset A_i\\
		\calR j(x,y)&\text{ otherwise.}
	\end{cases}
	\]

	Note that for any edges $(x,y)\in \cE$ near the boundaries of the $A_i$ we have $J_i(x,y) = \calR j(x,y)$. The glued field $J$ is thus divergence-free and has the right boundary values in $A$, or in other words $J \in \Rep_\calR(j,A)$.
	Therefore, using the additivity assumption (F3) we obtain
	\[
	f_{\omega,\calR}(j,A) \leq  F_\omega(J,A) = \sum_{i=1}^N  F_\omega(J,A_i) \leq \sum_{i=1}^N f_{\omega,\calR}(j,A_i)+\delta.
	\]
	Since $\delta$ is arbitrary, this shows subadditivity.
\end{proof}

We now use the subadditive ergodic theorem to define the homogenized energy density, see e.g. \cite[Theorem~4.1]{Licht-Michaille:2002}.

\begin{defi}	\label{def:f_hom}
	We define $f_{\omega,\hom}:V\otimes \R^d \to \R$
	as the limit
	\begin{align*}
		f_{\omega,\hom}(j):= \lim_{\eps \to 0} \frac{f_{\omega,\calR}(j,A/\eps)} {\Lm^d(A/\eps)} \, , \quad \forall j \in V \otimes \R^d \, ,
	\end{align*}
where $A \subset \R^d$
is an arbitrary nonempty, open, convex, bounded set.
\end{defi}

By definition, the homogenised density $f_{\omega,\hom}(j)$ is obtained by taking a set $A$ and computing the energy density on the blown-up sets $A/\eps$, while the underlying graph remains fixed.
The following simple lemma asserts that one may equivalently keep the size of the set $A$ fixed and shrink the underlying graph so that the edge lengths become of order $\eps$.
This point of view will be convenient in the sequel.

\begin{lemma}[Equivalent formula for the homogenised density]
	\label{lemma:fhom_formula}
	For $j_0 \in V \otimes \R^d$ and $A \in \calA$
	we have
	\begin{align}
	\label{eq:fhom_formula}
		f_{\omega,\hom}(j)
			=
		\lim_{\eps \to 0}
			\frac
			{f_{\omega,\eps,\calR}
				\big(
				 j, A
				\big)
			}
			{\Lm^d(A)}.
	\end{align}
	\end{lemma}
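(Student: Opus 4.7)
The plan is that this lemma is a direct corollary of the scaling property established in Lemma~\ref{lem:scaling-again}, requiring essentially no new ingredients. Let me outline the short argument.

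First, I would invoke Lemma~\ref{lem:scaling-again} to write
\begin{align*}
	f_{\omega,\eps,\calR}(j,A) = \eps^d\, f_{\omega,\calR}\Bigl(j,\tfrac{A}{\eps}\Bigr).
\end{align*}
Combined with the elementary scaling $\Lm^d(A/\eps) = \eps^{-d}\Lm^d(A)$, this immediately yields the pointwise identity
\begin{align*}
	\frac{f_{\omega,\eps,\calR}(j,A)}{\Lm^d(A)} \;=\; \frac{f_{\omega,\calR}\bigl(j,A/\eps\bigr)}{\Lm^d(A/\eps)}
\end{align*}
valid for every $\eps\in(0,1]$ and every bounded $A\in\cB(\R^d)$ with $\Lm^d(A)>0$.

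Second, I would take the limit $\eps\to 0$ on both sides. Assuming $A$ lies in the admissible class of Definition~\ref{def:f_hom} (i.e.\ nonempty, open, convex, bounded — this is what $\calA$ should denote here), the right-hand side converges to $f_{\hom,\omega}(j)$ by the very definition of the homogenised density. Hence the left-hand side has the same limit, which is precisely the claim~\eqref{eq:fhom_formula}.

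There is no real obstacle here: the content of the lemma is merely that the two equivalent viewpoints — blowing up the set while keeping the graph fixed, versus keeping the set fixed while refining the graph — produce the same homogenised density, and this reduces to the scaling identity already proved. The only point to be careful about is ensuring that $A$ satisfies the regularity required by Definition~\ref{def:f_hom} so that the existence of the limit (guaranteed by Kingman's subadditive ergodic theorem applied via Lemma~\ref{lemma: F_omega}) can be invoked; for more general Borel sets one would need an additional approximation argument, but the statement as written concerns $A\in\calA$ only.
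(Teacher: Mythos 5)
Your proposal is correct and follows essentially the same route as the paper: both proofs invoke Lemma~\ref{lem:scaling-again} to obtain the pointwise identity $f_{\omega,\eps,\calR}(j,A)/\Lm^d(A) = f_{\omega,\calR}(j,A/\eps)/\Lm^d(A/\eps)$ and then pass to the limit $\eps\to 0$ using Definition~\ref{def:f_hom}. Your additional remark about $A$ needing to lie in the admissible class $\calA$ is a sensible observation but does not change the argument.
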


	\begin{proof}
	Lemma \ref{lem:scaling-again} yields
	\begin{align*}
		\frac{f_{\omega,\calR} (
						j,
						A/\eps
					)}
			{\Lm^d(A/\eps)}
		= \frac{ f_{\omega,\eps,\calR}(
				        j, A)}
			{\Lm^d(A)}
			,
	\end{align*}
	hence the result follows by passing to the limit $\eps \to 0$.
	\end{proof}

\begin{lemma}	\label{lemma:prop_fhom}
	The function $f_{\omega,\hom}:V\otimes \R^d \to \R$ exists almost surely. Moreover,
	\begin{itemize}
		\item [(i)] $f_{\omega,\hom}$ has at least linear growth: $f_{\omega,\hom}(j) \geq c|j|$.
		\item [(ii)]$f_{\omega,\hom}$ is Lipschitz: $|f_{\omega,\hom}(j) - f_{\omega,\hom}(j')| \leq C|j-j'|$.
		\item [(iii)] For all $x\in \R^d$ and almost every $\omega\in \Omega$ we have $f_{\hom,\tau_x \omega} = f_{\omega,\hom}$. If $P$ is ergodic, then $f_{\omega,\hom}$ is independent of $\omega$.
	\end{itemize}
\end{lemma}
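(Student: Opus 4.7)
The plan is to invoke a subadditive ergodic theorem of Akcoglu--Krengel / Licht--Michaille type \cite{akcoglu1981ergodic,Licht-Michaille:2002} applied to the set function $A \mapsto f_{\omega,\calR}(j,A)$ on bounded Borel subsets of $\R^d$, for each fixed $j \in V \otimes \R^d$. Subadditivity is Lemma~\ref{lemma: F_omega}(iii), while a linear upper bound $f_{\omega,\calR}(j,A) \leq C(|j|+1)\Lm^d(B(A,R_\Lip))$ follows by testing against the canonical representative $\calR j$ and combining (F1)--(F2) with the cube-boundedness of Remark~\ref{rem:vague_J0_cubes}. The covariance $f_{\sigma_z \omega, \calR}(j, A+z) = f_{\omega, \calR}(j, A)$ for $z \in \Z^d$ reduces, via the stationarity identity $F_{\sigma_z \omega}(\tau_z J, A+z) = F_\omega(J,A)$ and the bijection between divergence-free fields induced by $\tau_{-z}$, to the equivariance $\calR_{\sigma_z \omega} j = \tau_z \calR_\omega j$. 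This equivariance can be imposed by a measurable lexicographic selection of the reference vertices $x_z(\omega)$ and the connecting paths $P_{z,i}(\omega)$ used in the proof of Proposition~\ref{prop:J0}. The subadditive ergodic theorem then yields the $\P$-almost sure existence of $f_{\hom,\omega}(j) = \lim_{\eps\to 0} f_{\omega,\calR}(j,A/\eps)/\Lm^d(A/\eps)$ for every non-empty open bounded convex $A$; rescaling via Lemma~\ref{lem:scaling-again} gives the equivalent formulation \eqref{eq:fhom_formula}.

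Items (i) and (ii) follow by passing the bounds of Lemma~\ref{lemma: F_omega} to the limit through \eqref{eq:fhom_formula}, applied to any open bounded convex $A$ with $\Lm^d(\partial A)=0$ (e.g.\ the unit cube). For (i), Lemma~\ref{lemma: F_omega}(i) gives $\liminf_{\eps \to 0} f_{\omega,\eps,\calR}(j,A) \geq c|j|\Lm^d(A)$, whence $f_{\hom,\omega}(j) \geq c|j|$. For (ii), dividing the bound $|f_{\omega,\eps,\calR}(j,A) - f_{\omega,\eps,\calR}(j',A)| \leq C \Lm^d(B(A,\eps \tilde R))|j-j'|$ of Lemma~\ref{lemma: F_omega}(ii) by $\Lm^d(A)$ and sending $\eps \to 0$ yields $|f_{\hom,\omega}(j) - f_{\hom,\omega}(j')| \leq C|j-j'|$, since $\Lm^d(B(A,\eps \tilde R))/\Lm^d(A) \to 1$.

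For (iii), the equivariance $\calR_{\sigma_z \omega} j = \tau_z \calR_\omega j$ established in the first paragraph propagates to the ratio $f_{\omega,\calR}(j,A/\eps)/\Lm^d(A/\eps)$, yielding $f_{\hom, \sigma_z \omega}(j) = f_{\hom, \omega}(j)$ for all $z \in \Z^d$ and $\P$-a.e.\ $\omega$. In the ergodic case, this translation-invariant measurable quantity is necessarily $\P$-a.s.\ constant, which is in fact a direct output of the subadditive ergodic theorem. The main obstacle throughout is the first paragraph: the uniform-flow operator $\calR$ is not uniquely determined by $(\cX,\cE)$, so covariance of the set function requires either an equivariant measurable selection rule (as outlined above) or an a priori proof that the limit in \eqref{eq:fhom_formula} does not depend on the choice of uniform-flow operator. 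The latter can be obtained by interpolating between optimal representatives for two operators $\calR^1,\calR^2$ on a thin collar near $\partial A$ and closing the resulting divergence defect via Proposition~\ref{prop:correctors}, with (F1), Remark~\ref{rem:vague_J0_cubes}, and the Kantorovich--Rubinstein estimate on the corrector controlling the energy error and showing it to be $o(\Lm^d(A/\eps))$.
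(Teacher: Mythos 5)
Your proposal follows essentially the same route as the paper: existence via the subadditive ergodic theorem applied to $A \mapsto f_{\omega,\calR}(j,A)$ (the paper cites \cite[Theorem~4.1]{Licht-Michaille:2002} just before Definition~\ref{def:f_hom}), with (i) and (ii) extracted from Lemma~\ref{lemma: F_omega} through the scaling formula \eqref{eq:fhom_formula}, and (iii) from stationarity and ergodicity (the paper delegates to \cite{DalMaso-Modica:1986,Braides-Maslennikov-Sigalotti:2008}). Where you diverge usefully is in making explicit a hypothesis the paper leaves tacit: the subadditive ergodic theorem requires the set function to be covariant, $f_{\sigma_z\omega,\calR}(j,A+z)=f_{\omega,\calR}(j,A)$, and since $\Rep_{\eps,\calR}(j;A)$ is defined relative to $\calR$, this reduces to the equivariance $\calR_{\sigma_z\omega}j = \tau_z\calR_\omega j$, which does not hold for an arbitrary uniform-flow operator and is not discussed in Proposition~\ref{prop:J0}. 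Your two proposed fixes are both sound: either impose a measurable, translation-equivariant selection rule on the vertices $x_z(\omega)$ and the paths $P_{z,i}(\omega)$ in the construction of $\calR$ (a lexicographic choice does the job, and is the standard remedy in this genre), or show a priori that $f_{\hom,\omega}$ is independent of the choice of $\calR$ by a boundary-collar interpolation closed with the divergence corrector of Proposition~\ref{prop:correctors} — an argument that is genuinely present in spirit in the paper's lower-bound machinery but never recorded as a self-contained independence statement. Points (i), (ii), and the $\Z^d$-invariance in (iii) are handled correctly; the constancy in the ergodic case is indeed part of the conclusion of the subadditive ergodic theorem, as you say.
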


\begin{proof}
	Points (i) and (ii) follow from Lemma \ref{lemma: F_omega}. Point (iii) follows from the properties of the random variables and the ergodic theorem (see \cite[Prop~1]{DalMaso-Modica:1986}) and \cite[Proposition~2.3]{Braides-Maslennikov-Sigalotti:2008}).
\end{proof}

We note that one could show now that $f_{\omega,\hom}:V\otimes \R^d\to \R$ is $\dive$-quasiconvex, and if $V=\R$ actually $f_{\omega,\hom}:\R\otimes \R^d \sim \R^d \to \R$ is convex. However, it is not necessary to prove the $\dive$-quasiconvexity property for $f_{\omega,\hom}$, as it arises as a natural consequence of the $\Gamma$-convergence and the lower semicontinuity of  the any $\Gamma$-limit, see e.g. \cite[Theorem~1.2]{Baia-Chermisi-Matias-Santos:2013}.

\section{Correctors to the divergence equation}
\label{sec:correctors}
In this section, we discuss the existence and properties of correctors to the discrete divergence equation, in a similar spirit as in \cite{Gladbach-Kopfer-Maas-Portinale:2023}. A similar result in $L^p$ in a continuous and periodic setting is provided by \cite[Lemma~2.14]{Fonseca-Muller:1999} (no locality property of the correctors is therein discussed). This is a crucial tool when performing corrections in the proof of the lower bound in our main result.

We first recall \cite[Def~4.4]{Gladbach-Kopfer-Maas-Portinale:2023} the definition of vector field associated to a	simple directed path $P \in \fP$ on $(\cX,\cE)$.
For an edge
		$e =(x,y) \in \cE$,
	the corresponding reversed edge will be denoted by
		$\overline e = (y,x) \in \cE$.

	\begin{defi}[Unit flux through a path]	\label{def:vectoralongpath}
		Let $P := (x_i)_{i=0}^m \in \fP$ be a simple path in $(\cX, \cE)$,
		thus $e_i = (x_{i-1}, x_i) \in \cE$
		for $i = 1, \ldots, m$,
		and $x_i \neq x_k$ for $i \neq k$.
		The \emph{unit flux through $P$}
		is the discrete vector field
			$J_{P} \in \R^{\cE}_a$
		given by
		\begin{align}	\label{eq:divergence_J_Pq}
			J_P(e) =
		\begin{cases}
		1  & \text{if } e = e_i \text{ for some } i \, , \\
		-1 & \text{if } e = \overline e_i  \text{ for some } i\, ,\\
		0  & \text{otherwise} \, .
		\end{cases}
		\end{align}
	\end{defi}

The next lemma collects some of the key properties of these vector fields.
	\begin{lemma}[\cite{Gladbach-Kopfer-Maas-Portinale:2023}; Properties of $J_P$]	\label{lem:J_Pq}
		Let $P := (x_i)_{i=0}^m$ be a simple path in $(\cX, \cE)$.
				The discrete divergence of the associated unit flux
					$J_P : \cE \to \R$
				is given by
				\begin{align}
					\dive J_P
					=
					\one_{\{x_0\}} - \one_{\{x_m\}}.
				\end{align}
	\end{lemma}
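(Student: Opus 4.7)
The statement is a direct computation, so my plan is to prove it by a case analysis on the position of a vertex $x \in \cX$ relative to the path $P = (x_i)_{i=0}^m$. By the defining formula \eqref{eq:divergence_J_Pq}, the function $J_P$ is supported on the $2m$ oriented edges $\{e_i, \overline{e_i}\}_{i=1}^m$, so only vertices lying on $P$ can contribute nontrivially to $\dive J_P$. For any $x \notin \{x_0, \ldots, x_m\}$ I would simply note that every neighbour $y \sim x$ gives $J_P(x,y) = 0$, hence $\dive J_P(x) = 0$.

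The core of the argument is the computation at the vertices of the path, where simplicity of $P$ plays the key role: since the $x_i$ are pairwise distinct, each internal vertex $x_i$ ($1 \le i \le m-1$) is the tail of exactly one path edge, namely $e_{i+1} = (x_i, x_{i+1})$, and the head of exactly one path edge, $e_i = (x_{i-1}, x_i)$. Consequently
\begin{align}
\dive J_P(x_i) = J_P(x_i, x_{i+1}) + J_P(x_i, x_{i-1}) = 1 + (-1) = 0,
\end{align}
with all other terms in the sum $\sum_{y \sim x_i} J_P(x_i, y)$ vanishing. At the endpoints, $x_0$ is incident to the single path edge $e_1 = (x_0, x_1)$, giving $\dive J_P(x_0) = J_P(x_0, x_1) = 1$, while $x_m$ is incident only to $e_m = (x_{m-1}, x_m)$, so $\dive J_P(x_m) = J_P(x_m, x_{m-1}) = -1$.

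Combining the four cases yields exactly $\dive J_P = \one_{\{x_0\}} - \one_{\{x_m\}}$, which is the claim. There is no genuine obstacle: the only subtlety is to invoke simplicity of $P$ to rule out the possibility that a vertex is visited more than once (which would otherwise produce additional contributions to the local sum), and to use that the edge set $\cE$ is symmetric so that the antisymmetric extension of $J_P$ to the reversed edges is well-defined.
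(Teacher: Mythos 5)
Your proof is correct and is precisely the direct computation one would carry out (the paper itself only cites this lemma from \cite{Gladbach-Kopfer-Maas-Portinale:2023} without reproving it). The case analysis at endpoints, interior vertices, and off-path vertices, together with the observation that simplicity prevents any vertex from accruing extra contributions, is exactly the argument needed.
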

The next proposition shows a self-strengthening of the condition (G2) under the validity of (G1), which plays an important role in this section.

\begin{lemma}[Localisation of (G2)]
\label{lemma:localisation_G}
	Let $(\cX,\cE)$ be a graph satisfying \emph{(G1)} and \emph{(G2)}. Then there exists  $C<\infty$ which only depends on $R_1$, $R_2$ so that for every $\eps \in (0,1)$,  $x,y \in \cX_\eps$, there exists a path $P_{xy} = (x_i)_{i=1}^m \in \tP$ on $(\cX_\eps,\cE_\eps)$ such that
	\begin{align}
	\label{eq:localisation_statement}
		\emph{length}(P_{xy}) \leq R' \big( |x-y | + \eps)
			\tand
		\sup_{i \in \{1, \dots, m\} }
		\sup_{z \in [x_i, x_{i+1}]}
			\dist
			\big(
				z , [x,y]
			\big)
		\leq C \eps
			\, ,
	\end{align}
	for some $R'< \infty$ which only depends on $R_1$, $R_2$.
\end{lemma}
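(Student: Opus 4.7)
The plan is to reduce the statement to the case $\eps = 1$ by scaling, then construct a path that follows $[x,y]$ by chaining together short ``local detours'' supplied by (G2) between nearby graph vertices supplied by (G1). Since $(\cX_\eps,\cE_\eps) = \eps(\cX,\cE)$, every path in $(\cX,\cE)$ from $x'$ to $y'$ of Euclidean length $L$ corresponds to a path in $(\cX_\eps,\cE_\eps)$ from $\eps x'$ to $\eps y'$ of length $\eps L$, and distances to $[\eps x', \eps y']$ scale likewise. Thus it suffices to prove: for all $x,y \in \cX$ there exists a simple path in $(\cX,\cE)$ from $x$ to $y$ of Euclidean length $\leq R'(|x-y|+1)$ all of whose edges lie in a tube $\{z : \dist(z,[x,y]) \leq C\}$, with $R',C$ depending only on $R_1, R_2$.

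First I would discretize the segment: let $N := \lceil |x-y| \rceil$ and place points $p_0 = x, p_1, \dots, p_N = y$ on $[x,y]$ with $|p_i - p_{i+1}| \leq 1$. By (G1), I pick vertices $q_i \in \cX$ with $|q_i - p_i| \leq R_2$, choosing $q_0 := x$ and $q_N := y$. The triangle inequality then yields $|q_i - q_{i+1}| \leq 2R_2 + 1$, so (G2) provides a simple path $P_i \in \tP$ in $(\cX,\cE)$ from $q_i$ to $q_{i+1}$ of Euclidean length at most
\begin{align}
    R_1\bigl(|q_i - q_{i+1}| + 1\bigr) \leq R_1(2R_2 + 2) =: \ell \, .
\end{align}

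The tube bound then follows automatically from the length bound: every vertex $v$ of $P_i$ satisfies $|v - q_i| \leq \ell$, so
\begin{align}
    \dist(v,[x,y]) \leq |v - q_i| + |q_i - p_i| \leq \ell + R_2 =: C \, .
\end{align}
Since $z \mapsto \dist(z,[x,y])$ is convex (as the segment $[x,y]$ is convex), the same bound $\leq C$ holds on every edge $[x_i,x_{i+1}]$ of $P_i$, and not merely at the vertices.

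Finally, concatenating $P_0, P_1, \dots, P_{N-1}$ yields a walk $W$ from $x$ to $y$ of total length at most $N\ell \leq \ell (|x-y|+1)$, and the tube estimate is inherited by every point of $W$. If $W$ is not simple, I extract a simple subpath $P_{xy}$ by standard loop removal, which can only decrease the length and preserves the tube property (since it only discards edges). Setting $R' := \ell$ and rescaling by $\eps$ yields \eqref{eq:localisation_statement}. I expect no serious obstacle here: the scaling is routine, (G1) and (G2) are used in the most direct way, and the only point requiring any care is observing that a length bound on each $P_i$ automatically translates into a uniform tube bound, which is what makes (G2) ``self-strengthen'' under (G1).
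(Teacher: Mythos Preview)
Your proof is correct and follows essentially the same route as the paper: discretize the segment $[x,y]$ at unit scale, use (G1) to find nearby graph vertices, connect consecutive ones via the short paths from (G2), concatenate, and remove loops. Your scaling reduction to $\eps=1$ is a clean way to avoid carrying $\eps$'s through the argument, and your derivation of the tube bound via $|v-q_i|\leq \mathrm{length}(P_i)\leq \ell$ together with convexity of $\dist(\cdot,[x,y])$ is slightly slicker than the paper's version (which instead bounds $\dist(q,[x,y])$ by exploiting $\mathrm{length}(P_i)\geq |x_i-q|+|q-x_{i+1}|$ and triangle inequalities), at the cost of a marginally worse constant.
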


\begin{proof}
The intuition behind the proof stems from the fact that, thanks to Assumptions (G1) and (G2), we are able to construct a path from $x,y$ gluing together paths of length of order $\eps$, while ensuring that we do not drift too far away from the segment $[x,y]$. Fix $x,y \in \cX_\eps$.

\smallskip
\noindent
\textit{Step 1}: \
If $|x-y| \leq \eps R_1$, the statement trivially follows from (G2).

\smallskip
\noindent
\textit{Step 2}: \
Assume that $|x-y| > \eps R_1$ and consider the sequence of points $x=:z_0, z_1, \dots, z_{m-1}$, $z_m:=~y$, for $m:= [(\eps R_1)^{-1}|x-y|] \in \N$, defined by
\begin{align}
	z_{i+1} \in [x,y]
		\tand
	 | z_{i+1} - z_i | = \eps R_1
		\, , \qquad
	\forall i = 0, \dots, m-2
		\, .
\end{align}

\begin{figure}[h]
\centering 
\includegraphics[scale=1.2]{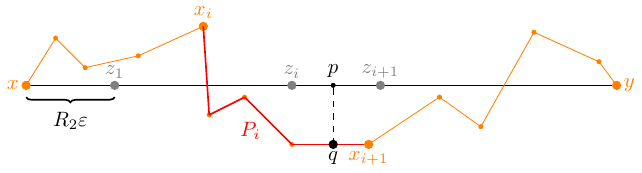}
\caption{Construction of good paths.}
	\label{fig:localisation}
\end{figure}
For every $i$, we use Assumption~(G1) and find a point $x_i \in \cX_\eps$ such that $x_i \in B_{\eps R_1}(z_i)$ (we set $x_0:= x$ and $x_m:= y$). Note that by traingle inequality, we must necessarily have $|x_{i+1} - x_i| \leq 2\eps R_1$, for every $i = 0, \dots, m-1$. Therefore, by Assumption~(G2) we can find a path $P_i \in \tP$ on $(\cX_\eps,\cE_\eps)$ which connects $x_i$ to $x_{i+1}$ of Euclidean length
\begin{align}
\label{eq:length_localised_paths}
	\text{length}(P_i) \leq
        R_2
	\big(
		|x_{i+1} - x_i| + \eps
	\big)
		\leq
	R_2
	\big(
		2\eps R_1 + \eps
	\big)
		\leq
	\eps R_2(2R_1 + 1)
		\, ,
\end{align}
see Figure~\ref{fig:localisation}.

We claim that the path $P=(P_0, \dots, P_{m-1})$ obtained gluing them together satisfies the claimed properties. Should the path P not being simple, we can simply eliminate the loops and obtain a simple path who satisfies the sought properties. To this end, pick any $q \in P_i$ and denote by $p \in [x,y]$ its projection onto $[x,y]$ (cfr. Figure~\ref{fig:localisation}). In particular,
\begin{align}
	|q-p| = \dist(q, [x,y]) \geq |q- z_j|
		\, , \qquad
	\forall j =0, \dots, m
		\, .
\end{align}
Therefore, by \eqref{eq:length_localised_paths} and triangle inequality we find that
\begin{align}
	\eps R_2(2R_1 + 1)
		&\geq
	\text{length}(P_i)
		\geq
	|x_i - q| + |q-x_{i+1}|
\\
		&\geq
	|z_i - q| - |x_i - z_i| + |q- z_{i+1}|  -  |z_{i+1} - x_{i+1}|
\\
		&\geq
	2 \dist
	\big(
		q, [x,y]
	\big)
		- 2 \eps R_1
		\, ,
\end{align}
which yeilds $\dist(q, [x,y]) \leq C \eps$, for $C:= \eps R_2(R_1 + 1/2) + R_1$. This shows the second part of \eqref{eq:localisation_statement}. The bound on the total length simply follows from \eqref{eq:length_localised_paths} and the fact that $m = \big[ |x-y| / \eps R_1 \big] \leq  |x-y| / \eps R_1$, which implies
\begin{align}
	\text{length}(P)
		=
	\sum_{i=0}^{m-1} \text{length}(P_i)
		\leq
	 m \eps R_2(2R_1 + 1)
	 	\leq
	  (2 R_2 + R_2R_1^{-1} )  |x-y|
	 	\, ,
\end{align}
and thus concludes the proof.
\end{proof}

We are ready to prove the main result of this section. In what follows, $\diam$ denotes the Euclidean diameter.

\begin{prop}[Correctors: existence and estimates]\label{prop:correctors}
	Let $m \in \calM_0(\cX_\eps; V)$ and set $\mu := \iota_\eps m$.
	Then there exists a discrete vector field $J\in V^{\cE_\eps}_a$ with the following properties: there exists $C=C(R_2)<\infty$ independent of $\eps$, $m$ so that
	\begin{enumerate}
		\item\label{item:c1} $J$ satisfies the discrete divergence equation  $\DIVE J = m$.
		\item\label{item:c2} We have the estimate
		$ \bigl|\iota_\eps J \bigr|(\R^d)
		\leq  C
		\left(
			\| \mu\|_{\tKR(\R^d)}
				+ \eps | \mu |(\R^d)
		\right)
		$.
	\end{enumerate}
	Assume additionally that $\supp(\mu) \subset Q$ where $Q$ is a convex set. Then  there exists a constant $C=C(d,R_1,R_2)< \infty$ independent of $\eps$ and $m$ so that
	\begin{enumerate}
		\setcounter{enumi}{2}
	\item\label{item:c3} $J$ can be chosen supported at distance at most $\eps$ from $Q$, in the sense that
	\begin{align}
		\supp(\iota_\eps J) \subset B_{C\eps}(Q)
		\, .
	\end{align}
	\end{enumerate}
\end{prop}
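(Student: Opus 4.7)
The plan is to realise $m$ as a superposition of elementary transports between pairs of vertices of $\cX_\eps$, and to turn each such transport into a discrete vector field of controlled total variation and prescribed divergence via the unit-flux-along-a-path construction of Definition~\ref{def:vectoralongpath}. Summing these elementary fluxes produces the desired $J$. The starting point is a representation
\begin{align}
	m = \sum_{k} v_k \bigl(\delta_{x_k} - \delta_{y_k}\bigr), \qquad x_k, y_k \in \cX_\eps,\ v_k \in V,
\end{align}
subject to the two \emph{simultaneous} bounds
\begin{align}
	\sum_{k} |v_k|_V\, |x_k - y_k| \leq C_V\, \|\mu\|_{\tKR}
		\tand
	\sum_{k} |v_k|_V \leq C_V\, |\mu|(\R^d).
\end{align}
When $V=\R$, both bounds follow from an optimal Kantorovich plan between the positive and negative parts $m_{\pm}$: the plan realises $\|\mu\|_{\tKR}$ as transport cost, while its total mass equals $\tfrac{1}{2}|m|(\cX_\eps)$. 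For general finite-dimensional $V$, fix a basis $(e_j)$, decompose $m=\sum_j m^j e_j$ into scalar components with zero total mass, apply the scalar construction to each $m^j$, and concatenate the resulting triples; equivalence of norms on $V$ absorbs all basis-dependent factors into the constant $C_V$.

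With the decomposition in hand, for each $k$ apply Lemma~\ref{lemma:localisation_G} to obtain a simple path $P_k$ on $(\cX_\eps,\cE_\eps)$ from $x_k$ to $y_k$ with $\length(P_k) \leq R'(|x_k-y_k| + \eps)$ and whose image lies within $C\eps$ of the segment $[x_k,y_k]$. Set
\begin{align}
	J := \sum_{k} v_k\, J_{P_k} \in V_\rma^{\cE_\eps}.
\end{align}
Property~\eqref{item:c1} then follows from Lemma~\ref{lem:J_Pq}: $\DIVE(v_k J_{P_k}) = v_k(\one_{\{x_k\}}-\one_{\{y_k\}})$, hence $\DIVE J = m$ by summation. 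Property~\eqref{item:c2} follows from the identity $|\iota_\eps J_{P}|(\R^d) = \length(P)$, immediate from~\eqref{eq:total_var_iota}, combined with subadditivity of the total variation and the two bounds above:
\begin{align}
	|\iota_\eps J|(\R^d)
		\leq
	\sum_{k} |v_k|_V\, \length(P_k)
		\leq
	R'\sum_{k} |v_k|_V \bigl( |x_k-y_k| + \eps \bigr)
		\leq
	C \bigl( \|\mu\|_{\tKR} + \eps\, |\mu|(\R^d) \bigr).
\end{align}
For property~\eqref{item:c3}, if $\supp\mu\subset Q$ with $Q$ convex, both endpoints $x_k, y_k$ lie in $Q$, hence $[x_k,y_k]\subset Q$ by convexity; Lemma~\ref{lemma:localisation_G} then forces every edge of $P_k$ to lie within $C\eps$ of $Q$, and thus $\supp(\iota_\eps J)\subset B_{C\eps}(Q)$.

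The main obstacle is producing the transport decomposition in the first step with both bounds controlled \emph{simultaneously}: a generic near-optimal decomposition for the $\tKR$-infimum may split mass into arbitrarily many tiny transfers, blowing up $\sum_k|v_k|_V$, which is exactly the quantity needed to absorb the $R'\eps$ piece of $\length(P_k)$ into the $\eps|\mu|(\R^d)$-correction in~\eqref{item:c2}. This is precisely the role of using an actual optimal Kantorovich plan between $m_+$ and $m_-$ (rather than a mere minimising sequence for $\|\mu\|_{\tKR}$), together with the reduction to scalar components in the vectorial case.
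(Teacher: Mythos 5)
Your proposal is correct and follows essentially the same strategy as the paper: decompose $\mu$ into scalar components, use an optimal Kantorovich plan for each to obtain an atomic coupling whose marginal difference recovers $m^j$, realise each elementary transport $v_k(\delta_{x_k}-\delta_{y_k})$ as a unit flux $J_{P_k}$ along a path furnished by Lemma~\ref{lemma:localisation_G}, and superpose. The paper packages the argument by first proving a purely continuous corrector bound (Proposition~\ref{prop:corrector_Euclidean}) and then transplanting it to the graph via path replacement, whereas you fold both steps into a single discrete construction; this is a presentational choice, not a mathematical difference. One point you gloss over with ``equivalence of norms on $V$'': for the first bound $\sum_k |v_k|_V |x_k-y_k| \lesssim \|\mu\|_{\tKR}$ you actually need that scalar coordinate projections do not increase the $\tKR$-norm (this is the claim~\eqref{eq:claim_decrease} in the paper, proved by testing against $\varphi \otimes e_i^*$), whereas equivalence of norms alone only controls the second, total-variation bound; since both facts are elementary for finite-dimensional $V$, your conclusion stands, but the reasoning deserves to be separated.
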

\begin{rem}[Dependence on the total variation of $\mu$]
\label{rem:dep_TV}
	The presence of the total variation in (2) is a consequence of the general assumptions on the graph, in particular (G2).
	If one assumes the stronger condition that
		for every $x,y\in \cX$
	there exists a path $P$ in $(\cX,\cE)$ with Euclidean length
		$(P)\leq R_2 |x-y|$,
	then one can get rid of the term $\eps \|\mu\|_{\TV(\R^d)}$.
	In particular, this holds for the cartesian grid $(\Z^d, \E^d)$.
\end{rem}

	Before proving Proposition \ref{prop:correctors},
	it is instructive to state and prove the following continuous counterpart.

\begin{prop}[Correctors at the continuous level]
	\label{prop:corrector_Euclidean}
	For every $\mu \in \M_0(\R^d;V)$ there exists
		$\nu \in \M(\R^d ; V \otimes \R^d)$
		such that
	\begin{align}
	\label{eq:claim_corrector_cont}
		\nabla \cdot \nu = \mu
			\; \tand \;
		\| \nu \|_{\TV(\R^d;V)}
			\leq
		\dim(V)
		\| \mu \|_{\tKR(\R^d;V)}
			\, .
	\end{align}
\end{prop}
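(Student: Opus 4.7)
The plan is to reduce the $V$-valued divergence equation to the scalar Beckmann formula by decomposing $\mu$ in a well-chosen basis of $V$.

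\textbf{Auerbach basis and projection.} First I would pick an Auerbach basis $\{e_1,\dots,e_n\}$ of $V$, which exists in every finite-dimensional normed space: by definition $|e_i|_V=1$ and the dual basis $\{e_i^*\}\subset V^*$ satisfies $|e_i^*|_{V^*}=1$. Decompose
\begin{equation*}
\mu = \sum_{i=1}^n \mu_i\, e_i \with \mu_i := \langle e_i^*,\mu\rangle \in \M_0(\R^d).
\end{equation*}
The normalization $|e_i^*|_{V^*}=1$ ensures that for every $1$-Lipschitz (bounded) scalar test $\varphi$, the $V^*$-valued map $\varphi\, e_i^*$ is an admissible competitor in the definition of $\|\mu\|_{\tKR(\R^d;V)}$, yielding the projection bound
\begin{equation*}
\|\mu_i\|_{\tKR(\R^d)} \leq \|\mu\|_{\tKR(\R^d;V)} \tand  i=1,\dots,n.
\end{equation*}

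\textbf{Scalar Beckmann and assembly.} For each scalar $\mu_i\in \M_0(\R^d)$, the classical Beckmann duality (see e.g.\ \cite[Ch.~4]{santambrogio}) provides $\nu_i\in \M(\R^d;\R^d)$ with $\nabla\cdot \nu_i = \mu_i$ and $\|\nu_i\|_{\TV}\leq \|\mu_i\|_{\tKR}$; concretely, starting from an optimal $W_1$-plan $\pi_i$ between $\mu_i^+$ and $\mu_i^-$, set $\nu_i := \tfrac12\int J_{[x,y]}\, d\pi_i(x,y)$, where $J_{[x,y]}$ is the oriented $1$-dimensional Hausdorff measure on the segment from $x$ to $y$. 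I then define
\begin{equation*}
\nu := \sum_{i=1}^n e_i \otimes \nu_i \in \M(\R^d; V\otimes \R^d).
\end{equation*}
Distributional divergence commutes with this linear combination, so $\nabla\cdot \nu = \sum_i e_i\,(\nabla\cdot \nu_i)=\sum_i e_i\,\mu_i = \mu$. Using $|e_i|_V=1$ and the triangle inequality applied to the density of $\nu$,
\begin{equation*}
\|\nu\|_{\TV} \leq \sum_{i=1}^n \|\nu_i\|_{\TV} \leq \sum_{i=1}^n \|\mu_i\|_{\tKR} \leq n\,\|\mu\|_{\tKR} = \dim(V)\,\|\mu\|_{\tKR},
\end{equation*}
which is exactly \eqref{eq:claim_corrector_cont}.

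\textbf{Main obstacle.} The delicate point is the scalar Beckmann inequality itself when $\mu_i$ lacks a finite first moment: in that case $W_1(\mu_i^+,\mu_i^-)$ can be infinite even though $\|\mu_i\|_{\tKR}$ is finite. This is handled by truncation — approximate $\mu_i$ by compactly supported zero-mass measures $\mu_i^k$ with $\limsup_k \|\mu_i^k\|_{\tKR}\leq \|\mu_i\|_{\tKR}$, apply Beckmann on each to get $\nu_i^k$ with uniformly bounded $\TV$, and extract a vague limit $\nu_i$ using weak-$*$ compactness together with the continuity of the distributional divergence. This is the only step requiring care; the rest of the argument is purely algebraic.
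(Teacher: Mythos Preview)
Your proof is correct and takes essentially the same route as the paper's --- decompose $\mu$ in a basis of $V$, apply scalar Kantorovich--Rubinstein/Beckmann componentwise, and reassemble; your explicit choice of an Auerbach basis (ensuring $|e_i|_V=|e_i^*|_{V^*}=1$) is in fact cleaner than the paper's unspecified ``fix a basis'', since both the projection bound and the final triangle inequality rely on these normalisations. One minor correction: your ``main obstacle'' is not an obstacle at all, because for balanced scalar measures Kantorovich--Rubinstein duality gives $\|\mu_i\|_{\tKR(\R^d)} = W_1(\mu_i^+,\mu_i^-)$ exactly (both sides possibly $+\infty$), so once the projection bound yields $\|\mu_i\|_{\tKR}<\infty$ the optimal plan $\pi_i$ exists with finite cost and no truncation is needed.
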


\begin{proof}
	Fix a basis $(e_i)_i$ of $V$ and let $(e_i^*)_i$ be the corresponding dual basis of $V^*$.
	Let $\mu_i = \ip{\mu, e_i^*} \in \M_0(\R^d)$, so that
		$\mu = \sum_{i} \mu_i \otimes e_i$.
	Then we can write
		$\mu_i = \mu_i^+ - \mu_i^-$,
		so that $\| \mu_i \|_{\tKR(\R^d)} = \mathbb W_1(\mu_i^-, \mu_i^+)$ by
		Kantorovich duality \eqref{eq:remark_KR_T1}
		for the scalar optimal transport problem.
	Let $\pi_i \in \M_+(\R^d \times \R^d)$ an optimal coupling for $\mathbb W_1(\mu_i^-, \mu_i^+)$, i.e.,
	\begin{align}
		\| \mu_i \|_{\tKR(\R^d)}
			=
		\int_{\R^d \times \R^d} | x-y | \dd \pi_i(x,y)
			\; \;  \text{and} \; \;
		(\tP_1)_{\#} \pi_i = \mu_i^-
			\, , \,
		(\tP_2)_{\#} \pi_i = \mu_i^+
			\, .
	\end{align}
	We set $\pi:= \sum_i \pi_i \otimes e_i \in \M(\R^d \times \R^d ;V)$ and observe that $\pi$ is a coupling for $\mu$, in the sense that $(\tP_2)_{\#} \pi - (\tP_1)_{\#} \pi = \mu$.

	We claim that the coordinate projections decrease the $\tKR$-norm, in the sense that
	\begin{align}
	\label{eq:claim_decrease}
		\| \mu_i \|_{\tKR(\R^d)}
		\leq \| \mu \|_{\tKR(\R^d; V)}
			\, , \qquad
		\forall i = 1, \dots, n
			\, .
	\end{align}
	To show \eqref{eq:claim_decrease}, fix $x_0 \in\R^d$ from the definition of the $\tKR$-norm
	and pick a Lipschitz test function
	$\varphi : \R^d \to \R$ with $\varphi(x_0) = 0$ and
	$\Lip(\varphi) \leq 1$.
	Set $\Phi_i := \varphi \otimes e_i^* : \R^d \to V^*$. Then $\Lip(\Phi_i) = \Lip(\varphi)\leq 1$ and
	\begin{align*}
		\int_{\R^d} \varphi \dd \mu_i
			=
		\int_{R^d}
		\ip{
			\Phi_i , \dd \mu
		}
			\leq
		\| \mu \|_{\tKR(\R^d; V)}
			\, .
	\end{align*}
	Taking the supremum over $\varphi$ we obtain \eqref{eq:claim_decrease}.

	Finally, we define $\nu \in \M(\R^d ; V \otimes \R^d)$ as
	\begin{align*}
		\nu := \int_{\R^d \times \R^d}
		\frac
			{\dd \pi}
			{\dd |\pi|_V}(x,y)
		\otimes
		\nu^{xy} \dd |\pi|_V(x,y)
			\, , \where
		\nu^{xy} :=
		\frac
			{y-x}{|y-x|}
			\left(
				\Hm^1 \res [x,y]
			\right)
				\, . \quad
	\end{align*}
	We are left to prove that $\nu$ satisfies \eqref{eq:claim_corrector_cont}.
	For the divergence condition we use the fact that $\nabla \cdot \nu^{xy} = \delta_y - \delta_x$
	to obtain
	\begin{align}
		\nabla \cdot \nu
			=
		\int_{\R^d \times \R^d}
			\nabla \cdot \nu^{xy} \dd \pi(x,y)
			=
		\int_{\R^d \times \R^d}
			\big( \delta_y - \delta_x  \big)
				\dd \pi(x,y)
			=
		(\tP_2)_{\#} \pi - (\tP_1)_{\#} \pi = \mu
			\, .
	\end{align}
	Moreover, since
		$\| \nu^{xy} \|_{\TV(\R^d)} = | x-y | $
	and
		$|\pi|_V \leq \sum_i \pi_i$
	we infer that
	\begin{align}
	\label{eq:tv_cont}
	\begin{aligned}
		\| \nu \|_{\TV(\R^d)}
			&\leq
		\int_{\R^d \times \R^d}
			|x-y| \dd |\pi|_V(x,y)
			\leq
		\sum_i
			\int_{\R^d \times \R^d}
				|x-y| \dd \pi_i(x,y)
\\
			&=
		\sum_i
			\| \mu_i\|_{\tKR(\R^d)}
			\leq
		\dim(V) \| \mu \|_{\tKR(\R^d;V)}
			\, ,
	\end{aligned}
	\end{align}
	where at last we used \eqref{eq:claim_decrease}.
\end{proof}

\begin{rem}
	It is worth noting that, one could have proved the same bound (in fact with a better constant $C'$) if we chose $\pi \in \M(\R^d, \R^d)$ to be any solution to $T_1(\mu)$ in the vectorial sense.  Unfortunately, it seems nontrivial to show the existence of an optimal transport plan in the vectorial formulation. Similarly, properties of (quasi)-optimal transport plans (for example that $\supp(\pi) \subset \supp(\mu) \times \supp(\mu)$, which is crucial for the proof of (3) in Proposition~\ref{prop:correctors})  are nontrivial in the vectorial case. This explains the reason of working with the scalar components of $\mu$.
\end{rem}

Inspired by the proof on $\R^d$, we are now ready to prove Proposition~\ref{prop:correctors}.

\begin{proof}[Proof of Proposition~\ref{prop:correctors}]
Given $\mu=\iota_\eps m$,
we use a similar construction as in $\R^d$,
adapting the proof to ensure that the constructed $\nu$ is of the form $\nu = \iota_\eps J_\eps$ for some $J_\eps \in \R_a^{\cE_\eps}$.
For this purpose, let $\pi = (\pi_1, \dots , \pi_n) \in \M(\R^d\times \R^d)$ be the associated vector of optimal transport plans associated to $\mu$ as in the proof of \eqref{eq:claim_corrector_cont}.
As $\mu$ is an atomic measure, the same holds for $\mu_i$ and $\pi_i$, for every $i$.
Moreover, for all $i$ we have by construction,
\begin{align}
	\supp(\pi_i) \subset \supp(\mu_i^-) \times  \supp(\mu_i^-) 	\subset \supp(\mu_i)  \times \supp(\mu_i) \subset \supp(\mu)  \times \supp(\mu)
		\, ,
\end{align}
which implies that
	$\supp(\pi) \subset \supp(\mu) \times \supp(\mu)$.

The only modification we have to make to construct a compatible $\nu$ is to replace, for every $x,y \in \supp(m) \subset \cX_\eps$, the measure $\nu^{xy}$ in \eqref{eq:def_nu} with suitable discrete measures along paths on $(\cX_\eps,\cE_\eps)$ in the sense of Definition~\ref{def:vectoralongpath}.
Precisely: for every couple $(x,y)\in \supp \pi$, we pick an optimal (w.r.t. the discrete distance structure on $(\cX_\eps, \cE_\eps)$) path $P_{xy}= (z_i^{xy} \in \cX_\eps)_{i=0}^{m_{xy}}$ connecting $x$ and $y$ in $(\cX_\eps,\cE_\eps)$. We then replace $\nu^{xy}$ in \eqref{eq:def_nu} with $\iota_\eps J_{P_{xy}} \in \M(\R^d ; \R^d)$, where $J_P$ is defined in Definition~\ref{def:vectoralongpath}.
In explicit formulas, we set
\begin{align}
	\nu_\eps :=
	\int_{\R^d \times \R^d}
	\frac
		{\dd \pi(x,y)}
		{\dd |\pi|(x,y)}
	\otimes
	\iota_\eps J_{P_{xy}} \dd |\pi|(x,y)
		\in \M(\R^d ; V \otimes \R^d)
			\, .
\end{align}
Writing $\pi = \sum_{x,y} p(x,y) \delta_{(x,y)}$, we can further write
\begin{align}
	\nu_\eps
		=
	\sum_{(x,y)\in\cE_\eps}
		p(x,y) \otimes \iota_\eps J_{P_{xy}}
	 	=
 \iota_\eps J_\eps
	 	\, , \where
 J_\eps:=
 		\sum_{(x,y)\in\cE_\eps}
 			p(x,y) J_{P_{xy}}
 		\, .
\end{align}
Thanks to what showed in Remark~\ref{prop:corrector_Euclidean}, we know that $\nabla \cdot \nu_\eps = \mu$, which by Lemma~\ref{lemma:divergence_discr_cont} is equivalent to $\dive J_\eps = m$. Arguing as in Proposition~\ref{prop:corrector_Euclidean}, in particular in \eqref{eq:tv_cont}, we control the total variation as
\begin{align}
\| \nu_\eps \|_{\TV(\R^d)}
		&\leq
	\int_{\R^d \times \R^d}
		\| \iota_\eps J_{P_{xy}} \|_{\TV(\R^d)} \dd |\pi|(x,y)
		\leq
	C \sum_{i=1}^n
		\int_{\R^d \times \R^d}
			\| \iota_\eps J_{P_{xy}} \|_{\TV(\R^d)} \dd \pi_i(x,y)
 .
\end{align}
In order to bound the total variation of $\iota_\eps J_{P_{xy}}$, we employ Assumption~(G2) to infer that
\begin{align}
	\| \iota_\eps J_{P_{xy}} \|_{\TV(\R^d)}
		=
	\frac12 \sum_{(z,w) \in \cE_{\eps}}
		|J_{P_{xy}}(z,w)| \| x-y \|
		=
	L(P_{xy})
		\leq
	R \big( |x-y| + \eps )
		\, ,
\end{align}
which together with the previous inequality ensures that
\begin{align}
	\frac1R \| \nu_\eps \|_{\TV(\R^d)}
		&\leq
	C  \sum_{i=1}^n
		\int_{\R^d \times \R^d}
			\big( |x-y| + \eps \big) \dd \pi_i(x,y)
		=
	C
	\sum_{i=1}^n
	\Big(
		\| \mu_i \|_{\tKR(\R^d)}
			+
		\frac\eps 2 | \mu_i |(\R^d)
	\Big)
\\
		&\leq
	C  n
	\Big(
		\| \mu \|_{\tKR(\R^d)}
			+
		\frac12 \eps \| \mu \|_{\TV(\R^d)}
	\Big)
		\, ,
\end{align}
where we at last we used \eqref{eq:claim_decrease} and that $|\mu_i|(\R^d) \leq |\mu|(\R^d)$, for every $i=1, \dots, n$.
This proves \eqref{item:c2}.

We are left to prove (3), assuming in addition that $\supp(\mu) \subset Q$ for $Q$ convex. To this end, we construct $J_\eps$ as above, but thanks to Lemma~\ref{lemma:localisation_G} we choose the paths $P_{xy}=(z_i^{xy} \in \cX_\eps)_{i=0}^{m_{xy}}$ in such a way that
\begin{align}
	\dist \big( z, [x,y] \big)
		\leq C' R \eps
	\, , \quad \forall z \in \big[ z_i^{xy}, z_{i+1}^{xy} \big]
	\, , \quad \forall i = 1 , \dots, m_{xy}
		\, .
\end{align}
In this way we ensure that $\supp(J_{P_{xy}}) \subset B_{C'R \eps}([x,y])\subset B_{C'R\eps}(Q)$ for every $x,y \in \supp(m)$, which clearly implies that $J_\eps$ satisfies (3).
\end{proof}

\section{Tangent measures and blow-up of divergence measures}
\label{sec:tangent}
In this section we discuss the notion of \emph{tangent measure} for a $W$-valued measure,
where $W$ is a finite-dimensional normed vector space.
Furthermore, we show that any tangent measure whose divergence is a measure is almost everywhere divergence-free.

We shall take advantage of the following general version of \textit{Besicovitch differentiation theorem}, see e.g.,~\cite[Proposition~2.2]{Ambrosio-DalMaso:1992} and \cite[Theorem~2.2]{Ambrosio-Fusco-Pallara:2000}.

\begin{prop}[Besicovitch differentiation theorem]
\label{prop:Besicovitch}
Let $\nu\in\M(U;W)$ be a Radon measure on a Borel set $U \subset \R^d$, and let $\xi\in\M_+(U)$.  Then there exists a Borel set $E \subset U$, with $\xi(E) = 0$, so that for every $x \in \supp(\xi) \setminus E$ and  for every bounded, convex, open set $C$ containing the origin,
\begin{align}
\label{eq:limit_Besic}
    L(x) := \lim_{r \to 0^+}
    \frac
        {\nu(x + r C)}
        {\xi(x + r C)}
           \quad \text{exists and does not depend on $C$}
            \, .
\end{align}
Moreover, the identity
	$L = \frac{\dd \nu}{\dd \xi}$
holds
	$\xi$-a.e., 
where $\frac{\dd \nu}{\dd \xi}$ denotes the density of the absolutely continuous part in the Lebesgue decomposition $\nu = \frac{\dd \nu}{\dd \xi} \xi + \nu^s$.
Finally, we have
$\nu_s = \nu \res \tilde E$, where
\begin{align}
    \tilde E = \big( U \setminus \supp \nu \big)
        \cup
    \left\{
        x \in \supp(\nu)
            \suchthat
        \lim_{r \to 0^+}
        \frac
            {|\nu|(x + r B_1)}
            {\xi(x + r B_1)}
                = +\infty
    \right\} \, . 
\end{align}
\end{prop}

It is crucial for our application that the exceptional set $E$ does not depend on the set $C$.
Indeed, in the proof of the lower bound 
we perform a blow-up procedure around singular points. In this application, the set $C$ will be a strip whose orientation depends on the point itself.

As a consequence of the previous result, we have the following corollary.
\begin{cor}[Lebesgue's points]
\label{cor:Lebesgue_points}
    Let $\xi \in \M_+(U)$ be a nonnegative measure on $U \subset \R^d$, and let $f \in L^1(\xi)$. Then there exists a set $E \subset U$ with $\xi(E)=0$ so that, for very $x \in \supp(\xi)\setminus E$, we have
    \begin{align}
        \lim_{r \to 0^+}
            \frac1{\xi(x+rC)}
            \int_{x+rC}
            \left|
                f(y) - f(x)
            \right|
                \dd \xi(y)
            = 0
                \, ,
    \end{align}
    for every bounded, convex, open set $C$ containing the origin.
\end{cor}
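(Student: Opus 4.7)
\smallskip
\noindent
\textbf{Proof plan.} \ The approach is the standard reduction to Besicovitch's theorem via countable approximation by constants. First I would consider, for each $q \in \Q$ (or rather, for each element $q$ of a countable dense subset of $\R$, say $\Q$), the nonnegative Radon measure
\begin{align}
	\nu_q := |f - q|\, \xi \in \M_+(U) \, ,
\end{align}
which is finite on compacts since $f \in L^1(\xi)$. Applying Proposition~\ref{prop:Besicovitch} to the pair $(\nu_q, \xi)$, I obtain a $\xi$-null Borel set $E_q \subset U$ such that for every $x \in \supp(\xi) \setminus E_q$ and every bounded, convex, open set $C$ containing the origin,
\begin{align}
	\lim_{r \to 0^+} \frac{\nu_q(x + rC)}{\xi(x + rC)}
		= \frac{\dd \nu_q}{\dd \xi}(x) = |f(x) - q| \, ,
\end{align}
where the last identity holds $\xi$-a.e.\ (after possibly enlarging $E_q$ by another $\xi$-null set on which the Radon--Nikodym derivative fails to equal $|f - q|$, using that $\nu_q \ll \xi$).

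Next I would set $E := \bigcup_{q \in \Q} E_q$, which is still $\xi$-null by countable subadditivity. The crucial point here, and the only delicate aspect, is that Besicovitch's theorem already yields one exceptional set that works \emph{for all} admissible $C$ simultaneously, so I only need to handle the countable $q$-dependence by hand.

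Finally, fix $x \in \supp(\xi) \setminus E$ and $C$ as in the statement. For an arbitrary $q \in \Q$, the triangle inequality gives
\begin{align}
	\frac{1}{\xi(x + rC)} \int_{x + rC} |f(y) - f(x)| \dd \xi(y)
		\leq \frac{\nu_q(x + rC)}{\xi(x + rC)} + |q - f(x)| \, .
\end{align}
Passing to the limit $r \to 0^+$ using the Besicovitch identity above, the right-hand side tends to $2 |f(x) - q|$. Since $q \in \Q$ is arbitrary and $\Q$ is dense in $\R$, I can choose a sequence $q_n \to f(x)$ and conclude that the limsup of the left-hand side is zero, which proves the claim. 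The only step requiring a small amount of care is the coincidence of $\dd \nu_q / \dd \xi$ with the pointwise value $|f - q|$ outside an additional $\xi$-null set; this follows from the absolute continuity $\nu_q \ll \xi$ together with the standard characterisation of the Radon--Nikodym derivative, and does not depend on $C$.
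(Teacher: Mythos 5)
Your proof is correct and is essentially the same argument the paper has in mind: the paper simply defers to \cite[Corollary~2.23]{Ambrosio-DalMaso:1992}, which is precisely the reduction you carry out — apply Besicovitch (Proposition~\ref{prop:Besicovitch}) to the auxiliary measures $\nu_q = |f-q|\,\xi$ for $q$ in a countable dense set, take the union of the resulting $\xi$-null exceptional sets, and conclude by the triangle inequality and density. The one point worth keeping explicit, which you do, is that Proposition~\ref{prop:Besicovitch} already produces an exceptional set independent of the convex set $C$, so the only countable union needed is over $q$; and the R--N derivative of $\nu_q$ w.r.t.\ $\xi$ must be identified with $|f-q|$ outside a further $\xi$-null set.
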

The proof follows the same line of \cite[Corollary~2.23]{Ambrosio-DalMaso:1992} and it is direct consequence of Proposition~\ref{prop:Besicovitch}.

We refer to the complement of $E$ as the set of \textit{Lebesgue points} of $f$ relative to $\xi$.

For $\delta > 0 $ and $x \in \R^d$,
we will consider the rescaling function
	$\rho_{\delta,x}:\R^d \to \R^d$
given by $\rho_{\delta,x}(y):= (y-x)/\delta$.

\begin{defi}[Tangent measure]
Let $\nu \in \M(\R^d; W)$ be a Radon measure
and let $x \in \R^d$.
Given an open bounded convex set
	$C \subset \R^d$ and $\delta > 0$,
we consider the rescaled measures
\begin{align}
\label{eq:def_jdelta}
	\nu_{\delta,x}:=
	\frac{1}{|\nu|(\delta (C-x))}
		(\rho_{\delta,x})_{\#} \nu
	\in \M(\R^d; W)
		\, .
\end{align}
Any accumulation point of $\nu_{\delta,x}$ as $\delta \to 0$ in the vague topology of $\M(\R^d; W)$ is called a
	\emph{($C$-)tangent measure}
of $\nu$ at $x$.
The set of all $C$-tangent measures to $\nu$ at $x$ is denoted by
$\Tan_C(j,x)$.
\end{defi}

In the setting of the definition above, it is well known that $\Tan_C(\nu,x) \neq \emptyset$; see, e.g., \cite[Prop.~3.4]{DeLellis:2006}). In fact, thanks to Corollary~\ref{cor:Lebesgue_points} we can say much more about tangent measures, as the next Lemma provides.

\begin{rem}[Mass of a tangent measure]
	\label{rem:mass_tangent}
	Let
	$C \subset \R^d$
	be an
	open bounded convex set
	and let $\nu \in \M(\R^d; W)$.
	Then,
	$|\tau|(C)\leq 1$
	for all tangent measures $\tau \in \Tan_C(\nu, x)$
	and all $x \in \R^d$.
	This follows from the
	vague lower semicontinuity of the total variation,
	since
	$|\nu_{\delta,x}|(Q) = 1$ for all $\delta > 0$. In fact, we can always pick a tangent measure whose total mass over $C$ is equal to $1$, see the Lemma below.
\end{rem}

\begin{lemma}[Properties of tangent measures I]
	\label{lemma:tangent_measures}
	Let $\nu \in \M(\R^d; W)$ be a vector-valued measure. Then there exists an $|\nu|$-exceptional set $E \subset \R^d$, i.e.  $|\nu|(E)=0$, so that, for every $x_0 \in \supp(|\nu|) \setminus E$, the following properties hold: for every convex, bounded, open set $C\subset \R^d$,
	\begin{enumerate}
    \item  \label{it:lemma_tang_0}
        The set $\emph{Tan}_C(\nu,x_0)$ contains a tangent measure $\tau$ which satisfies $|\tau|(\partial Q)=0$ as well as $|\tau|(Q)=1$.
	\item	\label{it:lemma_tang_1}
	Every tangent measure $\tau \in \emph{Tan}_C(\nu,x_0)$
		has constant density with respect to its variation
	$|\tau|\in\M_+(\R^d)$, namely
	\begin{align}
		\frac{\de \tau}{\de |\tau|}(y)
			=
		\frac{\de \nu}{\de |\nu|}(x_0)
			\, , \quad
		\text{for $|\tau|$-a.e.~}y \in \R^d
			 \, .
	\end{align}
	\item	\label{it:lemma_tang_2}
	If $\nu_{\delta_m,x_0} \to \tau \in \Tan_C(\nu,x_0)$ vaguely in $\M(\R^d; W)$
	for some null-sequence $(\delta_m)_m$, then $|\nu_{\delta_m,x_0}| \to |\tau|$ vaguely in $\M_+(\R^d)$.
	Consequently,
	\begin{align}
		\Tan_C(\nu,x_0)
			=
		\frac{\de \nu}{\de |\nu|}(x_0) \Tan_C\big(|\nu|,x_0\big)
			\, .
	\end{align}
    \end{enumerate}
    Finally, for $\Lm^d$-a.e.~ $x_0 \in \supp(|\nu|) \setminus E$, we have
	For $\Lm^d$-a.e.~ $x_0 \in \R^d$ we have
	\begin{align}
    \label{eq:lemma_tang_3}
		\Tan_C(\nu,x_0) = \big\{ j \Lm^d \big\} \,
	\end{align}
	where $j
	= \frac{\de \nu}{\de |\nu|}(x_0)
	= \frac{\de \nu}{\de \Lm^d}(x_0) $.
	\end{lemma}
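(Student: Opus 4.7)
Write $\mu := |\nu|$ and let $\theta := \de\nu/\de|\nu| \colon \R^d \to W$ denote the polar density, so that $\nu = \theta\mu$ and $|\theta|_W = 1$ $\mu$-a.e. Without loss of generality, assume $0\in C$. I choose the exceptional set $E$ as the union of three $\mu$-null sets: the Besicovitch exceptional set of Proposition~\ref{prop:Besicovitch} (which is uniform in $C$); the set of non-Lebesgue points of $\theta$ with respect to $\mu$ from Corollary~\ref{cor:Lebesgue_points}; and the $\mu$-null set where $\mu$ fails to be asymptotically doubling, namely where $\limsup_{\delta\to 0}\mu(x+\delta R C)/\mu(x+\delta C) = \infty$ for some integer $R$ and some $C$ in a countable dense family, a standard consequence of the Vitali--Besicovitch covering theorem.

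The engine of the proof is the pushforward identity
\begin{align}
\nu_{\delta,x_0} = (\theta\circ\rho_{\delta,x_0}^{-1})\,\mu_{\delta,x_0},
\end{align}
where $\mu_{\delta,x_0}$ is the analogous rescaling of $\mu$. Taking total variations and using $|\theta|_W = 1$ yields $|\nu_{\delta,x_0}| = \mu_{\delta,x_0}$. Moreover, for any bounded Borel set $K\subset\R^d$ one has
\begin{align}
|\nu_{\delta,x_0} - \theta(x_0)\,\mu_{\delta,x_0}|(K) = \frac{1}{\mu(x_0+\delta C)}\int_{x_0+\delta K}|\theta(y)-\theta(x_0)|_W\,\de\mu(y),
\end{align}
which tends to zero as $\delta\to 0$: choosing $R\in\N$ with $K\subset RC$, asymptotic doubling absorbs the change of denominator into a finite constant $M = M(x_0, C, R)$, and the Lebesgue-point property applied to the set $RC$ forces the integral average to vanish.

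Part (3) follows at once from this identity. Indeed, if $\nu_{\delta_m,x_0}\to\tau$ vaguely, then $\theta(x_0)\mu_{\delta_m,x_0}\to\tau$ vaguely; uniform local mass bounds from asymptotic doubling let us extract a further subsequence with $\mu_{\delta_m,x_0}\to\sigma\in\M_+(\R^d)$ vaguely, so $\tau = \theta(x_0)\sigma$ and $|\tau| = \sigma$, yielding $|\nu_{\delta_m,x_0}|\to|\tau|$ vaguely. Running the argument in reverse gives the stated equality $\Tan_C(\nu,x_0) = \theta(x_0)\Tan_C(\mu,x_0)$, and (2) is immediate from $\de\tau/\de|\tau| = \theta(x_0) = \de\nu/\de|\nu|(x_0)$ $|\tau|$-a.e. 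For (1), I extract a vague limit $\sigma$ of $\mu_{\delta_m,x_0}$; Portmanteau yields $\sigma(C)\leq 1$ and $\sigma(\overline{C})\geq 1$, so the only obstacle is $\sigma(\partial C) = 0$. This I address by a diagonal rescaling: since $r\mapsto\mu(x_0 + r\delta C)$ is monotone (as $0\in C$ and $C$ is convex), for any fixed limit $\sigma$ only countably many $r > 0$ give $\sigma(r\partial C) > 0$; replacing $\delta_m$ by $r_m\delta_m$ with a carefully chosen $r_m \to 1$ produces a vague limit $\sigma'$ with $\sigma'(\partial C) = 0$ and $\sigma'(C) = 1$, so $\tau := \theta(x_0)\sigma'$ is the desired tangent measure.

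The final $\Lm^d$-a.e.~statement is an application of standard Besicovitch differentiation of $\mu$ with respect to $\Lm^d$: for $\Lm^d$-a.e.~$x_0$ one has $\mu_{\delta,x_0}\to\rho(x_0)\Lm^d$ vaguely, with $\rho := \de\mu/\de\Lm^d$. Combining with (3) yields $\Tan_C(\nu,x_0) = \{\theta(x_0)\rho(x_0)\Lm^d\} = \{j\Lm^d\}$ with $j := \de\nu/\de\Lm^d(x_0)$. The main technical hurdle will be the boundary-mass extraction in (1), together with the careful bookkeeping required to transfer the Lebesgue-point property from $C$ to arbitrary bounded sets via asymptotic doubling in the proof of the key identity.
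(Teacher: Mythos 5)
Your argument is self-contained where the paper simply delegates: item (1) to \cite[Lemma~2.5]{Baia-Chermisi-Matias-Santos:2013}, items (2)--(3) to \cite[Theorem~2.44]{Ambrosio-Fusco-Pallara:2000} combined with Corollary~\ref{cor:Lebesgue_points}, and the final identity to \cite[Example~2.41]{Ambrosio-Fusco-Pallara:2000}. Your engine --- the pushforward identity $\nu_{\delta,x_0}=(\theta\circ\rho_{\delta,x_0}^{-1})\mu_{\delta,x_0}$ together with the Lebesgue-point bound on $|\nu_{\delta,x_0}-\theta(x_0)\mu_{\delta,x_0}|$ --- is the right device and essentially mirrors the proof of AFP Theorem~2.44, so the overall route is sound.

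There is, however, a concrete gap in your exceptional set. You include the set where $\mu:=|\nu|$ fails to be \emph{asymptotically doubling}, i.e.\ where $\limsup_{\delta\to0}\mu(x+\delta RC)/\mu(x+\delta C)=\infty$ for some $R,C$, asserting that it is $\mu$-null as ``a standard consequence of Vitali--Besicovitch.'' That is not a theorem: general Radon measures on $\R^d$ need not be asymptotically doubling $\mu$-a.e.; only the $\liminf$ version holds $\mu$-a.e., and that weaker fact is what underlies nonemptiness of $\Tan_C(\mu,\cdot)$ (cf.\ \cite[Prop.~3.4]{DeLellis:2006}). Fortunately your proof never actually needs the $\limsup$ version. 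In items (2)--(3), the factor $\mu_{\delta_m,x_0}(RC)$ coming from the change of denominator is automatically bounded \emph{along any sequence $\delta_m$ for which $\nu_{\delta_m,x_0}$ converges vaguely}: vague convergence gives $\sup_m\mu_{\delta_m,x_0}(K)=\sup_m|\nu_{\delta_m,x_0}|(K)<\infty$ on compacta by uniform boundedness, and that is all your estimate requires. In item (1), replace the doubling-exceptional set by the $\mu$-null set where $\Tan_C(\mu,\cdot)=\emptyset$; once one vaguely convergent subsequence of $\mu_{\delta,x_0}$ is in hand, your dilation argument for $\sigma'(\partial C)=0$ and $\sigma'(C)=1$ goes through, though a \emph{fixed} $r>1$ with $\sigma(r\partial C)=0$ (available for all but countably many $r$) is cleaner than a sequence $r_m\to1$. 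A final minor slip, shared with the statement \eqref{eq:lemma_tang_3} itself: with the normalisation by $|\nu|(x_0+\delta C)$, at an $\Lm^d$-density point one obtains $\mu_{\delta,x_0}\to\Lm^d/\Lm^d(C)$, not $\rho(x_0)\Lm^d$.
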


	\begin{proof}
  For \eqref{it:lemma_tang_0} see, e.g., \cite[Lemma~2.5]{Baia-Chermisi-Matias-Santos:2013}. Properties \eqref{it:lemma_tang_1} and \eqref{it:lemma_tang_2} hold for every $x_0$ Lebesgue point of $\frac{\dd \nu}{\dd |\nu|}$ relative to $|\nu|$
  \cite[Theorem~2.44]{Ambrosio-Fusco-Pallara:2000}, which thanks to Corollary~\ref{cor:Lebesgue_points} are all but an exceptional set (independent of the choice of the set $C$). Finally, \eqref{eq:lemma_tang_3} can be found in, 
  e.g., 
  \cite[Example~2.41]{Ambrosio-Fusco-Pallara:2000}.
	\end{proof}

In this work, particular attention will be devoted to tangent measures to Radon measures whose distributional divergence is a Radon measure as well.
\begin{defi}[Distributional divergence]
	The \emph{distributional divergence} of a
	vector-valued measure
		$\nu \in \M(\R^d; V \otimes \R^d)$
	is the distribution
		$\dive \nu \in \mathcal D'(\R^d; V)$
	defined by
\begin{align}
	\ip {\varphi, \dive \nu }
		:=
	- \int_{\R^d}
	\ip{ \nabla \varphi , \de \nu }
		\qquad
	\forall \varphi \in C_c^\infty(\R^d; V^*)
		\, .
\end{align}
Here the dual pairing on the right-hand side is between
$V^* \otimes \R^d$ and $V \otimes \R^d$.
\end{defi}

We consistently use the canonical identifcation between
$V \otimes \R^d$ and $\calL(\R^d;V)$, namely,
$v \otimes x \in V \otimes \R^d$ will be identified with the linear map
	$\R^d \ni y \mapsto \ip{x,y} v \in V$.

Recall that we always identify $\R^d$ (but not $V$) with its dual space in the canonical way.

\begin{defi}[Divergence measures]
\label{def:div_meas}
A vector-valued measure $\nu \in \M(\R^d; V \otimes \R^d)$ is called \emph{divergence measure}
if
its distributional divergence coincides with an element of
	$\M(\R^d; V)$,
in the sense that
there exists a measure $\sigma_\nu \in \M(\R^d; V)$ so that
\begin{align}
	\ip{\varphi,\dive \nu}
		=
	\int_{\R^d}
	\ip{
		\varphi, \de \sigma_\nu
	}
		 \qquad
	\forall \varphi \in C_c^\infty(\R^d; V^*)
		\, .
\end{align}
In this case, we write $\sigma = \dive \nu$.
\end{defi}

For $C\subset \R^d$, $\delta >0$, and $x_0 \in \R^d$, we use the short-hand notation $C_{\delta,x_0} := \delta (C- x_0)$.

\begin{lemma}[Properties of tangent measures II]
\label{lemma:divergence_measures}
Let $C\subset \R^d$ be an open bounded convex set containing the origin,
and let $\nu \in \M(\R^d; V \otimes \R^d)$ be a divergence measure. Denote by $\mu_\nu := \dive \nu \in \M(\R^d ; V)$.
The following assertions hold:
\begin{enumerate}
\setcounter{enumi}{3}
    \item	\label{it:lemma_tang_4}
    $\displaystyle \emph{rank}\Big( \frac{\de \nu}{\de |\nu|}(x_0) \Big) \leq n-1$ for $|\nu|^s$-a.e. $x_0 \in \R^d$.
    \item   \label{it:lemma_tang_5}
    Recall the definition of $\nu_{\delta,x}$ in \eqref{eq:def_jdelta}. Then for $|\nu|$-a.e. $x_0 \in \R^d$, we have that, if
    $\nu_{\delta_m,x_0} \to \tau \in \Tan_C(\nu,x_0)$ vaguely as $\delta_m \to 0$, then
    \begin{align}
    \label{eq:tv_bounds_div_rescale}
        \sup_{m \in \N}
        \left|
            \frac
    		  { (\rho_{\delta,x_0})_{\#} \mu_\nu}
    		  {|\nu|(C_\delta(x_0))}
        \right|(B)
            < \infty
                \, ,
    \end{align}
    for every bounded set $B \subset \R^d$.
    \item	\label{it:lemma_tang_6}
    $\dive \tau = 0$ for every $\tau \in \Tan_C(\nu,x_0)$
    for $|\nu|$-a.e. $x_0 \in \R^d$.
\end{enumerate}
\end{lemma}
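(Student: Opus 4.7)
My plan is to establish the three assertions in order. Assertion \eqref{it:lemma_tang_4} is a direct citation: since $\nu$ is a divergence measure by Definition~\ref{def:div_meas}, \cite[Corollary~1.13]{DePhilippis-Rindler:2016} yields the rank bound, as already noted in Remark~\ref{rem:existence_recess}. The remaining items will rest on a Besicovitch-type argument combined with a scaling identity for the divergence.

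For~\eqref{it:lemma_tang_5} the natural reference measure is not $\Lm^d$ (which carries no information near singular points of $\nu$) but $|\nu|$ itself. I would apply the general Besicovitch theorem (Proposition~\ref{prop:Besicovitch}) with $\xi=|\nu|$ and $\zeta=|\mu_\nu|$. The Lebesgue decomposition $\mu_\nu = g\,|\nu|+\mu_\nu^s$ with $\mu_\nu^s\perp|\nu|$ then gives, for $|\nu|$-a.e.~$x_0$ (outside the $|\nu|$-null support of $\mu_\nu^s$),
\begin{align}
\theta(x_0):=\lim_{\delta\to 0}\frac{|\mu_\nu|(x_0+\delta A)}{|\nu|(x_0+\delta A)}=|g(x_0)|<\infty
\end{align}
for every bounded open convex set $A\ni 0$, with the limit independent of $A$. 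To pass from $C$ to a generic bounded $B$, I would pick $t>0$ with $B\subseteq tC$ and $|\tau|(\partial(tC))=0$, which excludes at most countably many $t$. By Lemma~\ref{lemma:tangent_measures}\eqref{it:lemma_tang_2}, $|\nu_{\delta_m,x_0}|\to|\tau|$ vaguely, so Portmanteau yields $|\nu_{\delta_m,x_0}|(tC)\to|\tau|(tC)<\infty$. The factorisation
\begin{align}
\frac{|(\rho_{\delta_m,x_0})_{\#}\mu_\nu|(B)}{|\nu|(C_{\delta_m,x_0})}\leq
\frac{|\mu_\nu|(x_0+\delta_m tC)}{|\nu|(x_0+\delta_m tC)}\cdot|\nu_{\delta_m,x_0}|(tC)
\end{align}
then gives a bound uniform in $m$, the first factor tending to $\theta(x_0)$ and the second being bounded.

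For~\eqref{it:lemma_tang_6} the main ingredient is the scaling identity
\begin{align}
\dive\nu_{\delta,x_0}=\frac{\delta}{|\nu|(C_{\delta,x_0})}(\rho_{\delta,x_0})_{\#}\mu_\nu,
\end{align}
which I would derive from the chain rule $\nabla(\varphi\circ\rho_{\delta,x_0})=\delta^{-1}(\nabla\varphi)\circ\rho_{\delta,x_0}$ and a change of variables: for $\varphi\in C_c^\infty(\R^d;V^*)$,
\begin{align}
\ip{\varphi,\dive(\rho_{\delta,x_0})_{\#}\nu}
=-\int\ip{(\nabla\varphi)\circ\rho_{\delta,x_0},\dd\nu}
=-\delta\int\ip{\nabla(\varphi\circ\rho_{\delta,x_0}),\dd\nu}
=\delta\int\varphi\,\dd(\rho_{\delta,x_0})_{\#}\mu_\nu.
\end{align}
Combined with the uniform bound from~\eqref{it:lemma_tang_5}, for any $\varphi$ supported in a bounded set $B$,
\begin{align}
|\ip{\varphi,\dive\nu_{\delta_m,x_0}}|\leq\delta_m\|\varphi\|_\infty\sup_{m}\frac{|(\rho_{\delta_m,x_0})_{\#}\mu_\nu|(B)}{|\nu|(C_{\delta_m,x_0})}\xrightarrow{m\to\infty}0,
\end{align}
and since distributional divergence is continuous under vague convergence tested against $C_c^\infty$ functions, the vague limit $\tau$ satisfies $\dive\tau=0$.

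The main obstacle is the control in~\eqref{it:lemma_tang_5}: since $x_0$ may well lie in the $\Lm^d$-singular part of $\nu$, the reference measure must be $|\nu|$ itself, and one needs the full Besicovitch theorem to ensure that $\mu_\nu^s$ is negligible on the relevant shrinking neighbourhoods. Once this is in place,~\eqref{it:lemma_tang_6} is essentially free: the scaling identity produces exactly one factor of $\delta$, which extinguishes the distributional divergence in the limit.
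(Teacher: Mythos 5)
Your proposal is correct and follows essentially the same route as the paper: assertion~\eqref{it:lemma_tang_4} is a direct appeal to \cite[Corollary~1.13]{DePhilippis-Rindler:2016}, assertion~\eqref{it:lemma_tang_5} is obtained by applying Besicovitch differentiation with reference measure $|\nu|$ together with the factorisation splitting the ratio into a density term and $|\nu_{\delta_m,x_0}|$ evaluated on an enlarging set, and assertion~\eqref{it:lemma_tang_6} follows from the scaling identity $\dive\nu_{\delta,x_0}=\delta\,(\rho_{\delta,x_0})_{\#}\mu_\nu/|\nu|(C_{\delta,x_0})$ together with the bound from~\eqref{it:lemma_tang_5}. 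The only cosmetic difference is in the final step of~\eqref{it:lemma_tang_5}: the paper takes a closed ball $B_R$ and invokes one-sided Portmanteau (upper semicontinuity of mass on closed sets under vague convergence), whereas you select $t$ so that $|\tau|(\partial(tC))=0$ and use two-sided Portmanteau — both are standard and interchangeable.
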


\begin{proof}

\smallskip
\noindent
\eqref{it:lemma_tang_4}: \ This follows from Remark~\ref{rem:existence_recess}.

\smallskip
\noindent
\eqref{it:lemma_tang_5}. \
We know from Lemma~\ref{lemma:tangent_measures} that $|\nu|$-a.e. $x_0 \in \R^d$, there exists $\tau \in \Tan_C(\nu,x_0)$ so that $\nu_{\delta_m,x_0} \to \tau$ for some $\delta_m \to 0$. In this case, we also know that $|\nu_{\delta_m,x_0}| \to |\tau|$ vaguely in $\M(\R^d; V \otimes \R^d)$ as $m \to \infty$. Thanks to Proposition~\ref{prop:Besicovitch}, we also know that $|\nu|$-a.e. $x_0 \in \R^d$, the density of $\mu_\nu$ with respect to $|\nu|$ exists, i.e.
\begin{align}
    \lim_{m \to +\infty}
        \frac
            {\mu_\nu(B_{\delta_m}(x_0))}
            {|\nu|(B_{\delta_m}(x_0))}
    =
        \frac
            {\dd \mu_\nu}
            {\dd |\nu|}(x_0)
                \in \R_+
                    \, .
\end{align}
Then, for every such $x_0 \in \R^d$, for every closed ball $B_R:=B_R(0) \subset \R^d$ of radius $R>0$, we have that
\begin{align}
	 \limsup_{m \to \infty}
        \left|
            \frac
    		  { (\rho_{\delta,x_0})_{\#} \mu_\nu}
    		  {|\nu|(C_\delta(x_0))}
        \right|(B_R)
	&= \limsup_{m \to \infty}
		\frac
			{|\mu_\nu| ( B_{\delta_m R}(x_0) )}
			{|\nu| ( B_{\delta_m R}(x_0) )}
		\frac
			{|\nu| ( B_{\delta_m R}(x_0) ) }
			{|\nu|(C_\delta(x_0))}
\\
	&=
		\frac{\de |\mu_\nu|}{\de |\nu|}(x_0)
		\limsup_{m \to \infty}
			|\nu_{\delta_m,x_0}|(B_R)
	\leq
		\frac{\de |\mu_\nu|}{\de |\nu|}(x_0)
			|\tau|(B_R) < \infty
		\, ,
\end{align}
where at last we used $|\nu_{\delta_m,x_0}| \to |\tau|$ with the fact that $B_R$ is closed.

\smallskip
\noindent
\eqref{it:lemma_tang_6}. \  Let us compute the divergence of the rescaled measure $\nu_{\delta,x_0}$: for a given function $\varphi \in C_c(\R^d;V)$, define $\varphi_\delta(\cdot):= \delta \varphi(\cdot-x_0/\delta)$. By definition, we have that
\begin{align}
	\langle
		\dive \nu_{\delta,x_0} , \varphi
	\rangle
&=
	- \int
	\langle
		\nabla \varphi , \de \nu_{\delta,x_0}
	\rangle_{V \otimes \R^d}
=
	- \frac1{|\nu|(C_\delta(x_0))}
	\int
	\left \langle
		(\nabla \varphi)\Big( \frac{\cdot-x_0}\delta\Big) , \de \nu
	\right \rangle_{V \otimes \R^d}
\\
&=
	- \frac1{|\nu|(C_\delta(x_0))}
	\int
	\left \langle
		\nabla \varphi_\delta, \de \nu
	\right \rangle_{V \otimes \R^d}
=
	\frac1{|\nu|(C_\delta(x_0))}
	\int
	\left \langle
	 	\varphi_\delta, \de \mu_\nu
	\right \rangle_V
		\, ,
\end{align}
where at last we used that $\dive \nu = \mu_\nu \in \M(\R^d ; V)$. We continue with the definition of push forward and obtain
\begin{align}
	\langle
		\dive \nu_{\delta,x_0} , \varphi
	\rangle
=
	\frac\delta{|\nu|(Q_\delta(x_0))}
	\int
	\left \langle
	 	\varphi\Big(\frac{\cdot-x_0}\delta\Big), \de \mu_\nu
	\right \rangle_V
=
	\delta \int
	\left\langle
		\varphi , \frac
		{\de (\rho_{\delta,x_0})_{\#} \mu_\nu}
		{|\nu|(Q_\delta(x_0))}
	\right\rangle_V
		\, .
\end{align}
In other words, this shows that $\nu_\delta$ is also a divergence measure, with
\begin{align}	\label{eq:divergence_rescaled_formula}
\mu_{\delta,x_0} :=
	\dive \nu_{\delta,x_0} =
	\delta
	\frac
		{ (\rho_{\delta,x_0})_{\#} \mu_\nu}
		{|\nu|(Q_\delta(x_0))}
	\in \M(\R^d; V)
		\, .
\end{align}

A direct application of \eqref{it:lemma_tang_4} together with \eqref{eq:divergence_rescaled_formula} shows that, $|\nu|$-a.e. $x_0 \in \R^d$, for every $\tau \in \Tan_C(\nu,x_0)$ so that $\nu_{\delta_m,x_0} \to \tau$, we must have $\dive \nu_{\delta_m,x_0} \to 0$ vaguely in $\M(\R^d;V)$ as $m \to \infty$. Due to the fact that the distributional divergence commutes with the vague convergence of measures, it follows that $\dive \tau =0$, for every $\tau \in \Tan_C(\nu,x_0)$.
\end{proof}
\begin{rem}
Arguing in a similar way as in the previous proof, one could prove a stronger statement, namely that for  $|\nu|$-almost every $x_0 \in \R^d$,
\begin{align}
	\frac
	{ (\rho_{\delta_m,x_0})_{\#} \mu_\nu}
	{|\nu|(Q_{\delta_m}(x_0))}
	\to \hat \mu_{x_0} := \frac{\de \mu_\nu}{\de |\nu|}(x_0) |\tau|
	\quad
	\text{vaguely in }
	\M(\R^d; V)
	\text{ as }m \to \infty
	\, .
\end{align}
\end{rem}

We have shown in
Lemma \ref{lemma:tangent_measures}\eqref{it:lemma_tang_1}
and
Lemma \ref{lemma:divergence_measures}\eqref{it:lemma_tang_6}
that tangent measures of divergence measures
are unidirectional and divergence-free.
The next proposition shows such measures have a special structure: they are "constant" in the direction orthogonal to the kernel of the density. This will be crucial in the proof of the lower bound \eqref{eq:lb_singular} when performing blow-ups around singular points.

\begin{prop}[Unidirectional divergence-free measures]
\label{prop:structure_tangent_measures}
	Let $\tau\in \M(\R^d;V\otimes \R^d)$ be such that
	\begin{align}
		\dive \tau = 0
		\tand
		\frac{d\tau}{d|\tau|}(x) \equiv j
			\quad |\tau|\text{-a.e. }x \in \R^d
	\end{align}
	for some $j \in V \otimes \R^d$.
	Write $\R^d = (\ker j)^\perp \oplus \ker j$ and let $\lambda\in \M_+((\ker j)^\perp)$ denote the Hausdorff measure restricted to $(\ker j)^\perp$. Then there exists a measure $\kappa\in \M_+(\ker j)$ such that
	\[
	\tau = j \lambda \otimes \kappa.
	\]
\end{prop}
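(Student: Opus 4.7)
The plan is to reduce the claim to a statement about the positive scalar measure $\sigma := |\tau| \in \M_+(\R^d)$: since the density of $\tau$ with respect to $|\tau|$ is constant and equal to $j$, we have $\tau = j\, \sigma$ (viewing $j \in V \otimes \R^d \cong \calL(\R^d;V)$ as a constant linear map), and it suffices to prove that $\sigma = \lambda \otimes \kappa$ for some $\kappa \in \M_+(\ker j)$. The divergence-free condition translates, under this identification, into a scalar system of PDEs for $\sigma$, which will force translation-invariance in the directions of $W := (\ker j)^\perp$. The final step is then a Fubini/Haar-measure argument.

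First, I would fix an orthonormal basis $(e_i)_{i=1}^d$ of $\R^d$ such that $(e_i)_{i=1}^k$ spans $W$ and $(e_i)_{i=k+1}^d$ spans $\ker j$, with $k = \dim W = \dim \mathrm{range}(j)$. Writing $j = \sum_{i=1}^d j(e_i) \otimes e_i$, since $j(e_i)=0$ for $i > k$ and $\partial_i(j(e_i) \sigma) = j(e_i)\,\partial_i\sigma$ (because $j(e_i) \in V$ is constant), the distributional identity $\dive\tau = 0$ reads
\begin{align}
	\sum_{i=1}^{k} j(e_i)\,\partial_i \sigma = 0 \qquad \text{in } \calD'(\R^d;V).
\end{align}
Because $j|_W$ is injective, the vectors $\{j(e_i)\}_{i=1}^k$ are linearly independent in $V$, so there exists a dual family $(v_l^*)_{l=1}^k \subset V^*$ with $\langle v_l^*, j(e_i)\rangle = \delta_{il}$. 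Pairing the identity with $v_l^*$ yields $\partial_l \sigma = 0$ in $\calD'(\R^d)$ for every $l = 1, \dots, k$.

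Second, I would upgrade this to translation-invariance of $\sigma$ under $W$. For any $\varphi \in C_c^\infty(\R^d)$ and $t \in W$, consider $h(t) := \int_{\R^d} \varphi(x+t)\,d\sigma(x)$. For $i \leq k$ one has $\partial_{t_i} h(t) = \int \partial_i \varphi(\cdot + t)\,d\sigma = -\langle \partial_i\sigma, \varphi(\cdot+t)\rangle = 0$, so $h$ is constant on $W$. Hence $\sigma$ equals its translates along $W$.

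Third, I would obtain the product structure by a standard Fubini/Haar argument. For every Borel set $B \subset \ker j$ with compact closure, the set function $f_B(A) := \sigma(A \times B)$ is a locally finite Borel measure on $W$, invariant under translations by all of $W$; by uniqueness of Haar measure on $W \simeq \R^k$, there exists $\kappa(B) \in [0,\infty)$ with $f_B = \kappa(B)\,\lambda$. The map $B \mapsto \kappa(B)$ is $\sigma$-additive (by $\sigma$-additivity of $\sigma$) and locally finite (since $\sigma$ is Radon), hence extends to a Radon measure $\kappa \in \M_+(\ker j)$. By uniqueness of product measures, $\sigma = \lambda \otimes \kappa$ on all Borel sets of $\R^d$, and therefore $\tau = j\sigma = (j\lambda)\otimes \kappa$, which is the claim.

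The only mildly delicate point will be the bookkeeping in the first step: carefully identifying $V \otimes \R^d$ with $\calL(\R^d;V)$ and extracting scalar PDEs from the $V$-valued equation via a dual basis; the translation-invariance and Haar/Fubini steps are then routine.
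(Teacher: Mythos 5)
Your proposal is correct and follows essentially the same route as the paper: factoring out the constant density $j$, using a dual basis of $V^*$ (dual to the linearly independent vectors $j(e_1),\dots,j(e_k)$) to turn the $V$-valued condition $\dive\tau=0$ into the scalar equations $\partial_l\sigma=0$ for $l\le k$, upgrading to translation-invariance of $\sigma$ along $(\ker j)^\perp$, and finishing with a Fubini/Haar-uniqueness argument. The paper tests with a single $\Psi=\phi\otimes v^*$ at a time and works out the cube decomposition explicitly rather than citing Haar uniqueness, but these are cosmetic differences.
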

\begin{proof}
	Write $S := (\ker j)^\perp \subseteq \R^d$ for brevity
	and set $s := \dim(S)$.
	We claim that the result follows if we could show that
	$\tau$ does not depend on $S$, in the sense that for any $h \in S$,
	\begin{align}
		\label{eq:claim_partial_tau}
		\partial_{h} |\tau| = 0
		\quad \text{in } \, \mathcal D'(\R^d ; V) \, .
	\end{align}
	Indeed, to prove the claim,
	we observe that
	\eqref{eq:claim_partial_tau} implies that, for all test functions $\Psi \in C_c^\infty(\R^d ; V^*)$ and $h \in S$,
	\begin{align*}
		\int_{\R^d}
			\Psi(z)
		\dd \tau(z)
		=
		\int_{\R^d}
			\Psi(z + h)
			\dd \tau(z).
	\end{align*}
	Writing $S_h(z) = z + h$ for the translation map in the direction of $h$,
	this means that
		$(S_h)_\# \tau = \tau$,
	i.e., $\tau$ is invariant with respect to translations in $S$.
	Consequently, $\tau(A \times B) = \tau(A' \times B)$
	for all Borel sets $B \subseteq S^\perp$
	whenever $A, A' \subseteq S$ are translates of each other.
	Using this fact and the finite additivity of $\tau$, we find that
	\begin{align}
		\label{eq:cubeQB}
		\tau(Q \times B) = |Q| \tau\big( [0,1)^s \times B\big)
	\end{align}
	for all cartesian cubes $Q \subseteq S$
	of the form $Q = \prod_{i=1}^s [\alpha_i, \beta_i)$ in $S$ with $\alpha_i, \beta_i \in \Q$.
	By approximation, \eqref{eq:cubeQB} also holds for all such cubes $Q$ with $\alpha_i, \beta_i \in \R$, and therefore it holds for all Borel sets $Q \subseteq S$. This means that
	\begin{align*}
		\tau(Q \times B)
		= |Q| \, K(B),
		\quad \text{where} \quad
		K(B)
		:=
		\tau\big( [0,1)^s \times B\big),
	\end{align*}
	hence $\tau = \lambda \otimes K$.
	Combining this with the assumption
		$\tau = |\tau| j$,
	we infer that
		$K = |K| j$
	which yields the desired result with $\kappa = |K|$.

	To prove \eqref{eq:claim_partial_tau},
	fix $\phi \in C_c^\infty(\R^d)$
	and $h \in S$.
	Our goal is to show that
	$
	\bip{ \partial_h \phi, |\tau| } = 0
	$
	for every $\phi \in C_c^\infty(\R^d)$.
	Let $(e_i)_{i=1}^d$ be an orthonormal basis of $\R^d$
	such that $e_1, \ldots, e_s \in S$
	and $e_1 = h$.
	Let us write $v_i := j e_i \in V$ for $i = 1, \ldots, d$,
	and note that $v_1 \notin \spane\{ v_2, \ldots,  v_d \}$,
	since $v_{s + 1} = \ldots = v_d = 0$ and the restriction of $j$ to $S$ is injective.
	Therefore, we can pick $v \in V^*$ such that
	\begin{align}
		\label{eq:delta}
		\ip{v_k, v^*} = \delta_{1k}
		\quad \text{for }
		k = 1, \ldots d \, .
	\end{align}
	Define $\Psi \in C_c^\infty(\R^d; V^*)$ by
		$\Psi := \phi \otimes v^*$.
	As $\tau$ is divergence-free,
	we obtain
	using the identity $\tau = j |\tau|$,
	the definition of $v_i$,
	the indentity \eqref{eq:delta},
	and the fact that $e_1 = h$,
	\begin{align*}
		0
		=
		\ip{\nabla \Psi, \tau}
		=
		\Bip{\sum_{i=1}^d\partial_i \phi \otimes e_i \otimes v^*,
			 j |\tau|}
		 =
		 \sum_{i=1}^d
		\ip{v_i, v^*}
				  \bip{\partial_i \phi,
				   |\tau|}
		=
				 \bip{\partial_h \phi,
				  |\tau|}\, ,
	\end{align*}
	which is the desired identity.
\end{proof}

\section{The upper bound}
\label{sec:ub}
In the section, we prove the upper bound estimate for the $\Gamma$-convergence result. As usual, we will omit the $\omega$-dependence everywhere, as our result is of deterministic nature. The proof itself is mostly deterministic, the only random feature being the existence of certain limit, $\mathbb P$-a.e., due to the subadditive ergodic theorem. We fix $\mu \in \M(\overline \Omega;V)$ and  a sequence of bounded measures $m_\eps \in \M(\cX_\eps;V)$ so that $m_\eps \to \mu$ narrowly in $\M(\overline \Omega;V)$. We want to show that, for every tensor field $\nu\in \M(\overline \Omega;V\otimes \R^d)$ with $\dive \nu = \mu \in \M(\overline \Omega;V)$, we can find $J_\eps : \cE_\eps \stackrel{\rma}{\to} \R$ with $\DIVE J_\eps = m_\eps$,  $\iota_\eps J_\eps \to \nu$ narrowly  $\mP$-almost surely, and $F_\eps(J_\eps, \overline U) \to \bF_{\hom}(\nu,\overline U)$.

In the first part of this chapter, we show that continuous energy of a given flux $\nu$ can be approximated using smooth and compactly supported approximations of $\nu$. In the second part, we show how to construct recovery sequences for smooth vector fields and how to use the approximation result provided in the first part to show the existence of a recovery sequence in full generality.

\subsection{Smooth vector fields are dense in energy}

In the first step, we provide an approximation result for the continuous energy functional, which enables us to work with measures having a smooth and compactly supported density.

\begin{lemma}
\label{lemma: smooth}
	Let $U \subset \R^d$ be open, bounded, with Lipschitz boundary. Let $\nu\in \M(\overline U; V\otimes \R^d)$ and $\dive \nu = \mu \in \M(\overline U; V)$. Then there is a sequence $j_\rho \in C_c^\infty(U;V\otimes \R^d)$, $\dive j_\rho\in C_c^\infty(U;V)$ such that $j_\rho \Lm^d \to \nu$ narrowly in $\M(\overline{U}; V \otimes\R^d)$ and $\dive j_\rho \Lm^d \to \mu$ narrowly in $\M(\overline{U}; V)$ as $\rho \to 0$, as well as 
	\begin{align}	\label{eq:approx_cont_lemma}
		\lim_{\rho \to 0} \int_{U} f_{\hom}(j_\rho) \,dx = \bF_{\hom}(\nu)
            \, .
	\end{align}
\end{lemma}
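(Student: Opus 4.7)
The plan is to approximate $\nu$ in two steps: first produce a family $\nu_\eta\in\M(\overline U;V\otimes\R^d)$ with $\dive\nu_\eta=\mu_\eta\in\M(\overline U;V)$ supported strictly inside $U$ and converging to $(\nu,\mu)$, then mollify to obtain smooth, compactly supported representatives. Energy convergence will follow from a Reshetnyak-type continuity theorem, which applies because $f_{\hom}$ is continuous with linear growth.

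For the interior shift, since $U$ is a bounded Lipschitz domain, one can construct a family of bi-Lipschitz homeomorphisms $\Phi_\eta\colon \R^d\to\R^d$ with $\Phi_\eta\to\mathrm{id}$ uniformly in $C^{0,1}$ and $\Phi_\eta(\overline U)\subset U_\eta:=\{x\in U:\dist(x,\partial U)>c\eta\}$ for some $c>0$; for instance, by integrating a Lipschitz vector field on $\overline U$ uniformly transversal to $\partial U$. I would then transform $(\nu,\mu)$ under $\Phi_\eta$ via the Piola transform, producing Radon measures $\nu_\eta,\mu_\eta$ supported in $\overline{U_\eta}$ and still satisfying $\dive\nu_\eta=\mu_\eta$. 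Standard continuity properties of the Piola push-forward as $\Phi_\eta\to\mathrm{id}$ yield narrow and area-strict convergence $\nu_\eta\to\nu$, $\mu_\eta\to\mu$. For $0<\delta<c\eta$, I would then convolve with a smooth kernel $\rho_\delta\in C_c^\infty(B_\delta(0))$ to obtain
\[
j_{\delta,\eta} := \nu_\eta*\rho_\delta\in C_c^\infty(U;V\otimes\R^d),\qquad \dive j_{\delta,\eta}=\mu_\eta*\rho_\delta\in C_c^\infty(U;V),
\]
and let $\delta=\delta(\eta)\to 0$ along a diagonal as $\eta\to 0$.

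For the energy convergence, Lemma~\ref{lemma:prop_fhom} gives continuity and linear growth of $f_{\hom}$, while Remark~\ref{rem:existence_recess} ensures that $f_{\hom}^\infty$ is a genuine limit (not merely a $\limsup$) on the at-most-rank-$(n-1)$ directions relevant to the singular part of any divergence measure. Since mollification preserves area-strict convergence of vector-valued Radon measures, along a suitable diagonal one obtains $j_{\delta(\eta),\eta}\Lm^d\to\nu$ area-strictly. The Reshetnyak-type continuity theorem for continuous integrands with linear growth (as in Kristensen--Rindler) then yields
\[
\int_U f_{\hom}\!\big(j_{\delta(\eta),\eta}\big)\,\dd x = \bF_{\hom}\!\big(j_{\delta(\eta),\eta}\Lm^d\big)\ \xrightarrow[\eta\to 0]{}\ \bF_{\hom}(\nu),
\]
which is the required convergence. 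The main technical obstacle is the simultaneous handling of $\nu$ and $\mu=\dive\nu$ under the interior shift: one needs a single transformation that is divergence-compatible, pushes $\overline U$ strictly inside $U$, and is close enough to the identity to imply area-strict convergence of both $|\nu_\eta|\to|\nu|$ and $|\mu_\eta|\to|\mu|$. The Piola transform is the natural device, but verifying its compatibility with area-strict convergence of merely Radon measures (rather than smooth fields) requires some care.
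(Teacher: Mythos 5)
Your proposal follows essentially the same two-step route as the paper: an interior shift via a bi-Lipschitz map close to the identity combined with the Piola transform $\nu\mapsto(\Phi)_\#(\nu\,D\Phi^T)$ (which is exactly the paper's \eqref{eq:def_tildej}), then a mollification, then a diagonal argument. The difference is in how the energy convergence is established for the interior-shift step. The paper does not invoke a Reshetnyak-type continuity result there; instead it uses the explicit Lebesgue--Radon--Nikodym decomposition \eqref{eq:RN_decomp_tildej} of the Piola transform together with the Lipschitz property of $f_{\hom}$ (Lemma~\ref{lemma:prop_fhom}) and the $1$-homogeneity and Lipschitz continuity of $f_{\hom}^\infty$, to obtain the quantitative one-sided bound $\bF_{\hom}(\tilde\nu_\delta)\leq(1+C\delta)\bF_{\hom}(\nu)+C\delta|\nu|(\overline U)$; the continuity result \cite[Cor.~2.11]{Baia-Chermisi-Matias-Santos:2013} is only used for the mollification step, where area-strict convergence is easy. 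Your route — proving area-strict convergence of the Piola transform and then applying a Reshetnyak-type theorem to the whole approximating sequence — is also viable, precisely because $D\Phi_\eta\to\mathrm{id}$ uniformly, but it is qualitative and requires the recession-function technicality you flagged (the paper sidesteps this entirely: Remark~\ref{rem:existence_recess} enters only for the singular part of $\nu$, not for the smooth approximants). The paper's direct estimate is thus slightly more elementary and also yields the $O(\delta)$ rates in $\tKR$ that are convenient when combining Lemma~\ref{lemma: smooth} with the later steps of the upper-bound proof.
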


\begin{rem}[General energy densities]
    The approximation described in Lemma~\ref{lemma: smooth} does not depend on the special form of $f_{\hom}$, but holds in general if one replaces $f_{\hom}$ with any energy density $f:V \otimes \R^d \to \R$ which satisfies the same properties (linear growth and Lipschitz continuity) as in Lemma~\ref{lemma:prop_fhom}.
\end{rem}

\begin{proof}
	The proof proceeds in two parts. First we replace $\nu$ with a possibly singular measure $\tilde \nu_\rho\in \M(U;V\otimes \R^d)$ with compact support in $U$. Then we mollify $\tilde \nu_\rho$ using convolutions to arrive at the sought measure with smooth density $j_\rho\in C_c^\infty(U;V\otimes \R^d)$.

    \smallskip
	\noindent
	\emph{Step 1 (reduction to compact support)}. \ 
    First we need to choose a suitable, locally bi-Lipschitz 
    $\Phi_\rho: \overline U \to U$ so that $\Phi_\rho(\overline U)$ is compact and $\Phi_\rho$ is sufficiently close to  the identity. Once given, we define 
    \begin{align}
		\tilde \nu_\rho := \big(\Phi_\rho)_{\#} (\nu \tD\Phi_\rho^T)\in \M(U;V\otimes \R^d)
			\, ,
	\end{align}
    or, in other words, the measure defined for every $\varphi \in C_b(\overline{U}; (V\otimes \R^d)^*)$ as
	\begin{align}	\label{eq:def_tildej}
		\int \varphi \de \tilde \nu_\rho
			=
		\int \big( \varphi \circ \Phi_\rho \big) \tD \Phi_\rho \de \nu
			\, .
	\end{align}
   The goal is to construct the map $\Phi_\rho$ in such a way that 
    \begin{itemize}
		\setlength{\itemsep}{5pt}
		\item [$(i)$] $\displaystyle \tilde \nu_\rho \to  \nu$ narrowly in $\M(\overline{U};V \otimes \R^d)$ as $\rho \to 0$.
		\item [$(ii)$] $\displaystyle \dive \tilde \nu_\rho \to  \mu$  narrowly in $\M(\overline{U};V)$ as $\rho \to 0$.
		\item [$(iii)$] We have the energy bound $\displaystyle \limsup_{\rho \to 0} \bF_{\hom}(\tilde \nu_\rho) \leq \bF_{\hom}(\nu)$.
	\end{itemize}
    	Note that $\nu$ is a divergence measure (in the sense of Definition~\ref{def:div_meas}) with $\dive \nu = \mu \in \M(\overline U; V)$, then also $\tilde \nu_\rho$ is a divergence measure and its divergence is given by $\dive \tilde \nu_\rho= (\Phi_\rho)_{\#} \mu \in \M(\overline U; V)$. Indeed, for any test function $\varphi\in C_c^\infty(\R^d;V^*)$, we have
	\begin{align}
		\langle \dive \tilde \nu_\rho, \varphi \rangle
			&=
		- \langle \tilde \nu_\rho, \tD \varphi \rangle
			=
		- \langle \nu, (\tD\varphi \circ \Phi_\rho) \tD\Phi_\rho \rangle
			\\ &=
		- \langle \nu, \tD(\varphi \circ \Phi_\rho) \rangle
			=
		\langle \dive \nu, \varphi \circ \Phi_\rho \rangle
			=
		\langle (\Phi_\rho)_{\#} \mu, \varphi  \rangle
	\end{align}
	which is the claimed equality.
    Moreover, we use the Radon--Nikodym decomposition $\nu = \frac{\de \nu}{\de x} \Lm^d + \frac{\de \nu}{\de |\nu|} |\nu|^s$ to then obtain
	\begin{align}	\label{eq:RN_decomp_tildej}
		\tilde \nu_\rho =
			\Big[
			\Big(
				\frac{\de \nu}{\de x} \tD \Phi_\rho^T
			\Big) \circ \Phi_\rho^{-1}
			\Big]
			(\Phi_\rho)_{\#} \Lm^d
		+
			\Big[
			\Big(
				\frac{\de \nu}{\de |\nu|} \tD \Phi_\rho^T
			\Big)  \circ \Phi_\rho^{-1}
			\Big]
			(\Phi_\rho)_{\#} |\nu|^s
				\, ,
	\end{align}
 Using the fact that $\Phi_\rho$ is locally bi-Lipschitz, we ensure that
    \begin{align}
    \label{eq:preservation_decomposition}
        (\Phi_\rho)_{\#} \Lm^d \ll \Lm^d
            \tand
        (\Phi_\rho)_{\#} |\nu|^s \perp (\Phi_\rho)_{\#} \Lm^d
            \, .
    \end{align}
    In particular, \eqref{eq:preservation_decomposition} ensures that \eqref{eq:RN_decomp_tildej} is a Lebesgue--Radon--Nikodym decomposition for the measure $\tilde \nu_\rho$.

   By the change of variables formula and by \eqref{eq:def_tildej} we can then compute
	\begin{align}
		\frac{\de \tilde \nu_\rho}{\de x}(\Phi_\rho(x)) = (\det \tD \Phi_\rho)^{-1}(x) \frac{\de \nu}{\de x}(x) \tD \Phi_\rho^T(x)
			\, , \qquad  \Lm^d\text{-a.e. } x \in \overline U \, ,
	\end{align}
	whereas in general (including the singular part), we simply obtain that
	\begin{align} 	\label{eq:density_tildej_singular}
		\frac{\de \tilde \nu_\rho}{\de |\tilde \nu_\rho|}
			\circ \Phi_\rho
			=
		\frac1
		{
		\Big|
			\frac{\de \nu}{\de |\nu|} \tD \Phi_\rho^T
		\Big|
		}
		\Big(
			\frac{\de \nu}{\de |\nu|} \tD \Phi_\rho^T
		\Big)
			\, .
	\end{align}

    We shall now construct a suitable $\Phi_\rho$: an application of the Lemma \ref{lemma: deformation} below provides the existence of a bi-Lipschitz deformation $\Phi_\rho:\overline U \to U$ which satisfies $\|\Phi_\rho(x)-x\|_\infty \leq \rho$ and $\|\tD\Phi_\rho - \mathrm{id}\|_\infty\leq \rho$.
        We then define 
	\begin{align}
		\tilde \nu_\rho := \big(\Phi_\rho)_{\#} (\nu \tD\Phi_\rho^T)\in \M(U;V\otimes \R^d)
			\, ,
	\end{align}
	or, in other words, the measure defined for every $\varphi \in C_b(U; V \otimes \R^d)$ as
	\begin{align}	
		\int \varphi \de \tilde \nu_\rho
			=
		\int \big( \varphi \circ \Phi_\rho \big) \tD \Phi_\rho \de \nu
			\, .
	\end{align}

	We claim that the newly obtained measure $\tilde \nu_\rho$ is quantitatively close to $\nu$, in the sense there exists a constant $C=C(U)\in\R_+$ such that, for every $\rho>0$,
	\begin{itemize}
		\setlength{\itemsep}{5pt}
		\item [$(i)'$] $\displaystyle \|\tilde \nu_\rho - \nu\|_{\KR(\overline U;V \otimes \R^d)} \leq C\rho|j|(\overline U)$,
		\qquad $(ii)'$ $\displaystyle \|\dive \tilde \nu_\rho - \mu\|_{\KR(\overline U;V)} \leq C\rho|\mu|(\overline U)$,
	\end{itemize}
    and that it satisfies the energy bound $(iii)$.
    
	To show $(i)'$, we take a $1$-Lipschitz test function $\varphi\in C(\overline U;V\otimes \R^d)$ so that $\| \varphi \|_\infty \leq 1$ and by means of a simple triangle inequality, from the very definition of $\tilde \nu_\rho$ we obtain (as usual, we use the duality notation between measures and continuous functions)
	\begin{align}	\label{eq:error_tildej}
		\begin{aligned}
			|\langle \tilde \nu_\rho - \nu, \varphi \rangle| = & |\langle \nu, (\varphi \circ \Phi_\rho) \tD\Phi_\rho - \varphi \rangle|\\
			 \leq & |\nu|(\overline U)(\|\varphi\circ \Phi_\rho - \varphi\|_\infty + \|\varphi\|_\infty\|\tD\Phi_\rho - \id\|_\infty)\\
			 \leq &
			 	|\nu|(\overline U)(\| \Phi_\rho - \id\|_\infty + \|\tD\Phi_\rho - \id\|_\infty)
			 	\leq  C \rho |\nu|(\overline U) \, ,
		\end{aligned}
	\end{align}
	where at last we used the property of $\Phi_\rho$. By taking the supremum over the test functions we obtained the sought bound in $\KR(\overline U)$.

	We take advantage of this, and for any $1$-Lipschitz test function $\varphi\in C_c^1(\R^d;V)$ with $\| \varphi\|_\infty \leq 1$, we write
	\begin{align} \label{eq:change_of_var}
		\begin{aligned}
			|\langle \dive \tilde \nu_\rho - \dive \nu, \varphi \rangle|
			 = & |\langle  \mu , \varphi \circ \Phi_\rho - \varphi \rangle|
			 	\leq
			 C\rho |\mu|(\overline U)
				\, ,
		\end{aligned}
	\end{align}
	where in the last inequality we proceeded exactly as in \eqref{eq:error_tildej}. This shows $(ii)'$.
	We are left to show $(iii)$. By the expansion formula of the determinant in terms of the trace, it is easy to see that $\|\tD\Phi_\rho - \id \|_\infty \leq \rho$ implies for every $\rho \in (0,1)$,
	\begin{align}	\label{eq:expansion_determ}
		\big|  \det \tD \Phi_\rho - 1 \big| \leq C \rho
			\, , \quad
		C=C(d) \in \R_+ \, .
	\end{align}
	Integrating \eqref{eq:change_of_var}, by \eqref{eq:expansion_determ}, and using the Lipschitz continuity of $f_{\hom}$ from Lemma \ref{lemma:prop_fhom}, we estimate the energy of absolutely continuous part  of $\tilde \nu_\rho$ as
	\begin{align}	
			 \int_U
			 	f_{\hom}
			 &	
                \Big(
			 	\frac{\de \tilde\nu_\rho}{\de y}
			 	\Big)
			 		\de y
			 =
			 \int_{\Phi_\rho(U)}
			 	f_{\hom}
			 	\Big(
			 		\Big(
				\frac{\de \nu}{\de x} \tD \Phi_\rho^T
			\Big) \circ \Phi_\rho^{-1}
			 	\Big)
			 		\dd y
                    +
            f_{\hom}(0) \Lm^d( U \setminus \Phi_\rho(U) )
      \\
				&=
			\int_U
				(\det \tD\Phi_\rho)
				f_{\hom}
				\Big(
					(\det \tD\Phi_\rho)^{-1}\frac{\de \nu}{\de x}D\Phi_\rho^T
				\Big)
					\de x
                   +
            f_{\hom}(0) \Lm^d( U \setminus \Phi_\rho(U) )
        \\
        \label{eq:energy_est_1}
		&\leq  \int_U ( 1 + C \rho )
			\Big(
				f_{\hom}
				\Big(\frac{\de \nu}{\de x}\Big)
					+
				C	\rho
				\Big|
					\frac{\de \nu}{\de x}
				\Big|
			\Big)	\de x
                + C f_{\hom}(0) \rho
				\, ,
	\end{align}
	for some constant $C=C(d,U) \in \R_+$.
	Now for the singular part: using the formula provided in \eqref{eq:RN_decomp_tildej} and \eqref{eq:density_tildej_singular} and the homogeneity and the Lipschitz continuity of $f_{\hom}^\infty$ (Lemma~\ref{lemma:prop_fhom}), we obtain
	\begin{align}	\label{eq:energy_est_2}
	\begin{aligned}
		\int_{\overline U} f_{\hom}^\infty
		\Big(
			\frac{\de \tilde \nu_\rho}{\de |\tilde \nu_\rho|}
		\Big)
			\de|\tilde \nu_\rho|^s
	=&
		\int_{\overline U} f_{\hom}^\infty
		\Big(
			\frac{\de \tilde \nu_\rho}{\de |\tilde \nu_\rho|}(\Phi_\rho(x))
		\Big)
		\Big|
			\frac{\de \nu}{\de |\nu|} \tD \Phi_\rho^T
		\Big|
	 		\de |\nu|^s\\
	=&
		\int_{\overline U} f_{\hom}^\infty
		\Big(
			\frac{\de \nu}{\de |\nu|} \tD \Phi_\rho^T
		\Big)
	 		\de |\nu|^s\\
	\leq&
		\int_{\overline U}
		\Big(
			f_{\hom}^\infty
		\Big(
			\frac{\de \nu}{\de |\nu|}
		\Big)
			+ C \rho
		\Big)
			\de |\nu|^s
		\, .
	\end{aligned}
	\end{align}
	All in all, \eqref{eq:energy_est_1} together with \eqref{eq:energy_est_2}, provides the upper bound
	\begin{align}	\label{eq:approx_proof}
		\bF_{\hom}(\tilde \nu_\rho)
			\leq
		(1+ C \rho) \bF_{\hom}(\nu)
			+ C \rho | \nu |(\overline U)
		\, .
	\end{align}
	This completes Step 1 of the proof.

	\smallskip
	\noindent
	\emph{Step 2 (regularisation)}: \
	We now replace $\tilde \nu_\rho\in \M(U;V\otimes \R^d)$, which is supported in a compact subset $K := \Phi(\rho(\overline U))$ of $U$ with $\rho_0:= \dist(K,\partial U)>0$, with its mollification $\tilde j_{\rho,{\rho'}} := \tilde \nu_\rho \ast \psi_{\rho'}\in C_c^\infty(U;V\otimes \R^d)$. Here $\psi_{\rho'}\in C_c^\infty(B(0,{\rho'}))$ is a standard mollifier and ${\rho'}\in(0,\rho_0)$, so that $\supp \tilde j_{\rho,{\rho'}}\in B(K,{\rho'}) \subset U$.

	As ${\rho'}\to 0$, we clearly have $\tilde j_{\rho,{\rho'}} \Lm^d \to \tilde \nu_\rho$ narrowly and $\dive \tilde j_{\rho,{\rho'}} \Lm^d \to \dive \tilde \nu_\rho$ narrowly.  
   Finally, the energy bound follows by classical continuity argument with respect to the convolution, see e.g. \cite[Corollary~2.11]{Baia-Chermisi-Matias-Santos:2013}, which yields $\bF_{\hom}(\tilde j_{\rho,{\rho'}} \Lm^d) \to \bF_{\hom}(\tilde \nu_\rho)$ as ${\rho'} \to 0$. 

\smallskip
	\noindent
	\emph{Step 3 (conclusion)}: \
	Finally, we conclude the proof of the Lemma by taking a suitable diagonal sequence $j_\rho := \tilde  j_{\rho,{\rho'}(\rho)}$ so that $j_\rho \Lm^d \to \nu$ narrowly and, by \eqref{eq:approx_proof}, so that \eqref{eq:approx_cont_lemma} is satisfied.
\end{proof}

\begin{lemma}[Deformation of domains with Lipschitz boundary]\label{lemma: deformation}
    Let $U\subset \R^d$ be open and bounded with Lipschitz boundary. Then for any $\rho>0$ there is a smooth bi-Lipschitz deformation $\Phi_\rho: \overline U \to U$ with $\|\Phi_\rho-\mathrm{id}\|_\infty \leq \rho$ and $\|\emph{D}\Phi_\rho - \mathrm{id}\|_\infty\leq \rho$.
\end{lemma}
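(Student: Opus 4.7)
The plan is to define $\phi_\delta$ as the time-$\tau_\delta$ flow of a suitably chosen smooth, compactly supported vector field $X \in C^\infty_c(\R^d;\R^d)$ that points strictly inward uniformly along $\partial U$. Given such an $X$, the rest is essentially routine ODE theory: if $\Psi^t$ denotes the flow of $X$, then $\Psi^t$ is a global smooth diffeomorphism of $\R^d$ for every $t \in \R$, and Gr\"onwall's inequality yields $\|\Psi^t - \mathrm{id}\|_\infty \leq t \|X\|_\infty$ together with $\|D\Psi^t - I\|_\infty \leq e^{t \|DX\|_\infty} - 1$. Setting $\tau_\delta := \delta/C$ with $C := \max\{\|X\|_\infty, 2\|DX\|_\infty\}$, both quantities are bounded by $\delta$ for $\delta$ small, so $\phi_\delta := \Psi^{\tau_\delta}$ is a smooth bi-Lipschitz diffeomorphism of $\R^d$ satisfying $\|\phi_\delta - \mathrm{id}\|_\infty \leq \delta$ and $\|D\phi_\delta - I\|_\infty \leq \delta$. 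The inclusion $\phi_\delta(\overline U) \subset U$ will follow from the standard flow-invariance argument, provided the inward-pointing property of $X$ is uniform along $\partial U$.

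For the construction of $X$, I would use a partition of unity on $\partial U$. Since $\partial U$ is compact and Lipschitz, it admits a finite cover by balls $B_1,\dots,B_N$ such that, after an orthogonal change of coordinates in each $B_i$, the set $U \cap B_i$ is the strict subgraph $\{y_d < g_i(y')\}$ of a Lipschitz function $g_i$ of Lipschitz constant $L_i$. In these local coordinates, any vector $v$ with $v_d < -2L_i|v'|$ lies in the open convex cone of directions pointing uniformly strictly into $U$ from every boundary point in $B_i$; in particular, the image $\bar v_i$ of $-e_d$ in the original coordinates does. Choosing a smooth partition of unity $\{\chi_i\}_{i=1}^N$ subordinate to $\{B_i\}$, set $X_0 := \sum_{i=1}^N \chi_i \bar v_i$ on a neighborhood of $\partial U$, and multiply by an additional smooth cutoff supported in that neighborhood to obtain $X \in C^\infty_c(\R^d;\R^d)$.

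The only step requiring genuine care is verifying that $X$ is uniformly strictly inward-pointing along all of $\partial U$, including at points belonging to several charts. The key observation is that at each boundary point $y \in \partial U$, the set of strictly inward directions forms an open convex cone $C(y) \subset \R^d$. Provided the balls $B_i$ are chosen fine enough, each $\bar v_i$ with $y \in B_i$ lies in $C(y)$ (this is the only point where the Lipschitz hypothesis is actively used; a small enough $B_i$ ensures the single direction $\bar v_i$ works at every boundary point in $B_i$, not just at the center). Hence $X(y) = \sum_{i : y \in B_i} \chi_i(y) \bar v_i$ is a convex combination of elements of $C(y)$, and thus lies in $C(y)$ by convexity of the cone. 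Compactness of $\partial U$ and continuity of the data then yield uniform constants $c_0, \tau_0 > 0$ with $\mathrm{dist}(y + sX(y),\partial U) \geq c_0 s$ for every $y \in \partial U$ and $s \in (0,\tau_0]$, which is precisely what the flow-invariance argument of the first paragraph requires.
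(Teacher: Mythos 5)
Your proposal is correct and takes a genuinely different (though closely related) route from the paper. The paper constructs $\phi_\delta$ in one shot as the map $x \mapsto x - \delta \sum_{i\in I'}\eta_i(x)n_i$, i.e.\ a single Euler step of size $\delta$ along a partition-of-unity average of inward normals; global injectivity is then proved directly via the Lipschitz estimate $|\phi_\delta(x)-\phi_\delta(y)| \geq (1-C\delta)|x-y|$. You instead package the same inward-pointing field into a smooth, compactly supported vector field $X$ and take $\phi_\delta := \Psi^{\tau_\delta}$, the time-$\tau_\delta$ flow. What the flow buys you is that injectivity (indeed, global diffeomorphism) is automatic and the estimate $\|D\Psi^t - I\| \leq e^{t\|DX\|_\infty}-1$ comes from Gr\"onwall; what it costs is the invocation of ODE theory, which is conceptually heavier than the paper's one explicit formula. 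Both approaches turn on the identical geometric observation, which you isolate explicitly and the paper uses implicitly: at each Lipschitz boundary point the set of uniformly inward directions is an open convex cone, so any subordinate-partition-of-unity combination of locally inward directions remains inward. One place where your write-up is slightly under-specified is the final flow-invariance argument: $\mathrm{dist}(y + sX(y),\partial U) \geq c_0 s$ is the linearized statement, and passing from it to $\Psi^t(\overline U)\subset U$ for $0 < t \leq \tau_0$ requires a short bootstrap (or the standard Nagumo-type invariance argument); this is routine but worth spelling out if the sketch were to be expanded.
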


\begin{proof}
Using compactness, cover $\overline U$ with finitely many rotated open cubes $(Q_i)_{i\in I}$ such that every cube is either contained in $U$ or $R_i^T(Q_i \cap U)-x_i = \{(x',x_d)\in Q_i'\times \R\,:\,0<x_d<h_i(x')\}$ for some rotation $R_i\in SO(d)$, some translation vector $x_i\in\R^d$ and a Lipschitz function $h_i:\R^{d-1}\to\R$. The outer unit normals $n_i = R_ie_d$ then have the additional property that $x-\alpha n_i\in U$ for all $x\in \overline U \cap B(Q_i,r_i)$ and all $\alpha\in[0,r_i]$ for some $r_i>0$.

Pick a partition of unity $\eta_i\in C_c^\infty(Q_i)$ such that $\sum_{i\in I}\eta_i(x) = 1$ for all $x\in\overline U$. Define the global deformation $\Phi_\rho\in C^\infty(\R^d;\R^d)$,
\[
\Phi_\rho(x) := x - \rho \sum_{i\in I'} \eta_i(x) n_i,
\]
where $I'\subseteq I$ is the index set of cubes intersecting $\partial U$. First note that for $\rho\leq \max_{i\in I'} r_i / |I'|$, we have $\Phi_\rho(x)\in U$ for all $x\in\overline U$.

By construction,
\begin{equation}\label{eq: L infty bound}
|\Phi_\rho(x)-x| \leq \rho \sum_{i\in I'}\eta_i(x)\leq \rho,
\end{equation}
and
\begin{equation}\label{eq: L infty bound derivative}
|\tD\Phi_\rho(x)- \mathrm{id}| = |\rho \sum_{i\in I} R_ie_d \otimes \nabla \eta_i(x)| \leq C\rho.
\end{equation}

Using \eqref{eq: L infty bound derivative}, we see that
\begin{equation}
|\Phi_\rho(x) - \Phi_\rho(y)| \geq |x-y| - C\rho|x-y|,
\end{equation}
which implies global injectivity of $\Phi_\rho$ and Lipschitz continuity of its smooth inverse as long as $C\rho<1$.
\end{proof}

\subsection{Proof of the upper bound}
In this section we take advantage of the approximation result provided by Lemma~\ref{lemma: smooth} to show the validity of the limsup inequality in our main theorem.

\begin{proof}
Given $\nu\in \M(\overline U;V\otimes \R^d)$ with divergence $\dive \nu= \mu\in M(\overline U;V)$ and discrete measures $m_\eps\in \M(\cX_\eps;V)$ with 
	$m_\eps \to \mu$
narrowly in $\M(\overline{U};V)$
we wish to find $J_\eps\in V_\rma^{\cE_\eps}$ with $\DIVE J_\eps = \mu_\eps$ such that $\iota_\eps J_\eps \to \nu$ narrowly to $\M(\overline{U};V \otimes \R^d)$ 
and $F_\eps(J_\eps,\overline U)\to \bF_{\hom}(\nu,\overline U)$.

Let us describe first how to construct a recovery sequence in the case when $U$ is a bounded, Lipschitz domain.

We shall proceed in five steps:
\begin{itemize}
	\item [Step 1:] We replace $\nu$ by a smooth $j_\rho\in C_c^\infty(U;V\otimes \R^d)$ that is energy-close to $\nu$. 
	\item [Step 2:] We discretize $j_\rho$ in dyadic cubes at scale $\delta\in (0,\dist(\supp j_\rho, \partial U))$ to obtain a piecewise-constant $j_{\rho,\delta}:U\to V\otimes \R^d$ that is energy-divergence close to $j_\rho$ and thus $\nu$.
	\item [Step 3:] From $j_{\rho,\delta}$, we construct and glue optimal microstructures on each cube of size $\delta$ to build $\tilde J_{\rho,\delta,\eps}\in V_\rma^{\cE_\eps}$ that is divergence-close to $j_{\rho,\delta}$ and has $\limsup_{\eps\to 0} F_\eps(\tilde J_{\rho,\delta,\eps},\overline{U})\leq \bF_{\hom}(j_{\rho,\delta} \Lm^d,\overline{U})$.
	\item [Step 4:] We find a corrector $K_{\rho,\delta,\eps}\in V_\rma^{E_\eps}$ solving $\DIVE K_{\rho,\delta,\eps} = \mu_\eps - \DIVE \tilde J_{\rho,\delta,\eps}$ with total variation $|K_{\rho,\delta,\eps}| \leq C\rho + C(\rho)\delta + C(\rho,\delta)\eps$.
	\item [Step 5:] Closing arguments: we show that $J_{\rho,\delta,\eps} := \tilde J_{\rho,\delta,\eps} + K_{\rho,\delta,\eps}$ has the right properties and choose a diagonal sequence $\delta(\eps),\rho(\eps)\to 0$.
\end{itemize}

\noindent
\emph{Step 1}: \
For every $\rho>0$, we seek  $j_\rho\in C_c^\infty(U;V\otimes \R^d)$ satisfying, with $\nu_\rho:= j_\rho \Lm^d$, the properties
\begin{equation}\label{eq: mollification estimates}
	\begin{cases}
		\|\nu_\rho  - \nu \|_{\tKR(\overline U)} \leq \rho
                \, ,
  \\
		\|\dive \nu_\rho - \mu \|_{\tKR(\overline U)} \leq \rho
                \, ,
  \\
		\bF_{\hom}(\nu_\rho, \overline U) \leq \bF_{\hom}(\nu ,\overline U) + \rho
            \, ,
	\end{cases}
\end{equation}
This is done by applying Lemma \ref{lemma: smooth}, and use that narrow convergence is equivalent to $\KR$ convergence on compact sets.

\noindent
\emph{Step 2}: \
We fix $\delta\in(0,\frac1{2\sqrt{d}}\dist(\supp j_\rho, \partial U))$ 
and cover the the domain $U$ with finitely many cubes disjoint cubes 
	$Q_z := z+[0,\delta)^d$, where $z\in \delta\Z^d$. 
We now consider any piecewise constant $j_{\rho,\delta}:U\to V\otimes \R^d$ satisfying
\begin{align}
\label{eq:piecewise}
    j_{\rho,\delta}(x)  = j_{\rho,\delta,z}
            \quad\text{ for }x \in Q_z
                \, , \quad \text{so that} \quad
            \sup_{x \in Q_z}
            \| j_\rho(x) - j_{\rho,\delta,z} \|
                \leq
            C(\rho) \delta
                \, .
\end{align}
For example, $j_\rho$ being Lipschitz, we can also take $j_{\rho,\delta,z}:= j_\rho(z)$, for every $z \in \Z^d$. From now on, the constant $C(\rho)$ might change line by line.

We set $\nu_{\rho,\delta}:= j_{\rho,\delta} \Lm^d \in \calM(\overline{U}; V \otimes \R^d)$. Note that $\supp j_{\rho,\delta}\subset U$ by our choice of $\delta$. We claim that
\begin{equation}\label{eq: discretization estimates}
	\begin{cases}
		\|\nu_{\rho,\delta} - \nu_\rho\|_{\tKR(\overline U)} \leq C(\rho)\delta
			\, ,
	\\
 \displaystyle
		\|\dive \nu_{\rho,\delta} - \dive \nu_\rho\|_{\tKR(\overline U)} \leq C(\rho)\delta
			\,  ,
	\\
 \displaystyle
		\bF_{\hom}(\nu_{\rho,\delta} , \overline U) \leq \bF_{\hom}(\nu_\rho , \overline U) + C(\rho)\delta
			\, .
	\end{cases}
\end{equation}
The first inequality trivially holds by construction.
Concerning the second bound in \eqref{eq: discretization estimates}, we fix $\psi \in C^1(\overline U ; V^*)$ with 
	$\Lip(\psi) \leq 1$. 
Note that, due to the fact that $j_\rho$ is compactly supported, 
there exists $\tilde \psi \in C_c^1(\overline U ; V^*)$ 
with $\tilde \psi = \psi$ on $\Lip(\tilde \psi) \leq  \tilde  C(\rho)$ 
on $\supp j_\rho \cup \supp j_{\rho,\delta}$. 
As a consequence, we obtain
\begin{align}
    \big|
    \langle
        \dive j_{\rho,\delta} - \dive j_\rho
            ,
        \psi
    \rangle
    \big|
        =
    \int
    \big|
    \langle
        \nabla \tilde \psi
            ,
        j_{\rho,\delta} - j_\rho
    \rangle
    \big|
    \dd x
        \leq
    C(\rho) \tilde C(\rho) \delta
        \, ,
\end{align}
where we used $\| j_{\rho,\delta} - j_\rho \|_\infty \leq C(\rho) \delta$, uniformly in $\psi$. This shows the second inequality in  \eqref{eq: discretization estimates}. The third inequality directly follows as well from the $L^\infty$-bound and the Lipschitz property of $f_{\hom}$.

\smallskip
\noindent
\emph{Step 3}: \
The next step is to define a global competitor by \textit{gluing} together near-optimal microstructures. This procedure necessarily creates extra divergence, that we shall control thanks to the property of the uniform-flow operator. It is useful to introduce a layer, of size $\eta>0$, between the cubes $\{Q_z\}_z$ and perform a continuous interpolation between the near-optimal microstructures. A similar argument will also appear in the proof of the lower bound, see in particular the proof of Step 1 in Proposition~\ref{prop:asymptotic_strip}.

For every $\eta \in (0,\delta/4)$, we consider the smaller cubes $Q_{\eta,z} := \{ x \in Q_z \suchthat d(x, \partial Q_z) \geq \eta \} \subset Q_z$. Let $\{ 0 \leq \psi_{z,\eta} \in C_c^\infty(\R^d) \, : \, z \in \delta \Z^d \}$ be a family of smooth functions satisfying the following properties:
\begin{itemize}
    \item $\psi_{z,\eta} = 1$ on $Q_{2\eta,z}$ and $\psi_{z,\eta} \leq 1$ everywhere.
    \item $\psi_{z,\eta} =0$ on the complement of the larger cube
    \begin{align}
        \tilde Q_{\eta,z} := B_{\| \cdot \|_\infty}(Q_z, \eta) =
        \left\{
            x \in \R^d \suchthat \exists y \in Q_z\, , \, \| x - y \|_\infty \leq \eta
        \right\}
            \, .
    \end{align}
    In particular, $\psi_{z,\eta} =0$ on $Q_{2\eta,z'}$, for every $z'\neq z$.
    \item The family $\{ \psi_{z,\eta} \}_z$ is a partition of unity on $\R^d$, namely
    \begin{align}
    \label{eq:partition_unity}
        \sum_{z \in \delta \Z^d}
            \psi_{z,\eta}(x)
        = 1
            \, , \qquad
        \forall x \in \R^d
            \, , \quad
        \forall \eta \in (0, \delta/4)
                \, .
    \end{align}
    \item The gradients are bounded by 
		$\| \nabla \psi_{\eta,z} \|_\infty \leq C \eta^{-1}$ 
		for every $z \in \delta \Z^d$. Moreover, as $\eta \to 0$, 
		we have the convergence
    \begin{align}
    \label{eq:cutoff_deriv_convergence}
        \nabla \psi_{\eta,z} \Lm^d
            \to
        \tD \1_{Q_z} = \ext \Hm^{d-1} \res (\partial Q_z)
            \qquad \text{narrowly in }\calM(\R^d ; \R^d)
                \, ,
    \end{align}
	where here $\tD$ denotes the distributional derivative.
\end{itemize}
An example of a family which does the job is given by
\begin{align}
\label{eq:possible_choice_cutoff_ub}
    \psi_{\eta,z} := \rho_\eta * \1_{Q_z}
        \, , \quad
    \forall z \in \delta \Z^d
        \, ,
\end{align}
where $\rho_\eta \in C_c(B_\eta(0))$ is a smooth mollifier.
Since $\omega$ was chosen so that $f_{\hom}$ exists, 
we can find for every $z\in \delta\Z^d$ a sequence of 
near-optimal admissible microstructures 
	$J_{\rho,\delta,z,\eps}\in \Rep_{\calR ,\eps} 
		(j_{\rho,\delta,z}, Q_{2\eta,z})$ 
such that
\begin{equation}
\label{eq:ub_step3_optimalmicro}
    \lim_{\eps \to 0}
        F_\eps(J_{\rho,\delta,z,\eps},Q_{2\eta,z})
            =
        \Lm^d(Q_{2\eta,z}) f_{\hom}(j_{\rho,\delta, z})
            =
        \bF_{\hom}(\nu_{\rho,\delta},  Q_{2\eta,z})
            \, .
\end{equation}
Recall the notation introduced in \eqref{eq:def_hat_0}. 
We define the global competitor 
	$\tilde J_{\rho,\delta,\eps} \in V_\rma^{E_\eps}$ as
\begin{align}
    \tilde J_{\rho,\delta,\eps} 
	:=
    \sum_{z \in I_\delta}
        \hat \psi_{z,\eta} \cdot J_{\rho,\delta,z,\eps}
    \, , \qquad \text{where} \quad
        I_\delta 
		:= 
		\{ z \in \delta \Z^d \, : \, Q_z \subset U \}
         \, .
\end{align}
By construction, we have that $ \tilde J_{\rho,\delta,\eps}$ is supported in $U$, and
it coincides with $J_{\rho,\delta,z,\eps}$ on $Q_{2\eta,z}$, for every $z \in \delta \Z^d$.
We claim that we have
\begin{align}\label{eq: micro estimates}
	\begin{cases}
 \displaystyle
		\limsup_{\eps \to 0}\|\iota_\eps \tilde J_{\rho,\delta,\eps} - \nu_{\rho,\delta}\|_{\tKR(\overline U)} \leq C(\rho)(\delta + \eta)
            \, ,
\\
  \displaystyle
		\limsup_{\eps \to 0}\|\dive \iota_\eps  \tilde J_{\rho,\delta,\eps} - \dive \nu_{\rho,\delta}\|_{\tKR(\overline U)}
      = g_{\delta,\rho}(\eta)
    \, ,
\\
  \displaystyle
		\limsup_{\eps \to 0} F_\eps(\tilde J_{\rho,\delta,\eps}, \overline U) \leq \bF_{\hom}(\nu_{\rho,\delta}, \overline U) + C \delta +   C(\rho) \eta 
            \, ,
    \end{cases}
\end{align}
where $g_{\delta,\rho}(\eta) \to 0$ (possibly depending on $\delta$ 
and $\rho$
\footnote{If one chooses the cutoff functions 
as in \eqref{eq:possible_choice_cutoff_ub}, then we can take 
	$g_{\delta,\rho} = C(\rho) \eta/\delta$.}) as $\eta \to 0$.
To show the first point, we employ Lemma~\ref{lem:equal-mass} and Remark~\ref{rem:vague_J0_cubes} to obtain that
\begin{align}\label{eq: reduced cube mass}
    \iota_\eps J_{\rho,\delta,z,\eps}(Q_{2\eta,z})
        =
    \iota_\eps \calR_\eps (j_{\rho,\delta,z})(Q_{2\eta,z})
        \xrightarrow[\eps \to 0]{}
    j_{\rho,\delta,z} \Lm^d(Q_{2\eta,z})
        =
    \nu_{\rho,\delta}(Q_{2\eta,z})
        \, ,
\end{align}
for every $z \in \delta \Z^d$.
Using that $J_{\rho,\delta,z,\eps} \in \Rep_{\eps,\calR}(j_{\rho,\delta,z}, Q_{2\eta, z})$, we write
\begin{align}
    \tilde J_{\rho,\delta,\eps}
        &=
    \sum_{z \in I_\delta}
        \1_{Q_{2\eta,z}}  J_{\rho,\delta,z,\eps}
            +
    \sum_{z \in I_\delta}
        \big(  \psi_{z,\eta} - \1_{Q_{2\eta,z}}  \big)  J_{\rho,\delta,z,\eps}
\\
\label{eq:ub_step3_split}
        &=
    \sum_{z \in I_\delta}
        \1_{Q_{2\eta,z}}  J_{\rho,\delta,z,\eps}
            +
    \sum_{z \in I_\delta}
        \big(  \psi_{z,\eta} - \1_{Q_{2\eta,z}}  \big)  \calR_\eps (j_{\rho,\delta,z})
            \, .
\end{align}
Note that $x \mapsto \psi_{z,\eta}(x) - \1_{Q_{2\eta,z}}(x)$ 
is a compactly supported function whose set of discontinuities 
has $\Lm^d$-measure zero. 
As a consequence, by Remark~\ref{rem:abs_cont_Reps}, Remark~\ref{rem:localisation_weakconv}, and Proposition~\ref{prop:equivalence_convergence_KR},
we have that as $\eps \to 0$,
\begin{align}
\label{eq:ub_step3_KR1}
    \Bigl\| 
		\iota_\eps
    \big(  \psi_{z,\eta} - \1_{Q_{2\eta,z}}  \big)  \calR_\eps (j_{\rho,\delta,z})
        -
     \big(  \psi_{z,\eta} - \1_{Q_{2\eta,z}}  \big)
        j_{\rho,\delta,z} \Lm^d
	\Bigr\|_{\tKR(\overline U)}
	\to 0
        \, .
\end{align}
On the other hand, for every $1$-Lipschitz test function $\phi\in C(\overline U; V^* \otimes \R^d)$ with $\phi(0)=0$,
\begin{align}
\label{eq:ub_step3_test1}
    \Big\langle
    \sum_{z \in I_\delta}
        \1_{Q_{2\eta,z}}
             \iota_\eps J_{\rho,\delta,z,\eps}
     - j_{\rho,\delta}
        ,
    \phi
    \Big\rangle
        =
     \sum_{z \in I_\delta}
        \big\langle
            \iota_\eps J_{\rho,\delta,z,\eps} - j_{\rho,\delta,z}
                ,
            \phi   \1_{Q_{2\eta,z}}
        \big\rangle
            -
         \langle
            \1_{S_\eta^c} j_{\rho,\delta} , \phi
        \rangle
            \,  , \quad
\end{align}
where we used the notation $S_\eta$ to denotes the subset of $\R^d$ given by the union of all the sets $\{Q_{2\eta,z} \suchthat z \in I_\delta\}$.
We estimate the two terms in \eqref{eq:ub_step3_test1} one at a time: concerning the first one, we observe that it coincides with
\begin{align}
     \sum_{z \in I_\delta}
        \big(
            \iota_\eps J_{\rho,\delta,z,\eps}(Q_{2\eta,z}) - \nu_{\rho,\delta}(Q_{2\eta,z})
        \big)
        \cdot
            \phi(z)
            +
     \sum_{z \in I_\delta}
        \langle
            \iota_\eps J_{\rho,\delta,z,\eps} - j_{\rho,\delta,z}
                ,
            \big( \phi - \phi(z) \big)    \1_{Q_{2\eta,z}}
        \rangle
            \, ,
\end{align}
where in the first equality we used that 
$j_{\rho,\delta} = 0$ on $Q_{2\eta,z}$ for every $z \notin I_\delta$, 
which follows from the fact that 
$\sqrt{d} \delta < \dist(\supp(j_{\rho,\delta}), \partial U)$. 
Using \eqref{eq: reduced cube mass}, we see that 
the first term in the right-hand side above goes to zero in $\eps \to 0$ 
uniformly over the test functions $\phi$, 
whereas the test functions $\phi-\phi(z)\mathds{1}_{Q_{2\eta,z}}$ 
are uniformly bounded by $C\delta$.
Therefore,
\begin{align}
    \limsup_{\eps \to 0}
    \Big\|
        \sum_{z \in I_\delta}
        \1_{Q_{2\eta,z}}
             \iota_\eps J_{\rho,\delta,z,\eps}
        &- j_{\rho,\delta} \1_{S_\eta}
    \Big\|_{\KR(\overline U)}
        \leq
    C \delta \limsup_{\eps \to 0}
    \sum_{z \in I_\delta}
            \big|
                \iota_\eps J_{\rho,\delta,z,\eps} - j_{\rho,\delta,z}
            \big|(Q_{2\eta,z})
\\
        &\leq
    C \delta \limsup_{\eps \to 0}
        \sum_{z \in I_\delta}
        \big|
            \iota_\eps J_{\rho,\delta,z,\eps}
        \big|(Q_{2\eta,z})
            +
        \big|
            \nu_{\rho,\delta}(\overline{U})
        \big|
\\
        &\leq
    C c_2 \delta
    \big(
        \sum_{z \in I_\delta}
            \bF_{\hom}(\nu_{\rho,\delta}, Q_{2\eta,z})
        + \nu_{\rho,\delta}(\overline{U})
    \big)
\\
\label{eq:ub_step3_KR2}
        &\leq
    C c_2\delta
    \big(
        \bF_{\hom}(\nu_{\rho,\delta}, \overline{U})
            +
        \nu_{\rho,\delta}(\overline{U})
    \big)
                \, ,
\end{align}
where for going from the second to the third line we used (F2) as well as \eqref{eq:ub_step3_optimalmicro}.

Collecting the estimates in \eqref{eq:ub_step3_test1}, \eqref{eq:ub_step3_KR1}, \eqref{eq:ub_step3_split}, and finally \eqref{eq:ub_step3_KR2} we obtain
\begin{align}
    \limsup_{\eps \to 0}
        \|
            \iota_\eps \tilde J_{\rho,\delta,\eps} - j_{\rho,\delta}
        \|_{\tKR(\overline U)}
            &\leq
        C(\rho) \delta +
        \Big\|
            \sum_{z \in I_\delta}
            \big(
                \psi_{z,\eta} - \1_{Q_{2\eta,z}}
            \big)
                j_{\rho,\delta,z} \Lm^d
            - \1_{S_\eta^c} j_{\rho,\delta}
        \Big\|_{\tKR(\overline U)}
\\
            &=
        C(\rho) \delta +
        \Big\|
            \sum_{z \in I_\delta}
            \big(
                \psi_{z,\eta} - 1
            \big)
                j_{\rho,\delta,z} \Lm^d
        \Big\|_{\tKR(\overline U)}
\\
            &\leq C(\rho) (\delta + \eta)
    \, ,
\end{align}
which shows the first inequality in \eqref{eq: micro estimates}.

We now turn our attention to the second inequality in \eqref{eq: micro estimates}, involving the divergence $\tilde J_{\rho,\delta,\eps}$: using the notation from Section~\ref{sec:preliminaries_derivatives}, by the discrete Leibniz rule  \eqref{eq:Leibniz} we have
\begin{align}
\label{eq:DIVE_tildeJ_ub}
    \DIVE \tilde J_{\rho,\delta,\eps}
        =
    \sum_{z \in I_\delta}
        \grad \psi_{z,\eta}
            \star
         J_{\rho,\delta,z,\eps}
            \, ,
\end{align}
where we also used that $\DIVE J_{\rho,\delta,z,\eps} \equiv 0$ on $\cX_\eps$, for every $z \in \delta \Z^d$.
Using that, for every $z \in \delta \Z^d$, 
the total variation $\{ \nabla \psi_{\eta,z} \cdot (\iota_\eps J_{\rho,\delta,z,\eps}) \}_\eps$ is bounded uniformly in $\eps$, an application of Lemma~\ref{lemma:gradients} shows that
\begin{align}
\label{eq:ub_step3_dive}
    \limsup_{\eps \to 0}
    \Big\|
        \iota_\eps
        \big(
            \grad \psi_{z,\eta}
                \star
             J_{\rho,\delta,z,\eps}
        \big)
            -
        \nabla \psi_{\eta,z}
            \cdot
        ( \iota_\eps J_{\rho,\delta,z,\eps} )
    \Big\|_{\tKR(\overline{U})}
        = 0
        \, .
\end{align}
On the other hand, by construction, on the set where $\nabla \psi_{\eta,z}$ does not vanish, then $J_{\rho,\delta,z,\eps}$ coincides with $\calR_\eps j_{\rho,\delta,z}$. 
Using the convergence of the uniform-flow operator 
(note that $\nabla \psi_{\eta,z}$ is smooth and compactly supported), 
from \eqref{eq:ub_step3_dive} and \eqref{eq:DIVE_tildeJ_ub} we conclude that
\begin{align}
\label{}
    \limsup_{\eps \to 0}
    \Big\|
        \dive \iota_\eps \tilde J_{\rho,\delta,\eps}
            -
        \sum_{z \in I_\delta}
            \nabla \psi_{\eta,z}
                \cdot
            j_{\rho,\delta,z} \Lm^d
    \Big\|_{\tKR(\overline{U})}
        = 0
        \, .
\end{align}
In order to conclude the proof of the claimed inequality, it is enough to observe that \eqref{eq:cutoff_deriv_convergence} yields
\begin{align}
     \sum_{z \in I_\delta}
            \nabla \psi_{\eta,z}
                \cdot
            j_{\rho,\delta,z} \Lm^d
        \xrightarrow[\eta \to 0]{}
    \sum_{z \in I_\delta}
       \ext  \cdot  j_{\rho,\delta,z}
            \Hm^{d-1} \res_{Q_z}
                =
            \dive j_{\rho,\delta}
        \, ,
\end{align}
narrowly in $\calM(\overline{U}; V)$.

Finally, we estimate the energy: 
using the additivity of the energy we split into two contributions, 
the bulk terms and a boundary one, as
\begin{align}
\label{eq:ub_step3_energy_split}
    F_\eps(\tilde J_{\rho,\delta,\eps}, \overline U)
        =
    \sum_{z \in I_\delta}
    F_\eps(\tilde J_{\rho,\delta,\eps}, Q_{2\eta,z})
        +
    F_\eps
    \big(
        \tilde J_{\rho,\delta,\eps}, D_{\delta,\eta}
    \big)
        \, ,
\end{align}
where for simplicity we used the notation $D_{\delta,\eta}:= \overline U \setminus \bigcup_{z \in I_\delta} Q_{2\eta,z}$.
Concerning the bulk, using that $\{\psi_{\eta,z}\}_z$ is a partition of unity and the very definition of $\tilde J_{\rho,\delta,\eps}$ we observe that
\begin{align}
\label{eq:diff_micro_global}
    \tilde J_{\rho,\delta,\eps}
        -
    J_{\rho,\delta,z,\eps}
        =
    \begin{cases}
    \displaystyle
        0
            &\text{on } Q_{2\eta,z} \, ,
    \\
    \displaystyle
        \sum_{z' \in I_\delta}
            \psi_{\eta,z'}
            \calR_\eps (j_{\rho,\delta,z'} - j_{\rho,\delta,z})
                &\text{on }
                    Q_z\setminus Q_{2\eta,z} \, .
    \end{cases}
\end{align}
Note also that on $Q_z \setminus Q_{2\eta,z}$, all $\psi_{\eta,z'}=0$ for all $z'$ but the ones satisfying $\|z' - z\|_{\ell_\infty} = \delta$ (which are finitely many). In particular, using the boundedness of $\calR_\eps$ we have that
\begin{align}
    \Big|
        \sum_{z' \in I_\delta}
            \psi_{\eta,z'}
            \calR_\eps (j_{\rho,\delta,z'} - j_{\rho,\delta,z})
    \Big|
    &
    \big(
       Q_z \setminus Q_{2\eta,z}
    \big)
        \leq
        \hspace{-3mm}
    \sum_{
        \substack{ z' \in I_\delta \\  
				\|z' - z\|_{\ell_\infty} =\, \delta }
        }
        \hspace{-4mm}
        \psi_{\eta,z'}
        \big|
            \calR_\eps (j_{\rho,\delta,z'} - j_{\rho,\delta,z})
        \big|
        \big(
           Q_z \setminus Q_{2\eta,z}
        \big)
\\
        &\leq
        C         
		 \hspace{-3mm}
    \sum_{
        \substack{ z' \in I_\delta \\ 
				 \|z' - z\|_{\ell_\infty} =\, \delta }
        }
        \hspace{-4mm}
        \psi_{\eta,z'}
        | j_{\rho,\delta,z'} - j_{\rho,\delta,z} |
        \Lm^d
        \big(
            Q_z \setminus Q_{2\eta,z}
        \big)
        \leq
            C(\rho) \eta \delta^d
                \, .
\end{align}
for $\eps >0$ small enough. Consequently, from this estimate, \eqref{eq:diff_micro_global}, the Lipschitz properties \eqref{eq:Lipschitz-F-eps} of $F_\eps$, and the fact that, for $\eps>0$ small enough $B(Q_{2\eta,z},\eps R_\Lip) \subset Q_z \setminus Q_{2\eta,z}$, we infer that
\begin{align}
    \sum_{z \in I_\delta}
        F_\eps(\tilde J_{\rho,\delta,\eps}, Q_{2\eta,z})
        &\leq
    \sum_{z \in I_\delta}
        F_\eps(J_{\rho,\delta,z,\eps}, Q_{2\eta,z})
            +
        \big|
            \iota_\eps (\tilde J_{\rho,\delta,\eps} - J_{\rho,\delta,z,\eps})
        \big|( B(Q_z,\eps R_\Lip) )
\\
        &\leq
    \bigg(
    \sum_{z \in I_\delta}
        F_\eps(J_{\rho,\delta,z,\eps}, Q_{2\eta,z})
    \bigg)
            +
        C(\rho) \eta
            \, .
\end{align}
Taking the limsup in $\eps \to 0$ and using the property \eqref{eq:ub_step3_optimalmicro} of the microstructures, we find that
\begin{align}
    \limsup_{\eps \to 0}
    \sum_{z \in I_\delta}
        F_\eps(\tilde J_{\rho,\delta,\eps}, Q_{2\eta,z})
    &\leq
        \bF_{\hom}
        \bigg(
            \nu_{\rho,\delta}
                ,
            \bigcup_{z \in I_\delta} Q_{2\eta,z}
        \bigg)
            +C(\rho) \eta
\\
\label{eq:ub_energy_bulk}
    &\leq
        \bF_{\hom}(\nu_{\rho,\delta}, \overline U)
            +C(\rho) \eta
        \, ,
\end{align}
where at last we also used that $\bF_{\hom}$ is nonnegative. Concerning the boundary contribution in \eqref{eq:ub_step3_energy_split}, we note that by construction
\begin{align}
\label{eq:diff_global_boundary}
    \tilde J_{\rho,\delta,\eps}
        =      0
    \qquad \text{on} \qquad
        B
        \big(
            D_{\delta,\eta}
                \, , \,
            \eps R_\Lip
        \big)
            \, ,
\end{align}
for $\eps$ small enough (here we also used that $\calR_\eps (0) = 0$). Using this fact, by locality (Remark~\ref{rem:locality}) and the linear growth condition (F1) we see that
\begin{align}
    F_\eps ( \tilde J_{\rho,\delta,\eps}, D_{\delta,\eta})
        =
    F_\eps ( 0, D_{\delta,\eta} )
        \leq
    C \Lm^d(D_{\delta,\eta})
        \leq
    C \delta
        \, .
\end{align}

Putting this together with \eqref{eq:ub_step3_energy_split} and \eqref{eq:ub_energy_bulk} we conclude the proof of \eqref{eq: micro estimates}.

\smallskip
\noindent
\emph{Step 4}: \
Combining \eqref{eq: mollification estimates}, \eqref{eq: discretization estimates}, and \eqref{eq: micro estimates}, together with $m_\eps \to \mu$ in $\tKR$, we see that
\begin{align}
\label{eq:ub_step5_pre_corrector_bound}
    \limsup_{\eps \to 0}
    \|
        \mu_\eps - \dive \iota_\eps \tilde J_{\rho,\delta,\eps}
    \|_{\tKR(\overline U)}
        \leq
    \rho + C(\rho)\delta + g_{\delta,\rho}(\eta)
        \, .
\end{align}
We apply Proposition~\ref{prop:correctors} to $m:=m_\eps -  \DIVE \tilde J_{\rho,\delta,\eps}$ and find correctors $K_{\rho,\delta,\eps}\in V_\rma^{\cE_\eps}$ so that $\DIVE K_{\rho,\delta,\eps} = \mu_\eps - \DIVE \tilde J_{\rho,\delta,\eps}$ and
\begin{equation}
\label{eq:ub_step5_corrector_bound}
    |\iota_\eps J_{\rho,\delta,\eps}|(\overline U)
        \leq
    C
    \Big(
        \|\mu_\eps - \dive \iota_\eps \tilde J_{\rho,\delta,\eps}\|_{\tKR(\overline U)}
            +
        \eps
        \big|
            \mu_\eps - \dive \iota_\eps \tilde J_{\rho,\delta,\eps}
        \big|(\overline U)
    \Big)
        \, .
\end{equation}
We finally define the candidate recovery sequence as
\begin{align}
    J_{\rho,\delta,\eps}:=
        \tilde J_{\rho,\delta,\eps}
            +
        K_{\rho,\delta,\eps}
            \in V_\rma^{E_\eps}
        \, .
\end{align}
Note that the second term on the right-hand side of \eqref{eq:ub_step5_corrector_bound} is vanishing as $\eps \to 0$, due to the fact that
\begin{align}
    \sup_{\eps >0 }
    \big|
        \mu_\eps - \dive \iota_\eps \tilde J_{\rho,\delta,\eps}
    \big|(\overline U)
        \leq
    \sup_{\eps >0 }
    \Big(
    \big|
        \mu_\eps
    \big|(\overline U)
        +
    \big|
        \dive \iota_\eps \tilde J_{\rho,\delta,\eps}
    \big|(\overline U)
    \Big)
        \leq C(\rho, \eta)
            \, ,
\end{align}
which follows from the fact that $\mu_\eps \to \mu$ narrowly and from the explicit form of the divergence of $\tilde J_{\rho,\delta,\eps} $ given in \eqref{eq:DIVE_tildeJ_ub}\footnote{In fact, it is possible to show that the constant $C(\rho, \eta)$ can be chosen independent of $\eta$.}.
In particular, from \eqref{eq:ub_step5_corrector_bound} and \eqref{eq:ub_step5_pre_corrector_bound} we can control the total variation of the correctors as
\begin{align}
    \limsup_{\eps \to 0}
        |\iota_\eps J_{\rho,\delta,\eps}|(\overline U)
    \leq
        C
        \big(
             \rho + C(\rho)\delta + g_{\delta,\rho}(\eta)
        \big)
            \, .
\end{align}
Consequently, by the Lipschitz property \eqref{eq:Lipschitz-F-eps} of $F_\eps$ we readily check that
\begin{align}
\label{eq: global estimates}
    \begin{cases}
    \displaystyle
        \limsup_{\eps \to 0}
                \|
                    \iota_\eps  J_{\rho,\delta,\eps} - j
                \|_{\tKR(\overline U)}
            \leq
       C
        \big(
             \rho + C(\rho)\delta + g_{\delta,\rho}(\eta)
        \big)
            \, ,
    \\
    \displaystyle
        \dive \iota_\eps  J_{\rho,\delta,\eps} = \mu_\eps
            \, ,
    \\
    \displaystyle
        \limsup_{\eps \to 0} F_\eps(J_{\rho,\delta,\eps}, \overline U)
            \leq
        \limsup_{\eps \to 0} F_\eps(\tilde J_{\rho,\delta,\eps}, \overline U)
            +
      C
        \big(
             \rho + C(\rho)\delta + g_{\delta,\rho}(\eta)
        \big)
            \, .
    \end{cases}
\end{align}

\smallskip
\noindent
\emph{Step 5}: \
Combining the estimates obtained in Step 5 in \eqref{eq: global estimates}, recalling the energy estimates provided in \eqref{eq: mollification estimates}, \eqref{eq: discretization estimates}, and \eqref{eq: micro estimates}, we can choose a diagonal sequence $\eta(\eps) \to 0$, $\delta(\eps)\to 0$, and  $\rho(\eps)\to 0$ such that for $J_\eps:=J_{\rho(\eps),\delta(\eps),\eps}$ we have
\begin{equation}
\label{eq: diagonal estimates}
    \begin{cases}
 \displaystyle
	\limsup_{\eps \to 0}\|\iota_\eps J_{\eps} - \nu\|_{\tKR(\overline U)} = 0
        \,  ,
\\
\displaystyle
	\dive \iota_\eps  J_{\eps} = \mu_\eps
            \,  ,
\\
\displaystyle
	\limsup_{\eps \to 0} F_\eps(J_{\eps}, \overline U) \leq \bF_{\hom}(\nu,\overline U)
        \, ,
    \end{cases}
\end{equation}
thus providing the sought recovery sequence 
and finishing the proof of the upper bound for bounded $U$.

We are now left to discuss how to prove the upper bound when $U=\R^d$. The proof is very similar to the one for bounded domains, let us describe how to adapt each step to the full space setting.

\smallskip 
\noindent
\textit{Step 1}: \ 
For every $\rho>0$, we seek  $j_\rho\in C^\infty(\R^d;V\otimes \R^d)\cap \Lip(\R^d; V \otimes \R^d)$ satisfying, with $\nu_\rho:= j_\rho \Lm^d$, the properties described in \eqref{eq: mollification estimates}.
To do so, we simply take the convolution of the measure $\nu$ with a standard mollifier $\psi_{\rho'}$ such that $\psi_{\rho'} \in C_c^\infty(B_\delta(0))$, for some suitably small $\rho'>0$.
The $\tKR$-estimate follows by a direct computation, observing that for every $\varphi \in C^1(\R^d;W^*)$, $W:= V \otimes \R^d$, with $\|\nabla \varphi \|_\infty \leq 1$, we have that 
\begin{align}
    \Big| 
        \int \varphi \dd (\psi_{\rho'} * \nu) - \int \varphi \dd \nu 
    \Big|
        \leq 
    \int 
    \Big|
        \varphi * \psi_{\rho'}
            - 
        \varphi 
    \Big|
        \dd |\nu| 
        \leq 
    \rho' |\nu|(\R^d) 
        \, ,
\end{align}
uniformly in such test functions $\varphi$. This shows the first estimate, the second one follows by the very same computations, using that $\dive (\psi_{\rho'} * \nu) = \psi_{\rho'} * \dive \nu$ and that $|\dive \nu|(\R^d) <\infty$ by assumption. Finally, the energy estimate follows once again from \cite[Corollary~2.11]{Baia-Chermisi-Matias-Santos:2013}, together with the fact that $f_{\hom}(j) \leq c_2 |j|$. Indeed, by equi-tightness of $\{ \nu \}_\rho$, for every $\lambda>0$, we can find $K_\lambda \subset \R^d$ compact so that 
\begin{align}
    \sup_{\rho \in (0,1)} 
        |\nu_\rho|(\R^d \setminus K_\lambda)
            \vee 
        |\nu|(\R^d \setminus K_\lambda)
            \leq \lambda 
                \, , 
\end{align}
as well as ensuring that $\nu_\rho|_{K_\lambda} \to \nu|_{K_\lambda}$ narrowly in $\M(K_\lambda: V \otimes \R^d)$. 
By the growth condition on $\bF_{\hom}$, we deduce that, for every $\lambda \in (0,1)$,  
\begin{align}
    \limsup_{\rho \to 0}
        \bF_{\hom}(\nu_\rho)
            &\leq 
        c_2 \lambda 
            + 
         \limsup_{\rho \to 0}
            \bF_{\hom}(\nu_\rho;K_\lambda)
        \\
        &\leq 
            c_2 \lambda 
                +
            \bF_{\hom}(\nu_{\rho,\delta};K_\lambda)
        \leq 
        c_2 \lambda 
                +
            \bF_{\hom}(\nu_{\rho,\delta})
                \,  ,
\end{align}
where at last applied once again \cite[Corollary~2.11]{Baia-Chermisi-Matias-Santos:2013}. Sending $\lambda \to 0$ we conclude.

\smallskip 
\noindent
\textit{Step 2}: \ 
For $\delta>0$, we cover the domain using countably many cubes $Q_z:= z + [0,\delta)^d$, where $z \in \delta \Z^d$ and define a piecewise constant $j_{\rho,\delta}$, $\nu_{\rho,\delta}= j_{\rho,\delta} \Lm^d$  as in \eqref{eq:piecewise}, this time ensuring that $|j_{\rho,\delta}(x)| \leq |j_\rho(x)|$ for every $x \in \R^d$. Note that this guarantess equitightness of $\{\nu_{\rho,\delta}\}_\delta$. A possible choice is to define
\begin{align}
    j_{\rho,\delta}(x)
        \in 
    \argmin
    \left\{
        |j_\rho(x)|
            \suchthat 
        x \in Q_\delta(z)
    \right\}
        \, .
\end{align}
Now we claim that the following modified version of \eqref{eq: discretization estimates} holds in the full space: for every $\lambda \in (0,1)$, there exists a constant $C(\lambda,\rho) \in \R_+$ depending on $\lambda,\rho$ such that  
\begin{equation}\label{eq: discretization estimates_2}
	\begin{cases}
		\|\nu_{\rho,\delta} - \nu_\rho\|_{\tKR(\R^d)} \leq \lambda + C(\lambda,\rho)\delta
			\, ,
	\\
 \displaystyle
		\|\dive \nu_{\rho,\delta} - \dive \nu_\rho\|_{\tKR(\R^d)} \leq \lambda + C(\lambda,\rho)\delta
			\,  ,
	\\
 \displaystyle
		\bF_{\hom}(\nu_{\rho,\delta}) \leq \bF_{\hom}(\nu_\rho) + \lambda + C(\lambda,\rho)\delta
			\, .
	\end{cases}
\end{equation}
In order to show the first bound, we observe that for every given $\lambda>0$, by tightness we have that there exists a compact set $K_\lambda = K_\lambda(\rho) \subset \R^d$ such that, for all $\delta \in (0,1)$, 
\begin{align}
\label{eq:tightness}
    |\nu_\rho|(\R^d \setminus K_\lambda) \vee |\nu_{\rho,\delta}|(\R^d \setminus K_\lambda) \leq \frac\lambda 2
        \, . 
\end{align}
In particular, it follows that 
\begin{align}
    \|\nu_{\rho,\delta} - \nu_\rho\|_{\tKR(\R^d)}
        \leq 
    \lambda 
        +
    \|\nu_{\rho,\delta} - \nu_\rho\|_{\tKR(K_\lambda)}
        \leq
    \lambda     
        +
    C(\lambda,\rho)\delta  
        \, , 
\end{align}
where at last we used the Lipschitz continuity of $j_\rho$. Similarly, for any given $\psi \in C^1(\R^d;V^*)$ with $\Lip \psi \leq 1$ we have that
\begin{align}
     \big|
    \langle
        \dive j_{\rho,\delta} - \dive j_\rho
            ,
        \psi
    \rangle
    \big|
        =
    \int
    \big|
    \langle
        \nabla \tilde \psi
            ,
        j_{\rho,\delta} - j_\rho
    \rangle
    \big|
        \dd x
    \leq 
        \lambda 
            &+
    \int_{K_\lambda}
    \big|
    \langle
        \nabla \tilde \psi
            ,
        j_{\rho,\delta} - j_\rho
    \rangle
    \big|
        \dd x    
\\
            &\leq 
    \lambda + C(\rho) \Lm^d(K_\lambda) \delta 
        \, , 
\end{align}
where at last we used once again the Lipschitz regularity of $j_\rho$. 

Concerning the energy estimate, we use once again the tightness property \eqref{eq:tightness} and the linear growth property (Lemma~\ref{lemma:prop_fhom}) of $f_{\hom}$ with $f_{\hom}(0)=0$ to achieve
\begin{align}
    \bF_{\hom}(\nu_{\rho,\delta})
        &=
    \int_{\R^d \setminus K_\eta}
        f_{\hom}(j_{\rho,\delta}) \dd x 
            +
    \int_{K_\lambda}
        f_{\hom}(j_{\rho,\delta}) \dd x 
    \\
        &\leq 
    C \lambda 
        + 
    \bF_{\hom}(\nu_\rho; K_\eta)
        +
    c_2 \int_{K_\lambda}
        |j_{\rho,\delta}(x) - j_\rho(x)|
            \dd x
    \\
        &\leq 
    \bF_{\hom}(\nu_\rho)
        +
    c_2 \lambda 
        + 
    C_\rho \Lm^d(K_\lambda) \delta 
        \, , 
\end{align}
where we used the Lipschitz property of $j_\rho$ once again and the nonnegativity of $f_{\hom}$.

\smallskip 
\noindent
\textit{Step 3}: this step works precisely as in the setting of bounded domains. In fact, it is simpler, because we can can cover the whole $\R^d$ exactly with countably many disjoints cubes $Q_\delta(z)$, $z \in \delta \Z^d$. In particular, we do not need to introduce the excess set $D_{\delta,\eta}$ in the case of $U=\R^d$ (cfr. \eqref{eq:ub_step3_energy_split}). 

\smallskip 
\noindent
\textit{Step 4-5}: these steps follows from the previous steps exactly as in the case with bounded $U$, with additional dependence on $\lambda$ arising from Step 2. The conclusion then follows by taking suitable diagonal sequence $\lambda(\eps)\to 0$ as well. 
\end{proof}

\section{The lower bound}
\label{sec:lb}
Thorughout the whole section, we will omit the $\omega$-dependence everywhere, as done in the previous chapter. 
Consider any sequence $J_\eps$ with $\sup_\eps F_\eps(J_\eps, \overline{U})<\infty$ with $\DIVE J_\eps = m_\eps$ and such that $\iota_\eps J_\eps \to \nu$ vaguely. We define the positive Borel measures
\begin{align}
\label{eq:def_nu_eps}
    \nu_\eps := F_\eps(J_\eps,\cdot) \in \M_+(\overline U)
        \, .
\end{align}
 By the local compactness of $\M_+(\overline U)$ in the vague topology and Portmanteau's theorem, up to extracting a subsequence,
we obtain that 
	\begin{equation}
 \label{eq:def_nu}
		\begin{cases} \displaystyle
			\liminf_{\eps \to 0} \nu_\eps(\overline U) = \liminf_{\eps \to 0} F_{\eps}(J_\eps,\overline{U}) \geq \nu(\overline U) \, , \\
			\nu_\eps \to \nu\in \M_+(\overline U) \, , \\
			\iota_\eps J_\eps \to \xi \in \M(\overline U;V\otimes \R^d) \, ,
		\end{cases}
	\end{equation}
	   with respect to the vague topology.
	Our goal is to prove the two inequalities
	\begin{align}	\label{eq:lb_abs_cont_part}
		f_{\omega,\hom}\left(\frac{\dd \xi}{\dd \Lm^d}\right) \leq \frac{\dd \nu}{\dd \Lm^d} \, ,  \qquad & \Lm^d\text{-almost everywhere in }\overline U \, ,
		\\
			\label{eq:ub_blowup_recess}
		f_{\omega,\hom}^\infty\left( \frac{\dd \xi}{\dd |\xi|}\right) \leq \frac{\dd \nu}{\dd |\xi|} \, ,  \qquad & |\xi|^s \text{-almost everywhere in }\overline U \, .
	\end{align}

	Let us for the moment assume that \eqref{eq:lb_abs_cont_part} and \eqref{eq:ub_blowup_recess} hold.
	By the Radon--Nikodym theorem, we can decompose the measure $\nu$ into three parts:
	an absolutely continuous part with respect to $\Lm^d$,
	a singular part which is absolutely continuous with respect to $|\xi|^s$,
	and a measure $\nu_j^s \in \M_+(\R^d)$
	which is mutually singular with respect to both $\Lm^d$ and $|\xi|^s$, in the form
	\begin{align}
	\label{eq:decomp_nu_j}
		\nu =
		\frac{\dd \nu}{\dd \Lm^d}
			\Lm^d 	+
		\frac{\dd \nu}{\dd |j|}
			|\xi|^s 	+
		\nu_j^s
			\, .
	\end{align}
	The lower bound then follows from
	\begin{equation}
		\begin{aligned}
			\bF_{\omega,\hom}^\mu(\xi)
				&=  \int_{\overline U} f_{\omega,\hom}\left(\frac{\dd \xi}{\dd \Lm^d}\right) \ddd \Lm^d + \int_{\overline U} f_{\omega,\hom}^\infty\left(\frac{\dd \xi}{\dd |\xi|}\right) \ddd |\xi|^s\\
				&\leq  \int_{\overline U}
					\frac{\dd \nu}{\dd \Lm^d} \dd \Lm^d + \int_{\overline U}
					\frac{\dd \nu}{\dd |\xi|} \dd |\xi|^s\\
				&\leq \nu(\overline U) = \liminf_{\eps \to 0} F_{\omega,\eps}(J_\eps) \, ,
		\end{aligned}
	\end{equation}
where the first inequality uses \eqref{eq:lb_abs_cont_part} and \eqref{eq:ub_blowup_recess}, and the second inequality uses the decomposition of $\nu$ \eqref{eq:decomp_nu_j} and the fact that $\nu_j^s$ is a positive measure.

\subsection{Proof for the absolutely continuous part}

In this section, we show \eqref{eq:lb_abs_cont_part}, i.e.,
\begin{align}
	\label{eq:lb_absolute}
	f_{\hom}
	\biggl(\frac{\ddd  \xi}{\ddd \Lm^d}\biggr)
		\leq \frac{\ddd \nu}{\ddd \Lm^d }  \qquad & \Lm^d\text{-almost everywhere in }\overline U
		\, .
\end{align}
It follows immediately from \eqref{eq:fhom_formula} that
\begin{align*}
	f_{\hom}(j)
		\leq
	\liminf_{\eps \to 0}
		\frac{F_\eps(J_\eps,A)}{\Lm^d(A)}
\end{align*}
whenever
$J_\eps \in \Rep_{\eps,\calR}(\eps^{d-1} j;A)$.
Our next result shows that the error in the above inequality is quantitatively controlled by how far $J_\eps$ is from being an actual representative, in a suitable sense.

In view of Proposition \ref{prop:J0},
there exists a uniform-flow operator
	$\calR \in \mathrm{Lin}(V\otimes \R^d; V_\rma^\cE)$,
which we fix from now on.

\begin{prop}[Non-asymptotic behavior of the energy on a cube I]
\label{prop:asymptotic_cube_AC}
Let
	$Q \subseteq \R^d$ be an open cube
and $j \in V \otimes \R^d$.
For $\eps \in (0,1)$,
let $J \in V_\rma^{\cE_\eps}$
and take $\eta > 0$ such that
\begin{align}
\label{eq:assumption_eta_quantitative}
    \max\{  \eps R_\partial, \eps R_\Lip \}
        <
    \eta
        <
    \min
        \Big\{
			\frac13
                ,
            \frac{1 + | j | }{16}
            \frac
                { \Lm^d(Q) }
                { |\iota_\eps J|(Q) }
        \Big\}
        \, .
\end{align}
Then we have
\begin{align}	\label{eq:lowerbound_prop_cube_AC}
        f_{\eps,\calR}
        \big(
            j ,  Q
        \big)
\leq
		F_\eps(J,Q)
    + C \err_{\eps, \eta}(J , j)
		\,  ,
\end{align}
where $C < \infty$ only depends on the constants $R_i$, $C_i$, $c_i$ appearing in the assumptions on $(\cX,\cE)$ and $F$, and where 
\begin{align}
    \err_{\eps, \eta}(J , j)
        & :=
    \frac1\eta \| \dive \iota_\eps J \|_{\tKR(\overline{Q})}
        +
    \frac1{\eta^2} \| \iota_\eps (J - \calR_\eps j) \|_{\tKR(\overline{Q})}
\\ & \qquad +
     \sqrt{\eta}  \Big( (1 + | j | ) { \Lm^d(Q)}  + | \iota_\eps J |(\overline Q) \Big)
\\ & \qquad +
    \eps
    \Big(
        |j| {\Lm^d(Q)}
            +
        | \dive \iota_\eps J |(\overline Q)
            +
        \frac1{\eta}
        | \iota_\eps J| (\overline Q)
    \Big) \, .
\end{align}
\end{prop}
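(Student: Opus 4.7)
My plan is to construct an explicit modification $\hat J$ of $J$ which lies in $\Rep_{\eps,\calR}(j;Q)$, so that $f_\eps(j,Q)\le F_\eps(\hat J,Q)$ by definition, and then compare $F_\eps(\hat J,Q)$ with $F_\eps(J,Q)$ via the Lipschitz estimate (F1). Two defects of $J$ must be repaired to produce $\hat J$: its values near $\partial Q$ need not coincide with $\calR_\eps j$, and it need not be divergence free. Both repairs are local and their costs are controlled exactly by the terms appearing in $\err_{\eps,\eta}$.

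\emph{Cutoff via slicing.} The key quantitative input is a well-chosen cutoff $\psi_\eta \in C_c^\infty(Q)$ with $|\nabla\psi_\eta|\lesssim \eta^{-1}$ and transition strip of width $\eta$. A single cutoff would yield a factor $\eta^{-1}$ in the $\tKR$-term involving $J-\calR_\eps j$; to obtain the sharper $\eta^{-2}$ one applies a De Giorgi slicing / Fubini-type argument, introducing $N\sim \eta^{-1/2}$ nested cutoffs $\psi_\eta^{(k)}$ whose strips sit at disjoint distances $k\sqrt\eta$ from $\partial Q$. By pigeonhole at least one index $k_0$ satisfies
\begin{align}
\int_{\mathrm{strip}_{k_0}} |\nabla\psi_\eta^{(k_0)}|\,d|\iota_\eps(J-\calR_\eps j)| \le \tfrac{C}{\eta}\sqrt\eta\,\|\iota_\eps(J-\calR_\eps j)\|_{\tKR(\overline Q)}\cdot\text{(slicing factor)},
\end{align}
producing the $\eta^{-2}$ pre-factor after a careful $\tKR$-duality estimate. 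The same slicing also yields the $\sqrt\eta$ factor multiplying $(1+|j|)\Lm^d(Q)+|\iota_\eps J|(\overline Q)$, since the strip chosen by pigeonhole carries at most a $\sqrt\eta$-proportion of the total variation of $J$ and of $\calR_\eps j$ (whose mass on $Q$ is controlled by $|j|\Lm^d(Q)$ via boundedness of $\calR_\eps$, Remark~\ref{rem:vague_J0_cubes}).

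\emph{Boundary gluing and corrector.} Using the fixed $\psi:=\psi_\eta^{(k_0)}$, set
\begin{align}
\tilde J := \hat\psi\cdot J + (1-\hat\psi)\cdot \calR_\eps j,
\end{align}
with $\hat\psi$ the edge-averaged version of $\psi$ from Appendix~\ref{sec:preliminaries_derivatives}. Since $\psi\equiv 0$ in a neighbourhood of $\partial Q$ of width comparable to $k_0\sqrt\eta \ge \eps R_\partial$ (ensured by the lower bound on $\eta$), $\tilde J$ coincides with $\calR_\eps j$ on every edge whose segment meets $\R^d\setminus Q$ within distance $\eps R_\partial$. By the discrete Leibniz rule, and using $\DIVE\calR_\eps j = 0$,
\begin{align}
\DIVE\tilde J = \hat\psi\,\DIVE J + \grad\psi\star(J-\calR_\eps j),
\end{align}
so that on the continuous side $\dive\iota_\eps\tilde J$ is controlled in $\tKR$ and in total variation by $\|\dive\iota_\eps J\|_{\tKR}$, $\|\iota_\eps(J-\calR_\eps j)\|_{\tKR}$ (with the $\eta^{-1}$ and $\eta^{-2}$ factors from the previous step) and by $|\dive\iota_\eps J|(\overline Q)$, $|\iota_\eps J|(\overline Q)$ (with $\eps/\eta$ and $\eps$ factors, using the $\eps$-term in Proposition~\ref{prop:correctors}). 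Applying Proposition~\ref{prop:correctors} to $m:=-\DIVE\tilde J$ supported near $Q$ (convex, so (3) applies) produces a corrector $K$ with $\supp(\iota_\eps K)\subseteq B_{C\eps}(Q)$, $\DIVE K=-\DIVE\tilde J$, and $|\iota_\eps K|(\R^d)$ bounded by those same quantities. The field $\hat J:=\tilde J+K$ is divergence free and equals $\calR_\eps j$ on all edges sufficiently close to $\R^d\setminus Q$, hence $\hat J\in\Rep_{\eps,\calR}(j;Q)$.

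\emph{Energy comparison.} By Lemma~\ref{lemma:tv_bounds}(1),
\begin{align}
F_\eps(\hat J,Q) - F_\eps(J,Q) \le 2C_1\,|\iota_\eps(\hat J-J)|\bigl(B(Q,\eps R_\Lip)\bigr),
\end{align}
and on $B(Q,\eps R_\Lip)$ we decompose $\hat J-J = (1-\hat\psi)\star(\calR_\eps j - J) + K$. The first summand is supported in the strip where $\psi\ne 1$, with total variation bounded by the $\sqrt\eta$-type terms discussed above, while $|\iota_\eps K|$ is bounded by the $\eta^{-1},\eta^{-2},\eps$ terms from the corrector step. Combining these pieces and using $f_\eps(j,Q)\le F_\eps(\hat J,Q)$ produces exactly \eqref{eq:lowerbound_prop_cube_AC}. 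The main technical obstacle is the sharp bookkeeping of the corrector's cost in $\tKR$ versus in total variation — in particular separating the $\tKR$-norms multiplied by $\eta^{-1},\eta^{-2}$ from the total-variation terms multiplied by $\eps$ and $\eps/\eta$ — since a naive estimate loses the slicing gain and returns only $\eta^{-1}$ in front of $\|\iota_\eps(J-\calR_\eps j)\|_{\tKR}$.
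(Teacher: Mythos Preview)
Your overall architecture is the paper's: blend $J$ into $\calR_\eps j$ with a cutoff selected by layer--slicing, repair the divergence with the corrector from Proposition~\ref{prop:correctors}, and compare energies via (F1). But two of the mechanisms you describe are misattributed, and the second one is a genuine gap.

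\medskip
\emph{The $\eta^{-2}$ prefactor.} This does \emph{not} come from slicing; a single cutoff already yields it. From the Leibniz rule $\DIVE\tilde J=\psi_\eta\DIVE J+\grad\psi_\eta\star(J-\calR_\eps j)$, the corrector must absorb a term whose embedded $\tKR$-norm is, by Lemma~\ref{lemma:gradients}, essentially $\|\nabla\psi_\eta\cdot\iota_\eps(J-\calR_\eps j)\|_{\tKR}$. Testing this against a $1$-Lipschitz $\varphi$ amounts to testing $\iota_\eps(J-\calR_\eps j)$ against $\varphi\,\nabla\psi_\eta$, and $\Lip(\varphi\,\nabla\psi_\eta)\lesssim\eta^{-2}$ because $\|\nabla\psi_\eta\|_\infty\lesssim\eta^{-1}$ and $\Lip(\nabla\psi_\eta)\lesssim\eta^{-2}$. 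That is the entire source of $\eta^{-2}$. Your displayed pigeonhole inequality, bounding a total-variation integral over a strip by a $\tKR$-norm, is false: slicing redistributes total variation over layers but cannot convert it into a $\tKR$-bound.

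\medskip
\emph{The energy comparison.} Bounding $F_\eps(\hat J,Q)-F_\eps(J,Q)$ purely via Lipschitz forces you to control $|\iota_\eps(\tilde J-J)|$ on the \emph{entire} set $\{\psi\ne1\}$, which contains the boundary annulus $A_0=\{\psi=0\}$, not only the thin transition layer. On $A_0$ one has $\tilde J-J=\calR_\eps j-J$, so you would need $|\iota_\eps J|(A_0)$ small; but pigeonhole only singles out one thin layer with small $|\iota_\eps J|$-mass, while $A_0$ has width proportional to the layer index and carries an uncontrolled fraction of $|\iota_\eps J|(Q)$. The paper bypasses this by using additivity and nonnegativity of $F_\eps$ instead of Lipschitz at this stage: decompose $Q=A_1\cup A_\tr\cup A_0$, use $F_\eps(\tilde J,A_1)=F_\eps(J,A_1)\le F_\eps(J,Q)$ on the bulk, and bound $F_\eps(\tilde J,A_0)=F_\eps(\calR_\eps j,A_0)\lesssim(1+|j|)\Lm^d(A_0)$ directly from (F1)--(F2), with no reference to $J$ on $A_0$. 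Only on $A_\tr$ is the pigeonholed control of $|\iota_\eps J|$ invoked. Balancing the $A_0$-cost $(1+|j|)\Lm^d(Q)N\eta$ against the $A_\tr$-cost $N^{-1}|\iota_\eps J|(Q)$ and optimising $N$ is what produces the $\sqrt\eta$. The Lipschitz estimate enters only afterwards, to pass from $\tilde J$ to $\tilde J+K$, where $|\iota_\eps K|$ is globally small.
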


The plan of the proof is to replace the vector field $J$ by an $\eps$-representative $J_3 \in \Rep_{\eps,\calR}(j;Q)$,
and show that the error can be controlled in terms of its divergence and the distance from the constant measure $j \Lm^d$.
To achieve this, we proceed in two correction steps: first we  correct the boundary values and then we correct the divergence. In the first operation, we have some freedom in the choice of where to perform the cutoff, which we will then optimise to obtain a nice error estimate as claimed in the proposition.

There are three length-scales that play a role in the proof below (cfr. Figure~\ref{fig:cutoff}):
\begin{itemize}
\item edges in the graph are of length $\sim \eps$,
\item  the transition region between bulk and boundary behaviour has width $\sim \eta$,
\item the location of this transition region will be carefully chosen near the boundary in a zone of width $\sim N \eta$.
\end{itemize}

\begin{figure}[h]
    \centering
\includegraphics[scale=1]{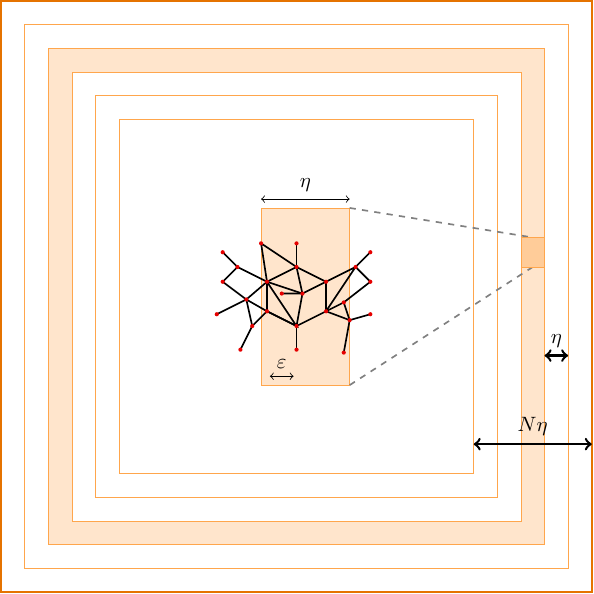}
    \caption{A representation of the three layers involved in the proof of Proposition \ref{prop:asymptotic_cube_AC}: a microscopic scale $\eps$ (edge length), a mesoscopic scale $1 \gg \eta \gg \eps$, representing the size of the strip where we apply a cutoff function to fix the boundary conditions, which must be chosen in a location of size $N \eta$ around the boundary of the cube, typically of order $\sqrt{\eta} \ll 1$.}
    \label{fig:cutoff}
\end{figure}

\begin{proof}[Proof of Proposition \ref{prop:asymptotic_cube_AC}]

The proof consists of three steps.

\medskip
\noindent
\emph{Step 1 (Boundary value correction)}. \
Fix a cutoff length-scale $\eta > 0$ satisfying
	\eqref{eq:assumption_eta_quantitative} 
and
$N \in \N$ so that $2N + 1 < \frac1{\eta}$.
The value of $N$ will be optimised below.

For $\ell = 1, \dots, N$,
let $\psi_\eta^\ell \in \cC_\rmc^\infty(\R^d)$
be a cutoff function
satisfying
	$0 \leq \psi_\eta^\ell \leq 1$,
	$\Lip (\psi_\eta^\ell) \leq C/\eta$,
	$\Lip (\nabla \psi_\eta^\ell) \leq C / \eta^2$,
and such that
\begin{align*}
	\psi_\eta^\ell (x) = 1
		\quad \text{for} \ x \in Q_{1-2\ell\eta}
	\tand
	\psi_\eta^\ell (x) = 0
		\quad \text{for} \ x \notin Q_{1-(2\ell-1)\eta} \, ,
\end{align*}
for some $C < \infty$ depending only on the dimension $d$.
Here, $Q_\alpha$ denotes the rescaled cube with the same center as $Q$ and side-length rescaled by a factor $\alpha > 0$.

For $\ell = 1, \dots, N$, we then define a vector field
	$J_2^\ell \in V_\rma^{\cE_\eps}$
by
\begin{align}
	\label{eq:J_2}
	J_2^\ell
	:=
	\hat \psi_\eta^\ell   J
	+
	(1-\hat\psi_\eta^\ell )
	\calR_\eps j
			\, ,
\end{align}
where the rescaled operator
$\calR_\eps$ has been defined in
 Definition~\ref{def:admissible}
and $\hat\psi_\eta^\ell$ has been defined in
	\eqref{eq:def_hat_0}.

We will show that
the vector field $J_2^{\bar \ell}$
has two desirable properties
for a suitable choice of
	$N$ and $\bar \ell \in \{ 1, \dots, N\}$:
namely, $J_2^{\bar \ell}$
has the right boundary conditions
to be a representative in the sense of Definition \ref{def:rep}
and
its energy is controlled by the energy of $J$.
More precisely:
\begin{align}
	&\text{Claim 1a:} &&
	J_2^{\bar \ell}(x,y) = \calR_\eps j(x,y)
		\text{ for all }
		(x,y) \in \cE_\eps
		\text{ with }
		\dist\bigl([x,y], Q^c \bigr)
		\leq \eps R_\partial \;,
	\\
	&\text{Claim 1b:} &&
    F_\eps(J_2^{\bar \ell},Q)
        -
    F_\eps(J,Q)
	\lesssim
		   \sqrt{\eta}  \Big( ( 1+ | j | ) { \Lm^d(Q)}
		   	+ | \iota_\eps J |(Q)\Big)
        \, .
\end{align}
Having obtained such $\bar{\ell}$, we simplify notation by writing $J_2 := J_2^{\bar \ell}$.

\medskip
\noindent \emph{Step 2 (Divergence correction)}. \
We will construct a discrete vector field $J_3 \in V_\rma^{\cE_\eps}$
that is divergence free with the same boundary values as $J_2$.
To do this, we note that $\DIVE J_2 = 0$ on $B(Q^c,\eps R_\partial)$
by \eqref{eq:assumption_eta_quantitative},
hence an application of Proposition~\ref{prop:correctors}
with
	$m = - \DIVE J_2$
yields a vector field
	$K \in V_\rma^{\cE_\eps}$
satisfying
\begin{align}	\label{eq:tv_bound_Cs_AC}
	\begin{cases}
		\DIVE K = -\DIVE J_2
		&  \text{on $(\cX_\eps, \cE_\eps)$}\, ,
	\\
		K \equiv 0
		\qquad
		&	\text{on }	B(Q^c,\eps R_\partial) \,  ,
	\\
		| \iota_\eps K |(\R^d)
			\lesssim
		\| \iota_\eps ( \DIVE J_2 )	 \|_{\tKR(\R^d)}
		+ \eps |\iota_\eps ( \DIVE J_2 ) |(\R^d)
	\end{cases}
	\end{align}
	We then define
	$J_3 \in V_\rma^{\cE_\eps}$
	by
		$J_3 := J_2 + K$.
We will show the following properties of $J_3$:
\begin{align}
	& \text{Claim 2a:} &&
		J_3 \in \Rep_{\eps, \calR}(j; Q)  \,, \\
	& \text{Claim 2b:} &&
		| \iota_\eps(J_3 - J_2)|(\R^d)
		\lesssim
	\frac1\eta \| \iota_\eps \dive J \|_{\tKR(\overline Q)}
        +
    \frac1{\eta^2} \| \iota_\eps (J - \calR_\eps j) \|_{\tKR(\overline Q)}
\\
       & && \hspace{3.6cm} +
    \eps
    \Big(
        | \iota_\eps \dive J |(\overline Q)
            +
        \frac1{\eta}
        | \iota_\eps( J - \calR_\eps j)| (\overline Q)
    \Big) \, .
 \end{align}

\medskip
\noindent
\textit{Step 3 (Energy estimate).} \
 It remains to estimate the discrete energy of $J_3$.
Since
	$J_3 \in \Rep_{\eps, \calR}(j; Q) $
we obtain,
using the Lipschitz property from \eqref{eq:Lipschitz-F-eps},
\begin{align}	\label{eq:energy_est_AC}
	f_{\eps,\calR}
		\big(
			j,
			Q
		\big)
	\leq
	F_\eps(J_3, Q)
	\leq
	F_\eps(J_2, Q)
		+
		2C_1
		\bigl|\iota_\eps(J_3 - J_2)
		\bigr|
		\bigl(B(Q,\eps R_\Lip)\bigr)
		\, .
\end{align}
We then apply the estimates obtained in Claim 1b and Claim 2b and conclude the proof using at last the boundedness of $\calR_\eps$ (Definition~\ref{def:admissible}, (3)).
It remains to prove the claims.
\medskip

\emph{Proof of Claim 1a.} \
Take $(x,y) \in \cE_\eps$ satisfying
	$\dist\bigl( [x,y], Q^c \bigr)
		\leq \eps R_\partial$.
Since $(G3)$ ensures that $|x-y|\leq \eps R_3$,
it follows that $[x,y]$ cannot be near the centre of the cube,
provided $\eps \ll \eta$.
More precisely, we have
	$[x,y] \subseteq ( Q_{1- (2 \ell -1) \eta} )^c$.
Therefore, $\hat\psi_\eta^\ell(x,y) = 0$, so that
	$J_2^\ell(x,y) = \calR_\eps j(x,y)$.
\qed

\medskip
\emph{Proof of Claim 1b.} \
For every $\ell \in \{1, \dots, N \}$, we decompose the cube  $Q$ as disjoint union
	 $Q = A_1^\ell \cup A_\tra^\ell \cup A_0^\ell$,
where $A_1^\ell$ is a smaller concentric cube surrounded
by $L^\infty$-spherical shells $A_\tra^\ell$ and $A_0^\ell$.
They are chosen so that
	$A_1^\ell$ lies well inside the set $\{ \psi_\eta^\ell = 1 \}$,
	$A_0^\ell$ lies well inside the set $\{ \psi_\eta^\ell = 0 \}$,
	and $A_\tra^\ell$ is a transition region.
More precisely,
\begin{align}
    A_1^\ell
		:= Q_{1-(2\ell+1)\eta}
        \, , \quad
	A_\tra^\ell
		=  Q_{1-(2\ell - 2) \eta} \setminus  A_1^\ell
		\, , \quad
	A_0^\ell
		:= Q \setminus \big( A_\tra^\ell \cup A_1^\ell \big)
        \, .
\end{align}
Note that
	$1 -(2\ell+1)\eta>0$,
since we assumed that
	$2N+1 < \frac1{\eta}$.
By additivity of $F_\eps$, which follows from $(F3)$, we have
\begin{align}
	\label{eq:addi}
    F_\eps(J_2^\ell,Q)
        =
    F_\eps(J_2^\ell,A_1^\ell)
        +
    F_\eps(J_2^\ell,A_\tra^\ell)
        +
    F_\eps(J_2^\ell,A_0^\ell)
        \, .
\end{align}
We will estimate the three terms separately.

\smallskip
\emph{i. Bulk term.} \
By the definitions, we have
\begin{align}
\label{eq:difference_cutoff}
    J - J_2^\ell
        =
    (1-\hat \psi_\eta)
    \big(
        J - \calR_\eps j
    \big)
\, .
\end{align}
Since $J = J_2^\ell$ on $\tilde B := B(A_1^\ell,\eps R_{\Lip})$ (in the sense that
	$|\iota_\eps(J - J_2^\ell)|
		(\tilde B) = 0$),
it follows from Lemma \ref{lemma:tv_bounds}\eqref{item:F-lip} that
\begin{align}
	\label{eq:bulk}
     F_\eps(J_2^\ell,A_1^\ell)
        =
     F_\eps(J,A_1^\ell)
        \leq
     F_\eps(J,Q)
        \, ,
\end{align}
by additivity and nonnegativity of $F_\eps$. 

\smallskip
\emph{ii. Boundary term.} \
Observe that
	$J_2^\ell \equiv \calR_\eps j$
on $B(A_0^\ell, \eps R_\Lip)$
for every $\ell \in \{1, \dots, N \}$.
Using Lemma \ref{lemma:tv_bounds}, we infer that
\begin{align}
    F_\eps(J_2^\ell,A_0^\ell)
        &=
    F_\eps(\calR_\eps j , A_0^\ell)
	\\&
	=
	F_\eps(0 , A_0^\ell)
	+
	\bigl( F_\eps(\calR_\eps j , A_0^\ell)
	- F_\eps(0 , A_0^\ell) \bigr)
    \\&
	\leq
    C_2 \Lm^d(A_0^\ell)
        +
   2C_1  |\iota_\eps \calR_\eps j|
   	\bigl(B(A_0^\ell,\eps R_{\Lip})\bigr)
\\
        &\lesssim
    \big(
        \Lm^d
            +
        |\iota_\eps \calR_\eps  j|
    \big)
    (A_0^{\ell+1})
        \, ,
\end{align}
using the inclusion of sets
	$A_0^\ell
		\subset B(A_0^\ell,\eps R_{\Lip})
		\subset A_0^{\ell+1}$,
which holds by the assumption that
	$\eta \leq \eps R_\Lip)$.
As $A_0^{\ell+1}$ is a union of orthotopes,
we conclude
using Remark~\ref{rem:vague_J0_cubes}
that
\begin{align}
	\label{eq:boundary}
    F_\eps(J_2^\ell,A_0^\ell)
        \lesssim
    (1+ | j | ) \Lm^d(A_0^{\ell+1})
        \lesssim
    (1+| j | )
    { \Lm^d(Q)}
    N \eta
\end{align}
for all $l \in \{1 , \dots , N\}$.

\smallskip
\emph{iii. Transition term.} \ \
Let us define
\begin{align}
    \tilde A_\tra^\ell := (A_0^{\ell-1})^c\setminus A_1^{\ell+1}
        \, , \quad
        \text{so that }
    A_\tra^\ell
        \subset
    B(A_\tra^\ell, \eps R_\Lip)
        \subset
    \tilde A_\tra^\ell
        \, ,
\end{align}
and note that $\tilde A_\tra^\ell$ is a disjoint union of orthotopes, each containing a cube of side-length $\eps > 0$.
Observing that $|J_2^\ell| 
 \leq |J| + |\calR_\eps j|$ edge-wise,
using again Remark~\ref{rem:vague_J0_cubes}
and the growth conditions on $F$ from  Lemma \ref{lemma:tv_bounds}, we find
\begin{align*}
	F_\eps(J_2^\ell, A_\tra^\ell)
	&=
	F_\eps(0 , A_\tra^\ell)
	+
	\bigl( F_\eps( J_2^\ell , A_\tra^\ell )
	- F_\eps(0 , A_\tra^\ell) \bigr)
    \\&
	\leq
    C_2 \Lm^d( A_\tra^\ell)
        +
   2C_1  |\iota_\eps J_2^\ell  |
   	\bigl( \tilde A_\tra^\ell \bigr)
\\
        &
	\lesssim
        \Lm^d( A_\tra^\ell )
            +
			|\iota_\eps J |
			\bigl( \tilde A_\tra^\ell \bigr)
			+ 
			|\iota_\eps \calR_\eps j  |
			\bigl( \tilde A_\tra^\ell \bigr)
\\ &
\lesssim
        (1 + |j|) \Lm^d( \tilde A_\tra^\ell )
            +
			|\iota_\eps J  |
			\bigl( \tilde A_\tra^\ell \bigr)
\\ &
\lesssim
        (1 + |j|)  { \Lm^d(Q)} \eta
            +
			|\iota_\eps J  |
			\bigl( \tilde A_\tra^\ell \bigr)		
        \, ,
\end{align*}
Now, we choose $\bar{\ell} \in \{1, \dots, N \}$ so that
\begin{align}
    |\iota_\eps J|(\tilde A_\tra^{\bar{\ell}})
        \leq
    \frac1N
        \sum_{\ell=1}^N
            |\iota_\eps J|(\tilde A_\tra^\ell)
        \leq
    \frac6N
        |\iota_\eps J|(Q)
            \, ,
\end{align}
where we used that 
each point in $Q$ is contained in at most 6 sets in the collection
$\{ \tilde A_\tra^\ell\}_{\ell = 1}^N$.
We thus arrive at
\begin{align}
	\label{eq:transition}
	F_\eps(J_2^\ell, A_\tra^\ell)
	\lesssim
        (1 + |j|)  { \Lm^d(Q)} \eta
            +
			\frac1N
			|\iota_\eps J|(Q) \, .
\end{align}

Summing the three contributions 
\eqref{eq:bulk},
\eqref{eq:boundary}, and
\eqref{eq:transition},
we find using \eqref{eq:addi},
\begin{align}
	\label{eq:summing}
	F_\eps(J_2^\ell,Q)
	- F_\eps(J,Q)
	\lesssim 
	\frac1N
	|\iota_\eps J|(Q)
	+
	( 1 + | j | )
    { \Lm^d(Q)}
    N \eta\, .
\end{align}
It remains to optimise in $N$.
To do so, define 
\begin{align}
    \nu = 
	\sqrt{\frac1{\eta (1+| j | )} 
		\frac{|\iota_\eps J|(Q)}{
    { \Lm^d(Q)}}}
        \, ,
\end{align}
which minimises the right-hand above among all positive \emph{real} values of $N$.

If $\nu \leq 2$, we have $
|\iota_\eps J|(Q)
	\leq 
4 (1+| j |) \Lm^d(Q)\eta $.
Using this inequality, we insert $N = 1$ at the right-hand side of \eqref{eq:summing} to obtain
\begin{align*}
	F_\eps(J_2^\ell,Q)
	- F_\eps(J,Q)
	\lesssim 
	|\iota_\eps J|(Q)
	+
	( 1 + | j | )
    { \Lm^d(Q)}
     \eta
	 \leq
	5 ( 1 + | j | )
    { \Lm^d(Q)}
     \eta
	 \, ,
\end{align*}
as desired.

If $\nu > 2$, we can fix a positive \emph{integer} $N$ with $\nu/2 \leq N \leq \nu$.
We claim that $N$ satisfies the requirement $2N+1 < \frac1\eta$.
Indeed, 
\eqref{eq:assumption_eta_quantitative}
yields $4 \eta \nu \leq 1 $, 
so that
$
2N
\leq 
2\nu
\leq 
\frac{1}{2\eta}
\leq
\frac{1}{\eta}-1
$.
Plugging $N$ into \eqref{eq:summing} we find
\begin{align*}
	F_\eps(J_2^\ell,Q)
	- F_\eps(J,Q)
&
	\lesssim 
	\frac1\nu
	|\iota_\eps J|(Q)
	+
	( 1 + | j | )
    { \Lm^d(Q)}
    \nu \eta
\\ &
	\lesssim
    \sqrt{\eta(1+| j | )
        |\iota_\eps J |(Q) 
    { \Lm^d(Q)}
        }
\\
        &\lesssim
    \sqrt{\eta}
    \Big(
        |\iota_\eps J|(Q) + (1+| j | )
    { \Lm^d(Q)}
    \Big)
	\, ,
\end{align*}
which concludes the proof of Claim 1b.

\medskip
\noindent \emph{Proof of Claim 2a.} \
By construction, we have
	$\DIVE J_3 = \DIVE J_2 + \DIVE K=0$.
Moreover, $J_3$ satisfies the property stated for $J_2$ in Claim 1a, 
since $K \equiv 0$ on $B(Q^c,\eps R_\partial) $.
\qed

\medskip
\noindent \emph{Proof of Claim 2b.} \
Note that, thanks to
\eqref{eq:tv_bound_Cs_AC} 
and Remark~\ref{rem:extension_Lip},
\begin{align}
\label{eq:Claim2b_beginning}
	| \iota_\eps(J_3 - J_2) | (\R^d)
	=
	| \iota_\eps K | (\R^d)
	\lesssim
		 \|
		 	\dive \iota_\eps J_2
		 \|_{\tKR(\overline Q)}
    + \eps |\dive \iota_\eps J_2 |(Q) \, .
\end{align}
Using the notation from
	Section~\ref{sec:preliminaries_derivatives},
it follows from the discrete Leibniz rule
\eqref{eq:Leibniz}
and the fact that $\DIVE \calR_\eps j \equiv 0$ on $\cX_\eps$,
that
\begin{align}	\label{eq:DIVE_J2}
	\DIVE J_2 = \psi_\eta \DIVE J
		+
	(\grad  \psi_\eta) \star
		(
			J - \calR_\eps j
		) \, .
\end{align}
By Proposition~\ref{prop:equivalence_convergence_KR}, we have
\begin{align}
    \| \dive (\iota_\eps J_2) \|_{\tKR(\overline Q)}
        &\lesssim
    \| \dive (\iota_\eps J_2) \|_{\KR(\overline Q)}
\\
        &\leq
    \| \psi_\eta \dive (\iota_\eps J_2) \|_{\KR(\overline Q)}
        +
    \| \iota_\eps (\grad  \psi_\eta) \star
		(
			J - \calR_\eps j
		) \|_{\KR(\overline Q)}
  \, .
\end{align}
We analyse both terms on the right-hand side separately.
On the one hand, using the definition of $\KR$-norm and the bound $\Lip(\psi_\eta) \lesssim 1/\eta$ with $\| \psi_\eta \|_\infty \leq 1$, we obtain
\begin{align}
\label{eq:bound_divergence_AC}
     \| \psi_\eta \dive (\iota_\eps J_2) \|_{\KR(\overline Q)}
        \lesssim
    \frac1 \eta  \| \dive (\iota_\eps J_2) \|_{\KR(\overline Q)}
        \, .
\end{align}
On the other hand, \eqref{eq:gradients_comparison} in Lemma~\ref{lemma:gradients} shows that
\begin{align}
    \big\| \iota_\eps
    \big(
        (\grad \psi_\eta )
        \star
         (J - \calR_\eps j )
    \big)
        \big\|_{\KR(\overline Q)}
&\lesssim
	\|
		\nabla \psi_\eta
			\cdot
		\iota_\eps (J - \calR_\eps j )
	\|_{\KR(\overline Q)}
\\ & \quad +
	\eps | \nabla \psi_\eta \cdot \iota_\eps (J - \calR_\eps j ) |(\overline Q)\, .
\end{align}
By construction, we have $\| \eta \nabla \psi_\eta \|_\infty \lesssim 1$ and $\Lip (\eta^2 \nabla \psi_\eta) \lesssim 1$, so that the previous estimate yields
\begin{align}
\label{eq:square_KR}
 \big\| \iota_\eps
    \big(
        (\grad \psi_\eta )
        &\star
         (J - \calR_\eps j )
    \big)
        \big\|_{\KR(\overline Q)}
\\
        &\lesssim
    \frac1{\eta^2} \| \iota_\eps (J - \calR_\eps j )
	\|_{\KR(\overline Q)}
        +
    \frac\eps\eta |\iota_\eps (J - \calR_\eps j)| (\overline Q)
        \, .
\end{align}
Putting the last inequality, together with \eqref{eq:bound_divergence_AC}, the chain rule \eqref{eq:DIVE_J2} for the divergence, and \eqref{eq:Claim2b_beginning},  we conclude the proof of Claim 2b.
\end{proof}

We can now conclude the proof of the lower bound for the absolutely continuous part, i.e.,~we prove \eqref{eq:lb_absolute}.
Let us briefly recall the setup.

We work with a sequence of discrete vector fields 
	$J_\eps \in V^{\cE_\eps}_\rma$ 
and write $m_\eps := \DIVE J_\eps$.
As $\eps \to 0$, we assume that
\begin{align}	\label{eq:final_hyp_0_AC}
	\iota_\eps J_\eps \to \xi
        \quad \text{vaguely} \quad 
		\tand
	m_\eps 
	=
	\DIVE(\iota_\eps J_\eps)
	 \to \mu
		\quad \text{in } \tKR
		\, ,
\end{align}
for suitable measures
	$\xi \in \M(\R^d; V \otimes \R^d)$
and 
	$\mu \in \M(\R^d; V)$.
By the uniform boundedness principle, $\{ |m_\eps| \}_\eps$ is locally uniformly bounded in total variation, hence we may and will assume without loss of generality that, as $\eps \to 0$,
\begin{align}
|m_\eps| \to \sigma
\quad \text{vaguely} \, ,
\end{align}
for some $\sigma \in \M_+(\R^d)$.

Consider the measures 
	$\nu_\eps := F_\eps(J_\eps,\cdot) \in \M_+(\R^d)$
and assume that $\nu_\eps$ converges vaguely to a measure 
	$\nu \in \M_+(\R^d)$.
Our goal is to show that 
\begin{align}
	\label{eq:lb_absolute-2}
	f_{\hom}
	\biggl(\frac{\ddd  \xi}{\ddd \Lm^d}\biggr)
		\leq \frac{\ddd \nu}{\ddd \Lm^d }  \qquad & \Lm^d\text{-a.e.} 
\end{align}

The proof is based on a blow-up strategy around a fixed point $x_0 \in \R^d$
with magnification factor $1/\delta$ for some $\delta > 0$. 
We thus consider the rescaled measures
\begin{align*}
	\xi_{\delta, x_0} 
		:= \delta^{-d} ({\rho_{\delta,x_0}})_\# \xi
	\tand
	\xi_{\delta, x_0}^\eps 
		:= \delta^{-d} ({\rho_{\delta,x_0}})_\# (\iota_\eps J_\eps )
	\, .
\end{align*}
Note that the edge-lengths in the blown-up vector field $\xi_{\delta, x_0}^\eps$ are of order $s := \eps/\delta$. 
Since the graph $\cX$ is not assumed to be translation invariant, 
$\xi_{\delta, x_0}^\eps$ is \emph{not} in general the continuous embedding of a discrete vector field on $\cX_s$. 
However, this is true up to translation by $x_0/\delta$: namely, 
\begin{align}
	\label{eq:K-def}
	(\rho_{1,-x_0/\delta})_\# \xi_{\delta, x_0}^\eps 
	= 
	 \iota_{\eps/\delta} K_\delta^\eps \, ,
\end{align}
where $K_\delta^\eps \in V_\rma^{\cE_s}$ is defined by
\begin{align} \label{eq:def_K_s_AC}
    K_\delta^\eps(x,y) := \delta^{1-d} J_\eps(\delta x, \delta y)
\end{align}
for $(x,y) \in \cE_s$. 
It follows from the scaling relations that
\begin{align}
	\label{eq:scaling-F}
	F_{\eps/\delta}(K_\delta^\eps, Q_1(x_0/\delta)) 
	=
	\frac{F_\eps(J_\eps,Q_\delta(x_0))}
		 {\Lm^d(Q_\delta(x_0))}
\end{align}

Very loosely speaking, the proof proceeds as follows: we fix 
	$x_0 \in \supp(|\xi|) \setminus E$,
where $E$ is a suitable Lebesgue null-set outside of which the measures $\xi$, $\nu$, and $\sigma$ are well-behaved. Set $j_0 := \frac{\ddd \xi}{\ddd \Lm^d }(x_0)$.
We will rigorously justify the following chain of approximations, for $\eps \ll \delta \ll 1$: 
\begin{align*}
	f_{\hom}(j_0)
\stackrel{(1)}{\approx}
	f_{\eps/\delta,\calR}
		(j_0, Q_1(x_0 / \delta))	
\stackrel{(3)}{\lessapprox} 
	F_{\eps/\delta}(
		K_\delta^\eps, 
		Q_1(x_0 / \delta) 
=
	\frac{F_\eps(J_\eps,Q_\delta(x_0))}
		{\Lm^d(Q_\delta(x_0))}
\stackrel{(2)}{\approx}
	\frac{\ddd \nu}{\ddd \Lm^d}(x_0) \, .
\end{align*}

Let us now make the argument rigorous. 

\smallskip
\emph{Selection of a good set of full measure.} \
Proposition~\ref{prop:Besicovitch},
Lemma~\ref{lemma:tangent_measures}, and
Lemma~\ref{lemma:divergence_measures} ensure
that there exists a Borel set $E$ with $\Lm^d(E)=0$
such that, for every $x_0 \in \supp(|\xi|)\setminus E$, the following properties hold:
\begin{enumerate}
    \item[(i)]
	$x_0$ is a Lebesgue point of 
		$\frac{\ddd \xi}{\ddd \Lm^d}$,
	so that 
	\begin{align*}
	j_0 
	:= \lim_{\delta \to 0}
	\frac{\xi(Q_\delta(x_0))}{\Lm^d(Q_\delta(x_0))} 
	\end{align*}
	exists.
	In view of Lemma~\ref{lemma:tangent_measures}:
	there exists a sequence $\delta=\delta_m(x_0) \to 0$ as $m \to \infty$ such that
\begin{align}	\label{eq:final_blowup_AC}
	\quad
	\xi_{\delta,x_0}
	\to
		j_0 \Lm^d
	\text{ vaguely in }
	\M(\R^d; V \otimes \R^d)
	 	\, .
\end{align}
\item[(ii)]
	In view of Proposition~\ref{prop:Besicovitch},
\begin{align}
\label{eq:Leb_point_nu}
	f_0 := \lim_{\delta \to 0}
	\frac{\nu(Q_\delta(x_0))}{\Lm^d(Q_\delta(x_0))} 
	\ \text{ exists} \, .
\end{align}
    \item[(iii)]
	In view of Proposition~\ref{prop:Besicovitch}
	we have, for every bounded Borel set $B \subseteq \R^d$,
    \begin{align}	\label{eq:tv_bound_mu_rescale_AC}
	C_\sigma :=
	\sup_{\delta>0}
		\frac{\sigma( B_\delta(x_0))}{\Lm^d(Q_\delta(x_0))}
		< \infty
		\, .
\end{align}
\end{enumerate}
From now on, we fix $x_0 \in \supp(|\xi|)\setminus E$.

Next we rigorously justify the three approximation steps from the informal argument above.
In each of the three steps, we perform a diagonal argument, which allows us to pass to the limit $\delta \to 0$
while 
simultaneously 
$\eps := \eps(\delta) \to 0$ fast enough.

\smallskip
\emph{Approximation 1.} \
For all fixed $\delta > 0$, Lemma~\ref{lemma:fhom_formula}, 
implies that,
as $\eps \to 0$,	
\begin{align}
	\label{eq:1-rigorous}
       f_{\eps/\delta, \calR}(j_0,Q_1(x_0/\delta))
	\to f_{\hom}(j_0)
	\quad \text{ vaguely. } 
\end{align}
Thus, by a diagonal argument,
the same convergence holds
as 
$\delta \to 0$ 
and
$\eps = \eps(\delta) \to 0$ sufficiently fast.

\smallskip
\emph{Approximation 2.} \
Since $\nu$ is a finite Radon measure, 
$\nu(\partial Q_\delta(x_0)) = 0$
except for but countably many values of 
	$\delta > 0$, 
which will be avoided in the remainder of the proof.
Hence, since $F_\eps( J_\eps, \cdot) \to \nu$ vaguely as $\eps \to 0$, it follows from the Portmanteau theorem that, as $\eps \to 0$,
\begin{align*}
	F_\eps( J_\eps, Q_\delta(x_0))
	\to 
	\nu(Q_\delta(x_0)) \, .
\end{align*}
Since \eqref{eq:Leb_point_nu} implies 
\begin{align*}
	\frac{\nu(Q_\delta(x_0))}
			{\Lm^d(Q_\delta(x_0))}
		\to 
	f_0
	\, ,
\end{align*}
another diagonal argument
yields 
\begin{align}
	\label{eq:2-rigorous}
	\frac{
		F_\eps( J_\eps, Q_\delta(x_0))
		}
			{\Lm^d(Q_\delta(x_0))}	
	\to 
	f_0
			\, ,
\end{align}
whenever $\delta \to 0$ and 
$\eps = \eps(\delta) \to 0$
sufficiently fast.

\smallskip
\emph{Approximation 3.} \
Our goal is to apply 
Proposition~\ref{prop:asymptotic_cube_AC} 
to the rescaled graph $(\cX_s,\cE_s)$
with $s = \eps(\delta) / \delta$, 
the cube $Q_1(x_0 / \delta)$,
the momentum vector $j_0$,
and the approximating discrete vector field $K_\delta^\eps$.
For this purpose, we first bound the total variation of $\iota_{\eps/\delta} K_\delta^\eps$.

Since $\iota_\eps J_\eps \to \xi$ vaguely as $\eps \to 0$,
we have
\begin{align*}
	\xi_{\delta, x_0}^\eps
	= 
	\delta^{-d} {\rho_{\delta,x_0}}_\# (\iota_\eps J_\eps)
	\to 
	\delta^{-d} {\rho_{\delta,x_0}}_\# \xi
	\quad \text{ vaguely, as } \eps \to 0\, . 	
\end{align*}
In view of \eqref{eq:final_blowup_AC},
we have
\begin{align*}
	\delta^{-d} {\rho_{\delta,x_0}}_\# \xi
	\to 
	j_0 \Lm^d
	\quad \text{ vaguely, as } \delta \to 0 \, .
\end{align*}
Hence, by yet another diagonal argument, 
we infer that 
\begin{align}
\label{eq:xi-conv}
	\xi_{\delta, x_0}^{\eps}
	\to 
	j_0 \Lm^d 
	\quad \text{ vaguely }\, ,
\end{align}
whenever $\delta \to 0$ and 
$\eps = \eps(\delta) \to 0$
sufficiently fast.
Hence, by the uniform boundedness principle, 
\begin{align*}
	T := \sup_
	{\substack{\delta > 0\\ 
	\eps = \eps(\delta)}
	}
		| \iota_{\eps / \delta}
			K_\delta^\eps 
		|( Q_1(x_0/\delta))
	= 
	\sup_
		{\substack{\delta > 0\\ 
		\eps = \eps(\delta)}
		}
			| \xi_{\delta, x_0}^\eps 
			|( Q_1(0))
	< \infty \, .
\end{align*}

Let $\eta > 0$ be a cut-off lengthscale
satisfying $\eta <      
\frac12 \min
        \Big\{
            1
                ,
            \frac18 \frac{1 + | j_0 | }{T}
        \Big\}$,
so that the assumptions of 
Proposition~\ref{prop:asymptotic_cube_AC} hold 
whenever $\eps/\delta$ is sufficiently small, i.e.,~whenever      
$\max\{  \frac{\eps}{\delta} R_\partial, 
 	 		 \frac{\eps}{\delta} R_\Lip \}
         <
     \eta$.
Then Proposition~\ref{prop:asymptotic_cube_AC} ensures that
\begin{align}
	\label{eq:proof_AC_energy_error}
		 f_{\eps/\delta, \calR}
			\big(
				j_0 ,  Q_1(x_0/\delta)
			\big)
	\leq
		F_{\eps/\delta}
				(K_\eps^\delta, 
					Q_1(x_0/\delta))
		+ C \err_{\eps/\delta,\eta}
				(K_\eps^\delta, j_0)
			\,  ,
	\end{align}
	where
	\begin{align}
		\err_{\eps/\delta,\eta}(K_\eps^\delta, j_0)
			& =
		\frac1\eta 
					\| \dive \iota_{\eps / \delta}
						K_\delta^\eps  
					\|_{\tKR(\overline{Q_1(x_0/\delta)})}
			+
		\frac1{\eta^2} 
					\| \iota_{\eps/\delta} (K_\eps^\delta 
							- \calR_{\eps/\delta} j_0) 
					\|_{\tKR(\overline{Q_1(x_0/\delta)})}
	\\ & \quad
	  		+
			 \sqrt{\eta}  \bigl(  1 + | j_0 | + T \bigr)
			+
		\eps
		\Big(
			|j_0|
				+
			| \dive \iota_{\eps / \delta}
			K_\delta^\eps   |(\overline{Q_1(x_0/\delta)})
				+
			\frac{T}{\eta}
		\Big) \, .
	\end{align}
We will bound the terms on the right-hand side of this expression. 

First, we estimate 
	$| \dive \iota_{\eps / \delta}
			K_\delta^\eps   
	 |(\overline{Q_1(x_0/\delta)})$.
For this purpose, we observe that
\begin{align*}
	\dive \iota_{\eps / \delta} K_\delta^\eps
	= 
	\dive ({\rho_{\delta, 0}}_\# (\iota_\eps J_\eps))
	= 
	\delta^{1-d} {\rho_{\delta, 0}}_\# \dive ( \iota_\eps J_\eps)
	= 
	\delta^{1-d} {\rho_{\delta, 0}}_\# (m_\eps) \, .
\end{align*}			
Since we assume that
$|m_\eps| \to \sigma$
vaguely as $\eps \to 0$,
it follows from the Portmanteau theorem 
and \eqref{eq:tv_bound_mu_rescale_AC} 
that,
for all compact sets $B \subseteq \R^n$ and fixed $\delta > 0$,
\begin{align*}
	\limsup_{\eps \to 0} 
		|\dive \iota_{\eps / \delta} K_\delta^\eps|(B)
	= 
	\frac{1}{\delta^{d-1}}
	\limsup_{\eps \to 0} 
		| m_\eps |(\delta B)
	= 
	\frac{1}{\delta^{d-1}}
		\sigma(\delta B)
	\leq C_\sigma \delta \, .
\end{align*}
Hence, by another diagonal argument, whenever 
$\delta \to 0$ and $\eps = \eps(\delta) \to 0$ fast enough,
\begin{align}
\label{eq:zero-1}
	\lim_
		{\substack{\delta \to 0\\ 
				   \eps = \eps(\delta) }}
		|\dive \iota_{\eps / \delta} K_\delta^\eps|(B)
	= 
	0 \, .
\end{align}
Since total variation controls the $\tKR$-norm, we also obtain
\begin{align}
\label{eq:zero-2}
	\lim_
		{\substack{\delta \to 0\\ 
				   \eps = \eps(\delta) }}
				   \| \dive \iota_{\eps / \delta}
				   K_\delta^\eps  
			   \|_{\tKR(\overline{Q_1(x_0/\delta)})}
	= 
	0 \, .
\end{align}

Second, we estimate 
$\| \iota_{\eps/\delta} (K_\eps^\delta 
- \calR_{\eps/\delta} j_0) 
\|_{\tKR(\overline{Q_1(x_0/\delta)})}$
by showing that both 
	$\iota_{\eps/\delta} K_\eps^\delta$
and 	 
	$\iota_{\eps/\delta}\calR_{\eps/\delta} j_0$ 
are near $j_0 \Lm^d$.
Without loss of generality, we can also assume that
\begin{align}
\label{eq:assumption_TV_lb_AC}
    |\xi_{\delta,x_0}^\eps| \to \lambda \geq |j_0| \Lm^d
        \quad \text{vaguely}
        \, , \quad
    \text{as }\eps \to 0
        \, ,
\end{align}
if not we simply consider a suitable subsequence in $\eps \to 0$ (note that $|\xi_\eps|_\eps$ is locally bounded in total variation).  Moreover, we can also assume that $\lambda(\partial Q_1(0))=0$, if not we replace $Q_1(0)$ with $Q_h(0)$ with some $h<1$ and repeat the proof.
Using 
\eqref{eq:KR-scaling}, 
the translation invariance of $\Lm^d$, 
\eqref{eq:K-def},
\eqref{eq:xi-conv}, and $\lambda(\partial Q_1(0))=0$,
we find
\begin{equation}
\label{eq:iKjL}
	\begin{aligned}
		\| 
		\iota_{\eps/\delta} K_\eps^\delta 
			-  j_0 \Lm^d
	\|_{\tKR(\overline{Q_1(x_0 / \delta)})}
	& = 
	\| 
		{\rho_{1,x_0/\delta}}_\# (\iota_{\eps/\delta} K_\eps^\delta) 
			-  j_0 \Lm^d
	\|_{\tKR(\overline{Q_1(0)})}
	\\& =
	\| 
		\xi_{\delta, x_0}^\eps 
			-  j_0 \Lm^d
	\|_{\tKR(\overline{Q_1(0)})}
	\to 0  \, 
	\end{aligned}
\end{equation}
whenever $\delta \to 0$ and 
$\eps = \eps(\delta) \to 0$ as above.
Moreover, note that $\| \iota_{\eps/\delta} \calR_{\eps/\delta} j_0
- j_0 \Lm^d \|_{\tKR(\overline{Q_1^\delta})} \to 0$ when $\eps \to 0$, for all fixed $\delta > 0$, since $\calR$ is a uniform-flow operator.
Hence, by another diagonal argument, the same convergence holds when $\delta \to 0$ and $\eps = \eps(\delta) \to 0$ fast enough.
Combined with \eqref{eq:iKjL} we obtain using the triangle inequality,
\begin{align}
\label{eq:zero-3}
	\lim_
		{\substack{\delta \to 0\\ 
				   \eps = \eps(\delta) }}
	\| \iota_{\eps/\delta} (K_\eps^\delta 
		- \calR_{\eps/\delta} j_0) 
	\|_{\tKR(\overline{ Q_1(x_0/\delta)})}
	= 0 \, .
\end{align}
Inserting 
\eqref{eq:zero-1}, \eqref{eq:zero-2}, and \eqref{eq:zero-3}, 
we find, for fixed $\eta$ as above,
\begin{align*}
	\limsup_{\substack{\delta \to 0\\ 
	\eps = \eps(\delta) }}
		\err_{\eps/\delta,\eta}(K_\eps^\delta, j_0)
	& \leq
	\kappa(\eta)\, , \quad
\text{ where } 
	\kappa(\eta):= \sqrt{\eta}  \bigl(  1 + | j_0 | + T \bigr)
		\, ,
\end{align*}
so that, by \eqref{eq:proof_AC_energy_error},
\begin{align}
\label{eq:f-F}
	\lim_
		{\substack{\delta \to 0\\ 
		\eps = \eps(\delta)}}
	f_{\eps/\delta, \calR}
	\big(
		j_0 ,  Q_1(x_0/\delta)
	\big)
\leq
\lim_
		{\substack{\delta \to 0\\ 
		\eps = \eps(\delta)}}
F_{\eps/\delta}
		(K_\eps^\delta, 
			Q_1(x_0/\delta))
+ C \kappa(\eta)
	\,  .
\end{align}

\smallskip
\emph{Putting everything together.} \
Combining 
\eqref{eq:1-rigorous},
\eqref{eq:f-F},
\eqref{eq:scaling-F},
and \eqref{eq:2-rigorous},
we find
\begin{align*}
	f_{\hom}(j_0)
	 = 
	\lim_
		{\substack{\delta \to 0\\ 
		\eps = \eps(\delta)}}
	f_{\eps/\delta, \calR}(j_0,Q_1(x_0/\delta))
	& \leq
	\lim_
		{\substack{\delta \to 0\\ 
		\eps = \eps(\delta)}}
	F_{\eps/\delta}
		(K_\eps^\delta,
		Q_1(x_0 / \delta))
	+
	C \kappa(\eta)
	\\& =
	\lim_
		{\substack{\delta \to 0\\ 
		\eps = \eps(\delta)}}
		\frac{F_\eps(J_\eps,Q_\delta(x_0))}
		{\Lm^d(Q_\delta(x_0))}
	+
	C \kappa(\eta)
	= 
	f_0
	+
	C \kappa(\eta)
	\, .
\end{align*}
Since 
$
	\frac{\nu(Q_\delta(x_0))}{\Lm^d(Q_\delta(x_0))}
	\to 
	\frac{\ddd \nu}{\ddd \Lm^d}(x_0)$ 
	 for $\Lm^d$-a.e.~$x_0 \in \R^d$, 
the result follows by sending $\eta \to 0$.

\subsection{Proof for the singular part}

In this section, we show 
\eqref{eq:ub_blowup_recess},~i.e.,
\begin{align}	\label{eq:lb_singular}
	f_{\omega,\hom}^\infty
	\left(
		\frac{\de \xi}{\de |\xi|}
	\right)
		\leq
	\frac{\de \nu}{\de |\xi|}
	\qquad |\xi|^s \text{-almost everywhere in }\overline U.
\end{align}

In the first part of this section, we discuss some geometric constructions and the asymptotic behaviour of the discrete energies, that we will apply in the proof of \eqref{eq:lb_singular}. 
While performing a blow-up around a point in the singular part of $j$, due to the more complex structure of the tangent measure of $j$, it will become necessary to construct suitable correctors this time \textit{not} on the whole cube, but only in a thin strip with some specific orientation.

\begin{rem}[Oriented strips]
    The need of choosing a suitable thin strip with specific orientations around singular points appears in other works in literature, see for example \cite[Lemma~3.9]{Bouchitte-Fonseca-Leoni-Mascarenhas:2002}. We refer also to \cite{Ruf-Zeppieri:2023}, in particular the proof of $(5.20)$ therein: in that work, stochastic homogenisation of integral functions of the gradient with linear growth are considered, which corresponds to replace our divergence constraint with a curl one. The structure of these two problems are significantly different: from one side, a curl-free vector field is necessarily of gradient forms, which allows to work directly in terms of unconstrained problems in the space of scalar maps. Moreover, the shape of the tangent measures around singular points of the gradient of BV functions has a different (somehow simpler) structure, due to the celebrated rank-one theorem by Alberti \cite{Alberti:1993}.
\end{rem}

We start with some notation: for a given orthonormal basis 
	$\{e_1, \dots, e_d\}$ of $\R^d$, let 
$Q_1=Q_1(0)$ be the open cube centered at $0$ having sides of length $1$ parallel to $\{e_i\}_i$. In formula,
\begin{align}	\label{eq:cube}
	Q_1:=
		\left\{
			x = \sum_{i=1}^d x_i e_i
		\suchthat
			x_i \in \Big( - \frac1 2, \frac1 2 \Big)
				\, \ \text{for } 
					i = 1, \dots, d
		\right\}  \, .
\end{align}
For fixed $k \in \{1, \dots, d-1\}$
we consider the subspace 
 $L_k := \Span\{ e_{k+1}, \dots, e_d\}$ 
and its orthogonal complement 
	$L_k^\perp := \Span \{e_1, \dots, e_k\}$. 
Define the corresponding strip of size 
	$\alpha \in (0,1)$ as
\begin{align}
\label{eq:def_Ralpha}
\hspace{-8mm}  	R_\alpha:=
	\left\{
	x = \sum_{i=1}^d x_i e_i \in Q_1
	\suchthat
	x_i  \in \Big( - \frac\alpha2, \frac\alpha2 \Big)
	\quad \text{for} \ i \in \{ k+1, \dots, d \} \
	\right\} \, .
\end{align}
In other words, $R_\alpha$ is an open rectangle centered in $0 \in L_k$  with sides parallel to $L_k^\perp$ and $L_k$, where the first $k$ sides (parallel to $L_k^\perp$) have lenght $1$ and the other $d-k$ have length $\alpha$. See Figure~\ref{fig:strip} for a representation of this set.
\begin{figure}
    \centering
\includegraphics{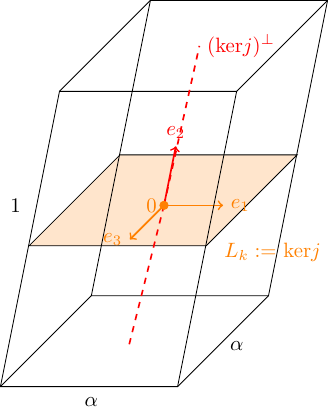}
    \caption{An oriented strip with sides paralell to $\ker j$ and $(\ker j)^\perp$.}
    \label{fig:strip}
\end{figure}

The next result is the key estimate in the proof of the lower bound around a singular point, and plays the same role as Proposition~\ref{prop:asymptotic_cube_AC} in the absolutely continuous part. We will apply this result to a sequence of discrete vector fields $J_\eps$ later that are converging to a tangent measure $\tau$ around a singular point. 
Such tangent measures are in general different from Lebesgue measure; their structure is described in Proposition~\ref{prop:structure_tangent_measures}.

In a similar spirit as for the absolutely continuous part, the next proposition shows a quantitative error estimates in terms of how far a discrete flux is from being a competitor of the cell formula, when working on a set which is a translation of a strip $R_\alpha$. This is one of the key estimates which we shall later employ to conclude the proof of the lower bound. This time is crucial to quantify the error in terms of how far the discrete flux is from a general tangent measure, as when performing the blow-up around a singular point of a divergence measure, a more complex tangent structure may arise, cfr. Proposition~\ref{prop:structure_tangent_measures}.

In view of Proposition \ref{prop:J0}, there exists a uniform-flow operator
	$\calR \in \mathrm{Lin}(V\otimes \R^d; V_\rma^\cE)$,
which we fix from now on.

\begin{prop}[Non-asymptotic behavior of the energies on a strip]
\label{prop:asymptotic_strip}
Fix $\eps \in (0,1)$.
Let $j \in V \otimes \R^d$ with $\emph{rank}(j) \leq n-1$.
For $\alpha \in (0,1)$, let $R:= R_\alpha + \bar{x}$, $\bar{x} \in \R^d$, where $R_\alpha$ is a strip as in \eqref{eq:def_Ralpha} with $k:= \dim (\ker j)$ and $L_k := \ker j$.
For every $J \in V_\rma^{\cE_\eps}$, let $\eta > 0$ be such that
\begin{align}
\label{eq:assumption_eta_quantitative_II}
    \frac1\alpha
    \max\{  \eps R_\partial, \eps R_\Lip \}
        <
    \eta
        \leq \max\{\eta,\alpha\}
    \EEE
        <
    \min
        \Big\{
            \frac13
                ,
            \frac{(1 + |j|){\Lm^d(R)}}{16|\iota_\eps J|(R)}
        \Big\}
        \, .
\end{align}
Then we have
\begin{align}	\label{eq:lowerbound_prop_strip}
        f_{\eps,\calR}
        \big(
            j ,  R
        \big)
\leq
		F_\eps(J,R)
    + C \err_{\eps,\eta,\alpha}^\tau(J, j)
		\,  ,
\end{align}
where $C < \infty$ only depends on the constants $R_i$, $C_i$, $c_i$ appearing in the assumptions on $(\cX,\cE)$ and $F$, and where 
\begin{align}
	\err_{\eps,\eta,\alpha}^\tau(J, j) &
        :=
    \frac1{\alpha^2 \eta} \| \dive \iota_\eps J \|_{\tKR(\overline R)}
        +
    \frac1{(\alpha^2 \eta)^2}
    \bigg(
        \| \iota_\eps J - \tau\|_{\tKR(\overline R)}
            +
        \| \iota_\eps \calR_\eps j - j \Lm^d)\|_{\tKR(\overline R)}
    \bigg)
\\
        &+
    \eps
    \Big(
        |j| {  \Lm^d(R)}
            +
        | \dive \iota_\eps J |(\overline R)
            +
        \frac1{\alpha^2 \eta}
        | \iota_\eps J| (\overline R)
    \Big)
        \\& +
    (\sqrt{\eta} + \sqrt{\alpha} ) \Big( (1+  | j |) {  \Lm^d(R)}  + | \iota_\eps J |(\overline R) \Big)
    \\
        &+
     \frac{\sqrt{\alpha}}\eta  \Big(  (1+ |j|) {  \Lm^d(R)} + |\iota_\eps J|(\overline R) \Big)
 \, ,
\end{align}
for $\tau$ being any measure satisfying, for $|\tau|$-a.e. $x \in \R^d$,
\begin{align}
\label{eq:tangent_measure_j0}
    \frac{\dd \tau }{\dd |\tau|}(x)
        =
    \frac{j}{| j |}
        \, , \quad
    \dive \tau =0
        \, ,
        \tand
    |\tau|(\overline R) = | j | \Lm^d(\overline R) = | j | \alpha^k
        \, .
\end{align}
\end{prop}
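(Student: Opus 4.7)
The proof adapts the three-step blueprint of Proposition~\ref{prop:asymptotic_cube_AC}—(i) boundary correction to make $J$ agree with $\calR_\eps j$ near $\partial R$, (ii) divergence correction via Proposition~\ref{prop:correctors}, and (iii) an energy estimate using \eqref{eq:Lipschitz-F-eps}—to the anisotropic setting of the strip. Two new features must be accommodated: $R$ has scale $1$ in the $L_k^\perp$-directions and scale $\alpha$ in the $L_k=\ker j$-directions, and the natural reference measure near which $J$ is concentrated is no longer $j\Lm^d$ but the tangent measure $\tau$, which by \eqref{eq:tangent_measure_j0} is constant along $L_k^\perp$ but possibly singular along $L_k$.

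Step 1 (Boundary correction). I would construct a family of smooth cutoffs $\psi_\eta^\ell\in \cC_\rmc^\infty(R)$, $\ell\in\{1,\ldots,N\}$ with $(2N+1)\eta<1$, equal to $1$ on a concentric sub-strip and $0$ outside $R$, with anisotropic transition layers whose gradient scales are chosen so that the Leibniz-rule computation of $\dive J_2^\ell$ in Step~2 produces precisely the prefactors $1/(\alpha^2\eta)$ and $1/(\alpha^2\eta)^2$ appearing in the statement. Define $J_2^\ell := \hat\psi_\eta^\ell J + (1-\hat\psi_\eta^\ell)\calR_\eps j$ as in \eqref{eq:J_2}; the constraint \eqref{eq:assumption_eta_quantitative_II} forces $J_2^\ell(x,y)=\calR_\eps j(x,y)$ on every edge within distance $\eps R_\partial$ of $\partial R$, which is the analog of Claim~1a. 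The analog of Claim~1b is obtained by decomposing $R$ into bulk, transition, and boundary annuli and applying Lemma~\ref{lemma:tv_bounds} and Remark~\ref{rem:vague_J0_cubes} piece by piece; an averaging argument over $\ell\in\{1,\ldots,N\}$ selects $\bar\ell$ with $|\iota_\eps J|$ in the transition layer bounded by $\tfrac{C}{N}|\iota_\eps J|(R)$, and optimizing $N$ yields the $(\sqrt\eta+\sqrt\alpha)$ factors, where the $\sqrt\alpha$ reflects that the thin boundary of $R$ has $(d-1)$-dimensional measure of order $\alpha^{k-1}$ while $R$ itself has volume of order $\alpha^{d-k}$, so the surface-to-volume ratio contributes an extra factor of order $1/\alpha$.

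Step 2 (Divergence correction). Apply Proposition~\ref{prop:correctors} to $m:=-\DIVE J_2^{\bar\ell}$ to obtain a corrector $K$ supported near $R$ with $\DIVE K=-\DIVE J_2^{\bar\ell}$ and $|\iota_\eps K|(\R^d)\lesssim \|\dive\iota_\eps J_2^{\bar\ell}\|_{\tKR(\overline R)}+\eps|\dive\iota_\eps J_2^{\bar\ell}|(\overline R)$. The discrete Leibniz rule \eqref{eq:Leibniz} gives $\DIVE J_2^{\bar\ell}=\psi_\eta^{\bar\ell}\DIVE J+(\grad\psi_\eta^{\bar\ell})\star(J-\calR_\eps j)$; the first summand contributes the $\tfrac{1}{\alpha^2\eta}\|\dive\iota_\eps J\|_{\tKR}$ term after applying Lemma~\ref{lemma:gradients}. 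For the second summand, I would decompose $J-\calR_\eps j = (J-\tau)+(\tau - j\Lm^d)+(j\Lm^d - \calR_\eps j)$: the first and third pieces produce the $\tKR$-norms of $\iota_\eps J-\tau$ and $\iota_\eps\calR_\eps j - j\Lm^d$ with prefactor $1/(\alpha^2\eta)^2$ through an integration-by-parts against a Lipschitz test function analogous to \eqref{eq:square_KR}. The middle piece $\tau - j\Lm^d$ is the crucial one: using that $\tau$ is divergence-free and depends only on the $L_k$-coordinates (Proposition~\ref{prop:structure_tangent_measures}), an integration by parts against $\varphi\nabla\psi_\eta^{\bar\ell}$ reduces the estimate to the thin directions, where the total mass of $\tau - j\Lm^d$ on $R$ is of order $\alpha^k |j|$, producing the $\sqrt\alpha/\eta$ contribution. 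I expect this divergence-free manipulation to be the principal technical obstacle.

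Step 3 (Conclusion). Set $J_3:=J_2^{\bar\ell}+K$; by construction $J_3\in\Rep_{\eps,\calR}(j;R)$, and \eqref{eq:Lipschitz-F-eps} yields
\begin{align*}
f_\eps(j,R)\leq F_\eps(J_2^{\bar\ell},R)+2C_1|\iota_\eps K|(B(R,\eps R_\Lip)).
\end{align*}
Combining Step~1, Step~2 and the boundedness of $\calR_\eps$ from Remark~\ref{rem:vague_J0_cubes} to control $|\iota_\eps\calR_\eps j|$ on the $\eps R_\Lip$-neighbourhood of $\partial R$ gives \eqref{eq:lowerbound_prop_strip}. The main difficulty throughout is the anisotropic bookkeeping that tracks the different powers of $\alpha$ and $\eta$ in the error terms, together with the divergence-free argument for the $\tau-j\Lm^d$ piece.
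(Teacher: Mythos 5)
Your overall blueprint—boundary correction, divergence correction via Proposition~\ref{prop:correctors}, and an energy estimate—matches the paper, and you correctly identify Proposition~\ref{prop:structure_tangent_measures} and the piece $\tau - j\Lm^d$ as the heart of the matter. However, there are substantive gaps in the anisotropic bookkeeping that is the real content of this proposition.

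The central structural device in the paper's proof is the \emph{product cutoff} $\Psi_{\eta,\eta'}^{\ell,\ell'} = \psi_\eta^\ell\,\psi_{\eta'}^{\ell',\perp}$, where $\psi_\eta^\ell$ depends only on the $(\ker j)^\perp$-coordinates and $\psi_{\eta'}^{\ell',\perp}$ depends only on the $\ker j$-coordinates, with two independent transition parameters $\eta$, $\eta'$ and two families of shells indexed by $\ell$, $\ell'$. You describe only a single anisotropic cutoff family $\psi_\eta^\ell$. The product form is not cosmetic: it yields the decomposition $\nabla\Psi = \psi_\eta\nabla\psi_{\eta'}^\perp + \psi_{\eta'}^\perp\nabla\psi_\eta$, where the first gradient lies in $\ker j$ and therefore annihilates \emph{both} $\tau$ and $j\Lm^d$ (so for that piece you never need to estimate $\tau - j\Lm^d$ at all and can insert $\tau$ and $j\Lm^d$ for free), while the second gradient lies in $(\ker j)^\perp$ and is a function of $x^\perp$ only, which is exactly what the averaging argument on the thin fibers requires. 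Without this factorization, the $\ker j$-direction component of $\nabla\psi$, which is the large one of size $\sim 1/(\alpha\eta')$, gets coupled to $\tau - j\Lm^d$ with no cancellation mechanism, and the estimate does not close.

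Second, you omit the analogue of Claim~1c — the selection of $\bar\ell'$ so that $|\iota_\eps J|$ is small on the transition annulus of $\psi_{\eta'}^{\bar\ell',\perp}$. This is needed before the averaging step: the term to estimate is $\|\psi_{\eta'}^\perp\nabla\psi_\eta\cdot(\iota_\eps J - \calR_\eps j)\|_{\tKR}$, and one writes $\psi_{\eta'}^\perp = 1 - (1-\psi_{\eta'}^\perp)$ and controls the $(1-\psi_{\eta'}^\perp)$ contribution via Claim~1c before reducing to $\nabla\psi_\eta\cdot(\iota_\eps J - \calR_\eps j)$.

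Third, your explanation of the $\sqrt\alpha/\eta$ contribution — that "the total mass of $\tau - j\Lm^d$ on $R$ is of order $\alpha^k|j|$" — is not a correct account, and would not produce the claimed bound. In fact $|\tau|(\overline R)$ and $|j\Lm^d|(\overline R)$ are both \emph{equal} to $|j|\alpha^k$ by the normalization \eqref{eq:tangent_measure_j0}, so the difference of masses is zero; what matters is the pairing against test functions of the form $\varphi\nabla\psi_\eta$. The actual mechanism is: replace $\varphi$ by its average $\bar\varphi$ over the $\ker j$-slice, at a Lipschitz cost $O(\alpha)$; since $\nabla\psi_\eta$ is a function of $x^\perp$ alone and $\tau = j\lambda\otimes\kappa$ is a product, the integral of $\bar\varphi\nabla\psi_\eta$ against $\tau - j\Lm^d$ vanishes identically because $\kappa(Q_\alpha^k) = \Lm^k(Q_\alpha^k)$. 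The $\alpha/\eta$ bound in \eqref{eq:final_claim_SIN} (then combined with the $\sqrt{\eta'}$ bound from Claim~1c and the choice $\eta'=\alpha$) is what produces $\sqrt\alpha/\eta$ in the final error.

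Finally, your heuristic for the $(\sqrt\eta+\sqrt\alpha)$ term in Step~1 — a surface-to-volume argument for the thin strip — is not how the paper obtains it; the paper runs \emph{two independent} averaging-and-optimization arguments over $\ell\in\{1,\ldots,N\}$ and $\ell'\in\{1,\ldots,N^\perp\}$, yielding $\sqrt\eta + \sqrt{\eta'}$, and then sets $\eta':=\alpha$ in Step~3. A single optimization over $N$ would not give both square roots.

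These are not cosmetic omissions; the two-cutoff product construction, the orthogonality $\nabla\psi_{\eta'}^\perp\cdot\tau = \nabla\psi_{\eta'}^\perp\cdot(j\Lm^d) = 0$, Claim~1c, and the averaging identity are the ideas that make the strip geometry work, and they are precisely the parts your proposal either skips or misstates.
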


\begin{rem}\label{rem:lack_bdry_singular}
It is interesting to note the differences between the setting of Proposition~\ref{prop:asymptotic_strip} and Proposition~\ref{prop:asymptotic_cube_AC}: we will apply it to a  tangent measure $\tau$ which is divergence free, and therefore it is in product form, but it does not necessarily coincide with the Lebesgue measure in every direction (cfr. Proposition~\ref{prop:structure_tangent_measures}). In particular,  whenever $\iota_\eps J \approx \tau$, the discrete field $K$ is not expected to be approximately constant in the direction of $\ker j$. This is why we need a thin strip $R_\alpha$ (of side-length $\alpha\ll1$) parallel to $\ker j$, where we have no information about $\tau$ (cfr. Figure~\ref{fig:strip}). The price we pay to correct the discrete fields at the boundary is of order $\alpha$, which also shows in \eqref{eq:lowerbound_prop_strip}. At the same time, the orientation of the strip is also crucial to ensure that, while fixing the right boundary conditions, we do not create extra divergence, see in particular \eqref{eq:final_claim_SIN} below.
\end{rem}

\begin{proof}
As done in Proposition~\ref{prop:asymptotic_cube_AC}, the plan of the proof is to replace the vector field $K$ by an exact $\eps$-representative $J_3 \in \Rep_{\eps,\calR}(j;R)$,
and show that the error can be controlled in terms of its divergence and the distance from \textit{any} tangent measure $\tau$ with $|\tau|(\overline R) = |j| \Lm^d(\overline R) = |j| \alpha^k$ with density $\frac{\dd \tau}{\dd |\tau|} = \frac{j}{| j |}$.
In a similar way as in Proposition~\ref{prop:asymptotic_cube_AC}, we will perform two corrections: first we  correct the boundary values and then the divergence.
In order to obtain the right boundary conditions, this time we will make use of two different cut-off functions at the boundary of $R$: one for the sides parallel to $(\ker j)^\perp$ and one for the sides parallel to $\ker j$. While performing these corrections, we need to control the extra divergence that we create (due for example to the gradient of the cut-off functions). 
The way we control the extra divergence, in contrast with Proposition~\ref{prop:asymptotic_cube_AC}, is very different between the two cut-off operations: for the sides which are parallel to $(\ker j)^\perp$, 
we take advantage of the fact that the gradient of the cutoff function belongs to $\ker j$, hence it enjoys good orthogonality properties with the tangent measures associated with $j$ (see "Part 1.1" in Step 4). 
In the other case, we control the divergence in $\tKR$-norm linearly in $\alpha$, thanks to the fact that tangent measures coincide with the Hausdorff measure when restricted to $(\ker j)^\perp$ (see "Part 1.2" in Step 4), thanks to Proposition~\ref{prop:structure_tangent_measures}.

As in Proposition~\ref{prop:asymptotic_cube_AC}, 
when performing boundary corrections, we have some freedom in the choice of where to perform the cutoff. Its location needs to be optimised to obtain the  claimed error estimate.

\medskip
\noindent
\emph{Step 1 (Boundary value correction)}. \

Recall that $R = \bar{x} + R_\alpha$, with $R_\alpha$ represented in Figure~\ref{fig:strip}.
Fix $\eta > 0$ satisfying
	\eqref{eq:assumption_eta_quantitative_II} and
$N \in \N$ so that $2N + 1 < \frac1{\eta}$. The value of $N$ will be optimised below.  Fix also $\eta' >0$ that we will later choose as a function of $\alpha$, also satisfying \eqref{eq:assumption_eta_quantitative_II}, and fix $N^\perp \in \N$ so that $2N^\perp +1 < \frac1{\eta'}$. The value of $N^\perp$ will too be optimised below.

We denote by $Q_\lambda^k = Q_\lambda^k(\bar{x}^\parallel)$ the cube on $\ker j \simeq \R^k$ of side length $\lambda \in \R_+$ and center $\bar{x}^\parallel$.
Similarly, we denote by $Q_\lambda^{d-k} = Q_\lambda^{d-k}(\bar{x}^\perp)$ the cube in $(\ker j)^\perp \simeq \R^{d-k}$.

\smallskip
(1)  \
For $\ell = 1, \dots, N$,
let $\tilde \psi_\eta^\ell \in \cC_\rmc^\infty(\R^{d-k})$
be a cutoff function
satisfying
	$0 \leq \tilde \psi_\eta^\ell \leq 1$,
	$\Lip (\tilde \psi_\eta^\ell) \leq C/\eta$,
	$\Lip (\nabla \tilde \psi_\eta^\ell) \leq C / \eta^2$,
and such that
\begin{align*}
	\tilde \psi_\eta^\ell (x) = 1
		\quad \text{for} \ x \in Q_{1-2\ell\eta}^{d-k}
	\tand
	\tilde \psi_\eta^\ell (x) = 0
		\quad \text{for} \ x \notin Q_{1-(2\ell-1)\eta}^{d-k} \, .
\end{align*}
We then define $\psi_\eta^\ell:\R^d \to [0,1]$ as  $\psi_\eta^\ell(x_1, \dots, x_d) = \tilde \psi_\eta^\ell(x_{k+1}, \dots, x_d)$. In particular,
\begin{align}
    \nabla \psi_\eta^\ell(x) \in (\ker j)^{\perp}
         \quad
    \forall x \in \R^d
    \, .
\end{align}

\smallskip
(2)  \
For $\ell' = 1, \dots, N^\perp$, let $\tilde \psi_{\eta'}^{\ell',\perp} \in \cC_\rmc^\infty(\R^k)$
be a cutoff function
satisfying
	$0 \leq \tilde \psi_{\eta'}^{\ell',\perp} \leq 1$,
	$\Lip (\tilde \psi_{\eta'}^{\ell',\perp}) \leq C/(\alpha\eta)$,
	$\Lip (\nabla \tilde \psi_{\eta'}^{\ell',\perp}) \leq C / (\alpha \eta)^2$,
and such that
\begin{align*}
	\tilde \psi_{\eta'}^{\ell',\perp} (x) = 1
		\quad \text{for} \ x \in Q_{(1-2\ell\eta')\alpha}^k
	\tand
	\tilde \psi_{\eta'}^{\ell',\perp} (x) = 0
		\quad \text{for} \ x \notin Q_{(1-(2\ell-1)\eta')\alpha}^k \, .
\end{align*}
We then define $\psi_{\eta'}^{\ell',\perp}:\R^d \to [0,1]$ as  $\psi_{\eta'}^{\ell',\perp}(x_1, \dots, x_d) = \tilde \psi_{\eta'}^{\ell',\perp}(x_1, \dots, x_k)$. In particular,
\begin{align}
    \nabla \psi_{\eta'}^{\ell',\perp}(x) \in \ker j
         \quad
    \forall x \in \R^d
    \, .
\end{align}

To ensure the correct boundary data, we define new vector fields in $V_\rma^{\cE_s}$ as
\begin{align}	\label{eq:J_2_SIN}
	H_2^\ell :=
	\hat \psi_\eta^\ell
	 J
	+ (1-\hat\psi_\eta^\ell ) \calR_\eps j
		\quad \tand \quad
	J_2^\ell:= \hat \psi_{\eta'}^\perp  H_\eps^2 + (1-\hat\psi_{\eta'}^\perp )  \calR_\eps j
			\, ,
\end{align}
where the rescaled operator
$\calR_\eps$ has been defined in
 Definition~\ref{def:admissible}
and $\hat\psi^\ell$ has been defined in
	\eqref{eq:def_hat_0}.

The construction is similar to the one used in the proof of the liminf inequality for the absolutely continuous part, except that now we use two different mesoscopic scales $\eta, \eta'$ to perform different cutoff in the directions of $\ker j$ and $(\ker j)^\perp$, see Figure~\ref{fig:cutoff_singular}.

\begin{figure}[h]
    \centering
    \includegraphics[scale=0.9]{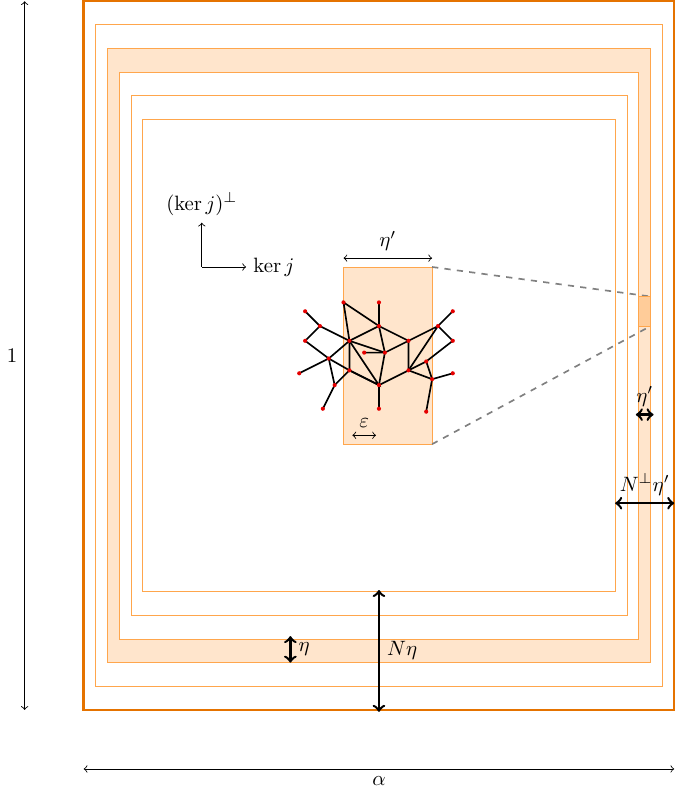}
    \caption{Around a singular point, we perform a cutoff procedure close to the boundary of the oriented strip $R_\alpha$. On the sides parallel to $\ker j$, we choose a mesoscopic scale $\eta \gg \eps$, whilst for the faces paralell to $(\ker j)^\perp$ we pick a mesoscopic scale $\eta' \gg \eps$, chosen in such a way that $\eta'\ll \eta$.}
    \label{fig:cutoff_singular}
\end{figure}

Note that $H_2^\ell$ has the right boundary condition on $R_\alpha$ \textit{only} on the facets that are parallel to $\ker j$, whereas $J_2^\ell$ has the right boundary condition also on the remaining facets.

It is useful to introduce the notation: $\Psi_{\eta,\eta'}^{\ell,\ell'}= \psi_\eta^\ell \psi_{\eta'}^{\ell',\perp}$. In particular, by construction we have
 \begin{align}
 \label{eq:J_2_SIN_2}
     J_2^{\ell,\ell'}:= \hat \Psi_{\eta,\eta'}^{\ell,\ell'} J + (1-\hat\Psi_{\eta,\eta'}^{\ell,\ell'})  \calR_\eps j
        \, .
 \end{align}

Arguing as in Step 1 in the proof of Proposition~\ref{prop:asymptotic_cube_AC}, thanks to \eqref{eq:assumption_eta_quantitative_II}, we will show we can suitably choose $N$, $\bar\ell\in \{ 1, \dots, N\}$, and $\bar\ell'\in \{ 1, \dots, N^\perp\}$
so that the vector field $J_2^{\bar\ell,\bar\ell'}$
has three desirable properties:
it has the right boundary conditions
to be a representative in the sense of Definition \ref{def:rep}
and
its energy is controlled by the energy of $K$.
More precisely:
\begin{align}
	&\text{Claim 1a:} &&
	J_2^{\bar\ell,\bar\ell'}(x,y) = \calR_\eps j(x,y)
		\text{ for all }
		(x,y) \in \cE_\eps
		\text{ with }
		\dist\bigl([x,y], R^c \bigr)
		\leq \eps R_\partial \;,
	\\
	&\text{Claim 1b:} &&
    F_\eps(J_2^{\bar\ell,\bar\ell'},R)
        -
    F_\eps(J,R)
	\lesssim
	\big(
            \sqrt\eta + \sqrt{\eta'}
        \big)
        \Big( (1+ | j |) {  \Lm^d(R)}
		   	+ | \iota_\eps J |(\overline R)\Big)
        \, .
    \\
        &\text{Claim 1c:} &&
        |\iota_\eps J|
        \big(
            \supp
            \big(
                \psi_{\eta'}^{\bar\ell',\perp} (1-\psi_{\eta'}^{\bar\ell',\perp})
            \big)
        \big)
            \lesssim
         \sqrt{\eta'} \Big( (1+ | j |) {  \Lm^d(R)}
		   	+ | \iota_\eps J |(\overline R)\Big)
        \, .
\end{align}
Having obtained such $\bar{\ell}, \bar\ell'$, we simplify notation by writing $J_2 := K^{2,\bar\ell,\bar\ell' }$ (and generally omit the dependence on $\bar{\ell},\bar\ell'$ for every cutoff function).

\bigskip
\noindent
\textit{Step 2 (Divergence correction)}. \
As in Proposition~\ref{prop:asymptotic_cube_AC}, the next step is to find a corrector to $J_2$ so that the new vector field is divergence free, while preserving the right boundary conditions. Using that $\iota_\eps J =0$ on $B(R^c,\eps R_\partial)$ (by \eqref{eq:assumption_eta_quantitative_II} and $\alpha \in (0,1)$), we employ Proposition~\ref{prop:correctors} once again and find $C \in V_\rma^{\cE_\eps}$ so that $\DIVE K = -\DIVE J_2$ on $(\cX_\eps, \cE_\eps)$, and $c=c(d)<\infty$, such that (for $\eps$ small enough)
\begin{align}	\label{eq:tv_bound_Cs_SIN}
	\begin{cases}
	\iota_\eps K = 0  \quad \text{on } B(R^c,\eps R_\Lip)
	\, ,
		\\
	| \iota_\eps K |(\R^d)
		\lesssim
		\| \iota_\eps  \DIVE J_2 \|_{\tKR(\overline R)} + \eps |\iota_\eps \DIVE J_2 |(\overline R)	\, ,
	\end{cases}
\end{align}
where at last we used the Remark~\ref{rem:extension_Lip}.
Finally, we define the competitor vector field as
$
	J_3 := J_2 + C \in V_\rma^{\cE_\eps}
$.
We will show that

\bigskip
\noindent
\quad \,  Claim 2a: \
$
		J_3 \in \Rep_{\eps, \calR}(j; R)
$ ,

\bigskip
\noindent
\quad \,  Claim 2b: \
we have the following bound:
 \begin{align}
    | \iota_\eps(J_3 - J_2)|(\R^d)
		&\lesssim
	 \frac1{\alpha \eta \eta'} \| \dive \iota_\eps J \|_{\tKR(\overline R)}
\\
        &+
    \frac1{(\alpha \eta \eta')^2}
    \bigg(
        \| \iota_\eps J - \tau\|_{\tKR(\overline R)}
            +
        \| \iota_\eps \calR_\eps j - j \Lm^d)\|_{\tKR(\overline R)}
    \bigg)
\\
        & +
    \eps
    \Big(
        | \dive \iota_\eps J |(\overline R)
            +
        \frac1{\alpha \eta \eta'}
        | \iota_\eps (J - \calR_\eps j)| (\overline R)
    \Big)
\\
        &+
    \frac{\sqrt{\eta'}}\eta  \Big( (1+ | j |) {  \Lm^d(R)}
		   	+ | \iota_\eps J |(\overline R)\Big)
    +  \frac\alpha\eta |j|
       {  \Lm^d(R)}
 \, ,
\end{align}
for every measure $\tau$ satisfying \eqref{eq:tangent_measure_j0}.

\bigskip
\noindent
\textit{Step 3: the energy estimate} \
Thanks to Lemma~\ref{lemma:tv_bounds}, we control the error we make by going from $K$ to $J_3$ quantitatively in terms of total variation, and get the lower bound
\begin{align}	\label{eq:energy_est_s_sing_2}
	F_\eps(J_2, R)
		&\geq
	F_\eps(J_3, R)
		-
  C
	|\iota_\eps(J_2 - J_3) |(B(R,\eps R_{\Lip}))
\\
		&\geq
	f_{\eps,\calR}
		\big(
			j , R
		\big)
		-
C	|\iota_\eps(J_2 - J_3) |(\R^d)
		\, ,
\end{align}
where in the second inequality we used that
	$J_3 \in \Rep_{\eps,\calR}(j; R_\alpha) $.
We then apply the estimates obtained in Claim 1b and Claim 2b and conclude the proof by choosing for example $\eta' = \alpha$. Note that this choice of $\eta'$ satisfies the required \eqref{eq:assumption_eta_quantitative_II}.

It remains to prove the Claims.
\bigskip

\smallskip
\emph{Proof of Claim 1a.} \
This follows by mimicking the argument from  Proposition~\ref{prop:asymptotic_cube_AC}.

\smallskip
\emph{Proof of Claim 1b and 1c.} \
We adapt the proof of Claim 1b in Proposition~\ref{prop:asymptotic_cube_AC}.
This time we need to optimise both in the direction of $\ker j_0$ and its orthogonal complement,
choosing different scaling limits for $N$ and $N^\perp$, respectively depending on $\eta$ and $\eta'$, and hence on $\alpha$.

For every $\ell \in \{1, \dots, N \}$ and $\ell' \in \{ 1, \dots, N^\perp\}$,
we decompose the strip  $R$ as disjoint union
	 $R = A_1^{\ell,\ell'} \cup A_\tra^{\ell,\ell'} \cup A_0^{\ell,\ell'}$,
where $A_1^{\ell,\ell'}$ is a smaller concentric strip surrounded
by $L^\infty$-spherical shells $A_\tra^{\ell,\ell'}$ and $A_0^{\ell,\ell'}$.
The first set is chosen in product form $A_1^{\ell,\ell'}
        =
A_1^{\ell} \times A_1^{\ell'}
$, with respect to the decomposition $\ker j_0 \oplus (\ker j_0)^\perp$, and overall they are chosen in such a way that $A_1^{\ell,\ell'}$ lies well inside the set $\{ \Psi_{\eta,\eta'}^{\ell,\ell'} = 1 \}$,
	$A_0^{\ell,\ell'}$  lies well inside the set $\{ \Psi_{\eta,\eta'}^{\ell,\ell'} = 0 \}$,
	and $A_\tra^{\ell,\ell'}$is a transition zones (see Figure~\ref{fig:cutoff_singular}).
More precisely,
\begin{align}
    A_1^\ell
		:= Q_{1-(2\ell+1)\eta}^{d-k}
        \, , \quad
     A_1^{\ell'}
		:= Q_{(1-(2\ell+1)\eta')\alpha}^k
        \, , \quad
    A_1^{\ell,\ell'}
        :=
    A_1^{\ell'} \times  A_1^{\ell}
\\
	A_\tra^{\ell,\ell'}
		=
        \Big(
        Q_{(1-(2\ell - 2) \eta')\alpha}^k
            \times
        Q_{1-(2\ell - 2) \eta}^{d-k}
        \Big)
        \setminus  A_1^{\ell,\ell'}
		\, , \quad
	A_0^{\ell,\ell'}
		:= R \setminus \big( A_\tra^\ell \cup A_1^{\ell,\ell'} \big)
        \, ,
\end{align}
Note that $1 -(2\ell+1)\eta>0$ (resp. $1 -(2\ell'+1)\eta'>0$) since we assumed $2N+1 < \frac1{\eta}$ (resp. $2N^\perp+1 < \frac1{\eta'}$ ).

By additivity of $F_\eps$, which follows from $(F3)$, we have
\begin{align}
    F_\eps(J_2^{\ell,\ell'},R)
        &=
    F_\eps(J_2^{\ell,\ell'},A_1^{\ell,\ell'})
        +
    F_\eps(J_2^{\ell,\ell'},A_\tra^{\ell,\ell'})
        +
    F_\eps(J_2^{\ell,\ell'},A_0^{\ell,\ell'}) \
\\
\label{eq:energy_threeparts_SIN}
        &\leq
    F_\eps(J,R)
        +
    F_\eps(J_2^{\ell,\ell'},A_\tra^{\ell,\ell'})
        +
    F_\eps(J_2^{\ell,\ell'},A_0^{\ell,\ell'})
        \, .
\end{align}
by arguing exactly as in Proposition~\ref{prop:asymptotic_cube_AC}(Claim 1b).
From this, we will proceed by estimating the remaining two terms.

Arguing as in \eqref{eq:boundary},  by \eqref{eq:assumption_eta_quantitative_II} one shows that
\begin{align}
\label{eq:choice_cutoff_zero_estimate_SIN}
    F_\eps(J_2^{\ell,\ell'},A_0^{\ell,\ell'})
        \lesssim
    \big(
        \Lm^d
            +
        |\iota_\eps \calR_\eps  j|
    \big)
    (A_0^{\ell+1,\ell'+1})
        \lesssim
    (1 + | j |) {  \Lm^d(R) } (N \eta + N^\perp \eta)
        \, , \quad
\end{align}
where at last we used that  $A_0^{\ell+1, \ell'+ 1}$ is a union of orthotopes and Remark~\ref{rem:vague_J0_cubes}.

Concerning the transition set $A_\tra^{\ell,\ell'}$, we argue as for \eqref{eq:transition} and obtain
\begin{align}
    F_\eps(J_2^{\ell,\ell'},A_{\tr}^{\ell,\ell'})
        \lesssim
    |\iota_\eps J|(\tilde A_\tra^{\ell,\ell'})
        +
    (1 + | j | ) {  \Lm^d(R) } (\eta + \eta')
        \, ,
\end{align}
where we defined
\begin{align}
\label{eq:def_Atilde_double}
    \tilde A_\tra^{\ell,\ell'} := (A_0^{\ell-1, \ell' -1})^c\setminus A_1^{\ell+1, \ell'+1}
        \, , \quad
        \text{for which }
    A_\tra^{\ell,\ell'}
        \subset
    B(A_\tra^{\ell,\ell'}, \eps R_\Lip)
        \subset
    \tilde A_\tra^{\ell,\ell'}
        \, . \quad 
\end{align}

Putting this together with \eqref{eq:choice_cutoff_zero_estimate_SIN}, we end up with
\begin{align}
\label{eq:choice_cutoff_second_estimate_SIN}
    F_\eps(J_2^{\ell,\ell'},A_0^{\ell,\ell'})
        +
    F_\eps&(J_2^{\ell,\ell'}, A_\tra^{\ell,\ell'})
\\
        &\lesssim
    |\iota_\eps J|(\tilde A_\tra^{\ell,\ell'})
        +
    (1+ | j | ) {  \Lm^d(R) }  (N \eta + N^\perp \eta)
        \, ,
\end{align}
for every $\ell \in \{ 1, \dots, N \}$, $\ell' \in \{ 1 , \dots, N^\perp \}$.

We are left with the choice of $\ell$, $\ell'$ to ensure that Claim 1b and 1c hold. To this purpose, in a similar spirit as in \eqref{eq:def_Atilde_double}, it is also useful to introduce a notation to denotes transition layers in the direction of $\ker j_0$ and its orthogonal: we thus define
\begin{align}
\label{eq:def_orizontal_vertical_layers}
    \tilde A_\tra^{\ell'}
        =
        \Big(
        Q_{(1-(2\ell' - 4) \eta')\alpha}^k
             \setminus
        A_1^{\ell'+1}
        \Big)
            \times
        Q_1^{d-k}
            \, , \quad
    \tilde A_\tra^\ell
     =
    Q_1^k
        \times
     \Big(
    Q_{1-(2\ell - 4) \eta}^{d-k}
         \setminus
    A_1^{\ell+1}
    \Big)
        \, ,
\end{align}
and observe that by construction
\begin{align}
\label{eq:inclusion_for_1c}
    \tilde A_\tra^{\ell,\ell'}  \subset \tilde A_\tra^{\ell'} \cup  \tilde A_\tra^\ell
        \tand
    \supp
    \big(
        \psi_{\eta'}^{\ell',\perp} (1-\psi_{\eta'}^{\ell',\perp})
    \big)
        \subset
    \tilde A_\tra^{\ell'}
        \, .
\end{align}

Using  that the sets $\{ \tilde A_\tra^{\ell'} \suchthat \ell' =1, \dots, N^\perp \}$, as well as the sets $\{ \tilde A_\tra^\ell \suchthat \ell =1, \dots, N \}$,  overlap a finite number of times, we now first choose  $\bar{\ell'} \in \{1, \dots, N^\perp \}$ so that
\begin{align}
\label{eq:mass_control_1c}
    |\iota_\eps J|(\tilde A_\tra^{\bar\ell'})
        \leq
    \frac1{N^\perp}
        \sum_{\ell'=1}^{N^\perp}
            |\iota_\eps J|(\tilde A_\tra^{\ell'})
        \lesssim
    \frac1{N^\perp}
        |\iota_\eps J|(R)
            \, ,
\end{align}
and then choose $\bar\ell \in \{1, \dots, N \}$ so that
\begin{align}
    |\iota_\eps J|(\tilde A_\tra^{\bar\ell})
        \leq
    \frac1{N}
        \sum_{\ell=1}^{N}
            |\iota_\eps J|(\tilde A_\tra^\ell)
        \lesssim
    \frac1{N}
        |\iota_\eps J|(R)
            \, .
\end{align}
Together with \eqref{eq:choice_cutoff_second_estimate_SIN}, this provides
\begin{align}
\label{eq:choice_cutoff_third_estimate_SIN}
    F_\eps(J_2^{\bar\ell,\bar\ell'},A_0^{\bar\ell,\bar\ell'})
        +
    F_\eps&(J_2^{\bar\ell,\bar\ell'},A_\tra^{\bar\ell,\bar\ell'})
\\
        &\lesssim
    \Big(
        \frac1N
            +
        \frac1{N^\perp}
    \Big)
        |\iota_\eps J|(R)
        +
    (1+ | j | ) \Lm^d(R)
    \big(
        N \eta
            +
        N^\perp \eta'
    \big)
        \, .
\end{align}

Finally we choose as $N$ and $N^\perp$ the values
\begin{align}
    N^2 = \frac1\eta \frac{|\iota_\eps J|(R)}{(1+ | j |)\Lm^d(R) }
        \tand
    (N^\perp)^2 = \frac1{\eta'} \frac{|\iota_\eps J|(R)}{(1+ | j |)\Lm^d(R)}
        \, ,
\end{align}
so that from
\eqref{eq:energy_threeparts_SIN} and \eqref{eq:choice_cutoff_third_estimate_SIN} we obtain
\begin{align}
    F_\eps(J_2^{\bar\ell,\bar\ell'},R)
        &\leq
    F_\eps(J,R)
        +
    (
        \sqrt{\eta}
            +
        \sqrt{\eta'}
    )
    \sqrt
        {
        |\iota_\eps J |(R) (1+ | j |)\Lm^d(R)
        }
\\
        &\lesssim
    (
        \sqrt{\eta}
            +
        \sqrt{\eta'}
    )
    \big(
        |\iota_\eps J |(R)  + (1+ | j |)\Lm^d(R)
    \big)
        \, ,
\end{align}
which concludes the proof of Claim 1b.

Similarly, Claim 1c follows from \eqref{eq:inclusion_for_1c} and \eqref{eq:mass_control_1c}, as by definition of $N^\perp$
\begin{align}
    |\iota_\eps J|
        \big(
            \supp
            \big(
                \psi_{\eta'}^{\bar\ell',\perp} (1-\psi_{\eta'}^{\bar\ell',\perp})
            \big)
        \big)
        \leq
    |\iota_\eps J|
        (  \tilde A_\tra^{\bar\ell'}  )
        \lesssim
  \sqrt{\eta'}
    \big(
        |\iota_\eps J |(R)  + (1+ | j |)\Lm^d(R)
    \big)
        \, .
\end{align}
Finally, note that $N$, $N^\perp$ are so that $2N + 1 < \frac1\eta$ as well as $2N^\perp +1 < \frac1{\eta'}$.

\smallskip
\emph{Proof of Claim 2a.} \
By construction, $\DIVE J_3 = \DIVE J_2 + \DIVE C=0$, and $J_3$ satisfies the same property of $J_2$ as in Claim 1a, due to the fact that $C \equiv 0$ on $B(R^c,\eps R_\partial) $.
\qed

\smallskip
\emph{Proof of Claim 2b.} \
By construction and \eqref{eq:tv_bound_Cs_SIN}, we have that
\begin{align}
    | \iota_\eps(J_3 - J_2)|(\R^d)
        =
    | \iota_\eps(C)|(\R^d)
        \lesssim
		\| \iota_\eps  \DIVE J_2 \|_{\tKR(\overline R)} + \eps |\iota_\eps \DIVE J_2 |(R)
    \, .
\end{align}
Similarly as in \eqref{eq:DIVE_J2},
by \eqref{eq:J_2_SIN_2} we have
\begin{align}	\label{eq:DIVE_J2_SIN}
	\DIVE J_2 = \Psi_{\eta,\eta'}  \DIVE J
		+
	\grad \Psi_{\eta,\eta'} \star
		\Big(
			J - \calR_\eps j
		\Big) \, .
\end{align}
The first term can be estimated by $\Lip(\Psi_{\eta,\eta'}) \lesssim \frac1{\alpha \eta \eta'}$, using the bound
\begin{align}
\label{eq:K2_estimate_KR_1}
    \| \Psi_{\eta,\eta'} \cdot \dive \iota_\eps J  \|_{\tKR(\overline R)}
        \lesssim
    \frac1{\alpha \eta \eta'}
     \| \dive \iota_\eps J  \|_{\tKR(\overline R)}
        \, ,
\end{align}
as well as the inequality
\begin{align}
\label{eq:K2_estimate_TV_1}
    | \Psi_{\eta,\eta'} \cdot \dive \iota_\eps J  |(R)
        \leq
     | \dive \iota_\eps J  |(R)
        \, ,
\end{align}
which holds since $\| \Psi_{\eta,\eta'} \|_\infty \leq 1$.
Concerning the second term in \eqref{eq:DIVE_J2_SIN}, we use \eqref{eq:gradients_comparison} in Lemma~\ref{lemma:gradients} to infer that
\begin{align}
\label{eq:perp_estimate}
	\| \iota_\eps \big( \grad \Psi_{\eta,\eta'} \star (J - \calR_\eps j ) \big) \|_{\tKR(\overline R)}
        \lesssim
    &\| \nabla \Psi_{\eta,\eta'} \cdot \iota_\eps (J - \calR_\eps j )\|_{\tKR(\overline R)}
\\
	&+ \eps
	| \nabla \Psi_{\eta,\eta'}  \cdot \iota_\eps (J - \calR_\eps j )|(\overline R)
		\, .
\end{align}
Using the Lipschitz bound on $\Psi_{\eta,\eta'}$ once again, we estimate the second term as
\begin{align}
\label{eq:perp_estimate_2}
    | \nabla \Psi_{\eta,\eta'}  \cdot \iota_\eps (J - \calR_\eps j )|(\overline R)
        \lesssim
    \frac1{\alpha \eta \eta'}
        | \iota_\eps (J - \calR_\eps j )|(\overline R)
        \, .
\end{align}
A similar argument shows that
\begin{align}
\label{eq:K2_estimate_TV_2}
    | \iota_\eps \big( \grad  \Psi_{\eta,\eta'}  \cdot  (J - \calR_\eps j ) \big) |(\overline R)
        \lesssim
    \frac1{\alpha \eta \eta'}
        | \iota_\eps (J - \calR_\eps j )|(\overline R)
        \, .
\end{align}
Combining
\eqref{eq:DIVE_J2_SIN}, \eqref{eq:K2_estimate_KR_1}, \eqref{eq:K2_estimate_TV_1}, \eqref{eq:perp_estimate}, \eqref{eq:perp_estimate_2}, and \eqref{eq:K2_estimate_TV_2},
the proof of Claim 2b will be completed by showing that
\begin{align}
    \| \nabla \Psi_{\eta,\eta'} \cdot \iota_\eps (J - &\calR_\eps j )\|_{\tKR(\overline R)}
     \lesssim
    \frac1{\eta(\alpha \eta')^2}
    \bigg(
        \| \iota_\eps J - \tau\|_{\tKR(\overline R)}
            +
        \| \iota_\eps \calR_\eps j - j \Lm^d)\|_{\tKR(\overline R)}
    \bigg)
\\
    \label{eq:K2_estimate_KR_2}
        & \quad +
    \frac{\sqrt{\eta'}}\eta  \Big( (1+ | j |) {  \Lm^d(R)}
		   	+ | \iota_\eps J |(\overline R)\Big)
    +  \frac\alpha\eta |j|
       {  \Lm^d(R)}
        \, .
\end{align}
For this purpose, we write $\nabla \Psi_{\eta,\eta'} = \psi_\eta \nabla \psi_{\eta'}^\perp + \psi_{\eta'}^\perp \nabla \psi_\eta$. Concerning the first term, we use that $\nabla \psi_{\eta'}^\perp(x) \in \ker j$.  It follows that
$
    \nabla \psi_{\eta'}^\perp \cdot \tau = 0
$ for every measure $\tau$ as in \eqref{eq:tangent_measure_j0}.
In particular, this identify holds for $\tau = j \Lm^d$.
Therefore, for every $\tau$ satisfying \eqref{eq:tangent_measure_j0}  we have
\begin{align}
    \| \psi_\eta \nabla \psi_{\eta'}^\perp \cdot \iota_\eps (J - \calR_\eps j )&\|_{\tKR(\overline R)}
        \leq
    \Big\| \psi_\eta \nabla \psi_{\eta'}^\perp \cdot \Big(
        (\iota_\eps J - \tau )
            +
        (\iota_\eps \calR_\eps j - j\Lm^d )
    \Big) \Big\|_{\tKR(\overline R)}
\\
\label{eq:final_est_SIN_1}
        &\lesssim
    \frac1{\eta(\alpha \eta')^2}
    \Big(
        \| \iota_\eps J - \tau \|_{\tKR(\overline R)}
            +
        \| \iota_\eps \calR_\eps j - j\Lm^d \|_{\tKR(\overline R)}
    \Big)
        \, ,
\end{align}
where at last we used that $\Lip(\psi_\eta \nabla \psi_{\eta'}^\perp) \lesssim     \frac1{\eta(\alpha \eta')^2} $.

We are left with the estimate involving $\psi_{\eta'}^\perp \nabla \psi_\eta$. Note that $\nabla \psi_\eta \in (\ker j)^\perp$ and not in $\ker j$, therefore we cannot simply substitute $\iota_\eps \calR_\eps$ with $\tau$ as done in \eqref{eq:final_est_SIN_1}. We will instead quantify the error of this replacement, in terms of  the size $\alpha$ of the strip.
First of all, we denote by $R_{\eta,\eta'} := R \cap \supp(\psi_{\eta'}^\perp \nabla \psi_\eta)$, and note that by Remark~\ref{rem:extension_Lip}
\begin{align}
\label{eq:KR_error_vertical_0}
     \| \psi_{\eta'}^\perp \nabla \psi_\eta
        &\cdot \iota_\eps (J - \calR_\eps j )\|_{\tKR(\overline R)}
        \lesssim
    \| \psi_{\eta'}^\perp \nabla \psi_\eta \cdot \iota_\eps (J - \calR_\eps j )\|_{\tKR(\overline R_{\eta,\eta'})}
\\
        &\leq
    \| \nabla \psi_\eta \cdot \iota_\eps (J - \calR_\eps j )\|_{\tKR(\overline R_{\eta,\eta'})}
        +
    \|
        (1-\psi_{\eta'}^\perp )
      \nabla \psi_\eta \cdot \iota_\eps (J - \calR_\eps j )\|_{\tKR(\overline R_{\eta,\eta'})}
        \, ,
\end{align}
where at last we used the triangle inequality.
Thanks to Claim 1c and the bound $\| (1-\psi_{\eta'}^\perp) \nabla \psi_\eta \|_\infty \lesssim \frac1\eta$,
we control the second term by
\begin{align}
    \|
        (1-\psi_{\eta'}^\perp )
            \nabla \psi_\eta \cdot \iota_\eps (J - \calR_\eps j )
    \|_{\tKR(\overline R_{\eta,\eta'})}
        &\lesssim
    \frac1\eta
    \big|
        \iota_\eps (J - \calR_\eps j )
    \big|(\overline R_{\eta,\eta'})
\\
\label{eq:KR_error_vertical}
        &\lesssim
     \frac{\sqrt{\eta'}}\eta  \Big( ( 1+ | j | ) {  \Lm^d(R)}
		   	+ | \iota_\eps J |(\overline R)\Big)
        \, . \quad
\end{align}
Taking \eqref{eq:final_est_SIN_1}, \eqref{eq:KR_error_vertical_0}, and  \eqref{eq:KR_error_vertical} into account,
the claimed estimate \eqref{eq:K2_estimate_KR_2} will follows once we show
\begin{align}
\label{eq:prefinal_claim_SIN}
    \| \nabla \psi_\eta &\cdot \iota_\eps (J - \calR_\eps j )\|_{\tKR(\overline R)}
\\
        &\lesssim
    \frac1{\eta^2}
    \Big(
         \|
            \iota_\eps J - \tau )
        \|_{\tKR(\overline R)}
            +
         \|
            j \Lm^d - \calR_\eps j
         \|_{\tKR(\overline R)}
    \Big)
        +
    \frac\alpha\eta |j|
       {  \Lm^d(R)}
            \, .
\end{align}
Note that it is enough to show that
\begin{align}
\label{eq:final_claim_SIN}
     \|  \nabla \psi_\eta \cdot  (\tau - j \Lm^d)\|_{\tKR(\overline R)}
        \lesssim
    \frac\alpha\eta |j| {  \Lm^d(R)}
        \, ,
\end{align}
for every $\tau$ which satisfies \eqref{eq:tangent_measure_j0}, the claimed \eqref{eq:prefinal_claim_SIN} then following by an application of the triangle inequality (twice) and from the fact that $\Lip(\nabla \psi_\eta) \lesssim \frac1{\eta^2}$.

\smallskip
\noindent \underline{Proof of \eqref{eq:final_claim_SIN}} \
We apply Proposition~\ref{prop:structure_tangent_measures} and write $\tau=j_0 \lambda \otimes \kappa$, where $\lambda \in \M_+\big((\ker j_0)^\perp \big)$ is the Lebesgue measure on $\ker j_0$ and $\kappa \in \M_+(\ker j_0)$.

We fix $\varphi \in \cC(\overline R; V^*)$ with $\text{Lip}(\varphi) \leq 1$. For $x \in \R^d$, we write the orthogonal decomposition $x = x^\parallel + x^\perp$, with $x^\parallel \in \ker j_0$ and $x^\perp \in (\ker j_0)^\perp$. Let us consider  the function $\bar \varphi \in \cC(\overline Q_1^{d-k}); V^*)$ obtained from $\varphi$ averaging out the variables of the $\ker j_0$, namely
\begin{align}
	\bar \varphi(x^\perp) := \int_{Q_\alpha^k} \varphi(x^\perp + x^\parallel) \de \Lm^k(x^\parallel) \, .
\end{align}
In particular, using the fact that $\varphi$ is $1$-Lipschitz, we have that
\begin{align}
	\sup_{x = x^\parallel + x^\perp \in \overline R}
		| \varphi(x) - \bar \varphi(x^\perp)|_{V^*}
			\leq C \alpha \, ,
\end{align}
for some $C=C(k) < \infty$. Therefore, we estimate
\begin{align}
\label{eq:averaging_1}
\Big|
	\int_{\overline R} \varphi
		\nabla \psi_\eta \cdot  \de  &\big( \tau  -  j_0 \Lm^d \big)
\Big|
\\
    &\leq
\Big|
	\int_{\overline R}
	\bar \varphi
		\nabla \psi_\eta \cdot  \de  \big( \tau  -  j_0 \Lm^d \big)
\Big|
		  +
	C \alpha
	\int_{\overline R}
	\big\|
		\nabla \psi_\eta
	\big\|_{\R^d}
	\de
	\big| \tau  -  j_0 \Lm^d \big|	\, .
\end{align}
We claim that in fact the first term on the right-hand side vanishes for every $\varphi$, due to the fact that  the tangent measure is indeed $j_0$ times the Lebesgue measure in the direction of $(\ker j_0)^\perp$.
Indeed, note that by construction, the set $R$ is in product form with respect to $\ker j_0 \oplus (\ker j_0)^\perp$ as $R = Q_1^{d-k} \times  Q_\alpha^k$.
Using that $\nabla \psi_\eta(x) = \nabla \psi_\eta(x^\perp)$ and the product structure of $\tau$ we then obtain
\begin{align}
    \int_{\overline R}
    \bar \varphi
        \nabla \psi_\eta \cdot  \de  \big( \tau  -  j_0 \Lm^d \big)
        =
    \int_{\overline Q_1^{d-k}}
    \bar \varphi(x^\perp)
        \nabla \psi_\eta (x^\perp) \cdot
        j_0
        \big(
            \kappa(Q_\alpha^k)
                -
            \Lm^k(Q_\alpha^k)
        \big)
        \de  \lambda(x^\perp)
            =
        0
            \, ,
\end{align}
where the last equality follows from the fact that
\begin{align}
    |j_0| \kappa(Q_\alpha^k)
        =
    |\tau|(\overline R)
        =
    |j_0| \Lm^d(\overline R)
        =
    |j_0| \Lm^k(Q_\alpha^k)
            \, .
\end{align}
By taking the supremum over $\varphi$, from this and \eqref{eq:averaging_1} we conclude that
\begin{align}
\label{eq:f}
     \|  \nabla \psi_\eta \cdot  (\tau - j \Lm^d)\|_{\tKR(\overline R)}
        \lesssim
    \alpha
	\int_{\overline R}
	\big\|
		\nabla \psi_\eta
	\big\|_{\R^d}
	\de
        \big|
            \tau - j_0 \Lm^d
        \big|
            \, .
\end{align}
Finally, using that $\| \nabla \psi_\eta \| \lesssim \frac1\eta$ and that $|\tau|(\overline R) = |j_0| \Lm^d(\overline R)$, a simple triangle inequality conclude the proof of \eqref{eq:final_claim_SIN}.
\end{proof}

We are now ready to conclude the proof of the lower bound for the singular part, i.e. we prove \eqref{eq:lb_singular}.
Recall the decomposition \eqref{eq:decomp_nu_j} and that we work with a sequence of discrete fields $J_\eps$ so that, as $\eps \to 0$,
\begin{align}	\label{eq:final_hyp_0}
	\iota_\eps J_\eps \to \xi
        \quad \text{vaguely} \quad 
		\tand
	m_\eps 
	=
	\dive(\iota_\eps J_\eps)
	 \to \mu
		\quad \text{in } \tKR
		\, .
\end{align}
With no loss of generality, we will assume that $|m_\eps| \to \sigma \in \M_+(\R^d)$ vaguely as $\eps \to 0$ (due to the fact that $\{ |m_\eps| \}_\eps$ is locally uniformly bounded in total variation).

Recall the definition of the measures $\nu_\eps$, $\nu$ in \eqref{eq:def_nu_eps}, \eqref{eq:def_nu}.
The idea is similar to the one for the absolutely continuous part, where this time we do a blow-up around a singular point.
Writing the Radon--Nikodym decomposition  $\xi = \frac{\dd \xi}{\dd \Lm^d} \Lm^d + \xi^s$, an application of Proposition~\ref{prop:Besicovitch}, Lemma~\ref{lemma:tangent_measures}, and Lemma~\ref{lemma:divergence_measures} ensure the existence of a set $E$ so that $|\xi|^s(E)=0$ and such that, for every $x_0 \in \supp(|\xi|)\setminus E$, we have the following properties:  defining
\begin{align*}
	j_0 := \frac{\de \xi}{\de |\xi|}(x_0)
		\, ,
\end{align*}
\begin{enumerate}
    \item Set $R_\alpha(x_0):= R_\alpha + x_0$ where $R_\alpha$ is the strip in the sense of \eqref{eq:def_Ralpha} with $k=\dim(\ker j_0)$ and $L_k := \ker j_0$. Then we have that
    \begin{align}
\label{eq:density_point_s_singular}
	\lim_{\delta \to 0}
		\frac
			{\xi(x_0 + \delta R_\alpha(x_0) )}
			{|\xi|(x_0 + \delta R_\alpha(x_0))}
		= j_0
	= \lim_{\delta \to 0}
	\frac
		{\xi(x_0 + \delta R_\alpha(x_0))}
		{|\xi|^s(x_0 + \delta R_\alpha(x_0))}
\end{align}
as well as
\begin{align}
\label{eq:density_point_s_singular_infinity}
	\lim_{\delta \to 0}
 \frac
		{|\xi|(x_0 + \delta R_\alpha(x_0))}
		{\Lm^d(x_0 + \delta R_\alpha(x_0))}
    \geq
	\lim_{\delta \to 0}
    C_\alpha
     \frac
		{|\xi|(x_0 + \delta B_{\alpha/2}(x_0))}
		{\Lm^d(x_0 + \delta B_{\alpha/2}(x_0))}
			=  +\infty \, .
\end{align}
    \item There exists a tangent measure $\tilde\tau \in \Tan_{R_\alpha(x_0)}(\xi,x_0)$ which is divergence free, with constant density $j_0$, such that as
\begin{align}	\label{eq:final_blowup_s_sing}
	\xi_{\delta,x_0}=
	\frac{1}{|\xi|(x_0 + \delta R_\alpha(x_0))}
		(\rho_{\delta,x_0})_{\#} \xi
	\to
		\tilde\tau
	\text{ vaguely in }
	\M(\R^d; V \otimes \R^d)
\end{align}
where we also guarantee that
\begin{align}
\label{eq:final_blowup_s_sing_2}
    |\tilde\tau|(R_\alpha)=1
	 	\tand
    |\tilde\tau|(\partial R_\alpha) = 0
        \, ,
\end{align}
for some suitable sequence $\delta=\delta_m(x_0) \to 0$ as $m \to \infty$.
    \item The density of $\nu$ with respect to $|j|$ in $x_0$ can be computed as
    \begin{align}
    \label{eq:sing_point_mu}
    	\frac{\de \nu}{\de |\xi|}(x_0) = \lim_{\delta \to 0}
    	\frac{\nu(x_0 + \delta R_\alpha(x_0))}{|\xi|(x_0 + \delta R_\alpha(x_0))}
            \, .
\end{align}
    \item For every bounded set $B \subset \R^d$, we have that
    \begin{align}
    \label{eq:tv_bound_mu_rescale}
    	\sup_{\delta > 0 }
    		\frac{\sigma(\delta B + x_0)}{|\xi|(x_0 + \delta R_\alpha(x_0))}
    		< \infty
    		\, .
\end{align}
\end{enumerate}
We often omit the explicit choice of the subsequence $\delta_m \to 0$, we will simply write $\delta \to 0$.
Arguing as for the absolutely continuous part, from \eqref{eq:sing_point_mu} we can write
\begin{align}	\label{eq:density_lower_bound_SIN}
	\frac{\de \nu}{\de |\xi|}(x_0)
		&= \lim_{\delta \to 0} \lim_{\eps \to 0}
			\frac{\nu_\eps(\delta R_\alpha(x_0))}{|\xi|(\delta R_\alpha(x_0))}
	=
		\lim_{\delta \to 0} \lim_{\eps \to 0}
			\frac1{t_\delta}
			\frac{F_\eps(J_\eps,\delta R_\alpha(x_0))}{\Lm^d(\delta R_\alpha(x_0))}
		\\
		&= \lim_{\delta \to 0} \lim_{\eps \to 0}
			\frac1{t_\delta}
			\frac{F(\eps^{1-d} J_\eps(\eps \cdot), \frac1\eps \delta R_\alpha(x_0))}{\Lm^d\big( \frac1\eps \delta R_\alpha(x_0) \big)} \, ,
\end{align}
where we define
\begin{align}
\label{eq:def_tdelta}
    t_\delta :=  \frac
        {|\xi|(x_0 + \delta R_\alpha(x_0))}
        {\Lm^d(x_0 + \delta R_\alpha(x_0))}
            \to +\infty
\end{align}
when $\delta \to 0$ thanks to \eqref{eq:density_point_s_singular_infinity}.
Observe by the very definition of the rescaled energy
\begin{align}	\label{eq:energy_rescale_SIN_1}
	\frac
	{F(\eps^{1-d} J_\eps(\eps \cdot), \frac1\eps  \delta R_\alpha(x_0))}
	{\Lm^d\big( \frac1\eps \delta R_\alpha(x_0) \big)}
	=
		F_{\eps/\delta}( K_\delta^\eps,  R_\alpha^\delta )
	\, , \where
    R_\alpha^\delta:= R_\alpha\Big( \frac{x_0}\delta \Big)
        \, ,
\end{align}
where for $s:=\eps/\delta \in \mathbb{N}^{-1}$ we set 
\begin{align}	\label{eq:def_K_s_SIN}
	K_\delta^\eps \in V_\rma^{\cE_s}
	\, , \quad
	K_\delta^\eps(x,y) := \delta^{1-d} J_\eps(\delta x, \delta y)
	\, , \quad
	\forall (x,y) \in \cE_s \, .
\end{align}
Altogether, \eqref{eq:density_lower_bound_SIN} and \eqref{eq:energy_rescale_SIN_1} yield
\begin{align} 	\label{eq:eqs_s_sing_tangent}
	\frac{\de \nu}{\de |\xi|}(x_0) =
		\lim_
		{\substack{\delta \to 0\\ 
		\eps = \eps(\delta)}}
			\frac
			{F_{\eps/\delta}(K_\delta^\eps), R_\alpha^\delta)}
			{t_\delta{   \Lm^d(R_\alpha)}} \, ,
  \end{align}
for $\eps(\delta) \to 0$ fast enough.

For $\delta>0$ fixed, using that $\iota_\eps J_\eps \to j$ vaguely, we deduce that $K_\delta^\eps \to \frac{1}{\Lm^d(Q_\delta)}
(\rho_{\delta,x_0})_{\#} \nu$ vaguely as $\eps \to 0$. Together with \eqref{eq:final_blowup_s_sing} and \eqref{eq:def_tdelta}, this shows that for fixed $\alpha$
\begin{align}
\label{eq:hyp_final_1_SIN}
    \xi_{\eps,\delta,\alpha} :=
    \frac1{  \Lm^d(R_\alpha^\delta) }
    (\tau_{\frac{x_0}{\delta}})_{\#}
    \bigg(
        \frac{\iota_{\eps/\delta} K_\delta^\eps}{t_{\delta}}
    \bigg)
        \to  \tilde\tau
            \quad \text{vaguely in }
                \M(\R^d ; V \otimes \R^d)
        \, ,
\end{align}
as $\delta \to 0$, if $\eps=\eps(\delta) \to 0$ fast enough.
In particular, with this suitable choice of the parameter we have that
\begin{align}
\label{eq:def_T_SIN}
    T:=\sup_
    {\substack{\delta,\alpha > 0\\ 
	\eps = \eps(\delta)}}
    |\xi_{\eps,\delta,\alpha}|(R_\alpha)
        =
    \sup_
    {\substack{\delta,\alpha > 0\\ 
	\eps = \eps(\delta)}}
        \frac
            {|\iota_{\eps/\delta} K_\delta^\eps|(R_\alpha^{\delta})}
            {t_{\delta} {  \Lm^d(R_\alpha^\delta) }}
        < \infty
            \, .
\end{align}
We now want to apply Proposition~\ref{prop:asymptotic_strip}  on the rescaled graph $(\cX_s,\cE_s)$, $s = \eps/\delta$, with
\begin{align}
    K=K_\delta^\eps
        \, , \quad
    R=R_\alpha^\delta
        \, , \quad
    j = t_\delta j_0
        \, , \quad
    \tau = t_\delta\Lm^d(R_\alpha)  (\tau_{\frac{-x_0}{\delta}})_{\#} \tilde\tau
        \, .
\end{align}
Note that by construction and \eqref{eq:final_blowup_s_sing}
\begin{align}
    |\tau|(R_\alpha^\delta)
        =
    t_\delta \Lm^d(R_\alpha^\delta) |\tilde \tau|(R_\alpha)
        =
    t_\delta \Lm^d(R_\alpha^\delta)
        =
    |j| \Lm^d(R_\alpha^\delta)
        \, .
\end{align}
Moreover by the very definition of $T$ \eqref{eq:def_T_SIN} we have that
\begin{align}
\label{eq:bound_constraint}
    \frac{(1 + t_\delta) {  \Lm^d(R_\alpha^\delta) } }{|\iota_s K_s|(R_\alpha^\delta)}
        \geq
    \frac1T
        \, .
\end{align}
Now, let $\eta,\alpha >0$ satisfying
\begin{align}
     \frac1\alpha
     \max\{  \eps R_\partial, \eps R_\Lip \}
        <
    \eta
        \leq
    \max\{ \eta,\alpha \}
        <
    \frac12
    \min
        \Big\{
            1
                ,
            \frac1{8T}
        \Big\} =: \eta_T
        \, .
\end{align}
Then Proposition~\ref{prop:asymptotic_strip} with \eqref{eq:bound_constraint} ensure that
\begin{align}
\label{eq:proof_SIN_energy_error}
    f_{\eps/\delta, \calR} 
    \big(
        t_\delta j_0 ,  R_\alpha^\delta
    \big)
\leq
    F_{\eps/\delta}(K_\delta^\eps,R_\alpha^\delta)
            +
    C
        \err_{\eps/\delta,\alpha}^\tau(K_\delta^\eps  ; t_\delta j_0 ; \eta)
		\,  ,
\end{align}
where by \eqref{eq:bound_constraint} and by definition of err$_{\eps/\delta,\alpha}^\tau$, 
we have that
\begin{align}
     \frac
        {\err_{\eps/\delta,\alpha}^\tau(K_\delta^\eps  ; t_{\delta} j_0 ; \eta)}
        {t_{\delta} \Lm^d(R_\alpha) }
        & \leq
    \frac1{\alpha^2 \eta} \| \dive \xi_{\eps,\delta,\alpha} \|_{\tKR(R_\alpha)}
\\
        & \hspace{-15mm} +
    \frac1{(\alpha^2 \eta)^2}
    \bigg(
        \| \xi_{\eps,\delta,\alpha} - \tilde \tau\|_{\tKR(R_\alpha)}
            +
            \frac1{ \Lm^d(R_\alpha) }
        \big\|
            (\tau_{\frac{x_0}{\delta}})_{\#}(\iota_{\eps/\delta} \calR_{\eps/\delta} j_0)
                -
            j_0 \Lm^d
        \big\|_{\tKR(R_\alpha)}
    \bigg)
\\
        & \hspace{-15mm} +
    \eps
    \Big(
        1
            +
        | \dive \xi_{\eps,\delta,\alpha} |(R_\alpha)
            +
        \frac1{\alpha \eta} T
    \Big)
        +
    \Big(
        \sqrt{\eta}
            +
        \sqrt{\alpha}
            +
        \frac
            {\sqrt\alpha}
            {\eta}
    \Big)
    \Big(
        \frac{ 1 + t_{\delta} }{ t_{\delta} }
            +
            T
    \Big)
            \, .
\end{align}

Concerning the divergence of $\xi_{\eps,\delta,\alpha}$, we use \eqref{eq:divergence_rescaled_formula} and \eqref{eq:final_hyp_0} to infer that, for $\delta$ fixed,
\begin{align}
     \Lm^d(R_\alpha) 
        |\dive \xi_{\eps,\delta,\alpha}|
    =
        \frac{1}{\delta^{d-1}}
        {\rho_{\delta,x_0}}_{\#}(|m_\eps|)
    \to
    \frac{1}{\delta^{d-1}} {\rho_{\delta,x_0}}_{\#} (\sigma)
    \text{ vaguely in }
            \M(\R^d; V)
        \, ,
\end{align}
as $\eps \to 0$.  Arguing as in   \eqref{eq:zero-1}, by \eqref{eq:tv_bound_mu_rescale} we conclude that, for fixed $\alpha >0$,
\begin{align}	\label{eq:hyp_final_2_SIN}
    \dive \iota_{\eps/\delta} \xi_{\eps/\delta,\alpha}
        \to 0
    \text{ locally in TV in }
        \M(\R^d; V) \, ,
\end{align}
if $\delta \to 0$ and $\eps=\eps(\delta)\to 0$ fast enough.

Without loss of generality, we can also assume that
\begin{align}
\label{eq:assumption_TV_lb_SIN}
    |\xi_{\eps,\delta,\alpha}| \to \lambda \geq |j_0| \Lm^d
        \quad \text{vaguely as }\delta \to 0 , \, \eps=\eps(\delta), 
        \quad \text{with } \lambda(\partial R_\alpha) = 0
            \, ,
\end{align}
if not we argue as in \eqref{eq:iKjL} (and work with $hR_\alpha$, for some $h \in (0,1]$). In particular, by Remark~\ref{rem:localisation_weakconv},  Remark~\ref{rem:abs_cont_Reps}, the convergence obtained in \eqref{eq:hyp_final_2_SIN},  \eqref{eq:hyp_final_1_SIN}, and \eqref{eq:hyp_final_2_SIN} imply that, for every $\alpha >0$,
\begin{align}
    \| \dive \xi_{\eps,\delta,\alpha} \|_{\tKR(R_\alpha)}
        +
     \| \xi_{\eps,\delta,\alpha} - \tilde \tau\|_{\tKR(R_\alpha)}
            +
    \big\|
        (\tau_{\frac{x_0}{\delta}})_{\#}(\iota_s \calR_s j_0)
            -
        j_0 \Lm^d
    \big\|_{\tKR(R_\alpha)}
        \to 0
    \, ,
\end{align}
if $\delta \to 0$ and $\eps=\eps(\delta) \to 0$ fast enough.

Taken into account this and  \eqref{eq:hyp_final_2_SIN}, we can provide an upper bound of the error and from \eqref{eq:proof_SIN_energy_error} with $t_{\delta} \to +\infty$ infer that
\begin{align}
    \limsup_{\delta \to 0}
    \frac
         {f_{\eps/\delta,\calR}
            \big(
                t_{\delta} j_0 ,  R_\alpha^{\delta}
            \big)
        }
        { t_{\delta} { \Lm^d(R_\alpha) } }
            \leq
    \lim_{\delta \to 0}
    \frac
        { F_{\eps/\delta}(K_\delta^\eps,R_\alpha^{\delta}) }
        { t_{\delta} {   \Lm^d(R_\alpha) } }
    + C
    \Big(
        \sqrt{\eta}
            +
        \sqrt{\alpha}
            +
        \frac
            {\sqrt\alpha}
            {\eta}
    \Big)
    ( 1 + T )
            \, ,
\end{align}
for every $0<\eta, \alpha < \eta_T$, for $\eps=\eps(\delta) \to 0$ fast enough. Note that, for every $\delta>0$ fixed, by definition of $f_{\hom}$, we have
\begin{align}
\label{eq:final_SIN}
    f_{\hom}(t_\delta j_0)
        =
    \lim_{\eps \to 0}
    \frac
        { f_{\eps/\delta ,\calR}(t_\delta, R_\alpha^\delta) }
        {   \Lm^d(R_\alpha) }
            \, .
\end{align}
Consequently, up to choosing $\eps=\eps(\delta) \to 0$ fast enough once more, we have that
\begin{align}
    f_{\hom}^\infty(j_0)
        =
    \lim_{t \to +\infty}
        \frac{f_{\hom}(t j_0)}{t}
        =
   \lim_
		{\substack{\delta \to 0\\ 
		\eps = \eps(\delta)}}
            \frac
                { f_{\eps/\delta , \calR}(t_\delta j_0,R_\alpha^\delta) }
                { t_\delta {   \Lm^d(R_\alpha) } }
            \, ,
\end{align}
for every $\alpha \in (0,1)$. Putting this together with \eqref{eq:final_SIN} and \eqref{eq:eqs_s_sing_tangent}, we finally obtain
\begin{align}
    f_{\hom}^\infty(j_0)
        \leq
    \frac{\de \nu}{\de |\xi|}(x_0)
        +
    + C
    \Big(
        \sqrt{\eta}
            +
        \sqrt{\alpha}
            +
        \frac
            {\sqrt\alpha}
            {\eta}
    \Big)
    ( 1 + T )
            \, ,
\end{align}
for every  $0<\eta,\alpha<\eta_T$. Sending first $\alpha \to 0$ and then $\eta \to 0$, we conclude the proof of the claimed lower bound \eqref{eq:lb_singular}.

\appendix

\section{Convergence of measures}

We start by recalling the different notions of convergence of measures that are going to be used in the paper. See \cite[Appendix A]{Gladbach-Kopfer-Maas-Portinale:2023} for more details and proofs (see also  \cite[Section 8.10(viii)]{Bogachev}).

Let $(X,d)$ be a locally compact and separable metric space (we will almost exclusively consider subsets of $\R^d$), $W$ a finite dimensional normed space, and denote by $\M(X; W)$ the space of $W$-valued Borel measures on $X$.
For $\mu \in \M(X;W)$, denote by $|\mu|\in \M_+(X)$ its variation and
with $\|\mu\|_{\TV(X)} := |\mu|(X)$ its total variation.

\begin{defi}[Narrow and vague convergence]
	Let $\mu, \mu_n \in \M(X;W)$ for $n = 1, 2, \ldots$.
	\begin{enumerate}
		\item
		We say that	$\mu_n \to \mu$ narrowly in $\M(X;W)$ if
		$
			\int_{X} \psi \de \mu_n
		\to
			\int_{X} \psi \de \mu
		$
		for every $\psi \in \cC_{\rm b}(X)$.
		\item
		We say that $\mu_n \to \mu$ vaguely in $\M(X;W)$ if
		$
			\int_{\R^d} \psi \de \mu_n
		\to
			\int_{\R^d} \psi \de \mu
		$
		for every $\psi \in \cC_{\rm c}(X)$.
	\end{enumerate}
\end{defi}
\begin{rem}
\label{rem:localisation_weakconv}
Suppose that $\mu_n \to \mu$ narrowly in $\M(X;W)$ and let $A\subset X$ be such that $\lambda(\partial A) =0$, for every accumulation point $\lambda \in \M_+(X)$ of $|\mu_n|$ with respect to the vague topology. Then we have that $\mu_n|_A \to \mu|_A$ narrowly in $\M(A;W)$ (see e.g. \cite[Prop~1.62]{Ambrosio-Fusco-Pallara:2000}). More generally, if $f:X \to \R$ is a bounded, measurable function whose set of discontinuities is of $\lambda$-measure zero, then $f \mu_n \to f \mu$ narrowly in $\M(X;W)$. The same conclusions holds true if $\mu_n \to \mu$ vaguely  in $\M(X;W)$and supp$(f)$ is bounded.
\end{rem}

$\M(X;W)$ is a Banach space endowed with the norm $\|\mu\|_{\TV(X;W)} = |\mu|(X)$.
By the Riesz–Markov theorem, it is the dual space of the Banach space
$\cC_0(X;W)$, the closure (in the uniform topology) of all continuous functions $\psi : X \to W^*$ with compact support, endowed with the supremum norm
	$\|\psi\|_\infty = \sup_{x \in X} | \psi(x) |_{W^*}$.
For $\psi : X \to W$ let
	$\Lip(\psi) := \sup_{x \neq y}
		\frac{ |\psi(x) - \psi(y)|_{W^*}}{d(x,y)}
	$
be its Lipschitz constant.

\begin{defi}
\label{def:KR}
	The \emph{Kantorovich--Rubinstein norm} on $\M(X;W)$ is defined by
	\begin{align}
		\label{eq:KR}
		\| \mu \|_{\KR(X; W)}
			:=
		\sup\bigg\{ \int_{X}
			\langle
					\psi
					, \de \mu
			\rangle
				\ : \
				\psi \in \cC(X;W^*), \
				\| \psi \|_\infty \leq 1, \
				\Lip(\psi) \leq 1
			\bigg\}.
	\end{align}
\end{defi}

It follows immediately that 
\begin{align*}
	\| \mu \|_{\KR(X; W)} \leq \|\mu\|_{\TV(X;W)}
\end{align*} 
for all $\mu \in \M(X;W)$.
However, the norms $\|\cdot\|_{\KR(X;W)}$ and $\| \cdot \|_{\TV(X;W)}$ are not equivalent (hence $(\M(X;W), \|\cdot \|_{\KR(X;W)})$ is not a complete space).
A closely related norm on $\M(X;W)$ is
	\begin{align*}
		\| \mu \|_{\tKR(X; W)}
			:=
	 |\mu(X)|
	 +
		\sup\bigg\{ \int_{X}
					\langle
										\psi
										, \de \mu
								\rangle
				\ : \
				\psi \in \cC(X;W^*), \
				\psi(x_0) = 0, \
				\Lip(\psi) \leq 1
			\bigg\},
	\end{align*}
	for some fixed $x_0 \in X$;
	see \cite[Section 8.10(viii)]{Bogachev}.
	These two norms are in fact equivalent on compact metric spaces.

\begin{prop}
\label{prop:equivalence_convergence_KR}
	Let $(X,d)$ be a compact metric space.
	For $\mu \in \M(X;W)$ we have
	\begin{align*}
		\| \mu \|_{\KR(X;W)}
		\leq
		\| \mu \|_{\tKR(X;W)}
		\leq
		c_X \| \mu \|_{\KR(X;W)},
	\end{align*}
	where $c_X < \infty$ depends only on $\diam(X)$. Moreover, for
			$\mu_n, \mu \in \M(X;W)$
		we have
		\begin{align*}
			\mu_n \to \mu \, \text{narrowly}
		\qquad \text{if and only if} \qquad
			\| \mu_n - \mu \|_{\KR(X;W)} \to 0
                    \, \, \text{and} \, \,
                \sup_{n \in \N} |\mu_n|(X) < \infty
                    \, .
		\end{align*}
\end{prop}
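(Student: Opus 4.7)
My plan is to split the statement into two independent claims and treat them in sequence.

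\textbf{Norm equivalence.} For the inequality $\|\mu\|_{\KR} \leq \|\mu\|_{\tKR}$, I would take a test function $\psi \in C(X;W^*)$ with $\|\psi\|_\infty \leq 1$ and $\Lip(\psi) \leq 1$ and decompose $\psi(x) = (\psi(x) - \psi(x_0)) + \psi(x_0)$. The first summand is $1$-Lipschitz and vanishes at $x_0$, so integrating it against $\mu$ is bounded by $\|\mu\|_{\tKR} - |\mu(X)|$; the second summand contributes $\langle \psi(x_0), \mu(X)\rangle$, bounded by $|\mu(X)|$ since $\|\psi(x_0)\|_{W^*} \leq 1$. Adding the two contributions gives the bound. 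For the reverse inequality, fix $D := 1 \vee \diam(X)$ and a test $\psi$ for $\tKR$; since $\psi(x_0)=0$ and $\Lip(\psi)\leq 1$, one has $\|\psi\|_\infty \leq \diam(X) \leq D$, so $\psi/D$ is admissible for $\KR$, yielding $|\int \langle \psi, d\mu\rangle| \leq D \|\mu\|_{\KR}$. Moreover $|\mu(X)| \leq \|\mu\|_{\KR}$ by testing against constants $v^* \in W^*$ of norm $\leq 1$. Hence $\|\mu\|_{\tKR} \leq (1+D)\|\mu\|_{\KR}$.

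\textbf{Narrow convergence implies $\KR$ convergence plus TV boundedness.} Under compactness of $X$, a family of measures $(\mu_n)$ defines bounded linear functionals on $C(X;W^*)$, and narrow convergence means pointwise convergence there; Banach--Steinhaus then delivers $\sup_n |\mu_n|(X) <\infty$. For the $\KR$-convergence, the key step is that the unit ball $\calB$ of admissible test functions (i.e., $\|\psi\|_\infty\leq 1$ and $\Lip(\psi)\leq 1$) is uniformly bounded and equicontinuous on the compact space $X$, hence relatively compact in $C(X;W^*)$ by Arzelà--Ascoli. I would run a standard contradiction argument: if $\|\mu_{n_k}-\mu\|_{\KR} \not\to 0$, extract test functions $\psi_{n_k}\in \calB$ with $|\int \langle \psi_{n_k}, d(\mu_{n_k}-\mu)\rangle| \geq \delta > 0$, pass to a subsequence with $\psi_{n_k}\to \psi^*$ uniformly, and write the difference as $\int \langle \psi_{n_k}-\psi^*, d(\mu_{n_k}-\mu)\rangle + \int \langle \psi^*, d(\mu_{n_k}-\mu)\rangle$; the first term is controlled by $\|\psi_{n_k}-\psi^*\|_\infty (|\mu_{n_k}|+|\mu|)(X)$ and uses the TV bound, while the second vanishes by narrow convergence applied to the fixed function $\psi^*$.

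\textbf{The converse.} Assuming $\|\mu_n - \mu\|_{\KR} \to 0$ and $M := \sup_n |\mu_n|(X) < \infty$, I would fix $\psi \in C_b(X;W^*) = C(X;W^*)$ and approximate it uniformly on the compact $X$ by Lipschitz functions $\psi_\eta$ with $\|\psi-\psi_\eta\|_\infty \leq \eta$ (density of Lipschitz functions in $C(X;W^*)$ follows from Stone--Weierstrass or a direct mollification by $\inf_y(\psi(y)+\eta^{-1}d(x,y))$ applied coordinate-wise). Then split
\begin{align}
\Bigl|\int \langle \psi, d(\mu_n-\mu)\rangle\Bigr|
\leq \eta (M + |\mu|(X)) + C_\eta \,\|\mu_n-\mu\|_{\KR},
\end{align}
where $C_\eta := \max\{\|\psi_\eta\|_\infty, \Lip(\psi_\eta)\}$ arises from rescaling $\psi_\eta/C_\eta$ to an admissible $\KR$-test. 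Letting $n\to\infty$ and then $\eta\to 0$ yields narrow convergence.

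\textbf{Expected obstacle.} The only nontrivial step is the Arzelà--Ascoli/contradiction argument upgrading pointwise narrow convergence to uniform convergence over the compact family $\calB$; everything else is algebraic manipulation or a standard density argument. Care must be taken that the target space $W^*$ is finite-dimensional so that Arzelà--Ascoli applies coordinatewise (or directly, since closed bounded subsets of $W^*$ are compact).
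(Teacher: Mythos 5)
Your proposal is correct, and it handles the key implication by a genuinely different route than the paper. For the equivalence of $\KR$ and $\tKR$ the paper merely cites \cite[Appendix A]{Gladbach-Kopfer-Maas-Portinale:2023}; your direct decomposition $\psi = (\psi-\psi(x_0)) + \psi(x_0)$ and the bound $\|\psi\|_\infty \leq \diam(X)$ for $\tKR$-tests are the expected elementary argument and are sound, giving $c_X = 1 + (1\vee\diam(X))$. For the implication ``narrow $\Rightarrow$ $\KR$ plus TV-bounded'', the paper reduces to the \emph{positive}-measure case: it Jordan-decomposes $\mu_n = (\mu_n)_+ - (\mu_n)_-$, extracts narrowly convergent subsequences of each part, invokes \cite[Theorem~8.3.2]{Bogachev} for positive measures, and stitches the conclusion back together along subsequences (upgrading from subsequence to full sequence by the usual subsubsequence device, left implicit). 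You instead argue directly on signed/vector measures: Banach--Steinhaus gives the TV bound, and then you exploit relative compactness of the $\KR$-test ball in $C(X;W^*)$ via Arzel\`a--Ascoli, running a contradiction argument with the split $\int\langle\psi_{n_k}-\psi^*, \,d(\mu_{n_k}-\mu)\rangle + \int\langle\psi^*, \,d(\mu_{n_k}-\mu)\rangle$. This avoids both the Jordan decomposition (whose positive and negative limiting parts need not be the Jordan parts of $\mu$, a point the paper flags) and the external citation, at the cost of the Arzel\`a--Ascoli step; I would say your route is the more self-contained one. The converse implication is essentially the same in both: density of Lipschitz functions plus the TV bound. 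One small remark: your claimed uniform bound $\|\psi - \psi_\eta\|_\infty \leq \eta$ from the inf-convolution $\inf_y(\psi(y)+\eta^{-1}d(x,y))$ really depends on the modulus of continuity of $\psi$ (the correct statement is $\|\psi-\psi_\eta\|_\infty \to 0$ as $\eta \to 0$, not a clean linear rate), but this does not affect the proof since you only need the error to vanish.
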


\begin{proof}
For the equivalence between $\KR$ and $\tKR$, see e.g.  \cite[Appendix A]{Gladbach-Kopfer-Maas-Portinale:2023}. For the link to narrow convergence, we refer to  \cite[Theorem 8.3.2]{Bogachev} in the case of positive measures. Let us prove the left-to-right implication for real-valued measures (i.e. $W = \R$).
We write $\mu_n = (\mu_n)_+ - (\mu_n)_-$. By Riesz--Markov--Kakutani's theorem and the uniform boundedness principle on Banach spaces, we must have
\begin{align*}
	\sup_{n \in \N} \| \mu_n\|_{\tv(X)} < \infty
		\quad \Longrightarrow \quad
	\sup_{n \in \N}
		(\mu_n)_+(X)
		+
	\sup_{n \in \N}
		(\mu_n)_-(X) < \infty
	\, .
\end{align*}
Therefore, the positive measures $(\mu_n)_+$, $(\mu_n)_-$ are uniformly bounded, hence narrowly precompact. Denote by $(\mu_\infty)_+$, $(\mu_\infty)_-$ any  narrow limit (up to subsequence) of $(\mu_n)_+$, $(\mu_n)_-$. Note that it must hold $\mu = (\mu_\infty)_+ - (\mu_\infty)_-$ (although in general $(\mu_\infty)_\pm \neq \mu_\pm$). Then we apply the result for positive measures and deduce that (up to subsequence) $\| (\mu_n)_\pm - (\mu_\infty)_\pm\|_{\KR(X)} \to 0$. It is then easy to show that this implies $\| \mu_n - \mu \|_{\KR(X)} \to 0$.

The general case of $V$-valued measures follows by applying the scalar result to each component of $\mu_n$ and $\mu$.

The right-to-left implication directly follows by Riesz--Markov--Kakutani's theorem, Banach--Alaoglu theorem's, and the density of the Lipschitz functions in $\cC(X;W^*)$.
\end{proof}

Let $A$ be a closed subset of $\R^d$,
we write $A_\delta(z) := z + \delta A$.
For later use, we record the scaling relation
\begin{align}
\label{eq:KR-scaling}
	\Bigl( 1 \wedge \frac{1}{\delta}\Bigr) 
	\| \mu \|_{\tKR(A_\delta(z);W)}
	\leq
	\| (\rho_{\delta, z}) _\# \mu \|_{\tKR(A;W)}
	\leq
	\Bigl( 1 \vee \frac{1}{\delta}\Bigr) 
	\| \mu \|_{\tKR(A_\delta(z);W)} \, ,
\end{align} 
which holds for 
$\mu \in \M(\R^d;W)$,
$\delta > 0$, and $z \in \R^d$.

A related notion is the vectorial $1$-Wasserstein transport distance;
see \cite{Ciosmak:2021}:
for a given $\mu \in \M(X;W)$ with zero mass $\mu(X) = 0 \in W$, one considers the vectorial optimal transport problem given by
\begin{align}	\label{eq:def_T1}
	T_1(\mu) := \inf_{	\pi \in  \mathcal M(X \times X ; W)}
		\left\{
			\int | x-y | \de |\pi|(x,y)
				\suchthat
			(\tP_2)_{\#} \pi - (\tP_1)_{\#} \pi = \mu
		\right\}
	.
\end{align}
In particular, if $V = \R$, we can write any such $\mu$ as $\mu = \mu_+ - \mu_-$ and $T_1(\mu)= W_1(\mu_-,\mu_+)$ is the classical $1$-Wasserstein distance.
Moreover, \cite[Theorem~2]{Ciosmak:2021} asserts that
\begin{align}
	\label{eq:remark_KR_T1}
	\| \mu \|_{\tKR(X;W)} = T_1(\mu)
\end{align}
for all $\mu \in \M(X;W)$ with
	$\mu(X) = 0$,
which is the vectorial generalisation of the Kantorovich duality for $W_1$.

\begin{rem}[Localisation of the $\tKR$ norm]
\label{rem:extension_Lip}
	By the Kirszbraun theorem \cite{Kirszbraun:1934} (McShane's theorem when working on metric spaces),
	every Lipschitz function $\psi : A \subset X \to V$ defined on a subset $A \subset X$
	can be extended to a Lipschitz function $\hat \psi:X \to V$
		with Lipschitz constant bounded by $C \Lip (\psi,A)$, where $C$ depends only on the dimension of $V$.
	Therefore, for every $\mu \in \M(X; V)$
	we have $\| \mu \|_{\tKR(\R^d)} \leq C \| \mu \|_{\tKR(A)}$ for every $A \supseteq \supp(\mu)$.
\end{rem}

\section{Discrete calculus}
\label{sec:preliminaries_derivatives}

It will be convenenient to use notation from discrete calculus
on a countable graph $(\cX, \cE)$.
In particular, for 
$\psi : \cX \to V $ and 
$J \in V^\cE_\rma$, 
we set
\begin{align}	
	\label{eq:def_grad_1}
	& \grad \psi \in V_a^{\cE}
	\, , \quad
	& (\grad \psi)(x,y) 
	&:= \psi(y) - \psi(x)
	 \quad \text{for } (x,y) \in \cE \, ,
	\\
	& \DIVE J \in V^\cX 
	\, , \quad
	& \DIVE J(x) 
	 &:= \sum_{y\sim x}J(x,y)
	\quad \text{for } x \in \cX \, .
\end{align}
For $\psi: \cX \to \R $ we consider the arithmetic mean function
\begin{align}	\label{eq:def_hat_0}
	\hat\psi: \cE \to \R 
		\, , \quad
	\hat\psi(x,y) := \frac12 \big( \varphi(x) + \varphi(y) \big)
		\quad
	\forall (x,y) \in \cE \, .
\end{align}
We use the same notation when $\psi\in \cC(\R^d;V)$.
Moreover,
for $K \in \cE \to \R $ and $J \in V_\rma^{\cE}$
we define 
\begin{align}
	& K \star J : \cX \to V 
		\, ,  &&
	( K \star J )(x)
	:=
	\frac12\sum_{y \sim x} K(x,y) J(x,y)
	 && \forall x  \in \cX\, .
\end{align}
With this notation, the following discrete Leibniz rule holds
for $\psi\in \cX \to \R$
and
$J \in V_\rma^{\cE}$:
\begin{align}
	\label{eq:Leibniz}
	\DIVE( \hat\psi\cdot J)
	=
	\psi\DIVE J
	+
	(\grad \psi) \star J \, .
\end{align}

The next lemma shows a useful intertwining property for the discrete and continuous divergence operators
and the embedding map $\iota_\eps$.

\begin{lemma}[Discrete and continuous divergence]
\label{lemma:divergence_discr_cont}
	Let $\eps > 0$.
	For $J \in V_\rma^{\cE_\eps}$ we have
	\begin{align}
		 \DIVE J = \dive \iota_\eps J
			 \quad
		\text{in} \  \mathcal D'(\R^d; V) \, ,
	\end{align}
    where we identified $\DIVE J: \cX \to V$ with the corresponding atomic measure in $\M(\cX_\eps)$.
\end{lemma}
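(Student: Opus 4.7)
The plan is to unfold both sides against a test function $\varphi \in C_c^\infty(\R^d; V^*)$ and reduce the identity to the fundamental theorem of calculus applied edge-by-edge. By definition, $\iota_\eps \DIVE J = \sum_{x \in \cX_\eps} \DIVE J(x)\, \delta_x = \sum_{x \in \cX_\eps} \bigl(\sum_{y \sim x} J(x,y)\bigr) \delta_x$, so that
\[
    \langle \iota_\eps \DIVE J, \varphi \rangle
    = \sum_{x \in \cX_\eps}
        \Bigl\langle \varphi(x), \sum_{y \sim x} J(x,y) \Bigr\rangle
    = \sum_{(x,y) \in \cE_\eps} \langle \varphi(x), J(x,y) \rangle \, ,
\]
where the last equality unfolds the inner sum over neighbours and uses that $\varphi$ has compact support (only finitely many edges contribute, so all reorderings are legitimate).

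On the other hand, by the definition of $\iota_\eps J$ in Definition~\ref{def:embedding} and the definition of the distributional divergence,
\[
    \langle \dive \iota_\eps J, \varphi \rangle
    = - \frac12 \sum_{(x,y) \in \cE_\eps}
        \int_{[x,y]}
            \Bigl\langle \nabla \varphi(z)\cdot \frac{y-x}{|y-x|}, J(x,y) \Bigr\rangle
        \dd \Hm^1(z) \, .
\]
For each edge $(x,y) \in \cE_\eps$, the integrand is the directional derivative of $z \mapsto \langle \varphi(z), J(x,y) \rangle$ along the unit vector $\frac{y-x}{|y-x|}$, so the fundamental theorem of calculus gives
\[
    \int_{[x,y]}
        \Bigl\langle \nabla \varphi(z)\cdot \frac{y-x}{|y-x|}, J(x,y) \Bigr\rangle
    \dd \Hm^1(z)
    = \langle \varphi(y) - \varphi(x), J(x,y) \rangle \, .
\]

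Plugging this in and using the antisymmetry $J(y,x) = -J(x,y)$ (which, together with reindexing $(x,y) \leftrightarrow (y,x)$ in the $\varphi(y)$ contribution, makes both halves equal), one obtains
\[
    \langle \dive \iota_\eps J, \varphi \rangle
    = \sum_{(x,y) \in \cE_\eps} \langle \varphi(x), J(x,y) \rangle \, ,
\]
which matches the expression computed for $\langle \iota_\eps \DIVE J, \varphi \rangle$ above. Since $\varphi$ was arbitrary, the identity holds in $\mathcal D'(\R^d; V)$.

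This proof is essentially a bookkeeping exercise; the only point that deserves a little care is the convergence and manipulation of potentially infinite sums indexed by $\cE_\eps$, which is harmless because compact support of $\varphi$ together with the bounded edge-length assumption (G3) means only finitely many edges contribute to each pairing, so every reordering and application of Fubini is trivially justified.
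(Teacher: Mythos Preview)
Your proof is correct and follows essentially the same approach as the paper: test against $\varphi \in C_c^\infty(\R^d;V^*)$, unfold the definitions of $\iota_\eps$ and $\DIVE$, apply the fundamental theorem of calculus along each edge segment, and use the antisymmetry of $J$ to match the two sides. The only cosmetic difference is that the paper chains the identities in one direction (from $\iota_\eps\DIVE J$ to $\dive\iota_\eps J$) rather than computing both pairings separately and comparing.
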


\begin{proof}
Using
the anti-symmetry of $J$,
and the fundamental theorem of calculus,
we obtain
for all test functions $\Psi \in C_c^\infty(\R^d; V^*)$,
\begin{align*}
&
	\int_{\R^d}
	\big\langle
		\de (\DIVE J) , \Psi
 	\big\rangle
 =
	 \sum_{x \in \cX_\eps}
		 \ip{
			 \DIVE J(x),
			 \Psi(x)
			}
=
	\sum_{x \in \cX_\eps}
	\Bip{
		\sum_{y: y \sim x} J(x,y)
		,
		\Psi(x)
	}
\\
& =
	\frac12 \sum_{(x,y) \in \cE_\eps}
	\big\langle
		J(x,y)
		,
		\Psi(x) - \Psi(y)
	\big\rangle
=
	- \frac12 \sum_{(x,y) \in \cE_\eps}
		\int_{[x,y]}
		\bip{
			\nabla \Psi(z)
			,
			J(x,y) \otimes \tau_{xy}
		}
		\de \Hm^1(z)
\\
 &=
	- \int_{\R^d}
	\bip{
		\de \iota_\eps J
		,
		\nabla \Psi
	}
=
	\int_{\R^d}
	\bip{
		\de (\dive \iota_\eps J)
		,
		\Psi
	} \, .
\end{align*}
Since $\Psi \in C_c^\infty(\R^d; V^*)$ is arbitrary,
the result follows.
\end{proof}

The following result provides a useful quantitative comparison of discrete and continuous gradients.

\begin{lemma}
\label{lemma:gradients}
Fix $\eps > 0$ and $J \in V_\rma^{\cE_\eps}$.
For $\psi \in C^1(\R^d)$ and $\Psi \in \Lip(\R^d; V^*)$ we have
	\begin{align}
	\label{eq:gradients_comparison}
	\quad \bigg|
		\big\langle
			\iota_\eps J
		,
		\Psi	\otimes
		\nabla \psi
		\big\rangle
	-
		\big\langle
				\grad \psi \star J
			,
			\Psi
		\big\rangle
	\bigg|
	\lesssim
		\eps \Lip(\Psi)
		\big|
		\nabla \psi \cdot
			\iota_\eps J
		\big|(B_\eps) \, , \quad
	\end{align}
	where $B_\eps := B_\eps(\supp(\Psi))$.
\end{lemma}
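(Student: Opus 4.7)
The plan is to expand both inner products explicitly, use antisymmetry of $J$ to put the discrete term in a symmetric form, and then recognise the continuous term as a Riemann/FTC reconstruction of the same object, with a controllable remainder.

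First, unfolding definitions,
\begin{align}
\bigl\langle \iota_\eps J, \Psi \otimes \nabla \psi \bigr\rangle
= \tfrac12 \sum_{(x,y)\in\cE_\eps}
\int_{[x,y]}
\bigl\langle J(x,y), \Psi(z)\bigr\rangle
\bigl\langle \tau_{xy}, \nabla \psi(z)\bigr\rangle
\dd \Hm^1(z),
\end{align}
with $\tau_{xy} := (y-x)/|y-x|$, while a direct computation together with antisymmetry of $J$ (swapping $(x,y)\leftrightarrow(y,x)$ in the sum) yields
\begin{align}
\bigl\langle \iota_\eps(\grad\psi \star J), \Psi\bigr\rangle
= \tfrac12 \sum_{(x,y)\in\cE_\eps} (\psi(y)-\psi(x))\,
\bigl\langle J(x,y), \hat\Psi(x,y)\bigr\rangle.
\end{align}
The key observation is then the identity $\int_{[x,y]} \langle \tau_{xy}, \nabla \psi\rangle\dd\Hm^1 = \psi(y)-\psi(x)$, which allows one to rewrite
\begin{align}
\bigl\langle \iota_\eps(\grad\psi\star J), \Psi\bigr\rangle
= \tfrac12 \sum_{(x,y)\in\cE_\eps}
\int_{[x,y]}
\bigl\langle J(x,y), \hat\Psi(x,y)\bigr\rangle
\bigl\langle \tau_{xy}, \nabla \psi(z)\bigr\rangle
\dd \Hm^1(z).
\end{align}

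Subtracting the two expressions, the difference becomes
\begin{align}
\tfrac12 \sum_{(x,y)\in\cE_\eps}
\int_{[x,y]}
\bigl\langle J(x,y), \Psi(z)-\hat\Psi(x,y)\bigr\rangle
\bigl\langle \tau_{xy}, \nabla \psi(z)\bigr\rangle
\dd \Hm^1(z).
\end{align}
For $z\in[x,y]$ the Lipschitz bound $|\Psi(z)-\hat\Psi(x,y)|\leq \tfrac12\Lip(\Psi)(|z-x|+|z-y|) = \tfrac12\Lip(\Psi)|y-x|$ together with (G3), which gives $|y-x|\leq \eps R_3$, leads to
\begin{align}
\Bigl|\bigl\langle J(x,y), \Psi(z)-\hat\Psi(x,y)\bigr\rangle\Bigr|
\leq \tfrac12\,\eps R_3 \Lip(\Psi)\,|J(x,y)|_V.
\end{align}
Plugging this into the remainder, and recalling that $|\nabla\psi \cdot \iota_\eps J| = \tfrac12 \sum_{(x,y)} |\langle \tau_{xy},\nabla\psi\rangle|\,|J(x,y)|_V\,\Hm^1\llcorner[x,y]$, yields
\begin{align}
\Bigl|\bigl\langle \iota_\eps J, \Psi\otimes \nabla\psi\bigr\rangle - \bigl\langle\iota_\eps(\grad\psi\star J), \Psi\bigr\rangle\Bigr|
\leq \eps R_3 \Lip(\Psi)\, |\nabla\psi\cdot \iota_\eps J|(A),
\end{align}
where $A$ is the union of those edges $[x,y]$ on which the integrand does not vanish identically. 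Since the integrand is zero as soon as $\Psi$ vanishes on all of $[x,y]$, only edges meeting $\supp(\Psi)$ contribute; combined with $|y-x|\leq \eps R_3$ this gives $A\subseteq B(\supp\Psi, \eps R_3)\subseteq B_{C\eps}$ for a constant $C$ depending only on $R_3$, which can be absorbed into the $\lesssim$.

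The only subtlety is making sure the cancellation in Step~2 is pushed far enough so that the remainder really reduces to a quantity controlled by $|\nabla\psi \cdot \iota_\eps J|$ (with the correct inner product $\langle \tau_{xy},\nabla\psi\rangle$, not merely $|\nabla\psi|$); this is ensured by keeping $\langle \tau_{xy}, \nabla\psi(z)\rangle$ intact throughout the computation rather than estimating it prematurely. Everything else is bookkeeping.
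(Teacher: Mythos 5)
Your proof is correct and follows essentially the same route as the paper's: expand both pairings as edge sums via the definition of $\iota_\eps$, rewrite the discrete term using the fundamental theorem of calculus $\psi(y)-\psi(x) = \int_{[x,y]}\langle \tau_{xy},\nabla\psi\rangle\,\dd\Hm^1$ along each edge, and bound the resulting difference by $\Lip(\Psi)$ times the edge length $\leq R_3\eps$, localised to edges meeting $\supp(\Psi)$. The one genuine difference is cosmetic but worth noting: you symmetrise $\Psi(x)\mapsto\hat\Psi(x,y)$ using antisymmetry of $J$ before invoking FTC, which halves the constant, and — more substantively — you keep the directional factor $\langle\tau_{xy},\nabla\psi(z)\rangle$ intact all the way to the end. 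The paper's own remainder estimate bounds $|\langle\tau_{xy},\nabla\psi\rangle|$ by $|\nabla\psi|$ in an intermediate step and then claims a bound by $|\nabla\psi\cdot\iota_\eps J|(B_\eps)$, whose density involves $|\langle\tau_{xy},\nabla\psi\rangle|$ rather than $|\nabla\psi|$; that chain of inequalities is not valid as displayed (though it is harmless in every application). Your version avoids that slip, so the argument is arguably a touch cleaner than the paper's. One small point you already flag: edges meeting $\supp(\Psi)$ lie in $B_{R_3\eps}(\supp\Psi)$, not $B_\eps(\supp\Psi)$; as you say this is absorbed by $\lesssim$, and the same looseness is present in the paper's statement.
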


\begin{proof}
Let us write
	$\tau_{xy} := (y-x) / \| y-x\|_{\R^d}$
for $x,y \in \R^d$.
Using the definition of $\iota_\eps J$ from \eqref{eq:def-iota-J}
the fundamental theorem of calculus,
and the definition of $\iota_\eps$ 
we obtain
\begin{align}
	\int_{\R^d}
		\big\langle
			\de \iota_\eps J
		,
		\Psi	\otimes
		\nabla \psi
		\big\rangle
	&= \frac12\sum_{(x,y) \in \cE_\eps}
		\int_{[x,y]}
			\big\langle
				 J(x,y) \otimes \tau_{xy}
				,
				\Psi(z)	\otimes
				\nabla \psi(z)
			\big\rangle
		\de \Hm^1(z)
\\
	&= \frac12\sum_{(x,y) \in \cE_\eps}
	\int_{[x,y]}
		\big\langle
			 J(x,y) \otimes \tau_{xy}
			,
			\Psi(x)	\otimes
				\nabla \psi(z)
		\big\rangle
	\de \Hm^1(z)
	+ R_\eps
\\
	&= \frac12\sum_{(x,y) \in \cE_\eps}
		\big(\psi(y) - \psi(x)\big)
		\big\langle
			J(x,y)
			,
			\Psi(x)
		\big\rangle
	+ R_\eps
\\
	&= \sum_{x \in \cX_\eps}
	\big\langle
		(\grad\psi
		\star
		J )(x)
			,
			\Psi(x)
		\big\rangle
	+ R_\eps
\\
	& =
	\int_{\R^d}
	\big\langle
		\de 
		\big(
			\grad \psi \star J
		\big)
		,
		\Psi
	\big\rangle
		+R_\eps \, ,
\end{align}
where the remainder term is given by
\begin{align}
	R_\eps:=
	\frac12\sum_{(x,y) \in \cE_\eps}
	\int_{[x,y]}
		\big\langle
			 J(x,y) \otimes \tau_{xy}
			,
			\big(\Psi(z) - \Psi(x)\big)	\otimes
				\nabla \psi(z)
		\big\rangle
	\de \Hm^1(z)\, .
\end{align}
Since $\| x-y\|_{\R^d} \leq R_3\eps$ for $(x,y) \in \cE_\eps$, we have
\begin{align}
	|R_\eps|
		&\leq
		\frac12
		\sum_{\substack{(x,y) \in \cE_\eps \\ [x,y] \cap \supp(\Psi) \neq \emptyset}}
		\|   J(x,y)\|_V
		\text{Lip}(\Psi)
		\| x - y\|_{\R^d}
		\int_{[x,y]}
		|\nabla\psi(z)|\,
			  \de \Hm^1(z)
\\
		&\leq R_3 \eps \text{Lip}(\Psi) \big|
				\nabla \psi \cdot
					\iota_\eps J
				\big|(B_\eps) \, ,
\end{align}
which is the sought upper bound.
\end{proof}

\subsection*{Acknowledgement}
This research was funded in part by the Austrian Science Fund (FWF) project \href{https://doi.org/10.55776/F65}{10.55776/F65}. L.P. gratefully acknowledges funding from the Deutsche Forschungsgemeinschaft (DFG, German Research Foundation) under Germany’s Excellence Strategy- GZ 2047/1, Projekt-ID 390685813.
Financial support by the Deutsche Forschungsgemeinschaft (DFG) within the CRC 1060, at University of Bonn project number 211504053, is also gratefully acknowledged.

\subsection*{Data Availability Statement}
Data sharing not applicable to this article as no datasets were generated or analysed during the current study.

\bibliographystyle{my_alpha}
\bibliography{stoch.bib}

\newcommand{\etalchar}[1]{$^{#1}$}
\begin{thebibliography}{AA00}\itemsep0.1em

\bibitem[ACG11]{Alicandro-Cicalese-Gloria:2011}
{\scshape R.~Alicandro, M.~Cicalese, {\upshape and} A.~Gloria}.
\newblock Integral representation results for energies defined on stochastic lattices and application to nonlinear elasticity.
\newblock {\em Arch. Ration. Mech. Anal.}, 200(3), 881--943, 2011.

\bibitem[ACR15]{Alicandro-Cicalese-Ruf:2015}
{\scshape R.~Alicandro, M.~Cicalese, {\upshape and} M.~Ruf}.
\newblock Domain formation in magnetic polymer composites: an approach via stochastic homogenization.
\newblock {\em Arch. Ration. Mech. Anal.}, 218(2), 945--984, 2015.

\bibitem[ADH17]{Auffinger-Damron-Hanson:2017}
{\scshape A.~Auffinger, M.~Damron, {\upshape and} J.~Hanson}.
\newblock {\em 50 years of first-passage percolation}, volume~68 of {\em University Lecture Series}.
\newblock American Mathematical Society, Providence, RI, 2017.

\bibitem[AFP00]{Ambrosio-Fusco-Pallara:2000}
{\scshape L.~Ambrosio, N.~Fusco, {\upshape and} D.~Pallara}.
\newblock {\em Functions of bounded variation and free discontinuity problems}.
\newblock Oxford Mathematical Monographs. The Clarendon Press, Oxford University Press, New York, 2000.

\bibitem[AkK81]{akcoglu1981ergodic}
{\scshape M.~A.~Akcoglu {\upshape and} U.~Krengel}.
\newblock Ergodic theorems for superadditive processes.
\newblock 1981.

\bibitem[Alb93]{Alberti:1993}
{\scshape G.~Alberti}.
\newblock Rank one property for derivatives of functions with bounded variation.
\newblock {\em Proc. Roy. Soc. Edinburgh Sect. A}, 123(2), 239--274, 1993.

\bibitem[AmM92]{Ambrosio-DalMaso:1992}
{\scshape L.~Ambrosio {\upshape and} G.~D.~Maso}.
\newblock On the relaxation in {BV}($\omega$; $\mathbb{R}^m$) of quasi-convex integrals.
\newblock {\em Journal of Functional Analysis}, 109(1), 76--97, 1992.

\bibitem[BC{\etalchar{*}}13]{Baia-Chermisi-Matias-Santos:2013}
{\scshape M.~Ba\'{\i}a, M.~Chermisi, J.~Matias, {\upshape and} P.~M.~Santos}.
\newblock Lower semicontinuity and relaxation of signed functionals with linear growth in the context of {$\mathcal A$}-quasiconvexity.
\newblock {\em Calc. Var. Partial Differential Equations}, 47(3-4), 465--498, 2013.

\bibitem[Bec52]{MR68196}
{\scshape M.~Beckmann}.
\newblock A continuous model of transportation.
\newblock {\em Econometrica}, 20, 643--660, 1952.

\bibitem[BF{\etalchar{*}}02]{Bouchitte-Fonseca-Leoni-Mascarenhas:2002}
{\scshape G.~Bouchitt\'{e}, I.~Fonseca, G.~Leoni, {\upshape and} L.~Mascarenhas}.
\newblock A global method for relaxation in {$W^{1,p}$} and in {${\rm SBV}_p$}.
\newblock {\em Arch. Ration. Mech. Anal.}, 165(3), 187--242, 2002.

\bibitem[BFL00]{Braides-Fonseca-Leoni:2000}
{\scshape A.~Braides, I.~Fonseca, {\upshape and} G.~Leoni}.
\newblock A-quasiconvexity: relaxation and homogenization.
\newblock {\em ESAIM: Control, Optimisation and Calculus of Variations}, 5, 539--577, 2000.

\bibitem[BMS08]{Braides-Maslennikov-Sigalotti:2008}
{\scshape A.~Braides, M.~Maslennikov, {\upshape and} L.~Sigalotti}.
\newblock Homogenization by blow-up.
\newblock {\em Appl. Anal.}, 87(12), 1341--1356, 2008.

\bibitem[Bog07]{Bogachev}
{\scshape V.~I.~Bogachev}.
\newblock {\em Measure theory. {V}ol. {I}, {II}}.
\newblock Springer-Verlag, Berlin, 2007.

\bibitem[BrC23]{Braides-Caroccia:2023}
{\scshape A.~Braides {\upshape and} M.~Caroccia}.
\newblock Asymptotic behavior of the dirichlet energy on poisson point clouds.
\newblock {\em Journal of Nonlinear Science}, 33(5), 80, 2023.

\bibitem[Cio21]{Ciosmak:2021}
{\scshape K.~J.~Ciosmak}.
\newblock Optimal transport of vector measures.
\newblock {\em Calculus of Variations and Partial Differential Equations}, 60(6), 1--22, 2021.

\bibitem[CMO20]{Conti-Muller-Ortiz:2020}
{\scshape S.~Conti, S.~M\"{u}ller, {\upshape and} M.~Ortiz}.
\newblock Symmetric div-quasiconvexity and the relaxation of static problems.
\newblock {\em Arch. Ration. Mech. Anal.}, 235(2), 841--880, 2020.

\bibitem[DaF16]{Davoli-Fonseca:2016}
{\scshape E.~Davoli {\upshape and} I.~Fonseca}.
\newblock Homogenization of integral energies under periodically oscillating differential constraints.
\newblock {\em Calculus of Variations and Partial Differential Equations}, 55, 1--60, 2016.

\bibitem[DaM86]{DalMaso-Modica:1986}
{\scshape G.~Dal~Maso {\upshape and} L.~Modica}.
\newblock Nonlinear stochastic homogenization.
\newblock {\em Annali di matematica pura ed applicata}, 144, 347--389, 1986.

\bibitem[De~06]{DeLellis:2006}
{\scshape C.~De~Lellis}.
\newblock Lecture notes on rectifiable sets, densities, and tangent measures.
\newblock {\em Preprint}, 23, 2006.

\bibitem[DeL77]{DeGiorgi-Letta:1977}
{\scshape E.~De~Giorgi {\upshape and} G.~Letta}.
\newblock Une notion g\'en\'erale de convergence faible pour des fonctions croissantes d'ensemble.
\newblock {\em Annali della Scuola Normale Superiore di Pisa - Classe di Scienze}, 4e s{\'e}rie, 4(1), 61--99, 1977.

\bibitem[DeR16]{DePhilippis-Rindler:2016}
{\scshape G.~De~Philippis {\upshape and} F.~Rindler}.
\newblock On the structure of {$\mathcal A$}-free measures and applications.
\newblock {\em Ann. of Math. (2)}, 184(3), 1017--1039, 2016.

\bibitem[DiL15]{Disser-Liero:2015}
{\scshape K.~Disser {\upshape and} M.~Liero}.
\newblock On gradient structures for {M}arkov chains and the passage to {W}asserstein gradient flows.
\newblock {\em Netw. Heterog. Media}, 10(2), 233--253, 2015.

\bibitem[EHS23]{Esposito-Heinze-Schlichting:2023}
{\scshape A.~Esposito, G.~Heinze, {\upshape and} A.~Schlichting}.
\newblock Graph-to-local limit for the nonlocal interaction equation.
\newblock {\em arXiv:2306.03475}, 2023.

\bibitem[EP{\etalchar{*}}21]{Esposito-Patacchini-Schlichting:2021}
{\scshape A.~Esposito, F.~S.~Patacchini, A.~Schlichting, {\upshape and} D.~Slepcev}.
\newblock Nonlocal-interaction equation on graphs: gradient flow structure and continuum limit.
\newblock {\em Arch. Ration. Mech. Anal.}, 240(2), 699--760, 2021.

\bibitem[EPS24]{Esposito-Patacchini-Schlichting:2023}
{\scshape A.~Esposito, F.~S.~Patacchini, {\upshape and} A.~Schlichting}.
\newblock On a class of nonlocal continuity equations on graphs.
\newblock {\em European J. Appl. Math.}, 35(1), 109--126, 2024.

\bibitem[EsM23]{Esposito-Mikolas:2023}
{\scshape A.~Esposito {\upshape and} L.~Mikol{\'a}s}.
\newblock On evolution {PDE}s on co-evolving graphs.
\newblock {\em arXiv:2310.10350}, 2023.

\bibitem[FMP22]{Forkert2020}
{\scshape D.~Forkert, J.~Maas, {\upshape and} L.~Portinale}.
\newblock Evolutionary {$\Gamma$}-convergence of entropic gradient flow structures for {F}okker-{P}lanck equations in multiple dimensions.
\newblock {\em SIAM J. Math. Anal.}, 54(4), 4297--4333, 2022.

\bibitem[FoF62]{FordFulkerson}
{\scshape L.~R.~Ford, Jr. {\upshape and} D.~R.~Fulkerson}.
\newblock {\em Flows in networks}.
\newblock Princeton University Press, Princeton, N.J., 1962.

\bibitem[FoM92]{Fonseca-Mueller:1992}
{\scshape I.~Fonseca {\upshape and} S.~M{\"u}ller}.
\newblock Quasi-convex integrands and lower semicontinuity in {$L^1$}.
\newblock {\em SIAM J. Math. Anal.}, 23(5), 1081--1098, 1992.

\bibitem[FoM99]{Fonseca-Muller:1999}
{\scshape I.~Fonseca {\upshape and} S.~M\"{u}ller}.
\newblock {$\mathcal A$}-quasiconvexity, lower semicontinuity, and {Y}oung measures.
\newblock {\em SIAM J. Math. Anal.}, 30(6), 1355--1390, 1999.

\bibitem[Gar20]{Garcia-Trillos:2020}
{\scshape N.~Garc\'{\i}a~Trillos}.
\newblock Gromov-{H}ausdorff limit of {W}asserstein spaces on point clouds.
\newblock {\em Calc. Var. Partial Differential Equations}, 59(2), Paper No. 73, 43, 2020.

\bibitem[GiM13]{GiMa13}
{\scshape N.~Gigli {\upshape and} J.~Maas}.
\newblock Gromov-{H}ausdorff convergence of discrete transportation metrics.
\newblock {\em SIAM J. Math. Anal.}, 45(2), 879--899, 2013.

\bibitem[GK{\etalchar{*}}20]{Gladbach-Kopfer-Maas:2020}
{\scshape P.~Gladbach, E.~Kopfer, J.~Maas, {\upshape and} L.~Portinale}.
\newblock Homogenisation of one-dimensional discrete optimal transport.
\newblock {\em Journal de Math{\'e}matiques Pures et Appliqu{\'e}es}, 139, 204 -- 234, 2020.

\bibitem[GK{\etalchar{*}}23]{Gladbach-Kopfer-Maas-Portinale:2023}
{\scshape P.~Gladbach, E.~Kopfer, J.~Maas, {\upshape and} L.~Portinale}.
\newblock Homogenisation of dynamical optimal transport on periodic graphs.
\newblock {\em Calc. Var. Partial Differential Equations}, 62(5), Paper No. 143, 75, 2023.

\bibitem[GlK24]{Gladbach-Kopfer:2024}
{\scshape P.~Gladbach {\upshape and} E.~Kopfer}.
\newblock Stochastic homogenization of dynamical discrete optimal transport.
\newblock {\em arXiv:2411.04157}, 2024.

\bibitem[HrT23]{Hraivoronska-Tse:2023}
{\scshape A.~Hraivoronska {\upshape and} O.~Tse}.
\newblock Diffusive limit of random walks on tessellations via generalized gradient flows.
\newblock {\em SIAM J. Math. Anal.}, 55(4), 2948--2995, 2023.

\bibitem[HST24]{Hraivoronska-Tse-Schlichting:2023}
{\scshape A.~Hraivoronska, A.~Schlichting, {\upshape and} O.~Tse}.
\newblock Variational convergence of the {S}charfetter--{G}ummel scheme to the aggregation-diffusion equation and vanishing diffusion limit.
\newblock {\em Numer. Math.}, 156(6), 2221--2292, 2024.

\bibitem[IsL24]{ishida2024quantitative}
{\scshape S.~Ishida {\upshape and} H.~Lavenant}.
\newblock Quantitative convergence of a discretization of dynamic optimal transport using the dual formulation.
\newblock {\em Foundations of Computational Mathematics}, pages 1--36, 2024.

\bibitem[Kin73]{kingman1973subadditive}
{\scshape J.~F.~C.~Kingman}.
\newblock Subadditive ergodic theory.
\newblock {\em Ann. Probability}, 1, 883--909, 1973.

\bibitem[Kir34]{Kirszbraun:1934}
{\scshape M.~Kirszbraun}.
\newblock {\"U}ber die zusammenziehende und {L}ipschitzsche {T}ransformationen.
\newblock {\em Fundamenta Mathematicae}, 22(1), 77--108, 1934.

\bibitem[Lav21]{lavenant2021unconditional}
{\scshape H.~Lavenant}.
\newblock Unconditional convergence for discretizations of dynamical optimal transport.
\newblock {\em Mathematics of Computation}, 90(328), 739--786, 2021.

\bibitem[LiM02]{Licht-Michaille:2002}
{\scshape C.~Licht {\upshape and} G.~Michaille}.
\newblock Global-local subadditive ergodic theorems and application to homogenization in elasticity.
\newblock {\em Ann. Math. Blaise Pascal}, 9(1), 21--62, 2002.

\bibitem[MMS15]{Matias-Morandotto-Santos:2015}
{\scshape J.~Matias, M.~Morandotti, {\upshape and} P.~M.~Santos}.
\newblock Homogenization of functionals with linear growth in the context of {$\mathcal A$}-quasiconvexity.
\newblock {\em Applied Mathematics \& Optimization}, 72(3), 523--547, 2015.

\bibitem[Mor52]{Morrey:1952}
{\scshape C.~B.~Morrey, Jr.}
\newblock {Quasi-convexity and the lower semicontinuity of multiple integrals.}
\newblock {\em Pacific Journal of Mathematics}, 2(1), 25 -- 53, 1952.

\bibitem[NSS17]{Neukamm-Schaffner-Schlomerkemper:2017}
{\scshape S.~Neukamm, M.~Sch\"{a}ffner, {\upshape and} A.~Schl\"{o}merkemper}.
\newblock Stochastic homogenization of nonconvex discrete energies with degenerate growth.
\newblock {\em SIAM J. Math. Anal.}, 49(3), 1761--1809, 2017.

\bibitem[PoQ24]{Portinale-Quattrocchi:2024}
{\scshape L.~Portinale {\upshape and} F.~Quattrocchi}.
\newblock Discrete-to-continuum limits of optimal transport with linear growth on periodic graphs.
\newblock {\em European Journal of Applied Mathematics}, pages 1--29, 2024.

\bibitem[RuZ23]{Ruf-Zeppieri:2023}
{\scshape M.~Ruf {\upshape and} C.~I.~Zeppieri}.
\newblock Stochastic homogenization of degenerate integral functionals with linear growth.
\newblock {\em Calc. Var. Partial Differential Equations}, 62(4), Paper No. 138, 2023.

\bibitem[San15]{santambrogio}
{\scshape F.~Santambrogio}.
\newblock {\em Optimal transport for applied mathematicians}, volume~87 of {\em Progress in Nonlinear Differential Equations and their Applications}.
\newblock Birkh\"{a}user/Springer, Cham, 2015.

\bibitem[SlW23]{slepvcev2023nonlocal}
{\scshape D.~Slep{\v{c}}ev {\upshape and} A.~Warren}.
\newblock Nonlocal {W}asserstein distance: Metric and asymptotic properties.
\newblock {\em Calculus of Variations and Partial Differential Equations}, 62(9), 238, 2023.

\end{thebibliography}

\end{document}